\title{On a notion of homotopy Segal $E_\infty$-Hopf cooperad}
\author{Benoit Fresse}
\author{Lorenzo Guerra}
\address{Univ. Lille, CNRS, UMR 8524 -- Laboratoire Paul Painlev\'e, F-59000 Lille, France}
\email{Benoit.Fresse@univ-lille.fr}
\email{Lorenzo.Guerra@univ-lille.fr}
\subjclass[2020]{Primary: 18N70; Secondary: 55U10, 18M70}
\date{November 22, 2020 (minor writing corrections on February 8, 2021)}
\theoremstyle{plain}
\newtheorem{thm}[subsubsection]{Theorem}
\newtheorem{thm-defn}[subsubsection]{Theorem-Definition}
\newtheorem{prop}[subsubsection]{Proposition}
\newtheorem{prop-defn}[subsubsection]{Proposition-Definition}
\newtheorem{lemm}[subsubsection]{Lemma}
\newtheorem{cor}[subsubsection]{Corollary}
\theoremstyle{definition}
\newtheorem{defn}[subsubsection]{Definition}
\newtheorem{constr}[subsubsection]{Construction}
\newtheorem{constr-defn}[subsubsection]{Construction-Definition}
\newtheorem{remark}[subsubsection]{Remark}
\newtheorem{recoll}[subsubsection]{Recollections}
\DeclareMathOperator{\sh}{sh}
\DeclareMathOperator{\EM}{EM}
\DeclareMathOperator{\AW}{AW}
\DeclareMathOperator{\TR}{TR}
\DeclareMathOperator{\kk}{\mathbb{k}}   % the ground ring
\DeclareMathOperator{\FF}{\mathbb{F}}   % a finite field
\DeclareMathOperator{\NN}{\mathbb{N}}   % the non-negative integers
\DeclareMathOperator{\ZZ}{\mathbb{Z}}   % the integers
\DeclareMathOperator{\mymod}{mod}       % modulo
\DeclareMathOperator{\eq}{eq}     % equalizers
\DeclareMathOperator{\Tree}{\mathit{Tree}}
\DeclareMathOperator{\CubeCat}{\square} %{\boxslash}
\DeclareMathOperator{\NCat}{\mathcal{N}}
\DeclareMathOperator{\Mod}{\mathit{Mod}}  % modules
\DeclareMathOperator{\Set}{\mathit{Set}}  % sets
\DeclareMathOperator{\EAlg}{\EOp\mathit{Alg}}
\DeclareMathOperator{\Hopf}{\mathit{Hopf}}    % Hopf objects
\DeclareMathOperator{\SegOp}{\mathit{SegOp}}    % Segal operads
\DeclareMathOperator{\hSegOp}{\mathit{SegOp}}    % homotopy Segal operads
\DeclareMathOperator{\dg}{\mathit{dg}}      % dg objects
\DeclareMathOperator{\cosimp}{\mathit{c}}   % cosimplicial objects
\DeclareMathOperator{\simp}{\mathit{s}}     % simplicial objects
\DeclareMathOperator{\Mor}{\mathtt{Mor}}    % morphism sets
\DeclareMathOperator{\id}{\mathit{id}}      % the identity map
\DeclareMathOperator{\pt}{\mathit{pt}}      % the one-point set
\DeclareMathOperator{\DGSigma}{\mathtt{\Sigma}} % suspensions
\DeclareMathOperator{\DGB}{\mathtt{B}}          % (co)bar complexes
\DeclareMathOperator{\DGK}{\mathtt{K}}
\DeclareMathOperator{\DGH}{\mathtt{H}}          % the homology
\DeclareMathOperator{\DGN}{\mathtt{N}}          % normalized complexes
\DeclareMathOperator{\DGW}{\mathtt{W}}          % the W-construction on groups
\DeclareMathOperator{\DGG}{\mathtt{G}}          % the left adjoint of the normalized cochain complex functor
\DeclareMathAlphabet{\mathsfit}{OT1}{cmss}{m}{sl}   % font for operads
\DeclareMathOperator{\AOp}{\mathsfit{A}}
\DeclareMathOperator{\BOp}{\mathsfit{B}}
\DeclareMathOperator{\COp}{\mathsfit{C}}
\DeclareMathOperator{\EOp}{\mathsfit{E}}        % the Barratt-Eccles operad
\DeclareMathOperator{\FOp}{\mathsfit{F}}        % the configuration spaces
\DeclareMathOperator{\POp}{\mathsfit{P}}        % any generic operad
\DeclareMathOperator{\QOp}{\mathsfit{Q}}        % any generic operad
\DeclareMathOperator{\ROp}{\mathsfit{R}}
\DeclareMathOperator{\KOp}{\mathsfit{K}}        % the Sinha operad/cosimplicial space
\DeclareMathOperator{\ComOp}{\mathsfit{Com}}    % the commutative (co)operad
\DeclareMathOperator{\AsOp}{\mathsfit{As}}      % the associative (co)operad
\DeclareMathOperator{\rset}{\underline{\mathsf{r}}}     % the finite set {1<...<r}
\DeclareMathOperator{\stree}{\underline{\mathsf{S}}}    % a tree
\DeclareMathOperator{\ttree}{\underline{\mathsf{T}}}    % a tree
\DeclareMathOperator{\utree}{\underline{\mathsf{U}}}    % a tree
\DeclareMathOperator{\ytree}{\underline{\mathsf{Y}}}    % a corolla
\DeclareMathOperator{\sigmatree}{\underline{\mathsf{\Sigma}}}    % a tree
\DeclareMathOperator{\thetatree}{\underline{\mathsf{\Theta}}}    % a tree
\DeclareMathOperator{\gammatree}{\underline{\mathsf{\Gamma}}}    % a tree with two vertices
\begin{document}

\begin{abstract}
We define a notion of homotopy Segal cooperad in the category of $E_\infty$-algebras. This model of Segal cooperad that we define in the paper,
which we call `homotopy Segal $E_\infty$-Hopf cooperad', covers examples given by the cochain complex of topological operads
and provides a framework for the study of the homotopy of such objects.

In a first step, we consider a category of Segal $E_\infty$-Hopf cooperads, which consist of collections of $E_\infty$-algebras indexed by trees
and equipped with coproduct operators, corresponding to tree morphisms, together with facet operators, corresponding to subtree inclusions.
The coproduct operators model coproducts of operations inside a tree. The facet operators are assumed to satisfy a Segal condition
when we take a partition of a tree into subtrees.
The homotopy Segal cooperads that we aim to define are formed by integrating homotopies in the composition schemes of the coproduct operators.
For this purpose, we replace the functorial structure that governs the composition of the coproduct operators
by the structure of a homotopy functor which we shape on a cubical enrichment of the category of $E_\infty$-algebras.
(We then use a particular model of the category of $E_\infty$-algebras which we associate to the chain Barratt--Eccles operad.)

We prove that every homotopy Segal $E_\infty$-Hopf cooperad in our sense is weakly-equivalent to a strict Segal $E_\infty$-Hopf cooperad.
We also define a notion of homotopy morphism of homotopy Segal $E_\infty$-Hopf cooperads.
We prove that every homotopy Segal $E_\infty$-Hopf cooperad admits a cobar construction
and that every homotopy morphism of homotopy Segal $E_\infty$-Hopf cooperads
induces a morphism on this cobar construction, so that our approach provides a lifting to the context of $E_\infty$-algebras
of classical homotopy cooperad structures that are modeled on the bar duality of operads
when we work in a category of differential graded modules.
\end{abstract}

\maketitle

%\tableofcontents

\section*{Introduction}

\setcounter{section}{-1}

The goal of this paper is to provide an effective framework for the study of homotopy models of operads. Various models of $\infty$-operads in simplicial sets and in topological spaces
have been introduced in the literature.
The model that we propose in this paper relies on the Mandell model for the homotopy of spaces, which takes place in the category of $E_{\infty}$-algebras (see~\cite{Mandell,MandellIntegral}).
We construct our model for the homotopy of operads within the category of $E_{\infty}$-algebras used by Mandell.

By passing from spaces to $E_{\infty}$-algebras, we have to replace operads by cooperad structures, which are dual to operads in the categorical sense.
In a first step, we define a notion of \emph{strict} Segal $E_{\infty}$-Hopf cooperad, which is close to an $E_{\infty}$-algebra counterpart
of the Cisinski--Moerdijk notion of dendroidal space~\cite{CisinskiMoerdijkI,CisinskiMoerdijkII}.
In a second step, we define a notion of \emph{homotopy} Segal $E_{\infty}$-Hopf cooperad. The idea is to integrate homotopies in the composition schemes
that govern the structure of our objects. This notion of homotopy Segal $E_{\infty}$-Hopf cooperad
is the model that we aim to define and study in the paper.

If we forget about $E_{\infty}$-algebra structures and focus on operads and cooperads defined in a category of differential graded modules,
then we can use the bar duality theory to define notions of homotopy operads and of homotopy cooperads.
The bar duality approach enables authors to apply effective methods of perturbation theory (like the basic perturbation lemma)
for the study of homotopy operads and of homotopy cooperads.
We prove that every homotopy Segal $E_{\infty}$-Hopf cooperad admits a cobar construction, and hence, defines a homotopy cooperad in the classical sense.
We also define a notion of homotopy morphism of homotopy Segal $E_{\infty}$-Hopf cooperads and we prove that every homotopy morphism
induces a morphism on the cobar construction.
Hence, our notion of homotopy Segal $E_{\infty}$-Hopf cooperad provides a lift of the homotopy cooperads
that are defined in terms of the cobar construction when we forget about $E_{\infty}$-algebra structures
and work in a category of differential graded modules.

%\medskip
We implement these ideas as follows.
We work with the (chain) Barratt--Eccles operad, denoted by $\EOp$ hereafter, which defines an $E_\infty$-operad in the category of differential graded modules.
We take the category of algebras over the Barratt--Eccles operad as a model for the category of $E_\infty$-algebras in differential graded modules.
By the main result of~\cite{BergerFresse}, the normalized cochain complex of a simplicial set $\DGN^*(X)$ is endowed with an action of this operad.
The object $\DGN^*(X)$, equipped with this particular $E_\infty$-algebra structure, defines a representative of the Mandell model of the space $X$.

The Barratt--Eccles operad is endowed with a diagonal. This observation implies that the Barratt--Eccles operad
acts on tensor products, and therefore, that the category of algebras over the Barratt--Eccles operad
inherits a monoidal structure from the category of differential graded modules.
But this monoidal structure is only symmetric up to homotopy, because the diagonal of the Barratt--Eccles operad is only homotopy cocommutative.
For this reason, we can hardly define cooperads in the ordinary sense in the category of algebras over the Barratt--Eccles operad.
To work out this problem, a first idea is to define homotopy cooperads in terms of a functor on the category of trees,
which represent the composition schemes of operations in an operad.
We follow this idea to define the notion of a strict Segal $E_\infty$-Hopf cooperad.
We explicitly define a strict Segal $E_\infty$-Hopf cooperad as a functor from the category of trees to the category of $E_\infty$-algebras
equipped with facet operators that model subtree inclusions. The morphisms of $E_\infty$-algebras that we associate to the tree morphisms
model the composition structure of our objects. We will therefore refer to these morphisms as the coproduct operators.

We prove that every strict Segal $E_\infty$-Hopf cooperad is weakly-equivalent (quasi-isomorphic) to a strict cooperad in the ordinary sense
when we forget about $E_\infty$-algebra structures and transport our objects
to the category of differential graded modules.
We use a particular feature of the category of algebras over the Barratt--Eccles operad to simplify the definition of this forgetful functor: the coproduct of any collection of objects
in this category is weakly-equivalent to the tensor product (in general, in a category of algebras over an $E_\infty$-operad, such results are only valid for cofibrant objects).
We use a version of the Boardman--Vogt $W$-construction to establish this result.

We already mentioned that the notion of a strict Segal $E_\infty$-Hopf cooperad is close to Cisinski--Moerdijk's notion of a dendroidal space~\cite{CisinskiMoerdijkI,CisinskiMoerdijkII}.
(We also refer to~\cite{LeGrignou} for the definition of an analogous notion of homotopy operad in the differential graded module context).
The facet operators of our definition actually correspond to the outer facets of dendroidal spaces, while the coproduct operators correspond to the inner facets.
The main difference lies in the fact that we do not take a counterpart of operadic units and arity zero terms in our setting.
The paper~\cite{Ching}, about the bar duality of operads in spectra, also involves a notion of quasi-cooperad, which forms an analogue, in the category of spectra, of our strict cooperads.

To define a notion of a homotopy cooperad, an idea is to replace the category of trees by a resolution of this category (actually, a form of the Boardman--Vogt construction).
We then replace our functors on trees by homotopy functors in order to change the functoriality relation,
which models the associativity of the coproduct operators,
into a homotopy relation. The resolution of the category of trees has a cubical structure. We actually define our homotopy functor structure
by taking a cubical functor on this category, by using a cubical enrichment
of the category of $E_\infty$-algebras.
In the context of algebras over the Barratt--Eccles operad, this cubical enrichment can be defined by using tensor products $A\otimes I^k$, $k\geq 0$,
where $I^k$ represents the cellular cochain algebra of the $k$-dimensional cube $[0,1]^k$.
We equivalently have $I^k = \DGN^*(\Delta^1)^{\otimes k}$, where we consider the $k$-fold tensor product of the normalized cochain complex of the one-simplex $\Delta^1$.
The cubical functor structure that models the composition structure of our homotopy Segal cooperads
can then be defined explicitly, in terms of homotopy coproduct operators
associated to composable sequences of tree morphisms
and with values in tensor products with these cubical cochain algebras $I^k$, $k\geq 0$.
This is exactly what we do in the paper to get our model of homotopy Segal $E_\infty$-Hopf cooperads.
To carry out this construction, we crucially use that the objects $I^k$ are endowed with an action of the Barratt--Eccles operad and are equipped with compatible connection operators,
which we associate to certain degeneracies in the category of trees.
Note that in comparison to other constructions based on dendroidal objects (see for instance the study of homotopy operads in differential graded modules
of~\cite{LeGrignou}),
we keep strict associativity relations for the facet operators corresponding to subtree inclusions.
To conclude the paper, we also prove that every homotopy Segal $E_\infty$-Hopf cooperad is weakly-equivalent to
a strict Segal $E_\infty$-Hopf cooperad

Previously, we mentioned that we use a comparison between coproducts and tensor products
to define a forgetful functor from Segal $E_\infty$-Hopf cooperads
to Segal cooperads in differential graded modules.
However, the map that provides this comparison, which is an instance of an Alexander--Whitney diagonal, is not symmetric.
For this reason, we consider shuffle cooperads (in the sense of~\cite{DotsenkoKhoroshkin})
rather than symmetric cooperads
when we pass to Segal cooperads in differential graded modules.
Briefly recall that a shuffle (co)operad is a structure that retains the symmetries of the composition schemes of operations in a (co)operad,
but forgets about the internal symmetric structure of (co)operads.

The category of shuffle operads and the dual category of shuffle cooperads were introduced by Vladimir Dotsenko and Anton Khoroshkin in~\cite{DotsenkoKhoroshkin},
with motivations coming from the work of Eric Hoffbeck~\cite{Hoffbeck},
in order to define an operadic counterpart of the classical notion of a Gr\"obner basis.
This theory provides an effective approach for the study of the homotopy of the bar construction of operads
(in connection with the Koszul duality theory of Ginzburg--Kapranov~\cite{GinzburgKapranov}),
because one can observe that the (co)bar complex of a (co)operad
only depends on the shuffle (co)operad structure
of our object (when we forget the symmetric group actions).
%when we forget about the action of the symmetric group on the cobar operad,
%and the forgetting of internal symmetries is just
%what we need in order to be able to define operadic analogues of the classical notion of a monomial ordering.

\medskip
We give brief recollections on the tree structures and on the conventions on trees that we use all along this paper in a preliminary section.
We also briefly review the definition of cooperads in terms of functors defined on trees in this section.
We study the strict Segal cooperad model afterwards, in Section~\ref{sec:strict-segal-cooperads}.
Then we explain our definition of homotopy Segal $E_{\infty}$-Hopf cooperads. We address the study of this notion in Section~\ref{sec:homotopy-segal-cooperads}.
We devote an appendix to brief recollections on the definition of the Barratt--Eccles operad and to the proof of the crucial statements
on algebras over this operad that we use in the definition of homotopy Segal $E_{\infty}$-Hopf cooperads (the weak-equivalence
between coproducts and tensor products, and the compatibility of connections with the algebra structure
of cubical complexes).

\medskip
%\subsection*{General conventions}
We work in a category of differential graded modules over an arbitrary ground ring $\kk$ all along this paper, where a differential graded module (a dg module for short)
generally denotes a $\kk$-module $M$ equipped with a decomposition of the form $M = \bigoplus_{*\in\ZZ} M_*$
and with a differential $\delta: M\rightarrow M$ such that $\delta(M_*)\subset M_{*-1}$.
We therefore assume that our dg modules are equipped with a lower grading in general, but we may also consider dg modules
that are naturally equipped with an upper grading $M = \bigoplus_{*\in\ZZ} M^*$
and with a differential such that $\delta(M^*)\subset M^{*+1}$.
We then use the classical equivalence $M_* = M^{-*}$ to convert the upper grading on $M$
into a lower grading.
We equip the category of dg modules with the standard symmetric monoidal structure, given by the tensor product of dg modules,
with a sign in the definition of the symmetry operator that reflects the usual commutation rule
of differential graded algebra.
We call weak-equivalences the quasi-isomorphisms of dg modules and we transfer this class of weak-equivalences
to every category of structured objects (algebras, cooperads)
that we may form within the category of dg modules.

We take the category of algebras over the chain Barratt--Eccles operad, denoted by $\EOp$, as a model for the category of $E_\infty$-algebras in dg modules.
We use the notation $\EAlg$ for this category of dg algebras. We also adopt the notation $\vee$ for the coproduct in $\EAlg$.
We refer to the appendix~(\S\ref{sec:Barratt-Eccles-operad}) for detailed recollections on the definition and properties of the chain Barratt--Eccles operad,
and for a study of the properties of the coproduct, notably the existence of a weak-equivalence $\EM: A\otimes B\xrightarrow{\sim}A\vee B$
that we use in our constructions.
We also use that the normalized cochain complex of a simplicial set $\DGN^*(X)$ inherits the structure of an $\EOp$-algebra.
We refer to~\cite{BergerFresse} for the definition of this $\EOp$-algebra structure. (We give brief recollections on this subject in~\S\ref{sec:Barratt-Eccles-operad}.)
We also adopt the notation $\Sigma_r$ for the symmetric group on $r$ letters
all along the paper.

\thanks{The authors acknowledge support from the Labex CEMPI (ANR-11-LABX-0007-01) and from the FNS-ANR project OCHoTop (ANR-18CE93-0002-01).}

\section{Background}\label{section:background}

The first purpose of this section is to briefly explain the conventions on trees that we use all along the paper.
We also briefly recall the definition of the notion of a shuffle cooperad, which is intermediate between the notion of a non-symmetric cooperad
and the notion of a symmetric cooperad.
The idea of shuffle cooperads, as we already explained in the introduction of the paper, is to retain the symmetries of the composition schemes of cooperads (based on trees),
but to forget about the internal symmetric structure
of our objects.
This construction is possible for cooperads with no term in arity zero, because the trees
that we consider in this case
have a natural planar embedding (and as such, have trivial automorphism groups).

In what follows, we mainly consider shuffle cooperads (rather than shuffle operads). Our main interest for this notion lies in the observation
that the category shuffle cooperads is endowed with a cobar complex functor, which is the same as the cobar complex functor on the category of cooperads
when we forget about the internal symmetric structure
of our objects.

\subsection{Recollections and conventions on the categories of trees}\label{subsection:trees}
In general, we follow the conventions of the book~\cite[Appendix A]{FresseBook} for the definitions that concern the categories of trees.

To summarize, we consider the categories, denoted by $\Tree(r)$ in~\cite[\S A.1]{FresseBook}, whose objects are trees $\ttree$
with $r$ ingoing edges $e_1,\dots,e_r$ numbered from $1$ to $r$ (the leaves), one outgoing edge $e_0$ (the root),
and where each inner edge $e$ is oriented from a source vertex $s(e)$ to a target vertex $t(e)$
so that each vertex $v$ has one outgoing edge and at least one ingoing edge.
The morphisms of $\Tree(r)$ are composed of isomorphisms and of edge contractions,
where we assume that the isomorphisms preserve the numbering of the ingoing edges.
% TODO
The assumption that each vertex in a tree has at least one ingoing edge implies that our trees have a trivial automorphism group (see~\cite[\S A.1.8]{FresseBook}).
In what follows, we use this observation to simplify our constructions.
Namely, we can forget about isomorphisms by picking a representative in each isomorphism class of trees
and we follow this convention all along the paper.

The set of vertices of a tree $\ttree\in\Tree(r)$ is denoted by $V(\ttree)$
whereas the set of edges is denoted by $E(\ttree)$.
The set of inner edges (the edges which are neither a leaf nor a root) is denoted by $\mathring{E}(\ttree)$.
For a vertex $v\in V(\ttree)$, we use the notation $\rset_v$
for the set of edges $e$ such that $t(e) = v$.

The symmetric group $\Sigma_r$ acts on the category of $r$-trees $\Tree(r)$ by renumbering the ingoing edges.
In what follows, we also consider a version of the category of trees $\Tree(\rset)$ where the ingoing edges of the trees
are indexed by an arbitrary finite set $\rset$,
and not necessarily by an ordinal, so that the mapping $\rset\mapsto\Tree(\rset)$
defines a functor from the category of finite sets and bijections between them
to the category of small categories.
The categories of trees are endowed with composition operations $\circ_i: \Tree(k)\times\Tree(l)\rightarrow\Tree(k+l-1)$,
which provide the collection $\Tree(r)$, $r>0$,
with the structure of an operad in the category of categories.
These composition operations have a unit, the $1$-tree $\downarrow\in\Tree(1)$ with a single edge which is both the root and a leave,
but we put this tree aside actually and we do not consider it in our forthcoming constructions.
%Recall that we call unit tree the $1$-tree $\downarrow\in\Tree(1)$ with a single edge which is both the root and a leave,
%and that the unit tree is the unit of operadic composition operations $\circ_i: \Tree(k)\times\Tree(l)\rightarrow\Tree(k+l-1)$
%on the categories of trees. [TODO]

Recall that we call $r$-corolla the $r$-tree $\ytree\in\Tree(r)$ with a single vertex $v$, one outgoing edge with this vertex $v$ as a source,
and $r$ ingoing edges targeting to $v$. We also consider corollas $\ytree\in\Tree(\rset)$
whose sets of ingoing edges are indexed by arbitrary finite sets $\rset$.
To each vertex $v$ in a tree $\ttree$, we can associate an $\rset_v$-corolla $\ytree_v\subset\ttree$,
with $v$ as vertex, the ingoing edges of this vertex in $\ttree$
as set of ingoing edges, and the outgoing edge
of $v$ as root.
The existence of a tree morphism $f: \ttree\rightarrow\stree$ is equivalent to the existence of a decomposition $\ttree = \lambda_{\stree}(\sigmatree_v,v\in V(\stree))$,
where $\lambda_{\stree}$ denotes a treewise operadic composition operation, shaped on the tree $\stree$,
of subtrees $\sigmatree_v\subset\ttree$, $v\in V(\stree)$,
that represent the pre-image of the corollas $\ytree_v\subset\stree$, $v\in V(\stree)$,
under our morphism.
For tree with two vertices $\gammatree$, the composition $\ttree = \lambda_{\gammatree}(\sigmatree_u,\sigmatree_v)$
is equivalent to an operadic composition operation $\ttree = \sh_*(\sigmatree_u\circ_i\sigmatree_v)$,
where $\sh_*$ denotes the action of a permutation,
associated to a partition of the form $\{1<\dots<r\} = \{i_1<\dots<\widehat{i_p}<\dots<i_k\}\amalg\{j_1<\dots<j_l\}$,
which reflects the indexing of the leaves in the tree $\gammatree$.
The index $i_p$ is a dummy composition variable which we associate to the inner edge of this tree $\gammatree$.
We can insert this dummy variable at the position such that $i_{p-1}<j_1<i_{p+1}$ inside the ordered set $\{i_1<\dots<\widehat{i_p}<\dots<i_k\}\subset\{1<\dots<r\}$
in order to work out the symmetries of this operation (we go back to this topic in the next paragraph).

Each $r$-tree $\ttree\in\Tree(r)$ has a natural planar embedding, which we determine as follows.
Let $\{e_{\alpha},\alpha\in\rset_v\}$ be the set of ingoing edges of a vertex $v\in V(\ttree)$.
Each edge $e_{\alpha}$ can be connected to a leaf $e_{i_{\alpha}}$
through a chain of edges $e_{\alpha} = e_{\alpha_0}$, $e_{\alpha_1}$, \dots, $e_{\alpha_l} = e_{i_{\alpha}}$,
such that $t(e_{\alpha_k}) = s(e_{\alpha_{k-1}})$, for $k = 1,\dots,l$.
Let $m_{\alpha}\in\{1,\dots,r\}$ be the minimum of the indices $i_{\alpha}$ of these leaves $e_{i_{\alpha}}$
that lie over the edge $e_{\alpha}$
in the tree $\ttree$. We order the set of ingoing edges $e_{\alpha}$, $\alpha\in\rset_v$, of our vertex $v$ by taking $e_{\alpha}<e_{\beta}$
when $m_{\alpha}<m_{\beta}$. We perform this ordering of the set of ingoing edges for each vertex $v\in V(\ttree)$
to get the planar embedding of our tree. We have an obvious generalization of this result for the trees $\ttree\in\Tree(\rset)$
whose ingoing edges are indexed by a set $\rset$
equipped with a total ordering.
The existence of this natural planar embedding reflects the fact that the automorphism group of any object $\ttree$
is trivial in our categories of trees $\Tree(\rset)$.

\subsection{Recollections on the treewise definition of cooperads and of shuffle cooperads}\label{subsection:shuffle-cooperads}
Throughout the paper, we mainly use a definition of cooperads in terms of collections endowed with treewise composition coproducts.
We refer to \cite[Appendix C]{FresseBook} or \cite[Section 5.6]{LodayValletteBook}, for instance, for a detailed account of this combinatorial approach to the definition of a cooperad.

In the paper, we more precisely use that a cooperad $\COp$ is equivalent to a collection of contravariant functors on the categories of trees $\COp: \ttree\mapsto\COp(\ttree)$
such that $\COp(\ttree) = \bigotimes_{v\in V(\ttree)}\COp(\ytree_v)$,
for all $\ttree\in\Tree(r)$
and where the morphism $\rho_{\ttree\rightarrow\stree}: \COp(\stree)\rightarrow\COp(\ttree)$ induced by a tree morphism $f: \ttree\rightarrow\stree$
also admits a decomposition of the form $\rho_{\ttree\rightarrow\stree} = \bigotimes_{v\in V(\stree)}\rho_{\sigmatree_v\rightarrow\ytree_v}$
when we use the relation $\ttree\simeq\lambda_{\stree}(\sigmatree_v,v\in V(\stree))$.
In the standard definition, the definition of the structure of a cooperad is rather expressed in terms of the coproduct operations $\rho_{\sigmatree_v\rightarrow\ytree_v}$
which generate the general operators $\rho_{\ttree\rightarrow\stree}$
associated to the tree morphisms $f: \ttree\rightarrow\stree$. (We review this reduction of the definition later on in this paragraph.)
The consideration of general coproduct operators $\rho_{\ttree\rightarrow\stree}$ in the definition of a cooperad
is motivated by the definition of the category of Segal cooperads
in the next section.

In the definition of a symmetric cooperad, we assume, besides, that the symmetric group $\Sigma_r$ acts on the collection $\COp(\ttree)$
in the sense that a natural transformation $s: \COp(\ttree)\rightarrow\COp(s\ttree)$, $\ttree\in\Tree(r)$,
is associated to each permutation $s\in\Sigma_r$,
where $\ttree\mapsto s\ttree$
denotes the action of this permutation on the category of trees $\Tree(r)$.
Then we require that the decomposition $\COp(\ttree) = \bigotimes_{v\in V(\ttree)}\COp(\ytree_v)$ is, in some natural sense, preserved by the action of the symmetric groups.
Note that we can again extend the definition of the functor underlying a cooperad $\COp$ to the categories of trees $\Tree(\rset)$ whose leaves are indexed by arbitrary finite sets $\rset$.
We then consider an action of the bijections of finite sets to extend the action of the permutations on ordinals.

Recall that $\ytree_v$ denotes the corolla generated by a vertex $v$ in a tree $\ttree$.
The object $\COp(\ytree)$ associated to a corolla $\ytree$ only depends on the number of leaves of the corolla (since all corollas with $r$ leaves are canonically isomorphic).
Thus the decomposition relations of the above definition imply that our functor is fully determined by a sequence of objects $\COp(r)$, $r>0$,
equipped with an action of the symmetric groups $\Sigma_r$, so that $\COp(\ytree) = \COp(r)$ for a corolla with $r$-leaves,
together with coproduct operations $\COp(r) = \COp(\ytree)\rightarrow\COp(\ttree)$,
which we associate to the tree morphisms with values in a corolla $\ttree\rightarrow\ytree$.
Furthermore, these coproduct operations can be generated by coproduct operations with values in a term $\COp(\gammatree)$
such that $\gammatree$ is a tree with two vertices,
because every tree morphism $\ttree\rightarrow\stree$ can be decomposed into a sequence of edge contractions,
which are equivalent to the application of tree morphisms of the form $\gammatree\rightarrow\ytree$
inside the tree $\ttree$.

In this equivalence, we can still consider a collection $\COp(\rset)$ indexed by arbitrary finite sets $\rset$
and take $\COp(\ytree) = \COp(\rset)$ for a corolla $\ytree$ whose set of leaves
is indexed by a finite set $\rset$.
Note that such a consideration is necessary in the expression of the decomposition $\COp(\ttree) = \bigotimes_{v\in V(\ttree)}\COp(\ytree_v)$,
because we then take an arbitrary set to index the edges of the corollas $\ytree_v$,
which correspond to the ingoing edges of the vertices of our tree (but we go back to this observation in the next paragraph).
If we unravel the construction, then we get that the $2$-fold coproducts $\COp(\rset) = \COp(\ytree)\rightarrow\COp(\gammatree)$ are equivalent to coproduct operations
of the form $\circ_{i_p}^*: \COp(\rset)\rightarrow\COp(\rset_u)\otimes\COp(\rset_v)$
with $\rset_u = \{i_1,\dots,i_p,\dots,i_k\}$
and $\rset_v = \{j_1,\dots,j_l\}$ such that $\{i_1,\dots,\widehat{i_p},\dots,i_k\}\amalg\{j_1,\dots,j_l\} = \rset$.
(We then retrieve the dual of the classical partial product operations associated to an operad.)

Recall that we assume by convention that the vertices of our trees have at least one ingoing edge.
(For this reason, we assume that the sequence of objects $\COp(r)$, which underlies an operad, is indexed by positive integers $r>0$.)
This convention enables us to order the ingoing edges of each vertex in a tree whose leaves are indexed by an ordinal $\rset = \{1<\dots<r\}$,
and as a consequence,
to get rid of the actions of the symmetric groups
in the construction
of the tensor product $\COp(\ttree) = \bigotimes_{v\in V(\ttree)}\COp(\ytree_v)$ (since we can use such a canonical ordering of the edges of the corolla $\ytree_v$
to fix a bijection between the indexing set of this set of edges $\rset_v$
and an ordinal $\{1<\dots<r_v\}$).
The idea, explained in~\cite{FressePartitions}, is to order the ingoing edges according to the minimum
of the index of the leaves of the subtree
that lie over each edge.

This observation is used to define the notion of a shuffle cooperad. Indeed, a shuffle cooperad explicitly consists of a collection of contravariant functors
on the categories of trees $\COp: \ttree\mapsto\COp(\ttree)$ with the same operations and structure properties
as the classical symmetric cooperads, but where we forget about the actions
of permutations. If we express the definition in terms of $2$-fold coproducts, then we get that a shuffle cooperad consists of a collection $\COp(r)$, $r>0$,
equipped with coproducts $\circ_{i_p}^*: \COp(\{1<\dots<r\})\rightarrow\COp(\{i_1<\dots<i_p<\dots,i_k\})\otimes\COp(\{j_1<\dots<j_l\})$
associated to partitions $\{i_1<\dots<\widehat{i_p}<\dots<i_k\}\amalg\{j_1<\dots<j_l\} = \{1<\dots<r\}$
such that $i_{p-1}<j_1<i_{p+1}$ (see~\cite{DotsenkoKhoroshkin}).
These partitions are equivalent to the pointed shuffles of~\cite{Hoffbeck}.

Note that we can extend the ordering of the ingoing edges of each vertex in a tree to an ordering of the vertices themselves.
We use this observation in our definition of the forgetful functor from the category of Segal cooperads in $E_\infty$-algebras
to the category of Segal cooperads in dg modules.

\subsection{Counits, connected cooperads and local conilpotence}\label{subsection:conilpotence}
In the standard definition of a cooperad, we assume that the coproducts $\circ_{i_p}^*$ satisfy natural counit relations
with respect to a counit morphism which we associate to our objects,
but we forget about this counit morphism and about the counit conditions in the definition
of the previous paragraph.
The cooperads that we consider are actually equivalent to the coaugmentation coideal of coaugmented cooperads.
If we start with the standard definition of a cooperad (where we have a counit), then we should take components of the coaugmentation coideal of our cooperad $\COp$
in the definition of the treewise tensor products $\COp(\ttree) = \bigotimes_{v\in V(\ttree)}\COp(\ytree_v)$
and of the treewise coproducts $\rho_{\ttree\rightarrow\stree}: \COp(\stree)\rightarrow\COp(\ttree)$.

In the definition of cooperads, one often has to assume the validity of a local conilpotence condition.
In the treewise formalism, this local conilpotence condition asserts that for every element $x\in\COp(\stree)$
in the component of a cooperad $\COp$ associated to a tree $\stree\in\Tree(r)$, we can pick a non-negative integer $N_x\in\NN$
such that $\sharp V(\ttree)\geq N_x\Rightarrow\rho_{\ttree\rightarrow\stree}(x) = 0$,
for every tree $\ttree\in\Tree(r)$.
This condition ensures that the map $\rho: \COp(\stree)\rightarrow\prod_{\ttree\rightarrow\stree}\COp(\ttree)$
induced by the collection of all coproducts $\rho_{\ttree\rightarrow\stree}: \COp(\stree)\rightarrow\COp(\ttree)$
factors through the sum $\bigoplus_{\ttree\rightarrow\stree}\COp(\ttree)$.

In what follows, we may actually need a stronger connectedness condition, which we define by requiring that the components of our object $\COp(\ttree)$
vanish when the tree $\ttree$ contains at least one vertex with a single ingoing edge. For an ordinary cooperad, this requirement is equivalent to the relation $\COp(1) = 0$.
In general, this connectedness condition implies that our object $\COp$ reduces to a structure given by a collection of functors $\ttree\mapsto\COp(\ttree)$
on the subcategories $\widetilde{\Tree}(r)\subset\Tree(r)$ formed by trees $\ttree$
where all the vertices have at least two ingoing edges (in~\cite[\S A.1.12]{FresseBook}
the terminology `reduced tree' is used for this subcategory of trees). The conilpotence of the cooperad $\COp$
then follows from the observation that, for any given tree $\stree\in\widetilde{\Tree}(r)$,
we have finitely many morphisms such that $\ttree\rightarrow\stree$
in $\widetilde{\Tree}(r)$.
We say that a cooperad is connected when it satisfies this connectedness requirement $\COp(1) = 0$, or equivalently, when $\ttree\not\in\widetilde{\Tree}(r)\Rightarrow\COp(\ttree) = 0$.

In what follows, we will similarly say that a Segal cooperad $\COp$ is connected when it satisfies the same treewise condition $\ttree\not\in\widetilde{\Tree}(r)\Rightarrow\COp(\ttree) = 0$.
We mainly use the local conilpotence and the connectedness condition in our study of the $W$-construction of (Segal) shuffle dg cooperads
and in our definition of the cobar construction for homotopy Segal shuffle dg cooperads.

\section{The category of strict Segal $E_\infty$-Hopf cooperads}\label{sec:strict-segal-cooperads}

We study the category of Segal $E_\infty$-Hopf cooperads in this section.
We devote our first subsection to the definition of this category.
We then study strict Segal dg cooperads, which are structures, defined within the category of dg modules, which we obtain by forgetting the $E_\infty$-algebra structures
attached to the definition of a Segal $E_\infty$-Hopf cooperad.
We also explain the definition of an equivalence between our Segal dg cooperads and ordinary dg cooperads.
We devote the second subsection of this section to these topics.
We study the cobar complex of Segal dg cooperads afterwards, in a third subsection.
We eventually explain a correspondence between operads in simplicial sets and Segal $E_\infty$-Hopf cooperads.
We prove that we can retrieve a completion of operads in simplicial sets
from a corresponding Segal $E_\infty$-Hopf cooperad.
We devote the fourth subsection of the section to this subject.

Recall that we use the notation $\EOp$ for the chain Barratt--Eccles operad and that $\EAlg$ denotes the category of algebras in dg modules associated to this operad.

\subsection{The definition of strict Segal $E_\infty$-Hopf cooperads}\label{subsec:strict-segal-cooperads}

We begin our study by defining the objects of our category of Segal $E_\infty$-Hopf cooperads.
We actually define beforehand a notion of Segal $E_\infty$-Hopf pre-cooperad, which consists of objects equipped with all the operations
that underlie the structure of a Segal $E_\infty$-Hopf cooperads (coproducts and facet operators),
and then we just define a Segal $E_\infty$-Hopf cooperad as a Segal $E_\infty$-Hopf pre-cooperad whose facet operators
satisfy an extra homotopy equivalence condition (the Segal condition).
We make these definitions explicit in the first paragraph of this subsection.
We explain the definition of morphisms of Segal $E_\infty$-Hopf cooperads afterwards in order to complete the objectives of this subsection.
We are guided by the combinatorial definition of cooperads in terms of trees, which we briefly recalled in the overview of~\S\ref{subsection:shuffle-cooperads}.
%and \cite[Appendices A and B]{FresseBook} or \cite[Section 5.6]{LodayValletteBook}, for instance).

\begin{defn}\label{definition:strict-E-infinity-cooperad}
We call (strict) Segal $E_\infty$-Hopf shuffle pre-cooperad the structure defined by a collection of $\EOp$-algebras
\begin{equation*}
\AOp(\ttree)\in\EAlg,\quad\text{$\ttree\in\Tree(r)$, $r>0$},
\end{equation*}
equipped with
\begin{itemize}
\item
coproduct operators
\begin{equation*}
\rho_{f: \ttree\rightarrow\stree}: \AOp(\stree)\rightarrow\AOp(\ttree),
\end{equation*}
defined as morphisms of $\EOp$-algebras, for all tree morphisms $f: \ttree\rightarrow\stree$,
and which satisfy the following standard functoriality constraints $\rho_{\stree\xrightarrow{=}\stree} = \id_{\AOp(\stree)}$
and $\rho_{\ttree\rightarrow\utree}\circ\rho_{\utree\rightarrow\stree} = \rho_{\ttree\rightarrow\stree}$,
for all pairs of composable tree morphisms $\ttree\rightarrow\utree\rightarrow\stree$,
\item
together with facet operators
\begin{equation*}
i_{\sigmatree,\stree}: \AOp(\sigmatree)\rightarrow\AOp(\stree),
\end{equation*}
also defined as morphisms of $\EOp$-algebras, for all subtree embeddings $\sigmatree\subset\stree$,
and which satisfy the following functoriality relations $i_{\stree,\stree} = \id_{\stree}$ and $i_{\thetatree,\stree}\circ i_{\sigmatree,\thetatree} = i_{\sigmatree,\stree}$,
for all $\sigmatree\subset\thetatree\subset\stree$.
\item
We also assume the verification of a compatibility relation between the facet operators and the coproduct operators.
We express this compatibility relation by the commutativity of the following diagram:
\begin{equation*}
\xymatrixcolsep{5pc}\xymatrix{ A(\stree)\ar[r]^-{\rho_f} & \AOp(\ttree) \\
A(\sigmatree)\ar[u]^{i_{\sigmatree,\stree}}\ar@{.>}[r]^-{\rho_{f|_{f^{-1}\sigmatree}}} &
\AOp(f^{-1}\sigmatree), \ar@{.>}[u]_{i_{f^{-1}\sigmatree,\ttree}} }
\end{equation*}
for all $f: \ttree\rightarrow\stree$ and $\sigmatree\subset\stree$, where $f^{-1}(\sigmatree)\subset\ttree$
denotes the subtree such that $V(f^{-1}\sigmatree) = f^{-1}V(\stree)$
and we consider the obvious restricted morphism $f|_{f^{-1}\sigmatree}: f^{-1}\sigmatree\rightarrow\sigmatree$.
\end{itemize}
We say that a Segal $E_\infty$-Hopf shuffle pre-cooperad $\AOp$ is a Segal $E_\infty$-Hopf shuffle cooperad when it satisfies the following extra condition (the Segal condition):
\begin{enumerate}
\item[(*)]
The facet operators $i_{\sigmatree_v,\ttree}: \AOp(\sigmatree_v)\rightarrow\AOp(\ttree)$
associated to a tree decomposition $\ttree = \lambda_{\stree}(\sigmatree_v, v\in V(\stree))$
induce a weak-equivalence
\begin{equation*}
i_{\lambda_{\stree}(\sigmatree_*)}: \bigvee_{v\in V(\stree)}\AOp(\sigmatree_v)\xrightarrow{\sim}\AOp(\ttree)
\end{equation*}
when we pass to the coproduct of the objects $\AOp(\sigmatree_v)$
in the category of $\EOp$-algebras. We refer to this weak-equivalence $i_{\lambda_{\stree}(\sigmatree_*)}$ as the Segal map
associated to the decomposition $\ttree = \lambda_{\stree}(\sigmatree_v, v\in V(\stree))$.
\end{enumerate}
We finally define a Segal $E_\infty$-Hopf symmetric (pre-)cooperad as a Segal $E_\infty$-Hopf shuffle (pre-)cooperad $\AOp$
equipped with an action of the permutations such that $s^*: \AOp(s\ttree)\rightarrow\AOp(\ttree)$, for $s\in\Sigma_r$ and $\ttree\in\Tree(r)$,
and which intertwine the facets and the coproduct operators attached our object.
\end{defn}

We have the following statement, which enables us to reduce the verification of the Segal condition to particular tree decompositions.

\begin{prop}\label{proposition:Segal-condition}
For any Segal $E_\infty$-Hopf shuffle pre-cooperad $\AOp$, we have an equivalence between the following statements:
\begin{enumerate}
\item The Segal condition holds for all tree decompositions $\ttree = \lambda_{\stree}(\sigmatree_v,v\in V(\stree))$.
\item The Segal condition holds for all tree decompositions of the form $\ttree = \lambda_{\gammatree}(\sigmatree_u,\sigmatree_v) = \sigmatree_u\circ_i\sigmatree_v$,
where we take an operadic composition along a tree with two vertices $\gammatree$ equivalent to the performance of an operadic composition product $\sigmatree_u\circ_i\sigmatree_v$
of a pair of trees $\sigmatree_u,\sigmatree_v\subset\ttree$ (we abusively omit the action of the shuffle permutation
that we associate to general composition operations of this form, see~\S\ref{subsection:trees}).
\item The Segal condition holds for all decompositions of trees into corollas $\ttree = \lambda_{\ttree}(\ytree_v,v\in V(\ttree))$.\qed
\end{enumerate}
\end{prop}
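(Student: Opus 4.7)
The plan is to dispatch (1)$\Rightarrow$(2) and (1)$\Rightarrow$(3) immediately, since both concern special cases of general tree decompositions, and then to prove the reverse implications by combining an induction on the number of inner edges of $\ttree$ with functoriality and the two-out-of-three property for weak-equivalences. The non-trivial input that glues everything together is the fact that the coproduct of weak-equivalences is a weak-equivalence in $\EAlg$, which I will address at the end.

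For the implication (2)$\Rightarrow$(3), I would induct on the number of inner edges of $\ttree$. When $\ttree = \ytree$ is a corolla, the Segal map is the identity and there is nothing to prove. Otherwise, fix an inner edge $e$ and contract it to obtain an operadic decomposition $\ttree = \sigmatree_u\circ_i\sigmatree_v$ in which both factors have strictly fewer inner edges than $\ttree$. The inductive hypothesis provides Segal weak-equivalences $\bigvee_{w\in V(\sigmatree_u)}\AOp(\ytree_w)\xrightarrow{\sim}\AOp(\sigmatree_u)$ and similarly for $\sigmatree_v$, while hypothesis (2) supplies a weak-equivalence $\AOp(\sigmatree_u)\vee\AOp(\sigmatree_v)\xrightarrow{\sim}\AOp(\ttree)$. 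The associativity of the facet operators yields a commutative triangle whose top edge is the coproduct of these two Segal maps for $\sigmatree_u$ and $\sigmatree_v$, whose right edge is the weak-equivalence from (2), and whose bottom edge is precisely the corolla Segal map of $\ttree$. Two-out-of-three then gives (3).

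For the implication (3)$\Rightarrow$(1), take an arbitrary decomposition $\ttree = \lambda_{\stree}(\sigmatree_v, v\in V(\stree))$ and observe that the corolla decompositions of the individual subtrees $\sigmatree_v$ refine the corolla decomposition of $\ttree$ itself. Using the compatibility of the facet operators with composition, this produces a commutative triangle
\begin{equation*}
\xymatrix{ \bigvee_{v\in V(\stree)}\bigvee_{w\in V(\sigmatree_v)}\AOp(\ytree_w)\ar[r]\ar[rd] & \bigvee_{v\in V(\stree)}\AOp(\sigmatree_v)\ar[d]^{i_{\lambda_{\stree}(\sigmatree_*)}} \\ & \AOp(\ttree) }
\end{equation*}
in which the top horizontal arrow is the coproduct of the Segal maps attached to each $\sigmatree_v$ (each a weak-equivalence by (3)) and the diagonal is the corolla Segal map of $\ttree$ (also a weak-equivalence by (3)). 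Applying two-out-of-three to the vertical arrow yields assertion (1).

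The main technical point, invoked twice above, is that the coproduct of weak-equivalences between $\EOp$-algebras is a weak-equivalence. I would deduce this from the natural weak-equivalence $\EM: A\otimes B\xrightarrow{\sim}A\vee B$ recalled in the introduction and established in~\S\ref{sec:Barratt-Eccles-operad}: the claim reduces to the invariance of tensor products of dg modules under weak-equivalences, which applies in our setting since the objects $\AOp(\sigmatree_v)$ and $\AOp(\ytree_w)$ of interest are $\kk$-flat (and in particular the Barratt--Eccles operad is defined over a free $\kk$-module so no further cofibrancy hypothesis is needed on the factors). This is the only step where something beyond pure combinatorics is used.
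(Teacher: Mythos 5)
Your argument is correct and, since the paper leaves this proposition without proof (it is stated with a \qed), it supplies essentially the argument the authors intend the reader to reconstruct: the implications $(1)\Rightarrow(2),(3)$ are trivial, $(2)\Rightarrow(3)$ follows by induction on the size of the tree using a single cut along an inner edge, and $(3)\Rightarrow(1)$ follows from the two-out-of-three property applied to the factorization of the corolla Segal map of $\ttree$ through $\bigvee_{v\in V(\stree)}\AOp(\sigmatree_v)$, which is provided by the associativity of the facet operators. One small wording slip: in the step $(2)\Rightarrow(3)$ you say you ``contract'' the inner edge $e$ to obtain $\ttree=\sigmatree_u\circ_i\sigmatree_v$; what you mean is that you cut $\ttree$ along $e$ (equivalently, contract all inner edges \emph{except} $e$ to produce the morphism $\ttree\rightarrow\gammatree$ onto the two-vertex tree). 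Your identification of the one non-combinatorial ingredient --- that a coproduct of weak-equivalences of $\EOp$-algebras is a weak-equivalence, deduced from the natural equivalence $\EM\colon A\otimes B\xrightarrow{\sim}A\vee B$ of Proposition~\ref{claim:Barratt-Eccles-algebra-coproducts} --- is exactly right, and your caveat about $\kk$-flatness is a genuine subtlety rather than a defect of your proof: the reduction to tensor products does require some flatness (or dg-cofibrancy) of the objects $\AOp(\sigmatree_v)$ over an arbitrary ground ring, a hypothesis the bare statement of the proposition does not impose but which the paper itself adds explicitly in the analogous situation of its cofibrant-replacement lemma. Over a field, or under the flatness hypothesis you state, the argument is complete.
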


We now define morphisms of strict Segal $E_\infty$-Hopf cooperads.

\begin{defn}\label{definition:E-infinity-cooperad-morphism}
A morphism of Segal $E_\infty$-Hopf shuffle (pre-)cooperads $\phi: \AOp\rightarrow\BOp$ is a collection of $\EOp$-algebra morphisms $\phi_{\ttree}: \AOp(\ttree)\rightarrow\BOp(\ttree)$, $\ttree\in\Tree(r)$, $r>0$,
which preserve the action of the facets and coproduct operators on our objects
in the sense that:
\begin{enumerate}
\item
the diagram
\begin{equation*}
\xymatrix{ \AOp(\stree)\ar[r]^{\phi_{\stree}}\ar[d]_{\rho_{\ttree\rightarrow\stree}} & \BOp(\stree)\ar[d]^{\rho_{\ttree\rightarrow\stree}} \\
\AOp(\ttree)\ar[r]^{\phi_{\ttree}} & \BOp(\ttree) }
\end{equation*}
commutes for all tree morphisms $\ttree\rightarrow\stree$,
\item
the diagram
\begin{equation*}
\xymatrix{ \AOp(\stree)\ar[r]^{\phi_{\stree}} & \BOp(\stree) \\
\AOp(\sigmatree)\ar[r]^{\phi_{\sigmatree}}\ar[u]^{i_{\sigmatree,\stree}} &
\BOp(\sigmatree)\ar[u]_{i_{\sigmatree,\stree}} }
\end{equation*}
commutes for all subtree embeddings $\sigmatree\subset\stree$.
\end{enumerate}
If $\AOp$ and $\BOp$ are Segal $E_\infty$-Hopf symmetric (pre-)cooperads, then $\phi: \AOp\rightarrow\BOp$ is a morphism of Segal $E_\infty$-Hopf symmetric (pre-)cooperads
when $\phi$ preserves the action of permutations on our objects in the sense that
\begin{enumerate}\setcounter{enumi}{2}
\item
the diagram
\begin{equation*}
\xymatrix{ \AOp(s\ttree)\ar[r]^{\phi_{s\ttree}}\ar[d]_{s^*} & \BOp(s\ttree)\ar[d]^{s^*} \\
\AOp(\ttree)\ar[r]^{\phi_{\ttree}} & \BOp(\ttree) }
\end{equation*}
commutes, for all $s\in\Sigma_r$ and $\ttree\in\Tree(r)$.
\end{enumerate}
\end{defn}

The morphisms of Segal $E_\infty$-Hopf shuffle (pre-)cooperads can obviously be composed,
as well as the morphisms of Segal $E_\infty$-Hopf symmetric (pre-)cooperads,
so that we can form a category of Segal $E_\infty$-Hopf shuffle (pre-)cooperads and a category of Segal $E_\infty$-Hopf symmetric (pre-)cooperads.
In what follows, we adopt the notation $\EOp\Hopf\sh\SegOp^c$ for the category of Segal $E_\infty$-Hopf shuffle cooperads
and the notation $\EOp\Hopf\Sigma\SegOp^c$ for the category of Segal $E_\infty$-Hopf symmetric cooperads.

\subsection{The forgetting of $E_\infty$-structures}\label{subsection:forgetful-strict}

To any Segal $E_\infty$-Hopf cooperad $\AOp$, we can associate a Segal cooperad in dg modules by forgetting the $E_\infty$-algebra structure attached to each object $\AOp(\ttree)$.
We examine this construction in this subsection.
We need to assume that the vertices of our trees are totally ordered in order to make the construction
of the forgetful functor from Segal $E_\infty$-Hopf cooperads to Segal dg cooperads
work.
For this reason, we restrict ourselves to Segal shuffle cooperads all along this subsection, though our definition of Segal dg cooperad
makes sense in the symmetric setting.

\begin{defn}\label{definition:tree-shaped-cooperad}
We call Segal shuffle dg pre-cooperad the structure defined by a collection of dg modules
\begin{equation*}
\AOp(\ttree)\in\dg\Mod,\quad\text{$\ttree\in\Tree(r)$, $r>0$},
\end{equation*}
equipped with
\begin{itemize}
\item
coproduct operators
\begin{equation*}
\rho_{f: \ttree\rightarrow\stree}: \AOp(\stree)\rightarrow\AOp(\ttree),
\end{equation*}
defined as morphisms of dg modules, for all tree morphisms $f: \ttree\rightarrow\stree$, and which satisfy the same standard functoriality constraints
as in the case of Segal $E_\infty$-Hopf shuffle cooperads,
\item
together with Segal maps
\begin{equation*}
i_{\lambda_{\stree}(\sigmatree_*)}: \bigotimes_{v\in V(\stree)}\AOp(\sigmatree_v)\rightarrow\AOp(\ttree),
\end{equation*}
defined as morphisms of dg modules, for all tree decompositions $\ttree = \lambda_{\stree}(\sigmatree_v,v\in V(\stree))$,
and such that
%\begin{itemize}
%\item
for the trivial decomposition $\ttree = \lambda_{\ytree}(\ttree)$, we have $i_{\lambda_{\ytree}(\ttree)} = \id_{\AOp(\ttree)}$,
%\item
while for nested decompositions $\ttree = \lambda_{\utree}(\thetatree_u,u\in V(\utree))$ and $\thetatree_u = \lambda_{\stree_u}(\sigmatree_v,v\in V(\stree_u))$, $u\in V(\utree)$,
we have
\begin{equation*}
i_{\lambda_{\utree}(\thetatree_*)}\circ(\bigotimes_{u\in V(\utree)}i_{\lambda_{\stree_u}(\sigmatree_*)}) = i_{\lambda_{\stree}(\sigmatree_*)},
\end{equation*}
where we consider the composite decomposition $\ttree = \lambda_{\stree}(\sigmatree_v,v\in V(\stree))$
with $\stree = \lambda_{\utree}(\stree_u,u\in V(\utree))$.
%\end{itemize}
\item
We still assume the verification of a compatibility relation between the Segal maps and the coproduct operators.
We express this dg module version of the compatibility relation by the commutativity of the following diagram:
\begin{equation*}
\xymatrixcolsep{7pc}\xymatrix{ \AOp(\stree)\ar[r]^-{\rho_f} & \AOp(\ttree) \\
\bigotimes_{u\in V(\utree)}\AOp(\sigmatree_u)\ar[r]_-{\bigotimes_{u\in V(\utree)}\rho_{f|_{f^{-1}\sigmatree_u}}}
\ar[u]^{i_{\lambda_{\utree}(\sigmatree_*)}} &
\bigotimes_{u\in V(\utree)}\AOp(f^{-1}\sigmatree_u)
\ar[u]_{i_{\lambda_{\utree}(f^{-1}\sigmatree_*)}} }
\end{equation*}
for all tree morphisms $f: \ttree\rightarrow\stree$ and decompositions $\stree = \lambda_{\utree}(\sigmatree_v,v\in V(\utree))$,
where we consider the pre-image $f^{-1}\sigmatree_v\subset\ttree$
of the subtrees $\sigmatree_v\subset\stree$.
\end{itemize}
We then say that a Segal shuffle dg pre-cooperad $\AOp$ is a Segal shuffle dg cooperad when the following Segal condition holds:
\begin{enumerate}
\item[(*)]
The Segal map $i_{\lambda_{\stree}(\sigmatree_*)}$ is a weak-equivalence
\begin{equation*}
i_{\lambda_{\stree}(\sigmatree_*)}: \bigotimes_{v\in V(\stree)}\AOp(\sigmatree_v)\xrightarrow{\sim}\AOp(\ttree),
\end{equation*}
for every decomposition $\ttree = \lambda_{\stree}(\sigmatree_v, v\in V(\stree))$.
\end{enumerate}
We still define a morphism of Segal shuffle dg (pre-)cooperads $\phi: \AOp\rightarrow\BOp$
as a collection of dg module morphisms $\phi_{\ttree}: \AOp(\ttree)\rightarrow\BOp(\ttree) $
that preserve the coproduct operators $\rho_{\ttree\rightarrow\stree}$
and the Segal maps $i_{\lambda_{\stree}(\sigmatree_*)}$
in the obvious sense.
We use the notation $\dg\sh\SegOp^c$ for the category of Segal shuffle dg cooperads, which we equip with this notion of morphism.
\end{defn}

The forgetful functor from the category of strict Segal $E_\infty$-Hopf shuffle cooperads to the category of Segal shuffle dg cooperads
essentially ignores the $E_\infty$-structures
attached to our objects.
However, Definition~\ref{definition:strict-E-infinity-cooperad} uses the coproduct of $\EOp$-algebras $\vee$,
while Definition~\ref{definition:tree-shaped-cooperad} uses the tensor product $\otimes$.
To pass from one to the other, we need to use the natural transformation $\EM$ described in Construction~\ref{constr:Barratt-Eccles-diagonal}.

\begin{prop}\label{proposition:forgetful-strict}
Let $\AOp$ be a strict Segal $E_\infty$-Hopf shuffle cooperad, with coproduct operators $\rho_{\ttree\rightarrow\stree}: \AOp(\stree)\rightarrow\AOp(\ttree)$
and facet operators $i_{\sigmatree,\stree}: \AOp(\sigmatree)\rightarrow\AOp(\stree)$.
The collection $\AOp(\ttree)$, $\ttree\in\Tree(r)$, equipped with the coproduct operators inherited from $\AOp$ and the Segal maps
given by the composites
\begin{equation*}
\bigotimes_{v\in V(\stree)}\AOp(\sigmatree_v)\xrightarrow{\EM}\bigvee_{v\in V(\stree)}\AOp(\sigmatree_v)\xrightarrow{i_{\lambda_{\stree}(\sigmatree_*)}}\AOp(\ttree),
\end{equation*}
for all tree decompositions $\ttree = \lambda_{\stree}(\sigmatree_v,v\in V(\stree))$, is a Segal shuffle dg cooperad.
%Thus, we have the category of strict $ E_\infty $-Hopf cooperads to the category of tree-shaped cooperads.
\end{prop}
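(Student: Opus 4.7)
The plan is to verify in sequence the four pieces of structure and the three axioms that a Segal shuffle dg cooperad must satisfy, treating the coproduct operators as inherited and concentrating on the Segal maps built from $\EM$.

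First, the coproduct operators $\rho_{\ttree\to\stree}$ transfer verbatim from $\AOp$ viewed as an $E_\infty$-Hopf object to $\AOp$ viewed as a collection of dg modules, since every morphism of $\EOp$-algebras is a dg module morphism. The functoriality constraints $\rho_{\stree\xrightarrow{=}\stree}=\id$ and $\rho_{\ttree\to\utree}\circ\rho_{\utree\to\stree}=\rho_{\ttree\to\stree}$ therefore hold for free.

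Next, I verify the two coherence axioms for the Segal maps. For the trivial decomposition $\ttree=\lambda_{\ytree}(\ttree)$, the coproduct $\bigvee$ reduces to a single factor, both $\EM$ and $i_{\lambda_{\ytree}(\ttree)}$ become the identity, and we get $i_{\lambda_{\ytree}(\ttree)}\circ\EM=\id_{\AOp(\ttree)}$. For the nested decomposition axiom, the key point is to observe that $\EM$ is coassociative in the multi-fold sense needed here, as long as we order the factors using the canonical total ordering of vertices $V(\stree)$ that was discussed in \S\ref{subsection:shuffle-cooperads} (this is precisely why the construction is performed for shuffle cooperads rather than symmetric ones). Under this ordering, the iterated $\EM$ on $\bigotimes_{u\in V(\utree)}\bigotimes_{v\in V(\stree_u)}\AOp(\sigmatree_v)$ agrees with the direct $\EM$ on $\bigotimes_{v\in V(\stree)}\AOp(\sigmatree_v)$. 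Combined with the functoriality $i_{\thetatree,\stree}\circ i_{\sigmatree,\thetatree}=i_{\sigmatree,\stree}$ of facet operators applied inside each coproduct factor, and the naturality of $\EM$ with respect to the coproduct of facet morphisms $\bigvee_u i_{\lambda_{\stree_u}(\sigmatree_*)}$, the required identity $i_{\lambda_{\utree}(\thetatree_*)}\circ\bigotimes_u i_{\lambda_{\stree_u}(\sigmatree_*)}=i_{\lambda_{\stree}(\sigmatree_*)}$ follows.

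Third, I check compatibility of Segal maps with coproduct operators. Starting from the $E_\infty$-Hopf compatibility diagram of Definition~\ref{definition:strict-E-infinity-cooperad} (which, iterated over each vertex $u\in V(\utree)$, gives the analogous compatibility for facet embeddings into decomposed trees), I precompose with $\EM$ and use naturality of $\EM$ with respect to the coproduct of the morphisms $\rho_{f|_{f^{-1}\sigmatree_u}}$: this is precisely the statement that the diagram
\[
\xymatrix{
\bigotimes_u\AOp(\sigmatree_u)\ar[r]^-{\EM}\ar[d]_{\bigotimes_u\rho_{f|_{f^{-1}\sigmatree_u}}}
& \bigvee_u\AOp(\sigmatree_u)\ar[d]^{\bigvee_u\rho_{f|_{f^{-1}\sigmatree_u}}}\\
\bigotimes_u\AOp(f^{-1}\sigmatree_u)\ar[r]^-{\EM}
& \bigvee_u\AOp(f^{-1}\sigmatree_u)
}
\]
commutes, which is part of the properties of $\EM$ recalled in the appendix~(\S\ref{sec:Barratt-Eccles-operad}). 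Pasting this square with the iterated $E_\infty$-Hopf compatibility square yields the desired dg compatibility diagram.

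Finally, the Segal weak-equivalence property is immediate: the map $\EM\colon\bigotimes_v\AOp(\sigmatree_v)\xrightarrow{\sim}\bigvee_v\AOp(\sigmatree_v)$ is a weak-equivalence by Construction~\ref{constr:Barratt-Eccles-diagonal} applied iteratively, and the map $i_{\lambda_{\stree}(\sigmatree_*)}\colon\bigvee_v\AOp(\sigmatree_v)\xrightarrow{\sim}\AOp(\ttree)$ is a weak-equivalence by hypothesis on $\AOp$ as a strict Segal $E_\infty$-Hopf shuffle cooperad. The hardest step will be the nested coherence axiom, because it forces us to pin down the precise way iterated applications of the non-symmetric $\EM$ assemble according to the canonical vertex ordering; every other verification reduces either to naturality of $\EM$ or to the corresponding identity already holding in the $E_\infty$-Hopf setting.
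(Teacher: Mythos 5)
Your proposal is correct and takes essentially the same route as the paper's proof: associativity of the facet operators gives associativity of the $\EOp$-algebra Segal maps on coproducts, the associativity and naturality of $\EM$ transfer this (and the compatibility with coproduct operators) to the tensor-product formulation, and the Segal condition follows because $\EM$ is a weak-equivalence. The only quibble is a citation: the weak-equivalence property of $\EM$ is the content of Proposition~\ref{claim:Barratt-Eccles-algebra-coproducts}, not of Construction~\ref{constr:Barratt-Eccles-diagonal}, which merely defines the map.
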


\begin{proof}
We easily deduce from the associativity of the facet operators in the definition of a Segal $E_\infty$-Hopf shuffle cooperad~(\S\ref{definition:strict-E-infinity-cooperad})
that the Segal maps of $\EOp$-algebras $i_{\lambda_{\stree}(\sigmatree_*)}: \bigvee_{v\in V(\stree)}\AOp(\sigmatree_v)\rightarrow\AOp(\ttree)$
satisfy natural associativity relations, which parallel the associativity relations of the Segal maps
of Segal shuffle cooperads in dg module. (By the universal properties of coproducts, we are left to verifying such a relation on a single summand of our coproducts.)
We use the associativity of the transformation $\EM$ to pass from this associativity relation on coproducts
to the associativity relation on tensor products
which is required in the definition of Segal shuffle dg cooperad.
We eventually deduce from the result of Proposition~\ref{claim:Barratt-Eccles-algebra-coproducts}
that the dg cooperad version of the Segal condition for $\AOp$
is equivalent to the Segal condition of strict Segal $E_\infty$-Hopf cooperads.
\end{proof}

\begin{remark}\label{remark:order-trees}
Note that the definition of the Segal map in the construction of this proposition
requires to pick an order on the vertices of the tree $\stree$
since the transformation $\EM$ is not commutative.
\end{remark}

We immediately see that an ordinary shuffle dg cooperad, in the sense of the definition of~\S\ref{subsection:shuffle-cooperads}, is equivalent to a Segal shuffle dg cooperad
where the Segal maps define isomorphisms $i_{\lambda_{\stree}(\sigmatree_*)}: \bigotimes_{v\in V(\stree)}\AOp(\sigmatree_v)\xrightarrow{\simeq}\AOp(\ttree)$.
We aim to prove that every Segal shuffle dg cooperad is weakly-equivalent to such an ordinary shuffle dg cooperad.
We then need to assume that our Segal shuffle dg cooperad satisfies the connectedness condition of~\S\ref{subsection:conilpotence}.
Namely, we have to assume that $\AOp(\ttree) = 0$ when the tree $\ttree$ is not reduced (contains at least one vertex with a single ingoing edge).
Recall that we say that $\AOp$ is connected when it satisfies this condition.

We use a version of the Boardman--Vogt $W$-construction in order to establish the existence of our equivalences.
The classical Boardman--Vogt construction (see~\cite{BergerMoerdijkResolution,BoardmanVogt})
is defined for ordinary operads (actually for algebraic theories in the original reference~\cite{BoardmanVogt}).
We therefore have to dualize the classical definition in order to deal with cooperads (rather than with operads)
and we have to extend the construction to the context of Segal dg cooperads.
We explain the definition of this $W$-construction of Segal dg cooperads with full details in the next paragraphs.
In a first step, we explain the definition of a covariant functor of cubical cochains on the category of trees.
We will pair this functor with the contravariant functor underlying a Segal shuffle dg cooperad to define our object.

In fact, we do not need the full connectedness condition for the definition of the Boardman--Vogt $W$-construction of a Segal shuffle dg cooperad $\AOp$,
because the definition makes sense as soon as the coproduct operators of our object
fulfill the local conilpotence property of~\S\ref{subsection:conilpotence} (which is implied
by the connectedness condition, but can be satisfied in a broader context). We will explain the definition of the Boardman--Vogt $W$-construction in this setting.

\begin{constr}
Fix $\stree\in\Tree(r)$.
For a tree morphism $\ttree\rightarrow\stree$, equivalent to a treewise decomposition $\ttree = \lambda_{\stree}(\sigmatree_v,v\in V(\stree))$,
we set:
\begin{equation*}
\square^*(\ttree/\stree) = \bigotimes_{e\in\mathring{E}(\sigmatree_v),v\in V(\stree)}\underbrace{\DGN^*(\Delta^1)}_{=: I_e},
\end{equation*}
where we associate a factor $I_e = \DGN^*(\Delta^1)$ to every inner edge of a subtree $\sigmatree_v\subset\ttree$ (recall that $\mathring{E}(\thetatree)$
denotes the set of inner edges, which we associate to any tree $\thetatree$
in our category).

This collection of dg modules $\square^*(\ttree/\stree)$ defines a covariant functor on the over category of tree morphisms $\ttree\rightarrow\stree$, where $\ttree\in\Tree(r)$.
Recall that the cochain complex $\DGN^*(\Delta^1)$ is given by $\DGN^*(\Delta^1) = \kk\underline{0}^{\sharp}\oplus\kk\underline{1}^{\sharp}\kk\underline{01}^{\sharp}$,
with the differential such that $\delta(\underline{0}^{\sharp}) = -\underline{01}^{\sharp}$
and $\delta(\underline{1}^{\sharp}) = \underline{01}^{\sharp}$ (see~\S\ref{constr:cubical-cochain-connection}).
We use that a composite of tree morphisms $\ttree\rightarrow\utree\rightarrow\stree$
is equivalent to a double decomposition $\utree = \lambda_{v\in V(\stree)}(\stree_v,v\in V(\stree))$
and $\ttree = \lambda_{u\in V(\utree)}(\sigmatree_u,u\in V(\utree))$,
which yields $\ttree = \lambda_{v\in V(\stree)}(\thetatree_v,v\in V(\stree))$ with $\thetatree_v = \lambda_{u\in V(\stree_v)}(\sigmatree_u,u\in V(\stree_v))$.
We define
\begin{equation*}
\partial_{\ttree\rightarrow\utree/\stree}: \square^*(\ttree/\stree)\rightarrow\square^*(\utree/\stree)
\end{equation*}
as the morphism of dg modules induced by the identity mapping on the factors $I_e$ associated to the edges such that $e\not\in\mathring{E}(\sigmatree_u)$, for all $u\in V(\utree)$,
and by the map $d_0: I_e\rightarrow\kk$ such that $d_0(\underline{1}^{\sharp}) = 1$ and $d_0(\underline{0}^{\sharp}) = d_0(\underline{01}^{\sharp}) = 0$
on the factors $I_e$
associated to the edges $e$ such that we have $e\in\mathring{E}(\sigmatree_u)$, for some $u\in V(\utree)$ (the edges which collapse when we pass to $\utree$).

The collection $\square^*(\ttree/\stree)$ also defines a contravariant functor on the under category of tree morphisms $\ttree\rightarrow\stree$,
when we fix $\ttree\in\Tree(r)$ instead of $\stree\in\Tree(r)$.
For tree morphisms $\ttree\rightarrow\utree\rightarrow\stree$ as above,
we then consider the map
\begin{equation*}
\rho_{\ttree/\utree\rightarrow\stree}: \square^*(\ttree/\stree)\rightarrow\square^*(\ttree/\utree)
\end{equation*}
induced by the identity mapping on the factors $I_e$
associated to the edges such that $e\in\mathring{E}(\sigmatree_u)$, for some $u\in V(\utree)$,
and by the map $d_1: I_e\rightarrow\kk$ such that $d_1(\underline{0}^{\sharp}) = 1$ and $d_1(\underline{1}^{\sharp}) = d_1(\underline{01}^{\sharp}) = 0$
on the factors $I_e$ associated to the edges $e$
such that $e\not\in\mathring{E}(\sigmatree_u)$, for all $u\in V(\utree)$.

We easily check that the above constructions yield associative covariant and contravariant actions on our collection of dg modules $\square^*(\ttree/\stree)$,
which, in addition, commute to each other. We accordingly get that our mapping $(\ttree\rightarrow\stree)\mapsto\square^*(\ttree/\stree)$
defines a covariant functor on the comma category of tree morphisms $\ttree\rightarrow\stree$.
\end{constr}

\begin{remark}
The application of the face operator $d_0$ for the definition of the covariant functor structure on the collection $\square^*(\ttree/\stree)$
and the application of the face operator $d_1$ for the definition of the contravariant functor structure
is converse to the usual convention for the definition of the $W$-construction.
This choice is motivated by our choices regarding the definition of the cobar construction of homotopy Segal dg cooperads,
which are themselves forced by the definition of connections
in the category of $\EOp$-algebras, and by a seek of coherence between the definition of the $W$-construction
and the definition of the cobar construction of (homotopy) Segal dg cooperads,
which we need in order to be able to compare the $W$-construction
with the cobar construction.
\end{remark}

We now address the definition of the $W$-construction.

\begin{constr}\label{construction:W}
Let $\AOp$ be a Segal shuffle dg pre-cooperad.
We assume that the treewise coproduct operators on $\AOp$ satisfies the local conilpotence condition of~\S\ref{subsection:conilpotence}.
We set:
\begin{equation*}
\DGW^{c}(\AOp)(\stree) = \eq(\xymatrix{\bigoplus_{\ttree\rightarrow\stree}\square^*(\ttree/\stree)\otimes\AOp(\ttree)\ar@<+2pt>[r]^{d^0}\ar@<-2pt>[r]_{d^1}
& \bigoplus_{\ttree\rightarrow\utree\rightarrow\stree}\square^*(\utree/\stree)\otimes\AOp(\ttree)\ar@/_2em/[l]_{s^0} }),
\end{equation*}
where we take the equalizer $\eq$ of the map $d^0$ induced by the covariant action $\square^*(\ttree/\stree)\rightarrow\square^*(\utree/\stree)$ of the tree morphisms $\ttree\rightarrow\utree$
and of the map $d^1$ induced by the contravariant action $\AOp(\utree)\rightarrow\AOp(\ttree)$
on our tensors. (The reflection map $s^0$ is given by the projection onto the summands such that $\ttree = \utree$.)
We may equivalently use the following end-style notation for this equalizer:
\begin{equation*}
\DGW^{c}(\AOp)(\stree) = \int_{\ttree\rightarrow\stree}'\square^*(\ttree/\stree)\otimes\AOp(\ttree),
\end{equation*}
where the notation $\int'$ refers to the consideration of sums (rather than products) in the above equalizer definition of our object.
We note that this additive end is well defined because the local conilpotence condition implies that the contravariant action operations $\AOp(\utree)\rightarrow\AOp(\ttree)$
land in a sum when $\ttree$ varies, while the covariant action operations $\square^*(\ttree/\stree)\rightarrow\square^*(\utree/\stree)$
land in a sum because each tree morphism $\ttree\rightarrow\stree$ has finitely many factorizations $\ttree\rightarrow\utree\rightarrow\stree$.

The objects $\DGW^{c}(\AOp)(\stree)$ inherit natural coproduct operators $\rho^W_{\utree\rightarrow\stree}: \DGW^{c}(\AOp)(\stree)\rightarrow\DGW^{c}(\AOp)(\utree)$
by covariant functoriality of the objects $\square^*(\ttree/\stree)$.
Besides, we have, for each tree decomposition $\stree = \lambda_{\utree}(\sigmatree_u, u\in V(\utree))$,
a natural Segal map
\begin{gather*}
i^W_{\sigmatree_*,\stree}: \bigotimes_{u\in V(\utree)}\DGW^{c}(\AOp)(\sigmatree_v)\rightarrow\DGW^{c}(\AOp)(\stree),
\intertext{induced by the following operators on our additive end}
\bigotimes_{u\in V(\utree)}\square^*(\thetatree_u/\sigmatree_u)\otimes\AOp(\thetatree_u)
\rightarrow\square^*(\lambda_{\utree}(\thetatree_u)/\stree)\otimes\AOp(\lambda_{\utree}(\thetatree_u)),
\end{gather*}
where we fix a collection of tree morphisms $\thetatree_u\rightarrow\sigmatree_u$, $u\in V(\utree)$, which we put together on $\utree$
in order to get a tree morphism $\lambda_{\utree}(\thetatree_u)\rightarrow\lambda_{\utree}(\sigmatree_u) = \stree$,
and we use the obvious isomorphism $\bigotimes_{u\in V(\utree)}\square^*(\thetatree_u/\sigmatree_u)\simeq\square^*(\lambda_{\utree}(\thetatree_u)/\stree)$
together with the Segal map $i_{\lambda_{\utree}(\thetatree_*)}: \bigotimes_{u\in V(\utree)}\AOp(\thetatree_u)\rightarrow\AOp(\ttree)$ on $\AOp$
for the tree $\ttree = \lambda_{\utree}(\thetatree_u)$.
\end{constr}

We check that the above construction provides the object $\DGW^c(\AOp)$ with a well-defined Segal dg cooperad structure later on.
We define, before carrying this verification, a decomposed version of the $W$-construction. The idea is to replace the contravariant functor $\AOp(\ttree)$
in the definition of the $W$-construction by a decomposed version of this functor, which we construct in the next paragraph.

\begin{constr}\label{construction:decomposed-Segal-cooperad-terms}
We again assume $\AOp$ that is a Segal shuffle dg pre-cooperad.
For a tree morphism $\ttree\rightarrow\stree$, equivalent to a treewise decomposition $\ttree = \lambda_{\stree}(\sigmatree_v,v\in V(\stree))$, we set:
\begin{equation*}
\AOp(\ttree/\stree) = \bigotimes_{v\in V(\stree)}\AOp(\sigmatree_v).
\end{equation*}

The collection of these objects inherits the structure of a contravariant functor on the over category of tree morphisms $\ttree\rightarrow\stree$,
where we fix the tree $\stree\in\Tree(r)$
and $\ttree$ varies. We proceed as follows.
Let $f: \utree\rightarrow\ttree$ be a tree morphism, which we compose with the above morphism $\ttree\rightarrow\stree$ to get $\utree\rightarrow\stree$.
In this case, for the decomposition $\utree = \lambda_{\stree}(\thetatree_v,v\in V(\stree))$,
we have $\thetatree_v = f^{-1}\sigmatree_v\subset\utree$, where we consider the pre-image of the subtree $\sigmatree_v\subset\ttree$
under the morphism $f: \utree\rightarrow\ttree$.
Furthermore, we can identify our morphism $f: \utree\rightarrow\ttree$
with the morphism $\lambda_{\stree}(\thetatree_v)\rightarrow\lambda_{\stree}(\sigmatree_v)$
which we obtain by putting together the morphisms $f|_{\thetatree_v}: \thetatree_v\rightarrow\sigmatree_v$
on the tree $\stree$. We just define the decomposed coproduct operator
\begin{equation*}
\rho_{\utree\rightarrow\ttree/\stree}: \AOp(\ttree/\stree)\rightarrow\AOp(\utree/\stree)
\end{equation*}
as the tensor product of the coproduct operators $\rho_{\thetatree_v\rightarrow\sigmatree_v}: \AOp(\sigmatree_v)\rightarrow\AOp(\thetatree_v)$
which we associate to these restrictions $f|_{\thetatree_v}: \thetatree_v\rightarrow\sigmatree_v$.

The collection $\AOp(\ttree/\stree)$ also defines a covariant functor on the under category of tree morphisms $\ttree\rightarrow\stree$,
when we make the tree $\stree$ vary and we fix $\ttree\in\Tree(r)$.
We consider a composable sequence of morphisms $\ttree\rightarrow\utree\rightarrow\stree$.
We write $\utree = \lambda_{v\in V(\stree)}(\stree_v)$ for the decomposition equivalent to the morphism $f: \utree\rightarrow\stree$
and $\ttree = \lambda_{u\in V(\utree)}(\sigmatree_u)$ for the decomposition equivalent to the morphism $\ttree\rightarrow\utree$.
Then the decomposition equivalent to the composite morphism $\ttree\rightarrow\stree$
reads $\ttree = \lambda_{v\in V(\stree)}(\thetatree_v,v\in V(\stree))$ with $\thetatree_v = \lambda_{\stree_v}(\sigmatree_u,u\in V(\stree_v))$,
for $v\in V(\stree)$.
The operator
\begin{equation*}
i_{\ttree/\utree\rightarrow\stree}: \AOp(\ttree/\utree)\rightarrow\AOp(\ttree/\stree)
\end{equation*}
of the covariant action of $f: \utree\rightarrow\stree$ on our collection
is given by the tensor product of the Segal maps $i_{\lambda_{\stree_v}(\sigmatree_*)}: \bigotimes_{u\in V(\stree_v)}\AOp(\sigmatree_u)\rightarrow\AOp(\thetatree_v)$
which we associate to the decompositions $\thetatree_v = \lambda_{\stree_v}(\sigmatree_u,u\in V(\stree_v))$.

We easily check that the above constructions yield associative covariant and contravariant actions on our collection $\AOp(\ttree/\stree)$,
which, in addition, commute to each other. We accordingly get that our mapping $(\ttree\rightarrow\stree)\mapsto\AOp(\ttree/\stree)$
defines a contravariant functor on the comma category of tree morphisms $\ttree\rightarrow\stree$.
\end{constr}

We can now proceed to the definition of our decomposed $W$-construction.

\begin{constr}\label{construction:decomposed-W}
Let $\AOp$ be a Segal shuffle dg cooperad. We assume that $\AOp$ satisfies the local conilpotence condition of~\S\ref{subsection:conilpotence} as in Construction~\ref{construction:W}.
We set:
\begin{equation*}
\DGW^{c}_{dec}(\AOp)(\stree) = \int_{\ttree\rightarrow\stree}'\square^*(\ttree/\stree)\otimes\AOp(\ttree/\stree),
\end{equation*}
where the notation $\int'$ refers to the same additive end construction as in Construction~\ref{construction:W}, and we consider the decomposed contravariant functor $\AOp(\ttree/\stree)$
defined in the previous paragraph.
We note again that this additive end is well defined because the local conilpotence condition implies that the decomposed coproduct operators $\AOp(\ttree/\stree)\rightarrow\AOp(\utree/\stree)$
land in a direct sum when $\utree$ varies, like the coproduct operators $\AOp(\ttree)\rightarrow\AOp(\utree)$
in Construction~\ref{construction:W} (and yet because the covariant action on $\square^*(\ttree/\stree)$
involves finitely many terms on each term of the end).

The coproduct operators $\rho^W_{\utree\rightarrow\stree}: \DGW^{c}_{dec}(\AOp)(\stree)\rightarrow\DGW^{c}_{dec}(\AOp)(\utree)$
are defined by using the covariant functoriality of the objects $\square^*(\ttree/\stree)$
and $\AOp(\ttree/\stree)$.
For each tree decomposition $\stree = \lambda_{\utree}(\sigmatree_u,u\in V(\utree))$, we define the Segal map
\begin{gather*}
i^W_{\sigmatree_*,\stree}: \bigotimes_{u\in V(\utree)}\DGW^{c}(\AOp)(\sigmatree_u)\rightarrow\DGW^{c}(\AOp)(\stree)
\intertext{termwise, by the morphisms}
\bigotimes_{u\in V(\utree)}\square^*(\thetatree_u/\sigmatree_u)\otimes\AOp(\thetatree_u/\sigmatree_u)
\rightarrow\square^*(\lambda_{\utree}(\thetatree_u)/\stree)\otimes\AOp(\lambda_{\utree}(\thetatree_u)/\stree),
\end{gather*}
associated to the collections of tree morphisms $\thetatree_u\rightarrow\sigmatree_u$, $u\in V(\utree)$,
which we get by tensoring the same isomorphisms $\bigotimes_{u\in V(\utree)}\square^*(\thetatree_u/\sigmatree_u)\simeq\square^*(\lambda_{\utree}(\thetatree_u)/\stree)$
as in Construction~\ref{construction:W}
with parallel isomorphisms $\bigotimes_{u\in V(\utree)}\AOp(\thetatree_u/\sigmatree_u)\simeq\AOp(\lambda_{\utree}(\thetatree_u)/\stree)$,
which we associate to the objects of Construction~\ref{construction:decomposed-Segal-cooperad-terms}.
\end{constr}

We have the following observation, which can be used to give a reduced description of both the $W$-construction $\DGW^{c}(\AOp)$
and the decomposed $W$-construction $\DGW^{c}_{dec}(\AOp)$.

\begin{lemm}\label{lemma:W-construction-splitting}
The additive end equalizers in the definition of the $W$-construction $\DGW^{c}(\AOp)$ in Construction~\ref{construction:W}
and in the definition of the decomposed $W$-construction $\DGW^{c}_{dec}(\AOp)$
in Construction~\ref{construction:decomposed-W}
split when we forget about differentials, so that the terms of these objects have a reduced description
of the form:
\begin{align*}
\DGW^{c}(\AOp)(\stree) & = \bigoplus_{\ttree\rightarrow\stree}L\square^*(\ttree/\stree)\otimes\AOp(\ttree), \\
\DGW^{c}_{dec}(\AOp)(\stree) & = \bigoplus_{\ttree\rightarrow\stree}L\square^*(\ttree/\stree)\otimes\AOp(\ttree/\stree),
\end{align*}
where, for a tree morphism $\ttree\rightarrow\stree$ equivalent to a tree decomposition such that $\ttree = \lambda_{v\in V(\stree)}(\sigmatree_v,v\in V(\stree))$,
we define $L\square^*(\ttree/\stree)\subset\square^*(\ttree/\stree)$
by the tensor product:
\begin{equation*}
L\square^*(\ttree/\stree) = \bigotimes_{e\in\mathring{E}(\sigmatree_v),v\in V(\stree)}\underbrace{(\kk\underline{0}^{\sharp}\oplus\kk\underline{01}^{\sharp})}_{=: \mathring{I}_e}.
\end{equation*}
(Thus, we just drop the factors $\underline{1}^{\sharp}$ from the normalized cochain complexes $I_e = N^*(\Delta^1)$ in the expression of the object $\square^*(\ttree/\stree)$.)
\end{lemm}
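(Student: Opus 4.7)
The plan is to exhibit an explicit isomorphism of graded modules between each equalizer and the claimed direct sum, using only the elementary decomposition of the factor $I_e = \DGN^*(\Delta^1)$. The starting point is the splitting
\[
I_e = \mathring{I}_e \oplus \kk\underline{1}^{\sharp}, \qquad \mathring{I}_e := \kk\underline{0}^{\sharp}\oplus\kk\underline{01}^{\sharp} = \ker(d_0\colon I_e\to\kk).
\]
Distributing this over the tensor products defining $\square^*(\ttree/\stree)$ yields, for each tree morphism $\ttree\to\stree$, a direct sum indexed by subsets $F$ of the inner edges of the subtrees $\sigmatree_v\subset\ttree$,
\[
\square^*(\ttree/\stree) = \bigoplus_F \square^{*,F}(\ttree/\stree),
\]
where $\square^{*,F}(\ttree/\stree)$ carries $\underline{1}^{\sharp}$ on the edges of $F$ and $\mathring{I}_e$ on the others; in particular $\square^{*,\emptyset}(\ttree/\stree) = L\square^*(\ttree/\stree)$.

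The next step will be to exploit the equalizer identities. For any pair $(\ttree\to\stree,F)$, let $\ttree/F\to\stree$ denote the factorization of $\ttree\to\stree$ obtained by contracting the edges of $F$. The operator $\partial_{\ttree\to(\ttree/F)/\stree}$ applies $d_0$ on the $F$-factors and the identity on the others, so its restriction to $\square^{*,F}(\ttree/\stree)$ is a canonical isomorphism $\beta_F\colon \square^{*,F}(\ttree/\stree)\xrightarrow{\cong}L\square^*(\ttree/F/\stree)$, while for $G\neq F$ the restriction of this operator to $\square^{*,G}(\ttree/\stree)$ has image disjoint from $L\square^*(\ttree/F/\stree)$. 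Projecting the equalizer identity associated with the factorization $\ttree\to\ttree/F\to\stree$ onto the summand $L\square^*(\ttree/F/\stree)\otimes\AOp(\ttree)$ then yields
\[
\beta_F(y_{\ttree\to\stree}^F) = (\id\otimes\rho_{\ttree\to\ttree/F})(y_{\ttree/F\to\stree}^{\emptyset}),
\]
which shows that each $\square^{*,F}$-component of an element of the equalizer is fully determined by its $\square^{*,\emptyset}$-components.

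Accordingly, I would set $\alpha_F:=\beta_F^{-1}$ (the map reinserting $\underline{1}^{\sharp}$ on each edge of $F$) and define the candidate comparison map
\[
\phi\colon \bigoplus_{\ttree\to\stree} L\square^*(\ttree/\stree)\otimes\AOp(\ttree)\longrightarrow \DGW^{c}(\AOp)(\stree),\qquad \phi(z)_{\ttree\to\stree}^F := (\alpha_F\otimes\rho_{\ttree\to\ttree/F})(z_{\ttree/F\to\stree}).
\]
Injectivity of $\phi$ is immediate by inspection of the $F=\emptyset$ component, and the reconstruction formula above shows that the rule $z_{\ttree\to\stree}:=y_{\ttree\to\stree}^{\emptyset}$ recovers any given equalizer element via $\phi$, yielding surjectivity.

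The main obstacle will be the verification that $\phi(z)$ actually lies in the equalizer, namely that it satisfies $\partial(\phi(z)_{\ttree\to\stree}) = \rho_{\ttree\to\utree}(\phi(z)_{\utree\to\stree})$ for \emph{every} factorization $\ttree\to\utree\to\stree$, not just the canonical ones $\ttree\to\ttree/F\to\stree$ used to derive the reconstruction formula. Expanding both sides via the direct sum decomposition and tracking the insertion and collapse maps, this identity will reduce on each component, after identifying $\ttree/F$ with $\utree/(F\setminus E)$ where $E$ denotes the collapsing set of $\ttree\to\utree$, to the contravariant functoriality relation $\rho_{\ttree\to\utree}\circ\rho_{\utree\to\utree/(F\setminus E)} = \rho_{\ttree\to\ttree/F}$ built into the definition of a Segal shuffle dg pre-cooperad. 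The same argument, with the decomposed contravariant functor $\AOp(\ttree/\stree)$ of Construction~\ref{construction:decomposed-Segal-cooperad-terms} substituted for $\AOp(\ttree)$, will yield the splitting for $\DGW^{c}_{dec}(\AOp)(\stree)$. The restriction to graded modules is unavoidable because $\delta(\underline{1}^{\sharp}) = \underline{01}^{\sharp}$ sends $\square^{*,F}$ into $\bigoplus_{e\in F}\square^{*,F\setminus\{e\}}$, so the above direct-sum decomposition is not preserved by the total differential.
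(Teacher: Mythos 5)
Your proof is correct and follows essentially the same route as the paper's: you decompose $\square^*(\ttree/\stree)$ according to which tensor factors carry $\underline{1}^{\sharp}$ and use the equalizer relation for the factorization $\ttree\rightarrow\ttree/F\rightarrow\stree$ to show that the $F$-component is the image, under $\id\otimes\rho_{\ttree\rightarrow\ttree/F}$, of the $L\square^*$-component at $\ttree/F\rightarrow\stree$. The only difference is one of thoroughness: the paper stops at this determination statement and declares that the splitting ``readily follows'', whereas you also make the inverse map $\phi$ explicit and correctly reduce the verification that its image satisfies all the equalizer relations to the functoriality of the coproduct operators.
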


\begin{proof}
This lemma readily follows from the fact that the terms $\varpi = \sigma\otimes\alpha\in\square^*(\ttree/\stree)\otimes\AOp(\ttree)$
(respectively, $\varpi = \sigma\otimes\alpha\in\square^*(\ttree/\stree)\otimes\AOp(\ttree/\stree)$)
with $\underline{1}^{\sharp}$ factors in the additive end definition of the object $\DGW^{c}(\AOp)(\stree)$ (respectively, $\DGW^{c}_{dec}(\AOp)(\stree)$)
are determined by the equalizer relations,
which identify such terms with the image of tensors of the form $\varpi' = \sigma'\otimes\alpha'\in L\square^*(\ttree'/\stree)\otimes\AOp(\ttree')$
(respectively, $\varpi' = \sigma'\otimes\alpha'\in L\square^*(\ttree'/\stree)\otimes\AOp(\ttree'/\stree)$)
under the action of coproduct operators,
where $\sigma'$ is defined by withdrawing the factors $\underline{1}^{\sharp}$ from $\sigma\in\square^*(\ttree/\stree)$
and $\ttree'$ is the tree obtained by contracting the edges $e$
that correspond to such factors in $\ttree$.
Indeed, we then have $\sigma' = \partial_{\ttree\rightarrow\ttree'/\stree}(\sigma)$ and, in the case of $W$-construction $\DGW^{c}(\AOp)$,
from the zigzag of morphisms
\begin{equation*}
\xymatrix{ \square^*(\ttree/\stree)\otimes\AOp(\ttree)\ar[dr]_{\partial_{\ttree\rightarrow\ttree'/\stree}\otimes\id} &&
\square^*(\ttree'/\stree)\otimes\AOp(\ttree')\ar[dl]^{\id\otimes\rho_{\ttree\rightarrow\ttree'}} \\
& \square^*(\ttree'/\stree)\otimes\AOp(\ttree) & },
\end{equation*}
which we extract from our equalizer, we see that we have the identity $\sigma'\otimes\alpha = \sigma'\otimes\rho_{\ttree\rightarrow\ttree'}(\alpha')\Rightarrow\alpha = \rho_{\ttree\rightarrow\ttree'}(\alpha')$.
We argue similarly in the case of the decomposable $W$-construction.
\end{proof}

We now check the validity of the definition of our Segal shuffle dg pre-cooperad structure on the $W$-construction $\DGW^{c}(\AOp)$ in Construction~\ref{construction:W}
and on the decomposed $W$-construction $\DGW^{c}_{dec}(\AOp)$ in Construction~\ref{construction:decomposed-W}.
We use the following straightforward observation (see \cite[Appendix A]{FresseBook}).

\begin{lemm}\label{lemma:tree-decomposition-chain}
Let $\stree = \lambda_{\utree}(\sigmatree_u,u\in V(\utree))$ be a tree decomposition.
There is a bijection between the set of collections of composable pairs of tree morphisms $\{\thetatree_u\rightarrow\ttree_u\rightarrow\sigmatree_u,u\in V(\utree)\}$
indexed by $V(\utree)$
and the set of composable pairs of tree morphisms $\thetatree\rightarrow\ttree\rightarrow\stree$.
This bijection associates any such collection $\{\thetatree_u\rightarrow\ttree_u\rightarrow\sigmatree_u,u\in V(\utree)\}$
with the morphisms $\lambda_{\utree}(\thetatree_u,u\in V(\utree))\rightarrow\lambda_{\utree}(\ttree_u,u\in V(\utree))\rightarrow\lambda_{\utree}(\sigmatree_u,u\in V(\utree)) = \stree$.\qed
\end{lemm}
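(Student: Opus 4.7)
The proof is essentially a bookkeeping exercise that iterates the fundamental bijection, recalled in~\S\ref{subsection:trees}, between tree morphisms $f: \ttree\rightarrow\stree$ and treewise decompositions $\ttree = \lambda_{\stree}(\sigmatree_v, v\in V(\stree))$, where $\sigmatree_v = f^{-1}(\ytree_v)$ is the pre-image of the corolla attached to $v\in V(\stree)$.

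My first step is to produce the forward map. Given the tree decomposition $\stree = \lambda_{\utree}(\sigmatree_u, u\in V(\utree))$ (equivalent to a morphism $\stree\rightarrow\utree$) and a morphism $\ttree\rightarrow\stree$, I consider the composite $\ttree\rightarrow\stree\rightarrow\utree$. By the fundamental bijection applied to this composite, I get a decomposition $\ttree = \lambda_{\utree}(\ttree_u, u\in V(\utree))$, where $\ttree_u$ is the pre-image in $\ttree$ of the corolla $\ytree_u\subset\utree$. Since the intermediate morphism $\ttree\rightarrow\stree$ sends each $\ttree_u$ into $\sigmatree_u$ (both being the pre-image of $\ytree_u$ under maps landing in $\utree$), I obtain a well-defined morphism $\ttree_u\rightarrow\sigmatree_u$ for every $u\in V(\utree)$ by restriction. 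Applying the same argument to the morphism $\thetatree\rightarrow\ttree = \lambda_{\utree}(\ttree_u)$ (composed with $\ttree\rightarrow\utree$), I likewise produce a decomposition $\thetatree = \lambda_{\utree}(\thetatree_u, u\in V(\utree))$ equipped with restricted morphisms $\thetatree_u\rightarrow\ttree_u$. Composing yields the desired collection $\{\thetatree_u\rightarrow\ttree_u\rightarrow\sigmatree_u, u\in V(\utree)\}$.

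For the inverse, I start from such a collection and form $\ttree := \lambda_{\utree}(\ttree_u, u\in V(\utree))$ and $\thetatree := \lambda_{\utree}(\thetatree_u, u\in V(\utree))$. The morphisms $\ttree_u\rightarrow\sigmatree_u$ assemble, via the functoriality of $\lambda_{\utree}$ as an operadic composition on the category of trees, into a morphism $\lambda_{\utree}(\ttree_u)\rightarrow\lambda_{\utree}(\sigmatree_u) = \stree$; similarly the morphisms $\thetatree_u\rightarrow\ttree_u$ assemble into $\thetatree\rightarrow\ttree$. Composition at the $\utree$-level is compatible with slabwise composition by the associativity of $\lambda_{\utree}$, so we obtain a well-defined composable pair $\thetatree\rightarrow\ttree\rightarrow\stree$.

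It then remains to verify that the two constructions are mutually inverse. In one direction, starting from a morphism pair and extracting the slab morphisms, we recover the original composite after reassembly because the tree decomposition of a morphism into pre-images of corollas is unique and because $\lambda_{\utree}$ of the pre-image slabs reconstitutes the total tree. In the other direction, if the slab collection is assembled into $(\thetatree,\ttree)$ and then re-extracted via pre-images of corollas in $\utree$, one recovers the original slab morphisms, again by the uniqueness clause of the fundamental bijection. The main and only delicate point is keeping track of the double nesting — once one commits to describing tree morphisms through their pre-images of $\ytree_u\subset\utree$ at both levels, the verification becomes formal and reduces to the associativity of the treewise composition operations $\lambda_{(-)}$ on the category of trees.
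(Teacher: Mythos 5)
Your proof is correct and fills in exactly the argument the paper leaves implicit (the lemma is stated as a straightforward observation with a reference to \cite[Appendix A]{FresseBook} and no written proof): you use the equivalence between tree morphisms and treewise decompositions into pre-images of corollas, at both levels of the composable pair, and the uniqueness of that decomposition to check the two assignments are mutually inverse. Nothing is missing.
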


\begin{thm-defn}\label{lemma:W-construction-cooperad}
The objects $\DGW^{c}(\AOp)$ and $\DGW^{c}_{dec}(\AOp)$, equipped with the coproduct operators and the Segal maps defined in Construction~\ref{construction:W},
form Segal shuffle dg pre-cooperads, to which we respectively refer as the $W$-construction and the decomposed $W$-construction
of the Segal shuffle dg (pre-)cooperad $\AOp$.

For the decomposed $W$-construction $\DGW^{c}_{dec}(\AOp)$, we get in addition that the Segal maps
define isomorphisms
\begin{equation*}
i_{\lambda_{\stree}(\sigmatree_*)}: \bigotimes_{v\in V(\stree)}\DGW^{c}_{dec}(\AOp)(\sigmatree_v)\xrightarrow{\simeq}\DGW^{c}_{dec}(\AOp)(\ttree),
\end{equation*}
for all tree decompositions $\ttree = \lambda_{\stree}(\sigmatree_v,v\in V(\stree))$, so that $\DGW^{c}_{dec}(\AOp)$
is identified with a shuffle dg cooperad in the ordinary sense.
\end{thm-defn}

\begin{proof}
The associativity of the coproduct operators on $\DGW^{c}(\AOp)$ and $\DGW^{c}_{dec}(\AOp)$
is immediate from the definition of these morphisms
in terms of associative actions on the terms
of our additive ends
in Construction~\ref{construction:W} and Construction~\ref{construction:decomposed-W}.
We similarly check the validity of the associativity condition for the Segal maps that we attach to our objects.
We also deduce the compatibility between the Segal maps and the coproduct operators from termwise counterparts of this relation.

To establish that Segal maps define isomorphisms in the case of the decomposed $W$-construction, we use the reduced expression of Lemma~\ref{lemma:W-construction-splitting},
the fact that the tensor products of this reduced expression have a factorization
of the form
\begin{multline*}
L\square^*(\ttree/\stree)\otimes\AOp(\ttree/\stree) = (\bigotimes_{e\in\mathring{E}(\sigmatree_v),v\in V(\stree)}\mathring{I}_e)\otimes(\bigotimes_{v\in V(\stree)}\AOp(\sigmatree_v))\\
\simeq\bigotimes_{v\in V(\stree)}(\underbrace{\bigotimes_{e\in\mathring{E}(\sigmatree_v)}\mathring{I}_e}_{= L\square^*(\sigmatree_v/\ytree_v)}
\otimes\underbrace{\AOp(\sigmatree_v)}_{= \AOp(\sigmatree_v/\ytree_v)})
\end{multline*}
and the bijective correspondence of Lemma~\ref{lemma:tree-decomposition-chain}.
\end{proof}

\begin{thm}\label{proposition:quasi-isomorphism-bar-strict-cooperads}
If $\AOp$ is a connected Segal shuffle dg pre-cooperad $\AOp$ (where we use the connectedness condition of~\S\ref{subsection:conilpotence}),
then we have a zigzag of natural transformations of Segal shuffle dg pre-cooperads
\begin{equation*}
\AOp\xrightarrow{\sim}\DGW^c(\AOp)\leftarrow\DGW^c_{dec}(\AOp)
\end{equation*}
where the morphism on the left-hand side is a weak-equivalence termwise.

If $\AOp$ satisfies the Segal condition (and, therefore, is a Segal shuffle dg cooperad $\AOp$),
then the morphism on the right-hand side of this zigzag is also a weak-equivalence
and the $W$-construction $\DGW^c(\AOp)$ also satisfies the Segal condition,
so that $\AOp$ is, as a Segal shuffle dg cooperad, weakly-equivalent to a shuffle dg cooperad in the ordinary sense.
\end{thm}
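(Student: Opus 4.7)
The plan rests on a spectral sequence argument driven by the acyclicity of the ``open'' part of the cubical cochains. First, I make the two comparison maps explicit. Define $\eta\colon \AOp \to \DGW^c(\AOp)$ termwise by sending $\alpha \in \AOp(\stree)$ to $1 \otimes \alpha$ in the summand of the identity morphism $\id\colon \stree \to \stree$, noting that $\square^*(\stree/\stree) = \kk$ since for this morphism the subtrees $\ytree_v$ are corollas with no inner edges. Define $\pi\colon \DGW^c_{dec}(\AOp) \to \DGW^c(\AOp)$ termwise by applying the Segal maps $\AOp(\ttree/\stree) = \bigotimes_{v \in V(\stree)}\AOp(\sigmatree_v) \to \AOp(\ttree)$ of $\AOp$ inside each term of the additive end. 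Both transformations preserve coproduct operators and Segal maps by unwinding the definitions and using the functoriality of the additive end construction.

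Next, I filter both $\DGW^c(\AOp)(\stree)$ and $\DGW^c_{dec}(\AOp)(\stree)$ by the total number of inner edges in the subtrees $\sigmatree_v$ of the decomposition $\ttree = \lambda_{\stree}(\sigmatree_v)$ associated to a morphism $\ttree \to \stree$. Under the connectedness hypothesis, only finitely many reduced trees $\ttree \in \widetilde{\Tree}(r)$ admit a morphism to a given $\stree \in \widetilde{\Tree}(r)$, so the filtration is bounded and the spectral sequence converges trivially. Using the reduced description of Lemma~\ref{lemma:W-construction-splitting}, the associated graded of this filtration on $\DGW^c(\AOp)(\stree)$ is the direct sum $\bigoplus_{\ttree \to \stree} L\square^*(\ttree/\stree) \otimes \AOp(\ttree)$ equipped with the tensor product of the intrinsic differentials of the two factors: the cross-terms coming from the equalizer identification of $\underline{1}^{\sharp}$-labelled classes involve edge contractions and therefore strictly lower the filtration degree. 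The crucial computation is that $\mathring{I}_e = \kk\underline{0}^{\sharp} \oplus \kk\underline{01}^{\sharp}$ is acyclic, since $\delta(\underline{0}^{\sharp}) = -\underline{01}^{\sharp}$ is an isomorphism; by the K\"unneth formula, $L\square^*(\ttree/\stree)$ is acyclic for every non-identity morphism $\ttree \to \stree$, so the only surviving term on the $E_1$-page is $\ttree = \stree$, which contributes exactly $\AOp(\stree)$. This proves that $\eta$ is a termwise weak-equivalence.

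For the map $\pi$, the same filtration reduces the problem to showing that the termwise map $L\square^*(\ttree/\stree) \otimes \AOp(\ttree/\stree) \to L\square^*(\ttree/\stree) \otimes \AOp(\ttree)$ is a quasi-isomorphism for each $\ttree \to \stree$. Under the Segal condition, the map $\AOp(\ttree/\stree) \to \AOp(\ttree)$ is a weak-equivalence, and tensoring with the degreewise free, bounded complex $L\square^*(\ttree/\stree)$ preserves this property. To derive the Segal condition for $\DGW^c(\AOp)$, I combine these inputs in the commutative square in which the top horizontal is the Segal map of $\DGW^c_{dec}(\AOp)$ (an isomorphism by the second part of Theorem-Definition~\ref{lemma:W-construction-cooperad}), the bottom horizontal is the Segal map of $\DGW^c(\AOp)$, the right vertical is the weak-equivalence $\pi_{\stree}$ just proved, and the left vertical is a tensor product of such weak-equivalences applied to the factors $\sigmatree_u$ of a decomposition $\stree = \lambda_{\utree}(\sigmatree_u)$. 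Two-out-of-three then yields that the Segal map of $\DGW^c(\AOp)$ is a weak-equivalence, which completes the proof.

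The main obstacle is verifying the structure of the differential on the associated graded of the filtration: one must track how the equalizer identifications interact with the internal differentials, confirming that the contribution coming from $\delta(\underline{1}^{\sharp}) = \underline{01}^{\sharp}$, once translated via the equalizer into a term in a coarser summand, reproduces precisely the edge-contraction component of the differential and therefore lives in lower filtration. A secondary technical point is to ensure that the K\"unneth formula applies in each invocation, which follows from the observation that both the factors $\mathring{I}_e$ and $L\square^*(\ttree/\stree)$ are degreewise free $\kk$-modules of finite rank.
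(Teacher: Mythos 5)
Your overall strategy coincides with the paper's: filter by the number of vertices (equivalently, by the total number of inner edges of the subtrees $\sigmatree_v$, which differs only by the constant $\sharp V(\stree)$), use the reduced description of Lemma~\ref{lemma:W-construction-splitting} and the acyclicity of $\mathring{I}_e = (\kk\underline{0}^{\sharp}\oplus\kk\underline{01}^{\sharp},\delta(\underline{0}^{\sharp}) = -\underline{01}^{\sharp})$ to compute the first page, treat the right-hand arrow termwise via the Segal maps of $\AOp$, and conclude the Segal condition for $\DGW^c(\AOp)$ by two-out-of-three against the isomorphisms of Theorem-Definition~\ref{lemma:W-construction-cooperad}. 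The convergence remark via connectedness is also the paper's.

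There is, however, a genuine error in your definition of the left-hand map. The object $\DGW^c(\AOp)(\stree)$ is an equalizer inside $\bigoplus_{\ttree\rightarrow\stree}\square^*(\ttree/\stree)\otimes\AOp(\ttree)$, so a map into it must be specified by a \emph{compatible family} of components, one for each morphism $\ttree\rightarrow\stree$. The family you propose, namely $\omega_{\id} = 1\otimes\alpha$ and $\omega_f = 0$ for $f\neq\id$, does not lie in the equalizer: for the factorization $\ttree\xrightarrow{f}\stree\xrightarrow{=}\stree$, the map $d^0$ applied to $\omega_f = 0$ gives $0$, while $d^1$ applied to $\omega_{\id}$ gives $1\otimes\rho_{\ttree\rightarrow\stree}(\alpha)\in\square^*(\stree/\stree)\otimes\AOp(\ttree)$, which is nonzero in general. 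Reinterpreting your formula through the splitting of Lemma~\ref{lemma:W-construction-splitting} does not save it either: the unique equalizer element with reduced part $\alpha$ concentrated in the identity summand has $\ttree$-component $\underline{1}^{\sharp\otimes}\otimes\rho_{\ttree\rightarrow\stree}(\alpha) + \cdots$, and differentiating the factors $\underline{1}^{\sharp}$ produces nonzero terms $\underline{01}^{\sharp}\otimes\cdots$ surviving in the reduced parts of the other summands, so your assignment is not even a chain map. The correct definition (the one the paper uses) takes $\beta_{\ttree\rightarrow\stree}(\alpha) = \eta^{\otimes\mathring{E}}\otimes\rho_{\ttree\rightarrow\stree}(\alpha)$ for \emph{every} morphism $\ttree\rightarrow\stree$, where $\eta:\kk\rightarrow I_e$, $\eta(1) = \underline{0}^{\sharp}+\underline{1}^{\sharp}$, is the unit of the cochain algebra $I_e = \DGN^*(\Delta^1)$; since $d_0(\underline{0}^{\sharp}+\underline{1}^{\sharp}) = 1$, this family does satisfy the equalizer relations, it is a chain map, and it visibly commutes with the coproduct operators and Segal maps. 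Once this correction is made, your spectral sequence argument goes through unchanged (the extra components of $\beta$ land in higher filtration, so the induced map on the first page is still the inclusion of the identity summand). A minor further point: with the filtration by the number of inner edges, the terms of the differential coming from $\delta(\underline{1}^{\sharp}) = \underline{01}^{\sharp}$ \emph{raise} rather than lower the filtration degree (they move from $\ttree'$ to a tree $\ttree$ with more vertices); this only affects whether you take an increasing or decreasing filtration, not the substance of the argument.
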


\begin{proof}
We address the definition of the morphism $\beta: \AOp\rightarrow\DGW^{c}(\AOp)$ first.
We consider the dg module morphisms $\beta_{\ttree\rightarrow\stree}: \AOp(\stree)\rightarrow\square^*(\ttree/\stree)\otimes\AOp(\ttree)$
defined by pairing the coproducts $\rho_{\ttree\rightarrow\stree}: \AOp(\stree)\rightarrow\AOp(\ttree)$
associated to the morphisms $\ttree\rightarrow\stree$
with the unit morphisms $\eta: \kk\rightarrow I_e$ of the cochain algebras $I_e = \DGN^*(\Delta^1)$
in $\square^*(\ttree/\stree)$.
We readily check that these morphisms $\beta_{\ttree\rightarrow\stree}: \AOp(\stree)\rightarrow\square^*(\ttree/\stree)\otimes\AOp(\ttree)$
induce a morphism with values in the additive end of Construction~\ref{construction:W}. (We just note that the local conilpotence condition
implies again that the collection of these morphisms land in the sum of the objects $\square^*(\ttree/\stree)\otimes\AOp(\ttree)$ when $\ttree$ varies.)
We accordingly get a dg module morphism $\beta: \AOp(\stree)\rightarrow\DGW^{c}(\AOp)(\stree)$, for each tree $\stree$.
We easily deduce from the associativity of the coproduct operators that these morphisms commute with the coproduct operators on $\AOp$
and on $\DGW^{c}(\AOp)(\stree)$.
We similarly prove that our morphisms preserve the Segal maps by reducing the verification of this claim
to a termwise relation.
We therefore get a well defined morphism of Segal shuffle dg cooperads $\beta: \AOp\rightarrow\DGW^{c}(\AOp)$
as requested.

We now check that this morphism defines a termwise weak-equivalence $\beta: \AOp(\stree)\xrightarrow{\sim}\DGW^{c}(\AOp)(\stree)$.
For this purpose, we use the reduced expression of the $W$-construction given in Lemma~\ref{lemma:W-construction-splitting},
and we take a filtration of our object by the number of vertices of the trees $\ttree$
in this expansion.
We see that the terms of the differential given by the map $\delta(\underline{0}^{\sharp}) = -\underline{01}^{\sharp}$
in the normalized cochain complexes $I_e = \DGN^*(\Delta^1)$
preserve this grading, as well as the term of the differential induced by the internal differential
of the dg modules $\AOp(\ttree)$,
but the terms of the differential given by the map $\delta(\underline{1}^{\sharp}) = \underline{01}^{\sharp}$
increase the number of vertices
when we pass to the reduced expansion.
We just take the spectral sequence associated to our filtration to neglect the latter terms and to reduce the differential of our object
to the terms given by the maps $\delta(\underline{0}^{\sharp}) = -\underline{01}^{\sharp}$
and the internal differential of the dg modules $\AOp(\ttree)$.
The acyclicity of the cochain complex $\mathring{I}_e = (\kk\underline{0}^{\sharp}\oplus\kk\underline{01}^{\sharp},\delta(\underline{0}^{\sharp}) = -\underline{01}^{\sharp})$
implies that all terms of our reduced expansion have a trivial homology, except the term associated to the identity morphism $\ttree = \stree\rightarrow\stree$
for which we have $L\square^*(\stree/\stree) = \kk$.
Hence, our map $\beta: \AOp(\stree)\rightarrow\DGW^{c}(\AOp)(\stree)$ induces an isomorphism on the first page of the spectral sequence associated to our filtration,
and we conclude from this result that $\beta: \AOp(\stree)\rightarrow\DGW^{c}(\AOp)(\stree)$
defines a weak-equivalence of dg modules,
as requested.
Note simply that the connectedness assumption of the theorem implies that the object $\DGW^{c}(\AOp)(\stree)$ reduces to a finite sum, for any given tree $\stree$,
because we have only finitely many morphism $\ttree\rightarrow\stree$ such that $\ttree$ is reduced,
and for which the object $\AOp(\ttree)$ does not vanish (see~\S\ref{subsection:conilpotence}).
This observation ensures that no convergence issue occurs in this spectral argument.

We obviously define our second morphism $\alpha: \DGW^c_{dec}(\AOp)\rightarrow\DGW^c(\AOp)$ termwise,
by taking the tensor product of the identity map on $\square^*(\ttree/\stree)$
with the morphism
\begin{equation}\tag{$*$}\label{eqn:termwise_segal_maps}
\AOp(\ttree/\stree) = \bigotimes_{v\in V(\stree)}\AOp(\sigmatree_v)\xrightarrow{i_{\lambda_{\stree}(\sigmatree_*)}}\AOp(\ttree)
\end{equation}
given by the Segal map on $\AOp$, where we consider the decomposition $\ttree = \lambda_{v\in V(\stree)}(\sigmatree_v)$
equivalent to the morphism $\ttree\rightarrow\stree$.
We just check that these maps~(\ref{eqn:termwise_segal_maps}) define a morphism of bifunctors on the comma category of tree morphisms $\ttree\rightarrow\stree$
to obtain that they induce a well-defined map on our additive end. This map also commutes with the action of our coproduct operators
on $\DGW^c_{dec}(\AOp)$ and $\DGW^c(\AOp)$.
We easily deduce, from the associativity relations of the Segal maps, that the maps~(\ref{eqn:termwise_segal_maps})
intertwine the action of the Segal operators on $\DGW^c_{dec}(\AOp)$ and $\DGW^c(\AOp)$
and hence, define a morphism of shuffle dg pre-cooperad.

We again use the spectral sequence determined by the filtration by the number of vertices of trees
in the reduced expansions of Lemma~\ref{lemma:W-construction-splitting}
to study the effect of our map in homology.
We use that the first page of the spectral sequence is given by these reduced expansions
with the differential inherited from our objects $\AOp(\ttree/\stree)$ and $\AOp(\ttree)$
and from the terms $\delta(\underline{0}^{\sharp}) = -\underline{01}^{\sharp}$
of the differential on the factors $\mathring{I}_e\subset I_e = \DGN^*(\Delta^1)$.
We immediately deduce that our morphism induces an isomorphism on the first page of this spectral sequence as our maps~(\ref{eqn:termwise_segal_maps}),
which we pair with the identity of the object $L\square^*(\ttree/\stree)$ in our reduced expansions,
are weak-equivalences when we assume that $\AOp$ satisfies the Segal condition.
We conclude from this observation that our morphism $\alpha: \DGW^c_{dec}(\AOp)\rightarrow\DGW^c(\AOp)$
defines a termwise weak-equivalence of Segal shuffle dg pre-cooperads.
(We again use the connectedness assumption to ensure that no convergence issue occurs in this spectral sequence argument.)

Finally, the existence of such a weak-equivalence of Segal shuffle dg pre-cooperads $\alpha: \DGW^c_{dec}(\AOp)\xrightarrow{\sim}\DGW^c(\AOp)$
immediately implies that $\DGW^c_{dec}(\AOp)$ fulfills the Segal condition,
and hence, defines a Segal shuffle dg cooperad,
since the Segal maps for $\DGW^c(\AOp)$ are weakly-equivalent to the Segal maps for $\DGW^c_{dec}(\AOp)$,
which are isomorphisms by the result of Proposition~\ref{lemma:W-construction-cooperad}.
\end{proof}

\subsection{The application of cobar complexes}\label{subsection:strict-Segal-cooperad-cobar}

In this subsection, we describe a cobar construction for Segal shuffle dg (pre-)cooperads. This is a construction that, from the structure of a Segal shuffle dg pre-cooperad $\AOp$,
%(which satisfies the conilpotence condition of~\S\ref{subsection:conilpotence})
produces an operad in dg modules $\DGB^c(\AOp)$.
%We make explicit the definition of this object $\DGB^c(\AOp)$ in the next paragraph.
%We check the validity of our construction afterwards.
%In a first step, we define the components of the cobar construction $\DGB^c(\AOp)(r)$ as dg modules. We proceed along the following construction.
We make explicit the definition of the structure operations of the cobar construction $\DGB^c(\AOp)$
in the next paragraph.
We check the validity of these definitions afterwards.
We eventually make a formal statement to record the definition of this dg operad $\DGB^c(\AOp)$.

\begin{constr}\label{construction:bar-complex}
%Let $\AOp$ be a Segal shuffle dg pre-cooperad.
%We assume that $\AOp$ satisfies the conilpotence condition of~\S\ref{subsection:conilpotence}.
We define the graded modules $\DGB^c(\AOp)(r)$, which form the components of the cobar construction $\DGB^c(\AOp)$,
by the following formula:
\begin{equation*}
\DGB^c(\AOp)(r) = \bigoplus_{\ttree\in\Tree(r)}\DGSigma^{-\sharp V(\ttree)}\AOp(\ttree),
\end{equation*}
where $\DGSigma$ is the suspension functor on graded modules.
We also have an identity $\DGSigma^{-\sharp V(\ttree)}\AOp(\ttree) = \bigl(\bigotimes_{v\in V(\ttree)}\underline{01}^{\sharp}_v\bigr)\otimes\AOp(\ttree)$,
where we associate a factor of cohomological degree one $\underline{01}^{\sharp}_v$ to every vertex $v\in V(\ttree)$.
We just need to fix an ordering choice on the vertices of our tree to get this representation, as a permutation of factors produces a sign
in the tensor product $\bigotimes_{v\in V(\ttree)}\underline{01}^{\sharp}_v$.

To every pair $(\ttree,e)$, with $\ttree\in\Tree(r)$ and $e\in\mathring{E}(\ttree)$,
we associate the map
\begin{equation*}
\partial_{(\ttree,e)}: \DGSigma^{-\sharp V(\ttree)+1}\AOp(\ttree/e)\rightarrow\DGSigma^{-\sharp V(\ttree)}\AOp(\ttree)
\end{equation*}
given by the coproduct operator $\partial_{(\ttree,e)} = \rho_{\ttree\rightarrow\ttree/e}$ on $\AOp(\ttree/e)$, where $\ttree/e$ is defined by contracting the edge $e$ in $\ttree$,
together with the mapping $\underline{01}_{u\equiv v}\mapsto\underline{01}_u\otimes\underline{01}_v$
in the tensor product $\underline{01}_{u\equiv v}\otimes\bigotimes_{x\in V(\ttree/e)\setminus\{u\equiv v\}}\underline{01}^{\sharp}_x$,
where $u$ and $v$ respectively denote the source and the target of the edge $e$ in $\ttree$,
while $u\equiv v$ denotes the result of the fusion of these vertices in $\ttree/e$. (Note that $V(\ttree/e) = V(\ttree)\setminus\{u,v\}\amalg\{u\equiv v\}$.)
To perform this construction, we put the factor $\underline{01}_{u\equiv v}$ associated to the merged vertex $u\equiv v$
in front position of the tensor product $\bigotimes_{x\in V(\ttree/e)}\underline{01}^{\sharp}_x$,
using a tensor permutation if necessary (recall simply that such a tensor permutation
produces a sign). The other option is to transport the map $\underline{01}_{u\equiv v}\mapsto\underline{01}_u\otimes\underline{01}_v$
over some tensors in order to reach the factor $\underline{01}_{u\equiv v}$
inside the tensor product $\bigotimes_{x\in V(\ttree/e)}\underline{01}^{\sharp}_x$.
This operation produces a sign too (because this map $\underline{01}_{u\equiv v}\mapsto\underline{01}_u\otimes\underline{01}_v$
has degree $1$ and therefore the permutation of this map with a tensor returns a sign).
Both procedures yield equivalent results.
Then we take
\begin{equation*}
\partial = \sum_{(\ttree,e)}\partial_{(\ttree,e)},
\end{equation*}
the sum of these maps $\partial_{(\ttree,e)}$ associated to the pairs $(\ttree,e)$.

The next lemma implies that this map $\partial$ defines a twisting differential on $\DGB^c(\AOp)(r)$,
so that we can provide $\DGB^c(\AOp)(r)$ with a dg module structure with the sum $\delta+\partial: \DGB^c(\AOp)(r)\rightarrow\DGB^c(\AOp)(r)$
as total differential, where $\delta: \DGB^c(\AOp)(r)\rightarrow\DGB^c(\AOp)(r)$ denotes the differential
induced by the internal differential of the objects $\AOp(\ttree)$
in $\DGB^c(\AOp)(r)$.

We now fix a pointed shuffle decomposition $\{1<\dots<r\} = \{i_1<\dots<\widehat{i_p}<\dots<i_k\}\amalg\{j_1<\dots<j_l\}$
associated to the composition scheme of an operadic composition product $\circ_{i_p}$,
which we can also represent by a tree with two vertices $\gammatree$.
For a pair of trees $\ttree\in\Tree(\{i_1<\dots<i_k\})$ and $\stree\in\Tree(\{j_1<\dots<j_l\})$,
we consider the map
\begin{equation*}
\circ_{i_p}^{\stree,\ttree}: \DGSigma^{-\sharp V(\stree)}\AOp(\stree)\otimes\DGSigma^{-\sharp V(\ttree)}\AOp(\ttree)
\rightarrow\DGSigma^{-\sharp V(\stree\circ_{i_p}\ttree)}\AOp(\stree\circ_{i_p}\ttree)
\end{equation*}
yielded by the Segal map $i_{\stree\circ_{i_p}\ttree}: \AOp(\stree)\otimes\AOp(\ttree)\rightarrow\AOp(\stree\circ_{i_p}\ttree)$
associated to the decomposition of the grafting $\stree\circ_{i_p}\ttree = \lambda_{\gammatree}(\stree,\ttree)$,
together with the obvious tensor identity $\bigotimes_{u\in V(\stree)}\underline{01}^{\sharp}_u\otimes\bigotimes_{v\in V(\ttree)}\underline{01}^{\sharp}_v
= \bigotimes_{x\in V(\stree\circ_{i_p}\ttree)}\underline{01}^{\sharp}_x$,
which we deduce from the relation $V(\stree\circ_{i_p}\ttree) = V(\stree)\amalg V(\ttree)$.
Then we consider the composition product
\begin{equation*}
\circ_{i_p}: \DGB^c(\AOp)(\{i_1<\dots<i_k\})\otimes\DGB^c(\AOp)(\{j_1<\dots<j_l\})\rightarrow\DGB^c(\AOp)(\{1<\dots<r\})
\end{equation*}
defined by the sum of these maps $\circ_{i_p}^{\stree,\ttree}$ associated to the pairs $(\stree,\ttree)$. We check in a forthcoming lemma
that these operations preserve our differentials, and hence, provide our object with well-defined dg module operations.
\end{constr}

We first check the validity of the definition of our twisting differential on the cobar construction.
We have the following more precise statement.

\begin{lemm}\label{lemma:differential-bar-construction}
We have the relations $\partial^2 = \delta\partial + \partial\delta = 0$ on $\DGB^c(\AOp)(r)$, where $\delta: \DGB^c(\AOp)(r)\rightarrow\DGB^c(\AOp)(r)$
denotes the differential induced by the internal differential of the objects $\AOp(\ttree)$ (as we explained in the above construction).
\end{lemm}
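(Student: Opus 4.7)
The plan is to verify the two identities $\delta\partial + \partial\delta = 0$ and $\partial^2 = 0$ separately, by unfolding the definition of $\partial = \sum_{(\ttree,e)} \partial_{(\ttree,e)}$ and analyzing each summand. The three structural ingredients I intend to use are: (i) each coproduct operator $\rho_{\ttree \rightarrow \ttree/e}$ is a morphism of dg modules, hence a chain map of degree zero; (ii) every element of the subspace of $\DGN^*(\Delta^1)^{\otimes \sharp V(\ttree)}$ generated by the $\underline{01}^\sharp_v$ factors is a cocycle, since $\delta\underline{01}^\sharp = 0$ in $\DGN^*(\Delta^1)$; (iii) the strict functoriality $\rho_{\ttree \rightarrow \utree} \circ \rho_{\utree \rightarrow \stree} = \rho_{\ttree \rightarrow \stree}$ of the coproduct operators attached to the Segal shuffle dg pre-cooperad structure on $\AOp$.

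For the anti-commutation $\delta\partial + \partial\delta = 0$, I would factor each summand $\partial_{(\ttree,e)}$ as the tensor product of the degree-one splitting $\mu_e: \underline{01}^\sharp_{u\equiv v} \mapsto \underline{01}^\sharp_u \otimes \underline{01}^\sharp_v$ on the $\underline{01}$-factors (placed in position by a signed permutation) and the degree-zero chain map $\rho_{\ttree \rightarrow \ttree/e}$ on the $\AOp$-factor. Since $\mu_e$ acts between subspaces consisting entirely of cocycles by (ii), it trivially commutes with $\delta$, while $\rho_{\ttree\to\ttree/e}$ does so by (i). A direct application of the Leibniz rule for the differential on a tensor product, combined with $|\mu_e x| = |x|+1$, would then produce exactly the Koszul sign that converts the strict commutation relations on the tensor factors into the anti-commutation $\delta\partial_{(\ttree,e)} + \partial_{(\ttree,e)}\delta = 0$. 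Summing over $(\ttree,e)$ yields the full identity.

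For $\partial^2 = 0$, I would expand the composition $\partial \circ \partial$ as a sum indexed by triples $(\ttree, e, e')$ with $e \neq e' \in \mathring{E}(\ttree)$, with contribution $\partial_{(\ttree, e)} \circ \partial_{(\ttree/e, e')}$. The same unordered pair $\{e, e'\}$ appears a second time in the expansion as $\partial_{(\ttree, e')} \circ \partial_{(\ttree/e', e)}$. By the strict functoriality of the coproduct operators, both composites give $\rho_{\ttree \rightarrow \ttree/\{e,e'\}}$ on the $\AOp$-factor. On the $\underline{01}$-factor, the two orderings apply the splittings $\mu_e$ and $\mu_{e'}$ in opposite orders; a short explicit sign count shows that the Koszul signs produced by the two orderings (arising from the permutations needed to position each splitting factor in front and then restore the canonical order) differ by a global factor of $-1$, precisely because we are swapping two odd operators. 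The two contributions therefore cancel in pairs, and $\partial^2 = 0$ follows. The main obstacle will be confirming that this final relative sign is independent of the linear ordering of $V(\ttree)$ used to identify $\DGSigma^{-\sharp V(\ttree)}\AOp(\ttree)$ with $(\bigotimes_{v\in V(\ttree)}\underline{01}^\sharp_v) \otimes \AOp(\ttree)$, which I expect to deduce from the fact that any change of ordering introduces matching Koszul signs in both orderings and thus cancels in the comparison.
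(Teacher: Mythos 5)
Your proposal is correct and follows essentially the same route as the paper: the anti-commutation $\delta\partial+\partial\delta=0$ is reduced termwise to the fact that each $\rho_{\ttree\rightarrow\ttree/e}$ is a chain map (with the Koszul sign coming from the odd $\underline{01}$-splitting), and $\partial^2=0$ is obtained by pairing the two contraction orders of each unordered pair $\{e,e'\}$, using the functoriality $\rho_{\ttree\rightarrow\ttree/e}\circ\rho_{\ttree/\{e,e'\}\rightarrow\ttree/e}=\rho_{\ttree/\{e,e'\}\rightarrow\ttree}=\rho_{\ttree\rightarrow\ttree/e'}\circ\rho_{\ttree/\{e,e'\}\rightarrow\ttree/e'}$ and the opposite signs produced by transposing the two odd tensor factors. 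Your extra care about independence of the chosen ordering of $V(\ttree)$ is a sound refinement of the same sign argument, not a different method.
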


\begin{proof}
The identity $\delta\partial +\partial\delta = 0$ reduces to the termwise relations $\delta\partial_{(\ttree,e)} + \partial_{(\ttree,e)}\delta = 0$,
which in turn, are reformulation of the fact that the coproduct operators $\rho_{\ttree\rightarrow\ttree/e}$
are morphisms of dg modules.

We check that the relation $\partial^2 = 0$ holds on each summand $\DGSigma^{-\sharp V(\ttree)}\AOp(\ttree)\subset\DGB^c(\AOp)$
in the target of our map.
Note that $\mathring{E}(\ttree/e) = \mathring{E}(\ttree)\setminus\{e\}$.
Hence the components of $\partial^2$ which land in $\DGSigma^{-\sharp V(\ttree)}\AOp(\ttree)$
are defined on a summand $\DGSigma^{-\sharp V(\stree)}\AOp(\stree)\subset\DGB^c(\AOp)$
such that $\stree = \ttree/\{e,f\}$,
for some $e,f\in\mathring{E}(\ttree)$, $e\not=f$, and are given by the composites:
\begin{equation*}
\xymatrix{ & \DGSigma^{-\sharp V(\ttree)+1}\AOp(\ttree/e)\ar[dr]^{\partial_{(\ttree,e)}} & \\
\DGSigma^{-\sharp V(\ttree)+2}\AOp(\ttree/\{e,f\})\ar[ur]^{\partial_{(\ttree/e,f)}}\ar[dr]_{\partial_{(\ttree/f,e)}} && \DGSigma^{-\sharp V(\ttree)}\AOp(\ttree) \\
& \DGSigma^{-\sharp V(\ttree)+1}\AOp(\ttree/e)\ar[ur]_{\partial_{(\ttree,f)}} & }.
\end{equation*}
Both composites are given by composite coproduct operators
such that
\begin{equation*}
\rho_{\ttree\rightarrow\ttree/e}\circ\rho_{\ttree/\{e,f\}\rightarrow\ttree/e} = \rho_{\ttree/\{e,f\}\rightarrow\ttree} = \rho_{\ttree\rightarrow\ttree/f}\circ\rho_{\ttree/\{e,f\}\rightarrow\ttree/f}
\end{equation*}
by functoriality of the object $\AOp$. We just note that the above composites differ by an order of tensor factors $\underline{01}$ to conclude that these composite coproducts
occur with opposite signs in $\partial^2$, and hence, cancel each other. The conclusion follows.
\end{proof}

We check the validity of our definition of the composition products on the cobar construction. We have the following more precise statement.

\begin{lemm}\label{lemma:bar-construction-product-operad}
The composition products $\circ_{i_p}$ of Construction~\ref{construction:bar-complex} preserve both the cobar differential $\partial$ and the differential $\delta$
induced by the internal differential of the object $\AOp$ on the cobar construction $\DGB^c(\AOp)$,
and hence, form morphisms of dg modules.
\end{lemm}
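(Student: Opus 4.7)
The plan is to verify separately that each composition product $\circ_{i_p}$ commutes with the internal differential $\delta$ and with the twisting differential $\partial$, so that it automatically preserves the total differential $\delta + \partial$ of the cobar construction.

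The relation with $\delta$ will be essentially immediate. Each map $\circ_{i_p}^{\stree,\ttree}$ is the composite of the Segal map $i_{\lambda_{\gammatree}(\stree,\ttree)}$, which is a morphism of dg modules by the defining axioms of a Segal shuffle dg pre-cooperad, with the canonical identification of tensor factors $\bigotimes_{u \in V(\stree)} \underline{01}^{\sharp}_u \otimes \bigotimes_{v \in V(\ttree)} \underline{01}^{\sharp}_v \simeq \bigotimes_{x \in V(\stree \circ_{i_p} \ttree)} \underline{01}^{\sharp}_x$ coming from the bijection $V(\stree) \sqcup V(\ttree) = V(\stree \circ_{i_p} \ttree)$; the latter is a degree-preserving isomorphism of graded modules and therefore commutes with $\delta$ trivially.

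For the cobar differential $\partial$, my task will be to match the summands of $\partial(\circ_{i_p}^{\stree,\ttree}(\alpha \otimes \beta))$ with those of $\circ_{i_p}(\partial\alpha \otimes \beta) + (-1)^{|\alpha|}\circ_{i_p}(\alpha \otimes \partial\beta)$. The summands on the target side are indexed by pairs $(\utree, e)$ with $\utree/e = \stree \circ_{i_p} \ttree$, and every such $\utree$ is obtained by uncontracting a vertex $x$ of $\stree \circ_{i_p} \ttree$ into two vertices joined by the new edge $e$. Because $V(\stree \circ_{i_p} \ttree) = V(\stree) \sqcup V(\ttree)$, we have a dichotomy: either $x \in V(\stree)$, in which case $\utree = \stree' \circ_{i_p} \ttree$ for some $\stree'$ with $\stree'/e = \stree$, or $x \in V(\ttree)$, with the symmetric description $\utree = \stree \circ_{i_p} \ttree'$. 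This provides a bijection between the pairs $(\utree, e)$ indexing the target summands and the pairs $(\stree', e)$ and $(\ttree', e)$ indexing the source summands. For each such pair in the first case, I will invoke the compatibility between Segal maps and coproduct operators from Definition~\ref{definition:tree-shaped-cooperad}, applied to the tree morphism $f: \stree' \circ_{i_p} \ttree \to \stree \circ_{i_p} \ttree$ and the decomposition $\stree \circ_{i_p} \ttree = \lambda_{\gammatree}(\stree, \ttree)$; the pre-images under $f$ of the two subtrees are precisely $\stree'$ and $\ttree$, so this compatibility yields the identity $\rho_f \circ i_{\lambda_{\gammatree}(\stree,\ttree)} = i_{\lambda_{\gammatree}(\stree',\ttree)} \circ (\rho_{\stree' \to \stree} \otimes \id_{\AOp(\ttree)})$, which identifies the two corresponding summands. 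The other case is symmetric.

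I expect the only genuinely delicate part to be the sign analysis. Both sides involve permuting the degree-one factors $\underline{01}^{\sharp}_v$ in order to bring the factor associated to the merged vertex $u \equiv v$ to the front position before applying the splitting $\underline{01}^{\sharp}_{u \equiv v} \mapsto \underline{01}^{\sharp}_u \otimes \underline{01}^{\sharp}_v$. Adopting the convention of ordering the vertices of $\stree \circ_{i_p} \ttree$ with those of $V(\stree)$ preceding those of $V(\ttree)$, I expect the permutations to produce matching signs when $x \in V(\stree)$ (since the whole $\stree$-block sits at the front on both sides), whereas when $x \in V(\ttree)$, moving the factor $\underline{01}^{\sharp}_{u \equiv v}$ past the $\stree$-block on the $\partial \circ \circ_{i_p}$ side will produce exactly the Koszul sign $(-1)^{|\alpha|}$ demanded by the graded Leibniz rule on the tensor product. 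Confirming this sign matching, summand by summand, will complete the proof.
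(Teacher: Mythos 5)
Your proposal is correct and follows essentially the same route as the paper's proof: the internal differential $\delta$ is handled by the fact that the Segal maps are dg module morphisms, and the cobar differential $\partial$ is handled by the dichotomy $\utree = \stree'\circ_{i_p}\ttree$ versus $\utree = \stree\circ_{i_p}\ttree'$ together with the compatibility of Segal maps and coproduct operators. Your sign analysis just makes explicit what the paper compresses into ``summing these equalities with suitable suspension factors.''
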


\begin{proof}
We prove that $\circ_{i_p}$ commutes with the differentials on each summand $\AOp(\stree)\otimes\AOp(\ttree)$
of the tensor product $\DGB^c(\AOp)(\{i_1<\dots<i_k\})\otimes\DGB^c(\AOp)(\{j_1<\dots<j_l\})$,
where $\stree\in\Tree(\{i_1<\dots<i_k\})$, $\ttree\in\Tree(\{j_1<\dots<j_l\})$.
We just use that the Segal maps, which induce our composition product componentwise, are morphisms of dg modules to conclude that $\circ_{i_p}$
preserves the internal differentials on our objects.
We focus on the verification that $\circ_{i_p}$ preserves the cobar differential.
We set $\thetatree = \stree\circ_{i_p}\ttree$ and we consider a tree $\thetatree'$ such that $\thetatree = \thetatree'/e$ for an edge $e\in\mathring{E}(\thetatree')$
which is contracted in a vertex in $\thetatree$. This vertex comes either from $\stree$
or from $\ttree$. In the first case, we have $\thetatree' = \stree'\circ_{i_p}\ttree$, for a tree $\stree'\subset\thetatree'$ such that $e\in\mathring{E}(\stree')$
and $\stree'/e = \stree$ (we can actually identify $\stree'\subset\thetatree'$
with the pre-image of the subtree $\stree\subset\thetatree$ under the morphism $\thetatree'\rightarrow\thetatree'/e = \thetatree$).
In the second case, we have $\thetatree' = \stree\circ_{i_p}\ttree'$,
for a tree $\ttree'\subset\thetatree'$ such that $e\in\mathring{E}(\ttree')$ and $\ttree'/e = \ttree$ (we can then identify $\ttree'\subset\thetatree'$
with the pre-image of the subtree $\ttree\subset\thetatree$
under our edge contraction morphism $\thetatree'\rightarrow\thetatree'/e = \thetatree$).
The compatibility between the Segal maps and the coproduct operators imply that the following diagrams commute:
\begin{equation*}
\xymatrix{ \AOp(\stree'/e)\otimes\AOp(\ttree)\ar[d]_-{\rho_{\stree'\rightarrow\stree'/e}\otimes\id}\ar[r]^{i_{\stree'/e\circ_{i_p}\ttree}} &
\AOp(\stree'/e\circ_{i_p}\ttree)\ar[d]^-{\rho_{\stree'\rightarrow\stree'/e}} \\
\AOp(\stree')\otimes\AOp(\ttree)\ar[r]^{i_{\stree'\circ_{i_p}\ttree}} &
\AOp(\stree'\circ_{i_p}\ttree) },
\quad\xymatrix{ \AOp(\stree)\otimes\AOp(\ttree'/e)\ar[d]_-{\id\otimes\rho_{\ttree'\rightarrow\ttree'/e}}\ar[r]^{i_{\stree\circ_{i_p}\ttree'/e}} &
\AOp(\stree\circ_{i_p}\ttree'/e)\ar[d]^-{\rho_{\ttree'\rightarrow\ttree'/e}} \\
\AOp(\stree)\otimes\AOp(\ttree')\ar[r]^{i_{\stree'\circ_{i_p}\ttree}} &
\AOp(\stree\circ_{i_p}\ttree') }.
\end{equation*}
This yields the relation $\partial_{(\stree'\circ_{i_p}\ttree,e)}\circ\circ_{i_p}^{\stree'/e,\ttree} = \circ_{i_p}^{\stree',\ttree}\circ(\partial_{(\stree',e)}\otimes\id)$
in the first case
and the relation $\partial_{(\stree\circ_{i_p}\ttree',e)}\circ\circ_{i_p}^{\stree,\ttree'/e} = \circ_{i_p}^{\stree,\ttree'}\circ(\id\otimes\partial_{(\ttree',e)})$
in the second case.
By summing these equalities with suitable suspension factors we get that $\partial$
defines a derivation with respect to $\circ_{i_p}$.
\end{proof}

We immediately deduce from the associativity of the Segal maps that the composition products of Construction~\ref{construction:bar-complex}
satisfy the associativity relations of the composition products of an operad.
We therefore get the following concluding statement:

\begin{thm-defn}
The collection $\DGB^c(\AOp) = \{\DGB^c(\AOp)(r),r>0\}$ equipped with the differential and structure operations
defined in Construction~\ref{construction:bar-complex}
forms a shuffle operad in dg modules.
This operad $\DGB^c(\AOp)$ is the cobar construction of the Segal shuffle dg (pre-)cooperad $\AOp$.\qed
\end{thm-defn}

%\begin{defn}\label{definition:bar-construction}
%The dg operad $\DGB^c(\AOp)$ defined above is called the bar construction of the Segal shuffle dg cooperad $\AOp$.
%\end{defn}

\begin{remark}
In the case of an ordinary dg cooperad, we just retrieve the cobar construction functor of the classical theory of operads $\COp\mapsto\DGB^c(\COp)$,
which goes from the category of locally conilpotent (coaugmented) dg cooperads
to the category of (augmented) dg operads. (Recall simply that we drop units from our definitions so that our cooperads are equivalent to the coaugmentation coideal of coaugmented cooperads,
whereas our cobar construction is equivalent to the augmentation ideal of the classical unital cobar construction.)
By classical operad theory, we also have a bar construction functor $\POp\mapsto\DGB(\POp)$, which goes in the converse direction, from the category of (augmented) dg operads
to the usual category of locally conilpotent (coaugmented) dg cooperads.
For a locally conilpotent Segal shuffle dg pre-cooperad $\AOp$, we actually have an identity $\DGW^c_{dec}(\AOp) = \DGB\DGB^c(\AOp)$,
where we consider the decomposed $W$-construction of the previous subsection and the cobar operad $\DGB^c(\AOp)$,
such as defined in this subsection.
\end{remark}

\subsection{The strict Segal $E_\infty$-Hopf cooperad associated to an operad in simplicial sets}

In this subsection, we study a correspondence between the category of operads in simplicial sets
and the category of Segal $E_\infty$-Hopf cooperads.
In a first step, we explain the construction of the structure of a strict Segal $E_\infty$-Hopf cooperad
on the normalized cochain complex of a simplicial operad.

\begin{constr}\label{definition:cooperad-from-simplicial-operad}
Let $\POp$ be a (symmetric) operad in the category of simplicial sets $\simp\Set$.
Let $\DGN^*: \simp\Set^{op}\rightarrow\EAlg$ be the normalized cochain complex functor, where we consider on $\DGN^*(-)$ the $\EOp$-algebra structure
defined in~\cite{BergerFresse} (see also our overview in~\S\ref{sec:Barratt-Eccles-operad}).
For a tree $\ttree\in\Tree(r)$, we set $\POp(\ttree) = \prod_{v\in V(\ttree)}\POp(\rset_v)$.
Then we set:
\begin{equation*}
\AOp_{\POp}(\ttree) = \DGN^*(\POp(\ttree)).
\end{equation*}
We equip this collection with the coproduct operators $\rho_{\ttree\rightarrow\stree}: \AOp_{\POp}(\stree)\rightarrow\AOp_{\POp}(\ttree)$
induced by the treewise composition products on~$\POp$
and with the facet operators $i_{\sigmatree,\stree}: \AOp_{\POp}(\sigmatree)\rightarrow\AOp_{\POp}(\ttree)$
induced by the projection maps $\prod_{v\in V(\ttree)}\POp(\rset_v)\rightarrow\prod_{v\in V(\sigmatree)}\POp(\rset_v)$,
for $\sigmatree\subset\ttree$.
We also have operators $s^*: \AOp_{\POp}(s\ttree)\rightarrow\AOp_{\POp}(\ttree)$,
associated to the permutations $s\in\Sigma_r$,
which are induced by the corresponding action of the permutations $s_*: \POp(\ttree)\rightarrow\POp(s\ttree)$
on our simplicial sets.
\end{constr}

\begin{prop}\label{proposition:cooperad-from-simplicial-set}
Let $\POp$ be an operad in $\simp\Set$. The collection $\AOp_{\POp}(\ttree)$, equipped with the coproduct operators and facet operators defined in the above construction,
and with our action of permutations,
defines a strict Segal $E_\infty$-Hopf symmetric cooperad. This construction is functorial in $\POp$.
\end{prop}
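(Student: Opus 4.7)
The plan is to verify each of the axioms of a strict Segal $E_\infty$-Hopf symmetric cooperad from Definition~\ref{definition:strict-E-infinity-cooperad}. Most of the verification is formal, and reduces to functoriality of $\DGN^*$ combined with properties of treewise operadic composition on $\POp$.

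First, the functoriality of the coproduct operators $\rho_{\ttree\rightarrow\stree}$ follows from the associativity of the treewise composition products on the operad $\POp$, which translates under the contravariant functor $\DGN^*$ into the required relations on $\AOp_{\POp}$. The functoriality of the facet operators $i_{\sigmatree,\stree}$ is immediate from the composability of projections between the iterated products $\prod_{v}\POp(\rset_v)$. The compatibility relation between facet and coproduct operators reduces, via $\DGN^*$, to the statement that the treewise composition $\lambda_{\stree}$ restricts correctly along a subtree embedding $\sigmatree\subset\stree$ to the composition $\lambda_{\sigmatree}$ on the pre-image $f^{-1}(\sigmatree)\subset\ttree$, which is a routine consequence of the definition of tree morphisms recalled in~\S\ref{subsection:trees}. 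The symmetric group equivariance holds since $\POp$ is a symmetric operad and $\DGN^*$ is a functor.

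The only nontrivial step is the Segal condition. For a decomposition $\ttree = \lambda_{\stree}(\sigmatree_v,v\in V(\stree))$, the partition of vertices $V(\ttree)=\coprod_{v\in V(\stree)}V(\sigmatree_v)$ yields a canonical identification of simplicial sets
\begin{equation*}
\POp(\ttree) = \prod_{w\in V(\ttree)}\POp(\rset_w) \;=\; \prod_{v\in V(\stree)}\POp(\sigmatree_v),
\end{equation*}
under which the facet operators $i_{\sigmatree_v,\ttree}$ correspond to the $\DGN^*$-images of the canonical projections onto the factors. I would then precompose the Segal map $i_{\lambda_{\stree}(\sigmatree_*)}\colon \bigvee_v \AOp_{\POp}(\sigmatree_v)\rightarrow\AOp_{\POp}(\ttree)$ with the natural transformation $\EM$ of Proposition~\ref{claim:Barratt-Eccles-algebra-coproducts}, producing the composite
\begin{equation*}
\bigotimes_{v\in V(\stree)}\DGN^*(\POp(\sigmatree_v))\xrightarrow{\EM}\bigvee_{v}\AOp_{\POp}(\sigmatree_v)\xrightarrow{i_{\lambda_{\stree}(\sigmatree_*)}}\DGN^*\bigl(\prod_{v\in V(\stree)}\POp(\sigmatree_v)\bigr).
\end{equation*}
The total composite unravels to the Eilenberg--Zilber cross product map on normalized simplicial cochains, which is a quasi-isomorphism. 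Since $\EM$ is already a weak equivalence by the cited proposition, the two-out-of-three property forces the Segal map itself to be a weak equivalence. Functoriality of the assignment $\POp\mapsto\AOp_{\POp}$ is automatic from the naturality of all the structural maps used in its construction.

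The main obstacle is essentially bookkeeping: one must verify that $\EM$ composed with the facet-induced map is indeed identified with the $\EOp$-algebra morphism given by the Eilenberg--Zilber cross product, which relies on the explicit description of the $\EOp$-action on $\DGN^*(X)$ from~\cite{BergerFresse} and on the description of $\EM$ in terms of the Alexander--Whitney diagonal recalled in~\S\ref{sec:Barratt-Eccles-operad}.
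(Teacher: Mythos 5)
Your proposal is correct and follows essentially the same route as the paper: both reduce the Segal condition to the observation that the composite of $\EM$ with the Segal map is the classical cochain cross product $\AW\colon\DGN^*(X)\otimes\DGN^*(Y)\rightarrow\DGN^*(X\times Y)$, which is a quasi-isomorphism, so two-out-of-three applies. The only cosmetic difference is that the paper first invokes Proposition~\ref{proposition:Segal-condition} to reduce to decompositions over a two-vertex tree, whereas you treat the general decomposition directly; both are fine.
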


\begin{proof}
The associativity of the coproduct maps is a direct consequence of the associativity of the products in $\POp$ and of the functoriality of the normalized cochain complex $\DGN^*(-)$.
The associativity of the facet operators and their compatibility with the coproducts also follows from the counterpart of these relations at the simplicial set level
and from the functoriality of the normalized cochain complex $\DGN^*(-)$,
and similarly as regards the compatibility between the action of permutations, the coproduct operators and the facet operators.

We only need to prove the Segal condition. By Proposition \ref{proposition:Segal-condition}, we can reduce our verifications to the case
of a decomposition $\ttree = \lambda_{\gammatree}(\sigmatree_u,\sigmatree_v)$
over a tree with two internal vertices $\gammatree$.
We observe that, as our facet operators are induced by simplicial projections, there is a commutative diagram:
\begin{equation*}
\xymatrixcolsep{2pc}\xymatrix{ \AOp_{\POp}(\sigmatree_u)\vee\AOp_{\POp}(\sigmatree_v) = \DGN^*(\POp(\sigmatree_u))\vee\DGN^*(\POp(\sigmatree_v))
\ar[r]^-{i_{\lambda_{\gammatree}(\sigmatree_u,\sigmatree_v)}} &
\AOp_{\POp}(\ttree) = \DGN^*(\POp(\sigmatree_u)\times\POp(\sigmatree_v)) \\
& \DGN^*(\POp(\sigmatree_u))\otimes\DGN^*(\POp(\sigmatree_v))\ar[lu]^-{\EM}\ar[u]_-{\AW} },
\end{equation*}
where $\AW$ denotes the classical Alexander--Whitney product $\AW: \DGN^*(X)\otimes\DGN^*(Y)\rightarrow\DGN^*(X\times Y)$.
Both maps $\EM$ and $\AW$ are weak-equivalences (the map $\EM$ by Proposition~\ref{claim:Barratt-Eccles-algebra-coproducts}).
Hence $i_{\lambda_{\gammatree}(\sigmatree_u,\sigmatree_v)}$ is also a weak-equivalence.
The conclusion follows.
\end{proof}

The normalized cochain complex functor $\DGN^*: \simp\Set^{op}\rightarrow\EAlg$ admits a left adjoint $\DGG: \EAlg\rightarrow\simp\Set^{op}$,
which is defined by the formula $\DGG(A) = \Mor_{\EAlg}(A,\DGN^*(\Delta^{\bullet}))$,
for all $A\in\EAlg$ (see~\cite{Mandell} for a variant of this construction).
This pair of adjoint functors also defines a Quillen adjunction between the Kan model category of simplicial sets
and the model category of $\EOp$-algebras (we refer to~\cite{BergerFresse} for the definition of the model structure on the category of $\EOp$-algebras).
By the main result of~\cite{Mandell}, if we take $\kk = \bar{\FF}_p$ as ring of coefficients, then the object $L\DGG(\DGN^*(X))$,
which we obtain by applying the normalized cochain complex functor to a connected simplicial sets $X\in\simp\Set$
and by going back to simplicial sets by using the derived functor $L\DGG(-)$
of our left adjoint $\DGG(-) = \Mor_{\EAlg}(-,\DGN^*(\Delta^{\bullet}))$,
is weakly-equivalent to the $p$-completion of the space~$X$ (under standard nilpotence and cohomological finiteness assumptions).

We aim to examine the application of this adjoint functor construction to our Segal $E_\infty$-Hopf cooperads.
We need to form resolutions of our objects in the category of Segal $E_\infty$-Hopf symmetric cooperads
with cofibrant $E_\infty$-algebras
as components to give a sense to the application of the derived functor $\DGG$
to our objects. We rely on the following observation.

\begin{lemm}
Let $R: \EAlg\rightarrow\EAlg$ be a functorial cofibrant replacement functor on the model category of $\EOp$-algebras (which exists because
the category of $\EOp$-algebras is cofibrantly generated).
If $\AOp\in\EOp\Hopf\Sigma\SegOp^c$ is a Segal $E_\infty$-Hopf symmetric pre-cooperad, then the collection $\ROp(\ttree) = R\AOp(\ttree)\in\EAlg$, $\ttree\in\Tree(r)$, $r>0$,
inherits a Segal $E_\infty$-Hopf symmetric pre-cooperad structure
by functoriality. If $\AOp$ satisfies the Segal condition and is such that the objects $\AOp(\ttree)$ are cofibrant as dg modules,
then $\ROp = R\AOp$ also satisfies the Segal condition,
and hence, forms a Segal $E_\infty$-Hopf symmetric cooperad.
\end{lemm}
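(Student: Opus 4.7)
My plan is to treat the two assertions separately. The first, about the transfer of structure, reduces to pure functoriality, while the second, about the Segal condition, reduces via the Eilenberg--Zilber equivalence $\EM$ to a classical tensor product argument.

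For the transfer of structure, I would define the coproduct operators, the facet operators, and the action of permutations on $\ROp$ as the images under $R$ of the corresponding operators on $\AOp$: we set $\rho^R_{f} = R(\rho_{f})$ for every tree morphism $f$, $i^R_{\sigmatree,\stree} = R(i_{\sigmatree,\stree})$ for every subtree inclusion, and $(s^{R})^* = R(s^{*})$ for every permutation $s$. Every relation required by Definition~\ref{definition:strict-E-infinity-cooperad} -- the functoriality of the coproducts, the associativity of the facets, the compatibility square between facets and coproducts, and the equivariance with respect to permutations -- is then obtained by applying $R$ to the corresponding relation on $\AOp$, since $R$ is a strict functor and sends compositions to compositions and identities to identities.

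For the Segal condition, I would fix a decomposition $\ttree = \lambda_{\stree}(\sigmatree_v,v\in V(\stree))$ and compare the Segal map for $\ROp$ with the Segal map for $\AOp$ through the commutative square
\[
\xymatrix{ \bigvee_{v\in V(\stree)}R\AOp(\sigmatree_v)\ar[r]\ar[d] & R\AOp(\ttree)\ar[d]^-{\sim} \\
\bigvee_{v\in V(\stree)}\AOp(\sigmatree_v)\ar[r]^-{\sim} & \AOp(\ttree) }
\]
whose vertical arrows are induced by the natural transformation $R\Rightarrow\id_{\EAlg}$, a levelwise weak-equivalence. The commutativity of the square comes from applying the universal property of coproducts to the naturality square for each facet operator $i_{\sigmatree_v,\ttree}$. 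The bottom map is a weak-equivalence by the Segal condition for $\AOp$, and the right vertical is a weak-equivalence by the definition of a cofibrant replacement, so by two-out-of-three the top map will be a weak-equivalence as soon as the left vertical is.

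The analysis of the left vertical is where I expect to meet the main technical obstacle, since the coproduct of $\EOp$-algebras is not a priori homotopy invariant on arbitrary weak-equivalences. To circumvent this, I would use the natural transformation $\EM$ of Proposition~\ref{claim:Barratt-Eccles-algebra-coproducts} to compare the coproducts $\bigvee_v$ with the tensor products $\bigotimes_v$ through a commutative square with horizontal weak-equivalences, thus reducing the problem to showing that the induced map $\bigotimes_v R\AOp(\sigmatree_v)\to\bigotimes_v\AOp(\sigmatree_v)$ is a weak-equivalence of dg modules. This last point is where the cofibrancy hypothesis enters: the objects $\AOp(\sigmatree_v)$ are cofibrant as dg modules by assumption, while the objects $R\AOp(\sigmatree_v)$ are cofibrant as dg modules because the model structure on $\EAlg$ is transferred from dg modules, so that every cofibrant $\EOp$-algebra is cofibrant as a dg module. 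The tensor product of weak-equivalences between cofibrant dg modules is a weak-equivalence, which yields the desired conclusion and, by two-out-of-three applied twice, the Segal condition for $\ROp$.
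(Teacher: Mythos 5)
Your proposal is correct and follows essentially the same route as the paper's own (much terser) proof: the pre-cooperad structure is transferred by pure functoriality, and the Segal condition is checked by using the Eilenberg--MacLane equivalence of Proposition~\ref{claim:Barratt-Eccles-algebra-coproducts} to replace coproducts by tensor products, invoking the cofibrancy of the objects $\AOp(\ttree)$ and $R\AOp(\ttree)$ as dg modules so that the tensor product of the weak-equivalences $R\AOp(\sigmatree_v)\xrightarrow{\sim}\AOp(\sigmatree_v)$ remains a weak-equivalence, and concluding by two-out-of-three. Your write-up merely spells out the comparison squares that the paper leaves implicit.
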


\begin{proof}
The first claim, that the collection $\ROp(\ttree) = R\AOp(\ttree)\in\EAlg$, $\ttree\in\Tree(r)$, $r>0$,
inherits a Segal $E_\infty$-Hopf symmetric pre-cooperad structure,
is immediate.
To establish that $R\AOp$ also satisfies the Segal condition, we just use the result of Proposition~\ref{claim:Barratt-Eccles-algebra-coproducts}
(we merely need the extra assumption that the objects $\AOp(\ttree)$ are cofibrant as dg modules, like the cofibrant $\EOp$-algebras $R\AOp(\ttree)$,
to get that the weak-equivalences $R\AOp(\ttree)\xrightarrow{\sim}\AOp(\ttree)$
induce a weak-equivalence when we pass to a tensor product).
\end{proof}

The application of the functor $\DGG(-)$ to the Segal $E_\infty$-Hopf symmetric cooperad $R\AOp$
defined in this proposition returns a Segal symmetric operad in the category of simplicial sets,
where a Segal symmetric operad is the obvious counterpart,
in the category of simplicial sets,
of our notion of a Segal $E_\infty$-Hopf symmetric cooperad (we just have to dualize the definition).

We mention in the introduction of this paper that such structures are close to Cisinski--Moerdijk's notion of a dendroidal Segal space (see~\cite{CisinskiMoerdijkI}).
We can also check that every Segal symmetric operad
is weakly-equivalent to an operad
in the ordinary sense.
We can proceed along the lines of Cisinski--Moerdijk's dendroidal nerve construction or use a counterpart, in the category of simplicial sets,
of the $W$-constructions of~\S\ref{subsection:forgetful-strict}.
We then get that every Segal symmetric operad in the category of simplicial sets $\POp$
is connected to an ordinary symmetric operad $\DGW_{dec}(\POp)$
by a zigzag of natural weak-equivalences of Segal symmetric operads:
\begin{equation*}
\POp\xleftarrow{\sim}\DGW(\POp)\xrightarrow{\sim}\DGW_{dec}(\POp).
\end{equation*}
We record the following straightforward consequence of our statements to conclude this subsection.

\begin{prop}
We assume $\kk = \bar{\FF}_p$ and we consider normalized cochains with coefficients in this field, so that $\DGN^*(X,\bar{\FF}_p)$
represents the Mandell model of the space $X$ in the category of $\EOp$-algebras.
We use the notation $\DGG(-)$ for the adjoint functor of $\DGN^*(-)$, from the category of $\EOp$-algebras
to the category of simplicial sets.
\begin{enumerate}
\item
Let $\POp$ be a symmetric operad in simplicial sets. We consider the Segal $E_\infty$-Hopf symmetric cooperad $\AOp_{\POp} = \DGN^*(\POp(-))$
given by the resolution of Proposition~\ref{proposition:cooperad-from-simplicial-set} and its resolution $R\AOp$
in the category of Segal $E_\infty$-Hopf symmetric cooperads.
If $\POp$ consists of connected nilpotent spaces with a cohomology $\DGH^*(-,\bar{\FF}_p)$ of finite dimension degreewise,
then the object $\DGG(R\AOp_{\POp})$ defines a Segal symmetric operad in simplicial sets such that $\DGG(R\AOp_{\POp})(\ttree) = \POp(\ttree)^{\wedge}_p$,
for each tree $\ttree\in\Tree(r)$, $r>0$, where we consider the $p$-completion of the space $\POp(\ttree)$.
Thus, if we apply our functor $\DGW_{dec}(-)$ to this Segal symmetric operad $\DGG(R\AOp_{\POp})$
then we get a model of the $p$-completion $\POp(-)^{\wedge}_p$
in the category of ordinary operads.
\item
If we have a pair of symmetric operads in simplicial sets $\POp$ and $\QOp$ which satisfy these connectedness, nilpotence and cohomological finiteness assumptions,
then the existence of a weak-equivalence $\AOp_{\POp}\sim\AOp_{\QOp}$ at the level of the category of Segal $E_\infty$-Hopf symmetric cooperads
implies the existence of a weak-equivalence $\POp(-)^{\wedge}_p\sim\QOp(-)^{\wedge}_p$
at the level of the $p$-completion of our operads.\qed
\end{enumerate}
\end{prop}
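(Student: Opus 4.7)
The plan is to transport structure across the Quillen adjunction $\DGG \dashv \DGN^*$ termwise and to invoke Mandell's theorem tree by tree. Since $\DGG \colon \EAlg \to \simp\Set^{op}$ is a left adjoint, it carries coproducts of $\EOp$-algebras to products of simplicial sets, and by Ken Brown's lemma it preserves weak-equivalences between cofibrant objects. Applying $\DGG$ componentwise to the cofibrant resolution $R\AOp_{\POp}$ therefore turns its coproduct operators, facet operators, and permutation actions into, respectively, composition operations, projection maps, and permutation actions on the collection $\DGG(R\AOp_{\POp}(\ttree))$, $\ttree \in \Tree(r)$, making it a Segal symmetric pre-operad in simplicial sets. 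The Segal condition is preserved: the Segal weak-equivalences $\bigvee_{v \in V(\stree)} R\AOp_{\POp}(\sigmatree_v) \xrightarrow{\sim} R\AOp_{\POp}(\ttree)$ between cofibrant $\EOp$-algebras go, under $\DGG$, to weak-equivalences $\DGG(R\AOp_{\POp}(\ttree)) \xrightarrow{\sim} \prod_{v \in V(\stree)} \DGG(R\AOp_{\POp}(\sigmatree_v))$ in simplicial sets.

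Next, for each tree $\ttree$, I would identify $\DGG(R\AOp_{\POp}(\ttree))$ with $\POp(\ttree)^\wedge_p$. The space $\POp(\ttree) = \prod_{v \in V(\ttree)} \POp(\rset_v)$ is a finite product of connected nilpotent spaces with $\bar{\FF}_p$-cohomology of finite dimension in each degree, hence itself satisfies these hypotheses. Mandell's theorem therefore applies and gives $\DGG(R\AOp_{\POp}(\ttree)) = L\DGG(\DGN^*(\POp(\ttree))) \simeq \POp(\ttree)^\wedge_p$. Naturality of this equivalence in the space $\POp(\ttree)$ ensures that the identification is compatible with the structure operations induced on both sides by the operadic structure of $\POp$, and applying the rectification $\DGW_{dec}$ then yields an honest symmetric operad in simplicial sets modeling $\POp(-)^\wedge_p$, which proves claim~(1). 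Claim~(2) follows immediately: a weak-equivalence $\AOp_{\POp} \sim \AOp_{\QOp}$ in $\EOp\Hopf\Sigma\SegOp^c$ consists of a zigzag of componentwise weak-equivalences, which lifts through the functorial cofibrant replacement $R$ and descends through $\DGG$ to a zigzag of componentwise weak-equivalences between the associated Segal symmetric operads; passing to $\DGW_{dec}$ and using the identifications of~(1) converts this into the desired weak-equivalence $\POp(-)^\wedge_p \sim \QOp(-)^\wedge_p$.

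The main technical point is the naturality of Mandell's identification $\DGG(R\AOp_{\POp}(\ttree)) \simeq \POp(\ttree)^\wedge_p$ with respect to the full operadic structure, that is, the assertion that the composition, projection, and permutation operators induced on $\DGG(R\AOp_{\POp})$ correspond under these equivalences to the $p$-completions of the corresponding structure operations of $\POp$. This reduces formally to naturality of Mandell's comparison map in the underlying spaces, together with the observation that the Segal $E_\infty$-Hopf cooperad structure on $\AOp_{\POp}$ was itself defined functorially from the operad structure of $\POp$ by the construction of the preceding subsection. A secondary point worth checking is the propagation of the nilpotence and finite-type hypotheses from the individual spaces $\POp(r)$ to the finite products $\POp(\ttree)$, but this is standard and automatic.
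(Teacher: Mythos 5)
Your proposal is correct and follows essentially the argument the paper intends: the paper records this proposition with a \qed as a direct consequence of the preceding statements (the Quillen adjunction $\DGG\dashv\DGN^*$, the lemma on the cofibrant resolution $R\AOp$ preserving the Segal condition, Mandell's theorem applied termwise to the finite products $\POp(\ttree)=\prod_{v}\POp(\rset_v)$, and the simplicial $\DGW_{dec}$ rectification), and your write-up simply makes these steps explicit in the natural way. The points you flag for verification (naturality of Mandell's comparison and propagation of the nilpotence/finiteness hypotheses to finite products) are exactly the routine checks the paper leaves implicit.
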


\section{The category of homotopy Segal $E_\infty$-Hopf cooperads}\label{sec:homotopy-segal-cooperads}

We study the homotopical version of Segal $E_\infty$-Hopf cooperads in this section. The informal idea is to require that the coproduct operators
satisfy the associativity relation $\rho_{\ttree\rightarrow\utree}\circ\rho_{\utree\rightarrow\stree} = \rho_{\ttree\rightarrow\stree}$
only up to homotopy. We construct a model to shape the homotopies that govern these associativity relations
and the compatibility relation between these homotopies and the facet operators.
We explain our definition of homotopy Segal $E_\infty$-Hopf cooperad with full details in the first subsection of this section.
We then examine the forgetting of $E_\infty$-algebra structures in the definition of this structure, as in a study of strict homotopy $E_\infty$-Hopf cooperads,
and we examine the application of the cobar construction to homotopy Segal cooperads. We devote our second and third subsection to these topics.
We study a homotopy version of morphisms of Segal $E_\infty$-Hopf cooperads afterwards, in a fourth subsection,
and we eventually prove, in the fifth subsection, that every homotopy Segal $E_\infty$-Hopf cooperad
is weakly equivalent to a strict Segal $E_\infty$-Hopf cooperad.

We still take the category of algebras $\EAlg$ associated to the chain Barratt--Eccles operad $\EOp$
as a model of a category of $E_\infty$-algebras in dg modules
all along this section.
%Thus, we will also consider homotopies between $\rho_{\ttree\rightarrow\stree} \circ \rho_{\stree\rightarrow\utree}$ and $\rho_{\ttree\rightarrow\utree}$ (for a certain interval object $I$), that are coherent in a suitable way.
%The reader is encouraged to compare our construction with the ideas expressed in \cite{Ching} and \cite{ChuHaugseng}.

\subsection{The definition of homotopy Segal $E_\infty$-Hopf cooperads}

We define homotopy Segal $E_\infty$-Hopf cooperads essentially along the same plan as strict Segal $E_\infty$-Hopf cooperads.
We still define a notion of homotopy Segal $E_\infty$-Hopf pre-cooperad beforehand, as a structure equipped with operations that underlie the definition of a homotopy Segal $E_\infty$-Hopf cooperad.
We define the notion of a homotopy Segal $E_\infty$-Hopf cooperad afterwards, as a Segal $E_\infty$-Hopf pre-cooperad equipped with facet operators that satisfy the Segal condition.
In comparison to the definition of strict Segal $E_\infty$-Hopf cooperads, we mainly consider higher coproduct operators,
which we use to govern the homotopical associativity of the composition of coproduct operators.
We use a complex of cubical cochains, which we define from the cochain algebra of the interval $I = \DGN^*(\Delta^1)$, in our definition scheme
of this model of higher coproduct operators.
We explain the definition of the structure of this complex of cubical cochains in the next preliminary construction.
We address the definition of homotopy Segal $E_\infty$-Hopf cooperads afterwards.
We explain a definition of strict morphism of homotopy Segal $E_\infty$-Hopf cooperads to complete the objectives of this subsection. (These strict morphisms
are particular cases of the homotopy morphisms that we define in the fourth subsection of the section.)

\begin{constr}\label{constr:cubical-cochain-algebras}
We define our cubical cochain algebras $I^k$, $k\in\NN$, as the tensor powers of the cochain algebra of the interval:
\begin{equation*}
I^k = \underbrace{I\otimes\dots\otimes I}_k,\quad I = \DGN^*(\Delta^1).
\end{equation*}
We make the Barratt--Eccles operad acts on these tensor products through its diagonal $\Delta: \EOp\rightarrow\EOp\otimes\EOp$
and its action on each factor $I = \DGN^*(\Delta^1)$
so that our objects $I^k$ inherit a natural $\EOp$-algebra structure (see Construction~\ref{constr:Barratt-Eccles-diagonal}
and Construction~\ref{constr:cubical-cochain-connection}).

We can identify the object $I^k$ with the cellular cochain complex of the $k$-cube $\square^k$.
We define face operators $d^i_{\epsilon}: I^k\rightarrow I^{k-1}$, for $1\leq i\leq k$ and $\epsilon\in\{0,1\}$,
and degeneracy operators $s^i: I^{k-1}\rightarrow I^k$, for $0\leq i\leq k$,
which reflect classical face and degeneracy operations
on the topological cubes $\square^k$.
We determine these operators from the maps
\begin{equation*}
\DGN^*(\Delta^1)\xrightarrow{d_{\epsilon}}\DGN^*(\Delta^0) = \kk,\quad\epsilon\in\{0,1\},
\quad\text{and}
\quad\kk = \DGN^*(\Delta^0)\xrightarrow{s_0}\DGN^*(\Delta^1),
\end{equation*}
induced by simplicial coface and codegeneracy operators of the $1$-simplex $d^{\epsilon}: \Delta^0\rightarrow\Delta^1$
and $s^0: \Delta^1\rightarrow\Delta^0$,
and from the connection of Construction~\ref{constr:cubical-cochain-connection}
\begin{equation*}
\DGN^*(\Delta^1)\xrightarrow{\nabla^*}\DGN^*(\Delta^1)\otimes\DGN^*(\Delta^1),
\end{equation*}
which we associate to the simplicial map $\min: \Delta^1\times\Delta^1\rightarrow\Delta^1$
such that $\min: (s,t)\mapsto\min(s,t)$ on topological realization.

Recall briefly that we have an identity $\DGN^*(\Delta^1) = \kk\underline{0}^{\sharp}\oplus\kk\underline{1}^{\sharp}\oplus\kk\underline{01}^{\sharp}$,
where $\underline{0}^{\sharp}$ and $\underline{1}^{\sharp}$ denote elements of (cohomological) degree $0$,
dual to the classes of the vertices $\underline{0},\underline{1}\in\Delta^1$
in the normal chain complex $\DGN_*(\Delta^1)$,
whereas $\underline{01}^{\sharp}$ denotes an element of (cohomological) degree $1$
dual to the class of the fundamental simplex $\underline{01}\in\Delta^1$. This cochain complex $\DGN^*(\Delta^1)$
is equipped with the differential such that $\delta(\underline{0}^{\sharp}) = - \underline{01}^{\sharp}$
and $\delta(\underline{1}^{\sharp}) = - \underline{01}^{\sharp}$.
The face operators $d_0,d_1: \DGN^*(\Delta^1)\rightarrow\DGN^*(\Delta^0) = \kk$
are defined by the formulas $d_0(\underline{0}^{\sharp}) = 0$, $d_0(\underline{1}^{\sharp}) = 1$, $d_1(\underline{0}^{\sharp}) = 1$, $d_1(\underline{1}^{\sharp}) = 0$,
and $d_0(\underline{01}^{\sharp}) = d_1(\underline{01}^{\sharp}) = 0$,
while the degeneracy operator $s_0: \kk = \DGN^*(\Delta^0)\rightarrow\DGN^*(\Delta^1)$ is defined by the formula $s_0(1) = \underline{0}^{\sharp}+\underline{1}^{\sharp}$.
We refer to Construction~\ref{constr:cubical-cochain-connection} for the explicit definition of the connection $\nabla^*$ on basis elements.
We mainly use that this connection satisfies relations of the form $(d_1\otimes\id)\circ\nabla^* = s_0 d_1 = (\id\otimes d_1)\circ\nabla^*$
and $(d_0\otimes\id)\circ\nabla^* = \id = (\id\otimes d_0)\circ\nabla^*$,
which reflect the identities $\min\circ(d^1\times\id) = d^1 s^0 = (\min\circ\id)\times d^1$
and $\min\circ(d^0\times\id) = \id = (\min\circ\id)\times d^0$
at the topological level.

We number our factors from right to left in our cubical cochain algebras
\begin{equation*}
I^k = \underset{k}{\DGN^*(\Delta^1)}\otimes\dots\otimes\underset{1}{\DGN^*(\Delta^1)}
\end{equation*}
and we use this numbering convention to index our face and degeneracy operators (the superscript in the notation of the faces $d^i_{\epsilon}$
indicates the factor of this tensor product on which we apply a simplicial face operator $d_{\epsilon}$
and the superscript in the notation of the degeneracies $s^j$ similarly indicates the factor of the tensor product $I^{k-1}$
on which we apply a simplicial degeneracy or a connection operator).
We precisely define the face operators of our cubical cochain algebras $d^i_{\epsilon}: I^k\rightarrow I^{k-1}$, $i = 1,\dots,k$, $\epsilon = 0,1$, by the formula:
\begin{align*}
d^i_{\epsilon} & = \id^{\otimes k-i}\otimes d_{\epsilon}\otimes\id^{\otimes i-1}\\
%& d^i_{\epsilon} = \underset{k}{\id}\otimes\dots\otimes\underset{i}{d_{\epsilon}}\otimes\dots\otimes\underset{1}{\id}
\intertext{and our degeneracy operators $s^j: I^{k-1}\rightarrow I^k$, $j = 0,\dots,k$, by the formulas:}
s^0 & = \id^{\otimes k-1}\otimes s_0, \\
s^j & = \id^{\otimes k-j-1}\otimes\nabla^*\otimes\id^{\otimes j-1}\quad\text{for $j = 1,\dots,k-1$}, \\
s^k & = s_0\otimes\id^{\otimes k-1},
%& s^0 = \underset{k}{\id}\otimes\dots\otimes\underset{2}{\id}\otimes\underset{1}{s_0}, \\
%& s^j = \underset{k}{\id}\otimes\dots\otimes\underset{j+1,j}{\nabla^*}\otimes\dots\otimes\underset{1}{\id},\quad\text{for $j = 1,\dots,k-1$}, \\
%& s^k = \underset{k}{s_0}\otimes\underset{k-1}{\id}\otimes\dots\otimes\underset{1}{\id},
\end{align*}
%\begin{align*}
%d^i_{\epsilon} & = \id_I^{\otimes(k-i)}\otimes d_{\epsilon}\otimes\id_I^{\otimes(i-1)}: I^k\rightarrow I^{i-1}\otimes\kk\otimes I^{k-i}\cong I^{k-1},
%\quad\text{for $1\leq i\leq k$ and $\epsilon\in\{0,1\}$}, \\
%s^0 & = \id_I^{\otimes(k-1)}\otimes s_0: I^{k-1}\cong I^{k-1}\otimes\kk\rightarrow I^k, \\
%s^i & = \id_I^{\otimes(i-1)}\otimes\nabla^*\otimes\id_I^{\otimes(k-i-1)}: I^{k-1}\rightarrow I^k\quad\text{for $1\leq i\leq k-1$}, \\
%s^k & = s_0\otimes\id_I^{\otimes(k-1)}: I^{k-1} \cong \kk \otimes I^{k-1}\rightarrow I^k
%\end{align*}
%where the indicated underset indices correspond to the factors of the source tensor product in the case of the face operators
%and to the factors of the target tensor product in the case of the degeneracy operators.
\end{constr}

We are now ready to define our main objects.

\begin{defn}\label{definition:homotopy-E-infinity-cooperad}
We call homotopy Segal $E_\infty$-Hopf shuffle pre-cooperad the structure defined by a collection of $\EOp$-algebras
\begin{equation*}
\AOp(\ttree)\in\EAlg,\quad\text{$\ttree\in\Tree(r)$, $r>0$},
\end{equation*}
equipped with
\begin{itemize}
\item
homotopy coproduct operators
\begin{equation*}
\rho_{\ttree\rightarrow\ttree_k\rightarrow\dots\rightarrow\ttree_1\rightarrow\stree}: \AOp(\stree)\rightarrow\AOp(\ttree)\otimes I^k,
\end{equation*}
defined as morphisms of $\EOp$-algebras, for all composable sequences of tree morphisms $\ttree\rightarrow\ttree_k\rightarrow\dots\rightarrow\ttree_1\rightarrow\stree$, $k\geq 0$,
and which satisfy the face and degeneracy relations depicted in Figure~\ref{homotopy-E-infinity-cooperad:0-faces}-\ref{homotopy-E-infinity-cooperad:degeneracies},
\item
%\label{homotopy-E-infinity-cooperad:facet-functoriality}
together with facet operators
\begin{equation*}
i_{\sigmatree,\stree}: \AOp(\sigmatree)\rightarrow\AOp(\stree),
\end{equation*}
defined as morphisms of $\EOp$-algebras again, for all subtree embeddings $\sigmatree\subset\stree$,
and which satisfy the usual functoriality relations $i_{\stree,\stree} = \id_{\stree}$
and $i_{\thetatree,\stree}\circ i_{\sigmatree,\thetatree} = i_{\sigmatree,\stree}$ for all $\sigmatree\subset\thetatree\subset\stree$,
\item
and where we also assume the verification of compatibility relations between the facet operators and the coproduct operators,
which we express by the commutativity of the diagram of Figure~\ref{homotopy-E-infinity-cooperad:facet-compatibility}.
\end{itemize}
We again say that a Segal $E_\infty$-Hopf shuffle pre-cooperad $\AOp$ is a Segal $E_\infty$-Hopf shuffle cooperad when the facet operators satisfy a Segal condition,
which reads exactly as in the context of strict Segal $E_\infty$-Hopf shuffle cooperads:
\begin{enumerate}
\item[(*)]
The facet operators $i_{\sigmatree_v,\ttree}: \AOp(\sigmatree_v)\rightarrow\AOp(\ttree)$
associated to a tree decomposition $\ttree = \lambda_{\stree}(\sigmatree_v, v\in V(\stree))$
induce a weak-equivalence
\begin{equation*}
i_{\lambda_{\stree}(\sigmatree_*)}: \bigvee_{v\in V(\stree)}\AOp(\sigmatree_v)\xrightarrow{\sim}\AOp(\ttree)
\end{equation*}
when we pass to the coproduct of the objects $\AOp(\sigmatree_v)$
in the category of $\EOp$-algebras.
\end{enumerate}
We also define a homotopy Segal $E_\infty$-Hopf symmetric (pre-)cooperad as a homotopy Segal $E_\infty$-Hopf shuffle pre-cooperad $\AOp$
equipped with an action of the permutations such that $s^*: \AOp(s\ttree)\rightarrow\AOp(\ttree)$,
for $s\in\Sigma_r$ and $\ttree\in\Tree(r)$,
and which intertwine the facets and the homotopy coproduct operators attached our object.
\end{defn}

Note that the statement of Proposition \ref{proposition:Segal-condition}, where we reduce the verification of the Segal condition to various particular cases,
obviously holds for homotopy Segal $E_\infty$-Hopf cooperads too.

\begin{figure}[p]
\ffigbox
{\caption{The compatibility of homotopy coproducts with $0$-faces.
The diagram commutes for all sequences of composable tree morphisms $\ttree\rightarrow\ttree_k\rightarrow\dots\rightarrow\ttree_1\rightarrow\stree$
and for all $1\leq i\leq k$, where $\widehat{\ttree_i}$ means that we delete the node $\ttree_i$
and we replace the morphisms $\ttree_{i+1}\rightarrow\ttree_i\rightarrow\ttree_{i-1}$
by their composite $\ttree_{i+1}\rightarrow \ttree_{i-1}$.}\label{homotopy-E-infinity-cooperad:0-faces}}
{\centerline{\xymatrixcolsep{7pc}\xymatrix{ \AOp(\stree)
\ar[r]^{\rho_{\ttree\rightarrow\ttree_k\rightarrow\dots\rightarrow\ttree_1\rightarrow\stree}}
\ar[dr]_{\rho_{\ttree\rightarrow\dots\widehat{\ttree_i}\dots\rightarrow\stree}} &
\AOp(\ttree)\otimes I^k\ar[d]^{\id\otimes d^i_0} \\
& \AOp(\ttree)\otimes I^{k-1} }
}}
\ffigbox
{\caption{The compatibility of homotopy coproducts with $1$-faces.
The diagram commutes for all sequences of composable tree morphisms $\ttree\rightarrow\ttree_k\rightarrow\dots\rightarrow\ttree_1\rightarrow\stree$
and for all $1\leq i\leq k$.}\label{homotopy-E-infinity-cooperad:1-faces}}
{\centerline{\xymatrixcolsep{7pc}\xymatrix{ \AOp(\stree)
\ar[r]^{\rho_{\ttree\rightarrow\ttree_k\rightarrow\dots\rightarrow\ttree_1\rightarrow\stree}}
\ar[dd]_{\rho_{\ttree_i\rightarrow\dots\rightarrow\ttree_1\rightarrow\stree}} &
\AOp(\ttree)\otimes I^k\ar[d]^{\id\otimes d^i_1} \\
& \AOp(\ttree)\otimes I^{k-1} \\
\AOp(\ttree_i)\otimes I^{i-1}\ar[r]_-{\rho_{\ttree\rightarrow\ttree_k\rightarrow\dots\rightarrow\ttree_i}\otimes\id} &
\AOp(\ttree)\otimes I^{k-i}\otimes I^{i-1}\ar[u]_{\simeq} }
}}
\ffigbox
{\caption{The compatibility of homotopy coproducts with degeneracies.
The diagram commutes for all sequences of composable tree morphisms $\ttree\rightarrow\ttree_{k-1}\rightarrow\dots\rightarrow\ttree_1\rightarrow\stree$
and for all $0\leq j\leq k$ (with the convention that $\ttree_0 = \stree$ in the case $j = 0$
and $\ttree_k = \ttree$ in the case $j = k$).}\label{homotopy-E-infinity-cooperad:degeneracies}}
{\centerline{\xymatrixcolsep{7pc}\xymatrix{ \AOp(\stree)
\ar[r]^-{\rho_{\ttree\rightarrow\ttree_{k-1}\rightarrow\dots\rightarrow\ttree_1\rightarrow\stree}} \ar[rd]_{\rho_{\ttree\rightarrow\ttree_{k-1}\rightarrow\dots\ttree_j=\ttree_j\dots\rightarrow\ttree_1\rightarrow\stree}} &
\AOp(\ttree)\otimes I^{k-1}\ar[d]^{\id\otimes s^j} \\
& \AOp(\ttree)\otimes I^k }
}}
\ffigbox
{\caption{The compatibility between coproducts and facet operators.
The diagram commutes for all sequences of composable tree morphisms $\ttree\xrightarrow{f_k}\ttree_k\xrightarrow{f_{k-1}}\cdots\xrightarrow{f_1}\ttree_1\xrightarrow{f_0}\stree$
and for all subtrees $\sigmatree\subset\stree$.}\label{homotopy-E-infinity-cooperad:facet-compatibility}}
{\centerline{\xymatrixcolsep{15pc}\xymatrix{ \AOp(\stree)\ar[r]^{\rho_{\ttree\rightarrow\ttree_k\rightarrow\dots\rightarrow\ttree_1\rightarrow\stree}} &
\AOp(\ttree)\otimes I^k \\
\AOp(\sigmatree)
\ar@{.>}[r]_-{\rho_{(f_0\dots f_k)^{-1}(\sigmatree)\rightarrow(f_0\dots f_{k-1})^{-1}(\sigmatree)\rightarrow\dots\rightarrow f_0^{-1}(\sigmatree)\rightarrow\sigmatree}}
\ar[u]^{i_{\sigmatree,\stree}} &
\AOp((f_0\dots f_k)^{-1}(\sigmatree))\otimes I^k\ar@{.>}[u]^{i_{(f_0\dots f_k)^{-1}(\sigmatree),\ttree}\otimes\id} }
}}
\end{figure}

\begin{figure}[p]
\ffigbox
{\caption{The coherence of the face and degeneracy relations of the coproduct operators with respect to the relation $s^j d^i_0 = \id$
between the cubical face and degeneracy operators
for $i = j,j+1$.}\label{homotopy-E-infinity-cooperad:0-face-coherence}}
{\centerline{\xymatrixcolsep{10pc}\xymatrix{ & \AOp(\ttree)\otimes I^{k-1}\ar[d]_{\id\otimes s^j}\ar@/^2pc/[dd]^{\id} \\
\AOp(\stree)
\ar[ru]^{\rho_{\ttree\rightarrow\ttree_{k-1}\rightarrow\dots\rightarrow\ttree_1\rightarrow\stree}} \ar[r]_{\rho_{\ttree\rightarrow\dots\rightarrow\ttree_j=\ttree_j\rightarrow\dots\rightarrow\stree}} \ar[rd]_{\rho_{\ttree\rightarrow\ttree_{k-1}\rightarrow\dots\rightarrow\ttree_1\rightarrow\stree}} &
\AOp(\ttree)\otimes I^k\ar[d]_{\id\otimes d^i_0} \\
& \AOp(\ttree)\otimes I^{k-1} }
}}
\ffigbox
{\caption{The coherence of the face and degeneracy relations of the coproduct operators with respect to the relation $s^j d^i_1 = s^0\otimes\id_{I^j}$
for $i = j$.}\label{homotopy-E-infinity-cooperad:1-face-coherence-left}}
{\centerline{\xymatrixcolsep{10pc}\xymatrix{ \AOp(\stree)
\ar@/_3pc/[ddd]_{\rho_{\ttree\rightarrow\dots\rightarrow\stree}}
\ar[r]^{\rho_{\ttree\rightarrow\ttree_{k-1}\rightarrow\dots\rightarrow\ttree_1\rightarrow\stree}} \ar[rd]_{\rho_{\ttree\rightarrow\dots\rightarrow\ttree_j=\ttree_j\rightarrow\dots\rightarrow\stree}}
\ar[dd]^{\rho_{\ttree_j\rightarrow\dots\rightarrow\ttree_1\rightarrow\stree}} &
\AOp(\ttree)\otimes I^{k-1}\ar[d]^{\id\otimes s^j} \\
& \AOp(\ttree)\otimes I^k \ar[d]^{\id\otimes d^j_1} \\
\AOp(\ttree_j)\otimes I^{j-1}
\ar[r]^{\rho_{\ttree\rightarrow\cdots\rightarrow\ttree_j=\ttree_j}\otimes\id}
\ar[dr]_{\rho_{\ttree\rightarrow\dots\rightarrow\ttree_j}} &
\AOp(\ttree)\otimes I^{k-1} \\
\AOp(\ttree)\otimes I^{k-1}\ar[r]_{\id\otimes d^j_1} &
\AOp(\ttree)\otimes I^{k-j-1}\otimes I^{j-1}\ar[u]_{\id\otimes s^0\otimes\id} }
}}
\ffigbox
{\caption{The coherence of the face and degeneracy relations of the coproduct operators with respect to the relation $s^j d^i_1 = \id_{I^{k-j}}\otimes s^j$
for $i = j+1$.}\label{homotopy-E-infinity-cooperad:1-face-coherence-right}}
{\centerline{\xymatrixcolsep{10pc}\xymatrix{ \AOp(\stree)
\ar@/_4pc/[dddd]_{\rho_{\ttree\rightarrow\dots\rightarrow\stree}}
\ar@/_2pc/[ddd]|{\rho_{\ttree_j\rightarrow\dots\rightarrow\stree}}
\ar[r]^{\rho_{\ttree\rightarrow\ttree_{k-1}\rightarrow\dots\rightarrow\ttree_1\rightarrow\stree}} \ar[rd]_{\rho_{\ttree\rightarrow\dots\rightarrow\ttree_j=\ttree_j\rightarrow\dots\rightarrow\stree}}
\ar[dd]^{\rho_{\ttree_j=\ttree_j\rightarrow\dots\rightarrow\ttree_1\rightarrow\stree}} &
\AOp(\ttree)\otimes I^{k-1}\ar[d]^{\id\otimes s^j} \\
& \AOp(\ttree)\otimes I^k\ar[d]^{\id\otimes d^{j+1}_1} \\
\AOp(\ttree_j)\otimes I^j\ar[r]^{\rho_{\ttree\rightarrow\dots\rightarrow\ttree_j}\otimes\id} &
\AOp(\ttree)\otimes I^{k-1} \\
\AOp(\ttree_j)\otimes I^{j-1}
\ar[r]_{\rho_{\ttree\rightarrow\dots\rightarrow\ttree_j}\otimes\id}
\ar[u]_{\id\otimes s^j} &
\AOp(\ttree)\otimes I^{k-j-1}\otimes I^{j-1}\ar[u]_{\id\otimes\id\otimes s^j} \\
\AOp(\ttree)\otimes I^{k-1}\ar[ru]_{\id\otimes d^j_1} }
}}
\end{figure}

\afterpage{\clearpage}

\begin{remark}\label{remark:compatibility-relations}
The faces and degeneracy operators on our complex of cubical cochain algebras
satisfy the following system of identities:
\begin{align*}
d^j_{\epsilon} d^i_{\eta} & = d^i_{\eta} d^{j-1}_{\epsilon},\quad\text{for $i<j$, $\epsilon,\eta\in\{0,1\}$}, \\
s^j d^i_{\epsilon} & = \begin{cases} d^i_{\epsilon} s^{j-1}, & \text{for $i<j$, $\epsilon\in\{0,1\}$}, \\
\id, & \text{for $i = j,j+1$ and $\epsilon = 0$}, \\
s^0\otimes\id_{I^j}, & \text{for $i = j$, $\epsilon = 1$, using $I^{k-1} = I^{k-j-1}\otimes I^j$}, \\
\id_{I^{k-j}}\otimes s^j, & \text{for $i = j+1$, $\epsilon = 1$, using $I^{k-1} = I^{k-j}\otimes I^{j-1}$}, \\
d^{i-1}_{\epsilon} s^j, & \text{for $i>j+1$, $\epsilon\in\{0,1\}$},
\end{cases} \\
s^j s^i & = s^i s^{j+1},\quad\text{for $i\leq j$}.
\end{align*}
We easily check that a multiple application of face and degeneracy relations of Figure~\ref{homotopy-E-infinity-cooperad:0-faces}-\ref{homotopy-E-infinity-cooperad:degeneracies}
for the coproduct operators of homotopy Segal $E_\infty$-Hopf shuffle cooperads lead to coherent results
with respect to these identities.
For instance, we have the double face relation
\begin{multline*}
(\id\otimes d^j_0 d^i_0)\circ\rho_{\ttree\rightarrow\ttree_k\rightarrow\dots\rightarrow\ttree_1\rightarrow\stree}
= \rho_{\ttree\rightarrow\ttree_k\rightarrow\cdots\widehat{\ttree_j}\cdots\widehat{\ttree_i}\cdots\rightarrow\ttree_1\rightarrow\stree}\\
= (\id\otimes d^i_0 d^{j-1}_0)\circ\rho_{\ttree\rightarrow\ttree_k\rightarrow\dots\rightarrow\ttree_1\rightarrow\stree},
\end{multline*}
which expresses the coherence of the face relations of Figure~\ref{homotopy-E-infinity-cooperad:0-faces}
with respect the double $0$-face identity $d^j_0 d^i_0 = d^i_0 d^{j-1}_0$
in our complex of cubical cochain algebras.
We get similar easy results for the other double face identities $d^j_{\epsilon} d^i_{\eta} = d^i_{\eta} d^{j-1}_{\epsilon}$
and the other commutation relations between the face and degeneracy operators $s^j d^i_{\epsilon} = d^i_{\epsilon} s^{j-1}$, $s^j d^i_{\epsilon} = d^{i-1}_{\epsilon} s^j$,
and $s^j s^i = s^i s^{j+1}$,
while the coherence of the face and degeneracy relations of the coproduct operators
with respect to the relations $s^j d^i_0 = \id$, for $i = j,j+1$,
and $s^j d^j_1 = s^0\otimes\id_{I^j}$, $s^j d^{j+1}_1 = \id_{I^{k-j}}\otimes s^j$
follows from the commutativity of the diagrams of Figure~\ref{homotopy-E-infinity-cooperad:0-face-coherence}-\ref{homotopy-E-infinity-cooperad:1-face-coherence-right}.
\end{remark}

We have the following obvious notion of strict morphism of homotopy Segal $E_\infty$-Hopf shuffle (pre-)cooperads
and of homotopy Segal $E_\infty$-Hopf symmetric (pre-)cooperads,
which generalize the definition of morphism of strict Segal $E_\infty$-Hopf shuffle (pre-)cooperads
and strict Segal $E_\infty$-Hopf symmetric (pre-)cooperads
in~\S\ref{sec:strict-segal-cooperads}

\begin{defn}\label{definition:homotopy-E-infinity-cooperad-morphism}
A (strict) morphism of homotopy Segal $E_\infty$-Hopf shuffle (pre-)cooperads $\phi: \AOp\rightarrow\BOp$
is a collection of $\EOp$-algebra morphisms
\begin{equation*}
\phi_{\ttree}: \AOp(\ttree)\rightarrow\BOp(\ttree),\quad\text{$\ttree\in\Tree(r)$, $r>0$},
\end{equation*}
which
\begin{enumerate}
\item
preserve the action of all coproduct operators on our objects,
in the sense that the diagram
\begin{equation*}
\xymatrix{ \AOp(\stree)\ar[r]^{\phi_{\stree}}\ar[d]_{\rho_{\ttree\rightarrow\ttree_k\rightarrow\dots\rightarrow\ttree_1\rightarrow\stree}} &
\BOp(\stree)\ar[d]^{\rho_{\ttree\rightarrow\ttree_k\rightarrow\dots\rightarrow\ttree_1\rightarrow\stree}} \\
\AOp(\ttree)\otimes I^k\ar[r]^{\phi_{\ttree}\otimes\id} & \BOp(\ttree)\otimes I^k }
\end{equation*}
commutes, for all sequences of composable tree morphisms $\ttree\rightarrow\ttree_k\rightarrow\dots\rightarrow\ttree_1\rightarrow\stree$,
\item
and the action of facet operators, so that we have the same commutative diagram as in the case of morphisms strict Segal $E_\infty$-Hopf shuffle (pre-)cooperads:
\begin{equation*}
\xymatrix{ \AOp(\stree)\ar[r]^{\phi_{\stree}} & \BOp(\stree) \\
\AOp(\sigmatree)\ar[r]^{\phi_{\sigmatree}}\ar[u]^{i_{\sigmatree,\stree}} &
\BOp(\sigmatree)\ar[u]_{i_{\sigmatree,\stree}} },
\end{equation*}
for all subtree embeddings $\sigmatree\subset\stree$.
\end{enumerate}
If $\AOp$ and $\BOp$ are homotopy Segal $E_\infty$-Hopf symmetric (pre-)cooperads, then we say that $\phi: \AOp\rightarrow\BOp$ is a morphism of Segal $E_\infty$-Hopf symmetric (pre-)cooperads
when $\phi$ also preserves the action of permutations on our objects (we express this condition by the same commutative diagram as in the case of strict $E_\infty$-Hopf cooperads).
\end{defn}

These strict morphisms of homotopy Segal $E_\infty$-Hopf shuffle (pre-)cooperads can obviously be composed,
as well the strict morphisms of homotopy Segal $E_\infty$-Hopf symmetric (pre-)cooperads,
so that we can form a category of homotopy Segal $E_\infty$-Hopf shuffle (pre-)cooperads
and a category of homotopy Segal $E_\infty$-Hopf symmetric (pre-)cooperads
as morphisms.
In what follows, we adopt the notation $\EOp\Hopf\sh\hSegOp^c$ for the category of homotopy Segal $E_\infty$-Hopf shuffle cooperads
and the notation $\EOp\Hopf\Sigma\hSegOp^c$ for the category of homotopy Segal $E_\infty$-Hopf symmetric cooperads.

We can modify the above definition to assume that the preservation of coproduct operators holds up to homotopy only, just as we assume
that the coproduct operators satisfy associativity relations up to homotopy in a homotopy Segal $E_\infty$-Hopf pre-cooperad.
This idea gives the notion of homotopy morphism, which we study in Subsection~\ref{subsection:homotopy-morphisms}.

\subsection{The forgetting of $E_\infty$-structures}

We now study the structure in dg modules that we obtain by forgetting the $E_\infty$-algebra structure attached to each object
in the definition of a homotopy Segal $E_\infty$-Hopf cooperad.
We follow the same plan as in Subsection~\ref{subsec:strict-segal-cooperads},
where we examine the parallel forgetting of $E_\infty$-algebra structures
in strict Segal $E_\infty$-Hopf cooperads.
In the previous subsection, we assume that the associativity of the coproduct operators only holds up to homotopy for the definition of a homotopy Segal $E_\infty$-Hopf cooperad,
but we keep the same notion of facet operators as in the case of strict Segal $E_\infty$-cooperads.
In the case of homotopy Segal cooperads in dg modules, we retain the homotopy associativity relation of the coproduct operators of homotopy Segal $E_\infty$-Hopf cooperads,
and we retrieve the expression of the Segal map that we obtained from the structure of the facet operators
in the definition of strict Segal dg cooperads.

We again need to assume that the vertices of our trees are totally ordered in order to make the construction
of the forgetful functor from homotopy Segal $E_\infty$-Hopf cooperads
to homotopy Segal dg cooperads work.
For this reason, we restrict ourselves to Segal shuffle cooperads all along this subsection (as in our study of strict Segal cooperads in dg modules).

\begin{defn}\label{definition:homotopy-tree-shaped-cooperad}
We call homotopy Segal shuffle dg pre-cooperad the structure defined by a collection of dg modules
\begin{equation*}
\AOp(\ttree)\in\dg\Mod,\quad\text{$\ttree\in\Tree(r)$, $r>0$},
\end{equation*}
equipped with
\begin{itemize}
\item
homotopy coproduct operators
\begin{equation*}
\rho_{\ttree\rightarrow\ttree_k\rightarrow\dots\rightarrow\ttree_1\rightarrow\stree}: \AOp(\stree)\rightarrow\AOp(\ttree)\otimes I^k,
\end{equation*}
defined as morphisms of dg modules, for all composable sequences of tree morphisms $\ttree\rightarrow\ttree_k\rightarrow\dots\rightarrow\ttree_1\rightarrow\stree$, $k\geq 0$,
and which satisfy the same face and degeneracy relations,
expressed by the commutative diagrams of Figure~\ref{homotopy-E-infinity-cooperad:0-faces}-\ref{homotopy-E-infinity-cooperad:degeneracies},
as the homotopy coproduct operators of Segal $E_\infty$-Hopf shuffle cooperads,
\item
together with Segal maps
\begin{equation*}
i_{\lambda_{\stree}(\sigmatree_*)}: \bigotimes_{v\in V(\stree)}\AOp(\sigmatree_v)\rightarrow\AOp(\ttree),
\end{equation*}
defined as morphisms of dg modules, for all tree decompositions $\ttree = \lambda_{\stree}(\sigmatree_v,v\in V(\stree))$,
and which satisfy the same functoriality relations
as the Segal maps of strict Segal cooperads in dg modules.
%\begin{itemize}
%\item
Namely, for the trivial decomposition $\ttree = \lambda_{\ytree}(\ttree)$, we have $i_{\lambda_{\ytree}(\ttree)} = \id_{\AOp(\ttree)}$,
%\item
and for nested decompositions $\ttree = \lambda_{\utree}(\thetatree_u,u\in V(\utree))$ and $\thetatree_u = \lambda_{\stree_u}(\sigmatree_v,v\in V(\stree_u))$, $u\in V(\utree)$,
we have
\begin{equation*}
i_{\lambda_{\utree}(\thetatree_*)}\circ(\bigotimes_{u\in V(\utree)}i_{\lambda_{\stree_u}(\sigmatree_*)}) = i_{\lambda_{\stree}(\sigmatree_*)},
\end{equation*}
where we again consider the composite decomposition $\ttree = \lambda_{\stree}(\sigmatree_v,v\in V(\stree))$
with $\stree = \lambda_{\utree}(\stree_u,u\in V(\utree))$.
%\end{itemize}
\item
We now assume the verification of the compatibility relations depicted in Figure~\ref{homotopy-dg-cooperad:Segal-coproduct-compatibility},
for the Segal maps and the higher coproduct operators.
\end{itemize}
We also say that a homotopy Segal shuffle dg pre-cooperad $\AOp$ is a homotopy Segal shuffle dg cooperad when the Segal maps
satisfy the following Segal condition (the same Segal condition as in the case of strict Segal dg cooperads):
\begin{enumerate}
\item[(*)]
The Segal map $i_{\lambda_{\stree}(\sigmatree_*)}$ is a weak-equivalence
\begin{equation*}
i_{\lambda_{\stree}(\sigmatree_*)}: \bigotimes_{v\in V(\stree)}\AOp(\sigmatree_v)\xrightarrow{\sim}\AOp(\ttree),
\end{equation*}
for every decomposition $\ttree = \lambda_{\stree}(\sigmatree_v, v\in V(\stree))$.
\end{enumerate}
We still consider a notion of strict morphism of homotopy Segal shuffle dg (pre-)cooperads, which we obviously define
as a collection of dg module morphisms $\phi_{\ttree}: \AOp(\ttree)\rightarrow\BOp(\ttree) $
that preserve all homotopy coproduct operators $\rho_{\ttree\rightarrow\ttree_k\rightarrow\dots\rightarrow\ttree_1\rightarrow\stree}$
and the Segal maps $i_{\lambda_{\stree}(\sigmatree_*)}$.
We use the notation $\dg\sh\hSegOp^c$ for the category of Segal shuffle dg cooperads, which we equip with this notion of morphism.
\end{defn}

\begin{figure}[t]
\ffigbox
{\caption{The compatibility between coproducts and Segal maps.
The diagram commutes for all sequences of composable tree morphisms $\ttree\xrightarrow{f_k}\ttree_k\xrightarrow{f_{k-1}}\cdots\xrightarrow{f_1}\ttree_1\xrightarrow{f_0}\stree$
and for all decompositions $\stree = \lambda_{\utree}(\sigmatree_v, v \in V(\utree))$.
The map $\mu: (\bigotimes_{v\in V(\utree)}I^k)\rightarrow I^k$ on the right hand side vertical arrow is given by the associative product
of the $\EOp$-algebra $I^k$.}\label{homotopy-dg-cooperad:Segal-coproduct-compatibility}}
{\centerline{\xymatrixcolsep{15pc}\xymatrix{ \AOp(\stree)
\ar[r]^{\rho_{\ttree\rightarrow\ttree_k\rightarrow\dots\rightarrow\ttree_1\rightarrow\stree}} &
\AOp(\ttree)\otimes I^k \\
& \left(\bigotimes_{v\in V(\utree)}\AOp(\sigmatree_v)\right)\otimes\left(\bigotimes_{v\in V(\utree)}I^k\right)
\ar@{.>}[u]_{i_{\lambda_{\utree}((f_0 \dots f_k)^{-1}(\sigmatree_*))}\otimes\mu} \\
\bigotimes_{v\in V(\utree)}\AOp(\sigmatree_v)
\ar@{.>}[r]_-{\bigotimes_v\rho_{(f_0\dots f_k)^{-1}(\sigmatree_v)\rightarrow(f_0\dots f_{k-1})^{-1}(\sigmatree_v)\rightarrow\dots\rightarrow\sigmatree_v}}
\ar[uu]_-{i_{\lambda_{\utree}(\sigmatree_*)}} &
\bigotimes_{v\in V(\utree)}\left(\AOp((f_0 \dots f_k)^{-1}(\sigmatree_v))\otimes I^k\right)\ar@{.>}[u]_{\simeq} }
}}
\end{figure}

There is a forgetful functor from the category of homotopy Segal $E_\infty$-Hopf cooperads to the category of homotopy Segal shuffle dg cooperads,
which is similar to the one that we have in the strict case.
To be more precise, we have the following proposition, which represents the homotopy counterpart of the result of Proposition~\ref{proposition:forgetful-strict}.

\begin{prop}\label{proposition:forgetful-homotopy}
Let $\AOp$ be a homotopy Segal $E_\infty$-Hopf shuffle cooperad,
with coproduct operators $\rho_{\ttree\rightarrow\ttree_k\rightarrow\dots\rightarrow\ttree_1\rightarrow\stree}: \AOp(\stree)\rightarrow\AOp(\ttree)\otimes I^k$
and facet operators $i_{\sigmatree,\stree}: \AOp(\sigmatree)\rightarrow\AOp(\stree)$.
The collection $\AOp(\ttree)$, $\ttree\in\Tree(r)$, equipped with the coproduct operators inherited from $\AOp$ and the Segal maps
given by the following composites
\begin{equation*}
\bigotimes_{v\in V(\stree)}\AOp(\sigmatree_v)\xrightarrow{\EM}\bigvee_{v\in V(\stree)}\AOp(\sigmatree_v)\xrightarrow{i_{\lambda_{\stree}(\sigmatree_*)}}\AOp(\ttree)
\end{equation*}
(the same composites as in Proposition~\ref{proposition:forgetful-strict}),
for all tree decompositions $\ttree = \lambda_{\stree}(\sigmatree_v,v\in V(\stree))$,
is a homotopy Segal shuffle dg cooperad.
\end{prop}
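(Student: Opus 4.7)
The plan is to adapt the strategy of Proposition~\ref{proposition:forgetful-strict} to the homotopy setting, and verify the four required conditions (face/degeneracy relations, associativity of Segal maps, Segal--coproduct compatibility, and the Segal condition) in turn. The face and degeneracy relations of Figures~\ref{homotopy-E-infinity-cooperad:0-faces}--\ref{homotopy-E-infinity-cooperad:degeneracies} transfer directly from $\AOp$, since the underlying dg-module morphisms of our coproduct operators are literally the same as in the $E_\infty$ setting and these diagrams only involve dg-module structure. The associativity and normalization of the Segal maps follow exactly as in Proposition~\ref{proposition:forgetful-strict}, combining the associativity of the facet operators on $\AOp$ with the associativity of the natural transformation $\EM$ of Construction~\ref{constr:Barratt-Eccles-diagonal}. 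Finally, the Segal condition holds in the dg setting because the dg Segal map $i_{\lambda_{\stree}(\sigmatree_*)}^{E_\infty} \circ \EM$ is the composite of $\EM$, which is a weak-equivalence by Proposition~\ref{claim:Barratt-Eccles-algebra-coproducts}, with the $E_\infty$-Segal map, which is a weak-equivalence by hypothesis on $\AOp$.

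The main work consists of verifying the compatibility of Figure~\ref{homotopy-dg-cooperad:Segal-coproduct-compatibility}. I would first apply the compatibility diagram of Figure~\ref{homotopy-E-infinity-cooperad:facet-compatibility} to each subtree $\sigmatree_v \subset \stree$ independently: for every $v \in V(\utree)$ this gives a commutative square relating $\rho_{\ttree\to\cdots\to\stree} \circ i_{\sigmatree_v,\stree}$ with $(i_{(f_0\cdots f_k)^{-1}\sigmatree_v,\ttree} \otimes \id_{I^k}) \circ \rho_{(f_0\cdots f_k)^{-1}\sigmatree_v \to \cdots \to \sigmatree_v}$. Combining these squares via the universal property of the coproduct in $\EAlg$ produces a single commutative diagram of $\EOp$-algebras, and precomposing with $\EM$ converts the top-and-left path of Figure~\ref{homotopy-dg-cooperad:Segal-coproduct-compatibility} into the outcome of this $E_\infty$-level construction. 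The equality with the bottom-and-right path of Figure~\ref{homotopy-dg-cooperad:Segal-coproduct-compatibility} then reduces to a purely structural statement about how $\EM$ interacts with the $\EOp$-algebra structure on $\AOp(\ttree) \otimes I^k$.

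The hard part will be to locate precisely where the multiplication $\mu$ on $I^k$ emerges in this comparison. The key observation is that the $\EOp$-algebra structure on a tensor product $B \otimes I^k$ is defined, via the diagonal of $\EOp$ (Construction~\ref{constr:cubical-cochain-algebras}), by the separate actions of $\EOp$ on the two tensor factors. Consequently, the universal $\EOp$-algebra morphism from $\bigvee_{v \in V(\utree)} \left(\AOp((f_0 \cdots f_k)^{-1}\sigmatree_v) \otimes I^k\right)$ to $\AOp(\ttree) \otimes I^k$ determined by the family of morphisms $i_{(f_0\cdots f_k)^{-1}\sigmatree_v,\ttree} \otimes \id_{I^k}$, when precomposed with $\EM$ and followed by the obvious reshuffling of tensor factors, should split as the tensor product of the $E_\infty$-Segal map on the $\AOp$-factor with the iterated algebra multiplication $\mu$ on the copies of $I^k$. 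Verifying this splitting carefully, using the naturality of $\EM$ and its compatibility with the diagonal of $\EOp$ (as used implicitly in Proposition~\ref{proposition:forgetful-strict}), will complete the proof of Figure~\ref{homotopy-dg-cooperad:Segal-coproduct-compatibility} and of the proposition.
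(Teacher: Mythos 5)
Your proposal is correct and follows essentially the same route as the paper: the face/degeneracy relations and the Segal condition are handled exactly as you describe, and the compatibility of Figure~\ref{homotopy-dg-cooperad:Segal-coproduct-compatibility} is established in the paper by the same device you identify, namely assembling the facet-compatibility squares over the coproduct $\bigvee_{v}$ and using that the composite of $\EM$ with the codiagonal $\nabla = \sum_v\id: \bigvee_v I^k\rightarrow I^k$ is the associative product $\mu$ on $I^k$.
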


\begin{proof}
The face and degeneracy relations for the homotopy coproduct operators are directly inherited from the homotopy Segal $E_\infty$-Hopf shuffle cooperad $\AOp$,
as we do not change the coproduct operators in our forgetful operation.
The functoriality relations of the Segal maps
and the Segal condition
follows from the same arguments as in Proposition \ref{proposition:forgetful-strict}
since the definition of the Segal maps
is the same as in the strict case.
The compatibility between the homotopy coproduct operators and the Segal maps follows from the commutativity of the following diagram,
\begin{equation*}
\xymatrix{ \bigotimes_{v \in V(\utree)}\AOp(\sigmatree_v)
\ar[d]|{\bigotimes_{v \in V(\utree)}\rho_{(f_0\dots f_k)^{-1}(\sigmatree_v)\rightarrow\dots\rightarrow f_0^{-1}(\sigmatree_v)\rightarrow\sigmatree_v}}
\ar[r]^{\EM} &
\bigvee_{v\in V(\utree)} \AOp(\sigmatree_v)
\ar[d]|{\bigvee_{v \in V(\utree)}\rho_{(f_0\dots f_k)^{-1}(\sigmatree_v)\rightarrow\dots\rightarrow f_0^{-1}(\sigmatree_v)\rightarrow\sigmatree_v}} \\
\bigotimes_{v \in V(\utree)}\left(\AOp((f_0\dots f_k)^{-1}(\sigmatree_v)\otimes I^k)\right)
\ar[r]^{\EM}\ar[d]_{\simeq} &
\bigvee_{v\in V(\utree)}\left(\AOp((f_0\dots f_k)^{-1}(\sigmatree_v))\otimes I^k\right)
\ar[d]|{\sum_v\left(i_{(f_0\dots f_k)^{-1}(\sigmatree_v),\ttree}\otimes\id\right)} \\
\left(\bigotimes_{v\in V(\utree)}\AOp((f_0\dots f_k)^{-1}(\sigmatree_v))\right)\otimes\left(\bigotimes_{v\in V(\utree)}I^k\right)
\ar[d]_{\EM\otimes\EM} &
\AOp(\ttree)\otimes I^k \\
\left(\bigvee_{v\in V(\utree)}\AOp((f_0\dots f_k)^{-1}(\sigmatree_v))\right)\otimes\left(\bigvee_{v\in V(\utree)}I^k\right)
\ar[ru]_-{i_{\lambda_{\utree}(\sigmatree_*)}\otimes\nabla} }
\end{equation*}
where $\nabla = \sum_v\id: \bigvee_v I^k\rightarrow I^k$ denotes the codiagonal of the $\EOp$-algebra $I^k$,
we use that the composite $\nabla\circ\EM: \bigotimes_v I^k\rightarrow I^k$ is identified with the associative product $\mu: \bigotimes_v I^k\rightarrow I^k$
and we consider the Segal map
of $\EOp$-algebras $i_{\lambda_{\utree}(\sigmatree_*)}: \bigvee_{v\in V(\utree)}\AOp((f_0\dots f_k)^{-1}(\sigmatree_v))\rightarrow\AOp(\ttree)$.
\end{proof}

The normalized cochain complex of the $k$-cube $I^k = \DGN^*(\Delta^1)^{\otimes k}$ is identified with the sum of the top dimensional element $\underline{01}^\sharp{}^{\otimes k}$
with the image of cubical face operators. We use this observation to determine the homotopy coproducts of homotopy Segal dg cooperads
in the following lemma.
For a morphism of dg modules $\alpha: X\rightarrow Y\otimes I^k$, we let $\alpha^{\square}: X\rightarrow Y$ be the homomorphism of graded modules of degree $k$
given by the component of the map $\alpha$ with values in $Y\otimes\underline{01}^{\sharp}{}^{\otimes k}\subset Y\otimes I^k$,
where we consider the top dimensional element $\underline{01}^{\sharp}{}^{\otimes k}\in\DGN^*(\Delta^1)^{\otimes k}$
of the cubical cochain complex $I^k = \DGN^*(\Delta^1)^{\otimes k}$,
so that we have:
\begin{equation*}
\alpha(x) = (-1)^{k\deg(x)}\alpha^{\square}(x)\otimes\underline{01}^{\sharp}{}^{\otimes k} + \text{tensors with a factor of dimension $<k$ in $I^k$},
\end{equation*}
for any $x\in X$. We also write $\delta(\alpha^{\square}) = \delta\alpha^{\square} - (-1)^k\alpha^{\square}\delta$
for the differential of this homomorphism.

\begin{lemm}\label{lemma:homotopy-cooperad-top-component}
%\hspace*{2mm}
\begin{enumerate}
\item\label{homotopy-cooperad-top-component:expression}
Let $\AOp$ be a homotopy Segal shuffle dg cooperad. The graded homomorphism of degree $k$
\begin{equation*}
\rho^{\square}_{\ttree\rightarrow\ttree_k\rightarrow\dots\rightarrow\ttree_1\rightarrow\stree}: \AOp(\stree)\rightarrow\AOp(\ttree)
\end{equation*}
that we associate to the dg module morphism $\rho_{\ttree\rightarrow\ttree_k\rightarrow\dots\rightarrow\ttree_1\rightarrow\stree}: \AOp(\stree)\rightarrow\AOp(\ttree)\otimes I^k$,
for any sequence of composable tree morphisms $\ttree\rightarrow\ttree_k\rightarrow\dots\rightarrow\ttree_1\rightarrow\stree$,
satisfies the relation
\begin{equation}\tag{*}\label{equation:differential-top-component}
\delta(\rho^{\square}_{\ttree\rightarrow\ttree_k\rightarrow\dots\rightarrow\ttree_1\rightarrow\stree})
\begin{aligned}[t] & = \sum_{i=1}^k(-1)^{i-1}\rho^{\square}_{\ttree\rightarrow\ttree_k\rightarrow\dots\rightarrow\ttree_i}
\circ\rho^{\square}_{\ttree_i\rightarrow\dots\rightarrow\ttree_1\rightarrow\stree}\\
& - \sum_{i=1}^k(-1)^{i-1}\rho^{\square}_{\ttree\rightarrow\dots\rightarrow\hat{\ttree}_i\rightarrow\dots\rightarrow\stree}.
\end{aligned}
\end{equation}
%\texttt{TODO: I am note sure that the signs are correct here.}
Moreover, if we have a degeneracy $\ttree_j = \ttree_{j+1}$ in our sequence of tree morphisms, for some $0\leq j\leq k$ (with the convention that $\ttree_{k+1} = \ttree$ and $\ttree_0 = \stree$),
then we have the relation
\begin{equation}\tag{**}\label{equation:degeneracy-top-component}
\rho^{\square}_{\ttree\rightarrow\ttree_{k}\rightarrow\dots\ttree_{j+1}=\ttree_j\cdots\rightarrow\ttree_1\rightarrow\stree} = 0.
\end{equation}
\item\label{homotopy-cooperad-top-component:generation}
Let now $\AOp(\ttree)$, $\ttree\in\Tree(r)$, $r>0$, be any given collection of dg modules
equipped with graded homomorphisms $\rho^{\square}_{\ttree\rightarrow\ttree_k\rightarrow\dots\rightarrow\ttree_1\rightarrow\stree}: \AOp(\stree)\rightarrow\AOp(\ttree)$,
of degree $k$,
and which satisfy the relations (\ref{equation:differential-top-component})-(\ref{equation:degeneracy-top-component})
of the previous statement.
Then there is a unique collection of morphisms of dg modules $\rho_{\ttree\rightarrow\ttree_k\rightarrow\dots\rightarrow\ttree_1\rightarrow\stree}: \AOp(\stree)\rightarrow\AOp(\ttree)\otimes I^k$,
which extend these maps on the summands $\AOp(\ttree)\otimes\underline{01}^{\sharp}{}^{\otimes k}$
and satisfy the face and degeneracy relations of homotopy coproduct operators
of Figure~\ref{homotopy-E-infinity-cooperad:0-faces}-\ref{homotopy-E-infinity-cooperad:degeneracies}.
%In particular, if there are chain maps associated to tree decompositions $\stree = \lambda_{\utree}( \sigmatree_v, v \in V(\utree))$ $i_{\sigmatree_v, v \in V(\utree)}: \bigotimes_{v \in V(\utree)} \AOp(\sigmatree_v)\rightarrow\AOp(\stree)$
%satisfying conditions (\ref{homotopy-dg-cooperad:Segal-map-unit}-\ref{homotopy-dg-cooperad:Segal-coproduct-compatibility}),
%then the resulting structure is a homotopy Segal shuffle dg cooperad.
\end{enumerate}
\end{lemm}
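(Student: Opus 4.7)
The plan is to expand each homotopy coproduct $\rho_{\ttree\to\ttree_k\to\dots\to\ttree_1\to\stree}(x) \in \AOp(\ttree) \otimes I^k$ in the basis of $I^k$ given by tensor products of elements from $\{\underline{0}^\sharp, \underline{1}^\sharp, \underline{01}^\sharp\}$, writing
\[
\rho_{\ttree\to\ttree_k\to\dots\to\ttree_1\to\stree}(x) = \sum_{\epsilon} a_{\epsilon}(x) \otimes \epsilon_k \otimes \dots \otimes \epsilon_1,
\]
so that $\rho^{\square}$ is identified, up to the sign $(-1)^{k\deg x}$, with the coefficient $a_{\underline{01}^\sharp,\dots,\underline{01}^\sharp}$. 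The face relations of Figures~\ref{homotopy-E-infinity-cooperad:0-faces} and~\ref{homotopy-E-infinity-cooperad:1-faces} determine the codimension-one coefficients $a_\epsilon$ -- those with exactly one $\epsilon_i \ne \underline{01}^\sharp$ -- in terms of top components of shorter composable sequences: by the $d^i_0$-face relation, the coefficient with $\underline{1}^\sharp$ in position $i$ equals $\rho^{\square}_{\ttree\to\cdots\widehat{\ttree_i}\cdots\to\stree}$ up to sign, and by the $d^i_1$-face relation combined with the tensor-compatibility of the top-component functor, the coefficient with $\underline{0}^\sharp$ in position $i$ equals $\rho^{\square}_{\ttree\to\cdots\to\ttree_i}\circ\rho^{\square}_{\ttree_i\to\cdots\to\stree}$ up to sign. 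Iterating these relations determines all coefficients $a_\epsilon$.

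For Part~(\ref{homotopy-cooperad-top-component:expression}), the differential identity~(\ref{equation:differential-top-component}) is obtained by applying the chain-map condition $\rho\circ\delta = (\delta\otimes\id + \id\otimes\delta)\circ\rho$ and projecting the resulting equation onto the summand $\AOp(\ttree)\otimes\underline{01}^\sharp{}^{\otimes k}$. Since $\delta(\underline{01}^\sharp) = 0$, the surviving terms from $\id\otimes\delta$ come only from the codimension-one coefficients computed above, where each application of $\delta$ to the factor in position $i$ produces $\pm\underline{01}^\sharp$ at the cost of a Koszul sign $(-1)^{k-i}$. The degeneracy vanishing~(\ref{equation:degeneracy-top-component}) follows directly from Figure~\ref{homotopy-E-infinity-cooperad:degeneracies}: the operators $s^j: I^{k-1}\to I^k$ for $j\in\{0,k\}$ insert a degree-zero factor $\underline{0}^\sharp + \underline{1}^\sharp$ in a specific position, while for $0<j<k$ the connection $\nabla^*$ has no $\underline{01}^\sharp\otimes\underline{01}^\sharp$ component by degree count, so the image of $\id\otimes s^j$ never meets $\underline{01}^\sharp{}^{\otimes k}$; taking the top component of both sides of the degeneracy relation yields $\rho^\square = 0$.

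For Part~(\ref{homotopy-cooperad-top-component:generation}), we take the formulas above as a definition: for each tuple $\epsilon = (\epsilon_1,\dots,\epsilon_k)$ with $A = \{i : \epsilon_i = \underline{1}^\sharp\}$ and $B = \{i : \epsilon_i = \underline{0}^\sharp\}$, the coefficient $a_\epsilon$ is defined as a signed composition of $\rho^\square$-blocks obtained by first contracting the nodes indexed by $A$ and then splitting the remaining chain at the nodes indexed by $B$. Uniqueness is immediate from the face relations. For existence, the chain-map condition on $\rho$ reduces on each summand to the identity~(\ref{equation:differential-top-component}) applied to the subchain associated with $\epsilon$; the face relations hold by construction of $a_\epsilon$; consistency of the definition across iterated face applications follows from the combinatorial identities $d^j_\epsilon d^i_\eta = d^i_\eta d^{j-1}_\epsilon$ recalled in Remark~\ref{remark:compatibility-relations} and the evident associativity of block compositions; and the degeneracy relations reduce via the coherence diagrams of Figures~\ref{homotopy-E-infinity-cooperad:0-face-coherence}--\ref{homotopy-E-infinity-cooperad:1-face-coherence-right} to the vanishing~(\ref{equation:degeneracy-top-component}) together with lower-order face compatibilities handled inductively.

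The principal obstacle is sign bookkeeping in Part~(\ref{homotopy-cooperad-top-component:expression}). Three sources of signs interact: the overall factor $(-1)^{k\deg x}$ built into the definition of $\rho^\square$; the Koszul sign $(-1)^{k-i}$ incurred when $\id\otimes\delta$ acts in the $i$-th factor past the $k-i$ degree-one factors $\underline{01}^\sharp$ to its left; and the internal signs from $\delta(\underline{0}^\sharp) = -\underline{01}^\sharp$ and $\delta(\underline{1}^\sharp) = \underline{01}^\sharp$. Moreover, identifying the top component of the composite $(\rho_{\ttree\to\cdots\to\ttree_i}\otimes\id)\circ\rho_{\ttree_i\to\cdots\to\stree}$ with $\rho^\square_{\ttree\to\cdots\to\ttree_i}\circ\rho^\square_{\ttree_i\to\cdots\to\stree}$ requires pulling the factor $\underline{01}^\sharp{}^{\otimes k-i}$ past the intermediate $I^{i-1}$ factor, producing an additional Koszul sign depending on $k$, $i$, and $\deg x$. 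A careful accounting shows that these contributions recombine to the signs $(-1)^{i-1}$ appearing in~(\ref{equation:differential-top-component}).
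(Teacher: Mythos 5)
Your proposal is correct and follows essentially the same route as the paper: both parts rest on expanding the coproduct over the cells of $I^k$, reading off the codimension-one coefficients from the face relations of Figures~\ref{homotopy-E-infinity-cooperad:0-faces}--\ref{homotopy-E-infinity-cooperad:1-faces}, projecting the chain-map identity onto the top cell $\underline{01}^{\sharp}{}^{\otimes k}$ for~(\ref{equation:differential-top-component}), and using that the degeneracies land in dimension $<k$ for~(\ref{equation:degeneracy-top-component}). For part~(\ref{homotopy-cooperad-top-component:generation}) the paper phrases the construction as an induction on $k$ via the adjoint map on $\AOp(\stree)\otimes\DGN_*(\Delta^1)^{\otimes k}$ rather than your closed-form block formula, but this is the same extension-over-the-cube argument and the verification reduces to~(\ref{equation:differential-top-component}) on the top cell in both treatments.
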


\begin{proof}[Proof of assertion~\ref{homotopy-cooperad-top-component:expression}]
We use the identities
\begin{gather*}
\begin{aligned}
(I^k)_{-k} & = \kk\,\underline{01}^{\sharp}{}^{\otimes k}\\
(I^k)_{1-k} & = \bigoplus_{i=1}^k\bigl(\bigoplus_{\epsilon\in\{0,1\}}\kk\,\underline{01}^{\sharp}{}^{\otimes k-i}\otimes\underline{\epsilon}^\sharp\otimes\underline{01}^{\sharp}{}^{\otimes i-1}\bigr),
\end{aligned}
\intertext{and the relations}
d^i_{\epsilon}(\underline{01}^{\sharp}{}^{\otimes k-j}\otimes\underline{\eta}^\sharp\otimes\underline{01}^{\sharp}{}^{\otimes j-1})
= \begin{cases}\underline{01}^{\sharp}{}^{\otimes k-1}, & \text{if $i = j$ and $\epsilon\equiv\eta + 1\mymod 2$}, \\
0, & \text{otherwise}, \end{cases}
\end{gather*}
which implies that we have a formula of the form:
\begin{multline*}
\alpha(x) = (-1)^{k\deg(x)}\alpha^{\square}(x)\otimes\underline{01}^{\sharp}{}^{\otimes k} \\
+ \sum_{i=1}^k\left(\sum_{\epsilon\in\{0,1\}}(-1)^{(k-1)\deg(x)}((\id\otimes d^i_{\epsilon+1})\circ\alpha)^{\square}(x)\otimes\underline{01}^{\sharp}{}^{\otimes k-i}\otimes\underline{\epsilon}^\sharp\otimes\underline{01}^{\sharp}{}^{\otimes i-1}\right)\\
+ \text{tensors with a factor of dimension $<k-1$ in $I^k$},
\end{multline*}
for any morphism $\alpha: X\rightarrow Y\otimes I^k$ and any $x\in X$,
where we consider the homomorphism of graded modules $((\id\otimes d^i_{\epsilon+1})\circ\alpha)^{\square}: X\rightarrow Y$
associated to the composite $(\id\otimes d^i_{\epsilon+1})\circ\alpha: X\rightarrow Y\otimes I^{k-1}$ (and we obviously take the face operator $d^i_{\epsilon+1}$
indexed by the residue class of $\epsilon+1$ mod $2$).
We deduce from the differential formula
\begin{equation*}
\delta(\underline{01}^{\sharp}{}^{\otimes k-i}\otimes\underline{\epsilon}^\sharp\otimes\underline{01}^{\sharp}{}^{\otimes i-1}) = (-1)^{k-i+\epsilon+1}\underline{01}^{\sharp}{}^{\otimes k}
\end{equation*}
that the projection of the relation $\delta(\alpha(x)) = \alpha(\delta(x))$ onto $Y\otimes\underline{01}^{\sharp}{}^{\otimes k}\subset Y\otimes I^k$
is equivalent to the following relation:
\begin{equation*}
\delta(\alpha^{\square}(x)) + \sum_{i=1}^k\bigl(\sum_{\epsilon\in\{0,1\}}(-1)^{i+\epsilon}((\id\otimes d^i_{\epsilon+1})\alpha)^{\square}(x)\bigr)
= (-1)^k\alpha^{\square}(\delta x).
\end{equation*}
The relation of our statement (\ref{equation:differential-top-component}) then follows from the compatibility of coproducts with the face operators
(the relations of Figure~\ref{homotopy-E-infinity-cooperad:0-faces}-\ref{homotopy-E-infinity-cooperad:1-faces}).

If we have $\ttree_j = \ttree_{j+1}$ for some $j$, then the morphism $\rho_{\ttree\rightarrow\ttree_k\rightarrow\dots\rightarrow\ttree_1\rightarrow\stree}$
factors through the image of a degeneracy $\AOp(\ttree)\otimes s^j: \AOp(\ttree)\otimes I^{k-1}\rightarrow\AOp(\ttree)\otimes I^k$
by the compatibility of the coproducts with the degeneracies (the relations of Figure~\ref{homotopy-E-infinity-cooperad:degeneracies})
and this requirement implies the vanishing relation $\rho_{\ttree\rightarrow\ttree_k\rightarrow\dots\rightarrow\ttree_1\rightarrow\stree}^{\square} = 0$
since $I^{k-1}$ is null in dimension $k$.
\end{proof}

\begin{proof}[Proof of assertion~\ref{homotopy-cooperad-top-component:generation}]
The operators $\rho_{\ttree\rightarrow\ttree_k\rightarrow\dots\rightarrow\ttree_1\rightarrow\stree}: \AOp(\stree)\rightarrow\AOp(\ttree)\otimes I^k$
are defined by induction on $k$.
If $k = 0$, then we take $\rho_{\ttree\rightarrow\stree} = \rho_{\ttree\rightarrow\stree}^{\square}$.
The compatibility conditions of Figure~\ref{homotopy-E-infinity-cooperad:0-faces}-\ref{homotopy-E-infinity-cooperad:degeneracies}
are tautological in this case.

If $k > 0$, then defining a morphism of dg modules $\rho_{\ttree\rightarrow\ttree_k\rightarrow\dots\rightarrow\ttree_1\rightarrow\stree}: \AOp(\stree)\rightarrow\AOp(\ttree)\otimes I^k$
amounts to defining a morphism of dg modules
$\rho'_{\ttree\rightarrow\ttree_k\rightarrow\dots\rightarrow\ttree_1\rightarrow\stree}: \AOp(\stree)\otimes\DGN_*(\Delta^1)^{\otimes k}\rightarrow\AOp(\ttree)$.

Let $d_i^{\epsilon}: \DGN_*(\Delta^1)^{\otimes k-1}\rightarrow\DGN_*(\Delta^1)^{\otimes k}$ denote the coface operators attached to the cubical complex $\DGN_*(\Delta^1)^{\otimes k}$
dual to the face operators $d^i_{\epsilon}: \DGN^*(\Delta^1)^{\otimes k}\rightarrow\DGN^*(\Delta^1)^{\otimes k-1}$
that we considered so far. We let $\rho'_{\ttree\rightarrow\ttree_k\rightarrow\dots\rightarrow\ttree_1\rightarrow\stree}(x\otimes\underline{01}^{\otimes r})
= \rho^{\square}_{\ttree\rightarrow\ttree_k\rightarrow\dots\rightarrow\ttree_1\rightarrow\stree}(x)$,
and we define inductively:
\begin{align*}
\rho'_{\ttree\rightarrow\ttree_k\rightarrow\dots\rightarrow\ttree_1\rightarrow\stree}(x\otimes d_i^0(\underline{\sigma})) &
= \rho'_{\ttree\rightarrow\dots\rightarrow\widehat{\ttree_i}\rightarrow\dots\rightarrow\stree}(x\otimes\underline{\sigma}), \\
\rho'_{\ttree\rightarrow\ttree_k\rightarrow\dots\rightarrow\ttree_1\rightarrow\stree}(x\otimes d_i^1(\underline{\sigma})) &
= \rho'_{\ttree\rightarrow\dots\rightarrow\ttree_i}(\rho'_{\ttree_i\rightarrow\dots\rightarrow\stree}\otimes\id(x\otimes\underline{\sigma})),
\end{align*}
where we use the factorization $\underline{\sigma}\in\DGN_*(\Delta^1)^{\otimes k-1}\Rightarrow\underline{\sigma} = \underline{\sigma}'\otimes\underline{\sigma}''\in\DGN_*(\Delta^1)^{\otimes k-i}\otimes\DGN_*(\Delta^1)^{\otimes i-1}$.
We deduce from the compatibility of the homotopy coproducts with the face operators that our coproduct operator is necessarily given by these formulas
and this observation proves the uniqueness of the coproduct operators
extending our maps $\rho^{\square}_{\ttree\rightarrow\ttree_k\rightarrow\dots\rightarrow\ttree_1\rightarrow\stree}$.
We may also use the observations of Remark \ref{remark:compatibility-relations} to check that our maps satisfy the compatibility relations
of Figure~\ref{homotopy-E-infinity-cooperad:0-faces}-\ref{homotopy-E-infinity-cooperad:degeneracies}
on the whole dg modules $\AOp(\ttree)\otimes I^k$.

We only need to prove that the map $\rho_{\ttree\rightarrow\ttree_k\rightarrow\dots\rightarrow\ttree_1\rightarrow\stree}$
or equivalently the map $\rho'_{\ttree\rightarrow\ttree_k\rightarrow\dots\rightarrow\ttree_1\rightarrow\stree}$
commutes with the differential.
We proceed by induction on $k$. For $k = 0$ the statement is obviously equivalent to relation~(\ref{equation:differential-top-component}).
For $k > 0$, the relation $\delta(\rho'_{\ttree\rightarrow\ttree_k\rightarrow\dots\rightarrow\ttree_1\rightarrow\stree}(x\otimes\underline{\sigma}))
= \rho'_{\ttree\rightarrow\ttree_k\rightarrow\dots\rightarrow\ttree_1\rightarrow\stree}\delta(x\otimes\underline{\sigma})$
follows from the induction assumption
in the case $\underline{\sigma} = (d_i^{\epsilon})(\underline{\tau})$, for some $1\leq i\leq k$, $\epsilon\in\{0,1\}$, $\underline{\tau}\in\DGN_*(\Delta^1)^{\otimes k-1}$,
and reduces to relation~(\ref{equation:differential-top-component})
when $\underline{\sigma} = \underline{01}^{\otimes k}$.
The conclusion follows.
\end{proof}

\subsection{The application of cobar complexes}\label{subsection:homotopy-Segal-cooperad-cobar}

The aim of this Subsection is to extend the cobar construction of Subsection~\ref{subsection:strict-Segal-cooperad-cobar}
to homotopy Segal dg cooperads. We follow the same plan.

We make explicit the definition of the structure operations of the cobar construction $\DGB^c(\AOp)$
associated to a homotopy Segal shuffle dg cooperad $\AOp$
in the next paragraph.
We check the validity of these definitions afterwards and we record the definition of this dg operad $\DGB^c(\AOp)$
to conclude this subsection.
We assume all along this subsection that $\AOp$ is a homotopy Segal shuffle dg cooperad.
%\texttt{TODO: connectedness condition.}
We also assume that $\AOp$ is connected in the sense that we have $\AOp(\ttree) = 0$ when the tree $\ttree$ is not reduced (has at least one vertex with a single ingoing edge).
This condition implies that every object $\AOp(\stree)$
is the source of finitely many nonzero homotopy coproducts $\rho_{\ttree\rightarrow\dots\rightarrow\stree}: \AOp(\stree)\rightarrow\AOp(\ttree)\otimes I^k$,
because the set of tree morphisms $\ttree\rightarrow\stree$ with $\ttree$ reduced and $\stree$ fixed is finite.

\begin{constr}\label{construction:bar-complex-homotopy}
%Let $\AOp$ be a homotopy Segal shuffle dg cooperad. Define $\DGB^c(\AOp)(r) = \bigoplus_{\ttree \in \Tree(r)} \DGSigma^{-\sharp V\ttree} \AOp(\ttree)$ as a graded module, as we did in the strict case.
We define the graded modules $\DGB^c(\AOp)(r)$ which form the components of the cobar construction $\DGB^c(\AOp)$
by the same formula as in the strict case
\begin{equation*}
\DGB^c(\AOp)(r) = \bigoplus_{\ttree\in\Tree(r)}\DGSigma^{-\sharp V(\ttree)}\AOp(\ttree),
\end{equation*}
but we now take a cobar differential that involves terms given by higher coproduct operators and which we associate to multiple edge contractions.
We precisely consider the set of pairs $(\ttree,\underline{e})$, where $\ttree\in\Tree(r)$ and $\underline{e} = (e_1,\dots,e_m)$
is an ordered collection of pairwise distinct edges $e_i\in\mathring{E}(\ttree)$.
To any such pair, we associate the sequence of tree morphisms such that
\begin{gather*}
\sigma(\ttree,\underline{e}) = \{\ttree\rightarrow\ttree/{e_1}\rightarrow\ttree/{\{e_1,e_2\}}\rightarrow\dots\rightarrow\ttree/\{e_1,e_2,\dots,e_m\}\}.
\intertext{and the map of degree $-1$}
\partial_{(\ttree,\underline{e})}: \DGSigma^{-\sharp V(\ttree)+m}\AOp(\ttree/\{e_1,\dots,e_m\})\rightarrow\DGSigma^{-\sharp V(\ttree)}\AOp(\ttree)
\intertext{given by top component of the homotopy coproduct $\rho_{\sigma(\ttree,\underline{e})}$}
\partial_{(\ttree,\underline{e})} = \rho^{\square}_{\sigma(\ttree,\underline{e})},
\end{gather*}
such as defined in Lemma~\ref{lemma:homotopy-cooperad-top-component}.

Recall that the object $\DGSigma^{-\sharp V(\ttree)}\AOp(\ttree)$
is identified with a tensor product
\begin{equation*}
\DGSigma^{-\sharp V(\ttree)}\AOp(\ttree) = \bigl(\bigotimes_{v\in V(\ttree)}\underline{01}^{\sharp}_v\bigr)\otimes\AOp(\ttree),
\end{equation*}
where we associate a factor of cohomological degree one $\underline{01}^{\sharp}_v$ to every vertex $v\in V(\ttree)$.
In the definition of our map $\partial_{(\ttree,e)}$, we also perform blow-up operations
\begin{equation*}
\underline{01}_{u\equiv v}^{\sharp}\mapsto\underline{01}_{u}^{\sharp}\otimes\underline{01}_{v}^{\sharp},
\end{equation*}
for each edge contraction step $\ttree/{\{e_1,\dots,e_{i-1}\}}\rightarrow\ttree/{\{e_1,\dots,e_{i-1},e_i\}}$,
where $u$ and $v$ represent the vertices of the edge $e_i$ in the tree $\ttree/{\{e_1,\dots,e_{i-1}\}}$,
and $u\equiv v$ represents the result of the fusion of these vertices
in $\ttree/{\{e_1,\dots,e_{i-1}\}}$.
The performance of this sequence of blow-up operations, in parallel to the application of the map $\rho^{\square}_{\sigma(\ttree,\underline{e})}$,
enables us to pass
from the tensor product $\DGSigma^{-\sharp V(\ttree)+m}\AOp(\ttree/\{e_1,\dots,e_m\})
= \bigl(\bigotimes_{x\in V(\ttree/\{e_1,\dots,e_m\})}\underline{01}^{\sharp}_x\bigr)\otimes\AOp(\ttree/\{e_1,\dots,e_m\})$
to $\DGSigma^{-\sharp V(\ttree)}\AOp(\ttree) = \bigl(\bigotimes_{x\in V(\ttree)}\underline{01}^{\sharp}_x\bigr)\otimes\AOp(\ttree)$.
This operation may involve a sign, which we determine as in the strict case in Construction~\ref{construction:bar-complex}.

Finally, we take:
\begin{equation*}
\partial_m = \sum_{(\ttree,(e_1,\dots,e_m))}\partial_{(\ttree,(e_1,\dots,e_m))},\quad\text{for $m\geq 1$},
\quad\text{and}\quad\partial = \sum_{m\geq 1}\partial_m.
\end{equation*}
(We just use the connectedness condition on $\AOp$ to ensure that this sum reduces to a finite number of terms
on each summand $\DGSigma^{-\sharp V(\ttree)}\AOp(\ttree)$
of our object $\DGB^c(\AOp)(r)$.)
% TODO
We prove next that this map $\partial$ also defines a twisting differential on $\DGB^c(\AOp)(r)$, so that we can again provide $\DGB^c(\AOp)(r)$
with a dg module structure with the sum $\delta+\partial: \DGB^c(\AOp)(r)\rightarrow\DGB^c(\AOp)(r)$ as total differential,
where $\delta: \DGB^c(\AOp)(r)\rightarrow\DGB^c(\AOp)(r)$ denotes the differential induced by the internal differential of the objects $\AOp(\ttree)$
in $\DGB^c(\AOp)(r)$ (as in the case of the cobar construction of strict Segal dg cooperads).

We equip the object $\DGB^c(\AOp)$ with composition products
\begin{equation*}
\circ_{i_p}: \DGB^c(\AOp)(\{i_1<\dots<i_k\})\otimes\DGB^c(\AOp)(\{j_1<\dots<j_l\})\rightarrow\DGB^c(\AOp)(\{1<\dots<r\}),
\end{equation*}
which we define exactly as in the strict case.
We explicitly define $\circ_{i_p}$ as the sum of the maps
\begin{equation*}
\circ_{i_p}^{\stree,\ttree}: \DGSigma^{-\sharp V(\stree)}\AOp(\stree)\otimes\DGSigma^{-\sharp V(\ttree)}\AOp(\ttree)
\rightarrow\DGSigma^{-\sharp V(\stree\circ_{i_p}\ttree)}\AOp(\stree\circ_{i_p}\ttree)
\end{equation*}
yielded by the Segal map $i_{\stree\circ_{i_p}\ttree}: \AOp(\stree)\otimes\AOp(\ttree)\rightarrow\AOp(\stree\circ_{i_p}\ttree)$
associated to the composition operation $\stree\circ_{i_p}\ttree = \lambda_{\gammatree}(\stree,\ttree)$
in the category of trees, where $\stree\in\Tree(\{i_1<\dots<i_k\})$ and $\ttree\in\Tree(\{j_1<\dots<j_l\})$.
In a forthcoming lemma, we also check that these operations preserve the above differential, and hence, provide our object with well-defined operations in the category of dg modules.
\end{constr}

We first check the validity of the definition of the twisting differential announced in our construction. This result is a consequence of the following more precise lemma.

\begin{lemm}\label{lemma:differential-bar-construction-homotopy}
We have the relation $\delta\partial_m + \partial_m\delta + \sum_{i=1}^{m-1}\partial_i\partial_{m-i} = 0$, for each $m\geq 1$.
\end{lemm}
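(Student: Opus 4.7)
The plan is to apply equation~(\ref{equation:differential-top-component}) of Lemma~\ref{lemma:homotopy-cooperad-top-component} to each sequence $\sigma(\ttree,\underline{e})$ entering the definition of $\partial_m$, to sum the resulting expressions over all pairs $(\ttree,\underline{e})$ with $|\underline{e}| = m$, and to reorganize the output into the two contributions appearing in the identity. That formula expresses $\delta(\rho^{\square}_{\sigma(\ttree,\underline{e})})$ as a sum of composition terms, where $\sigma(\ttree,\underline{e})$ is split at an intermediate tree $\ttree_j = \ttree/\{e_1,\dots,e_j\}$, minus a sum of skip terms, where the tree $\ttree_j$ is omitted and replaced by the direct morphism $\ttree/\{e_1,\dots,e_{j-1}\}\rightarrow\ttree/\{e_1,\dots,e_{j+1}\}$ that contracts $e_j$ and $e_{j+1}$ simultaneously. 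Since $\partial_{(\ttree,\underline{e})}$ is obtained from $\rho^{\square}_{\sigma(\ttree,\underline{e})}$ by composing with the blow-up operations $\underline{01}^{\sharp}_{u\equiv v}\mapsto\underline{01}^{\sharp}_u\otimes\underline{01}^{\sharp}_v$ on the suspension factors, the differential relation for $\partial_{(\ttree,\underline{e})}$ is inherited from the one for $\rho^{\square}_{\sigma(\ttree,\underline{e})}$ up to the signs coming from these blow-ups.

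The composition terms, summed over all $(\ttree,\underline{e})$ and all split positions $j\in\{1,\dots,m-1\}$, should reassemble into $\sum_{i=1}^{m-1}\partial_i\partial_{m-i}$ up to the global sign. Indeed, each split of a sequence $\sigma(\ttree,\underline{e})$ at $\ttree_j$ produces a composite $\rho^{\square}_{\sigma(\ttree,(e_1,\dots,e_j))}\circ\rho^{\square}_{\sigma(\ttree_j,(e_{j+1},\dots,e_m))}$, and these are precisely the composites whose source and target match in the expansion of $\partial_i\circ\partial_{m-i}$; the other summands $\partial_{(\stree,\underline{f})}\circ\partial_{(\utree,\underline{g})}$ appearing formally in $\partial_i\partial_{m-i}$ vanish because they are nonzero only when $\utree = \stree/\underline{f}$. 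The skip terms demand a separate cancellation argument. The skip at position $j$ in $\sigma(\ttree,\underline{e})$ depends, as a sequence of tree morphisms, only on the unordered pair $\{e_j,e_{j+1}\}$, since after skipping the two contractions happen simultaneously. The tuples $\underline{e}$ and $\underline{e}' = (\dots,e_{j+1},e_j,\dots)$ thus give identical skip maps at position $j$; however, the accompanying blow-up signs differ by $-1$, because the tensor reordering that places the factors $\underline{01}^{\sharp}_{u_j}\otimes\underline{01}^{\sharp}_{v_j}$ and $\underline{01}^{\sharp}_{u_{j+1}}\otimes\underline{01}^{\sharp}_{v_{j+1}}$ into a common normal form changes sign under the swap. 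This is exactly the sign mechanism already used for the strict case $m=1$ in the proof of Lemma~\ref{lemma:differential-bar-construction}, extended here to arbitrary $m$, and it forces the skip terms to cancel in pairs.

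The main obstacle will be the sign bookkeeping. One needs to check that the combined contributions of the $(-1)^{i-1}$ coefficients in equation~(\ref{equation:differential-top-component}), the suspension conventions in the definition of $\partial_{(\ttree,\underline{e})}$, and the blow-up signs all assemble into the asserted identity with the correct global sign. As sanity checks, the case $m = 1$ should recover the relation $\delta\partial_1+\partial_1\delta = 0$ already established in the strict case of Lemma~\ref{lemma:differential-bar-construction} (where neither composition nor skip terms are present), while $m = 2$ is the first case where both mechanisms appear nontrivially and can be verified by comparing the contributions of the two orderings $(e_1,e_2)$ and $(e_2,e_1)$ of a pair of inner edges.
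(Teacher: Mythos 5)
Your proposal follows essentially the same route as the paper's proof: apply relation~(\ref{equation:differential-top-component}) of Lemma~\ref{lemma:homotopy-cooperad-top-component} to each sequence $\sigma(\ttree,\underline{e})$, sum over pairs $(\ttree,\underline{e})$, identify the composition terms with $\sum_{i}\partial_i\partial_{m-i}$, and cancel the skip terms in pairs by matching $\underline{e}$ with the tuple obtained by transposing $e_i$ and $e_{i+1}$, the opposite signs coming from the reordering of the suspension factors in the blow-up. Your identification of the sign mechanism (the blow-up order, not the coefficients in~(\ref{equation:differential-top-component}), which agree for the two paired terms) is a correct elaboration of the paper's terser "we readily check that these signs are opposite."
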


\begin{proof}
If $m = 1$, then the statement reduces to $\delta\partial_1 + \partial_1\delta = 0$, and we readily check, as in the case of the cobar construction of Segal dg cooperads,
that this relation is equivalent to the fact that the coproducts of degree zero $\rho_{\ttree\rightarrow\stree} = \rho_{\ttree\rightarrow\stree}^{\square}$
are morphisms of dg modules.
We now prove the statement for $m>1$. From Equation~(\ref{equation:differential-top-component}) of Lemma~\ref{lemma:homotopy-cooperad-top-component},
we see that, for every tree $\ttree$ and every sequence of internal edges $\underline{e} = (e_1,\dots,e_m)$,
we have:
\begin{align*}
\delta\partial_{(\ttree,\underline{e})} + \partial_{(\ttree,\underline{e})}\delta &
= \sum_{i=1}^{m-1}\pm\rho^{\square}_{\ttree\rightarrow\dots\rightarrow\widehat{\ttree/\{e_1,\dots,e_i\}}\rightarrow\dots\rightarrow\ttree/\{e_1,\dots,e_m\}} \\
& + \sum_{i=1}^{m-1}\pm\rho^{\square}_{\ttree\rightarrow\dots\rightarrow\ttree/\{e_1,\dots,e_i\}}\circ\rho^{\square}_{\ttree/\{e_1,\dots,e_i\}\rightarrow\dots\rightarrow\ttree/\{e_1,\dots,e_m\}}
\end{align*}
Then, by taking the sum of these expressions over the set of pairs $(\ttree,\underline{e})$, we obtain the formula:
\begin{align*}
\delta\partial_m + \partial_m\delta &
= \sum_{\substack{(\ttree,\underline{e})\\i=1,\dots,m}}\pm\rho^{\square}_{\ttree\rightarrow\dots\rightarrow\widehat{\ttree/\{e_1,\dots,e_i\}}\rightarrow\dots\rightarrow\ttree/\{e_1,\dots,e_m\}} \\
& + \sum_{i=1}^{m-1}\bigl(\underbrace{\sum_{(\ttree,\underline{e})}\pm\rho^{\square}_{\ttree\rightarrow\dots\rightarrow\ttree/\{e_1,\dots,e_i\}}
\circ\rho^{\square}_{\ttree/\{e_1,\dots,e_i\}\rightarrow\dots\rightarrow\ttree/\{e_1,\dots,e_m\}}}_{= \partial_i\partial_{m-i}}\bigr).
\end{align*}
In the first sum of this formula, the term that corresponds to the removal of the node $\ttree/\{e_1,\dots,e_{i-1},e_i\}$
and the term that corresponds to the removal of the node $\ttree/\{e_1,\dots,e_{i-1},e_{i+1}\}$
for the pair $(\ttree,(e_1,\dots,e_{i-1},e_{i+1},e_i,e_{i+2},\dots,e_m))$
with $e_i$ and $e_{i+1}$ switched
are equal up to a sign. We readily check that these signs are opposite, so that these terms cancel out in our sum. The conclusion of the lemma follows.
\end{proof}

We still check the validity of our definition of the composition products.

\begin{lemm}\label{lemma:bar-construction-product-operad-homotopy}
The twisting map $\partial$ and the differential $\delta$ induced by the internal differential of the object $\AOp$ on the cobar construction $\DGB^c(\AOp)$
form derivations with respect the composition products of Construction~\ref{construction:bar-complex-homotopy},
so that these operations $\circ_{i_p}$ define morphisms of dg modules.
\end{lemm}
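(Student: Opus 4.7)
The plan is to adapt the argument of the strict case (Lemma~\ref{lemma:bar-construction-product-operad}), replacing the compatibility of strict coproducts with Segal maps by the compatibility of higher coproducts with Segal maps from Figure~\ref{homotopy-dg-cooperad:Segal-coproduct-compatibility}. The preservation of the internal differential $\delta$ by $\circ_{i_p}$ is immediate: since $\circ_{i_p}$ is induced componentwise by the Segal maps $i_{\stree\circ_{i_p}\ttree}$, which are morphisms of dg modules, the usual Koszul signs yield $\delta\circ\circ_{i_p} = \circ_{i_p}\circ(\delta\otimes\id\pm\id\otimes\delta)$.

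The heart of the argument is the compatibility with the twisting differential $\partial = \sum_m\partial_m$. Fix $\stree$ and $\ttree$, set $\thetatree = \stree\circ_{i_p}\ttree$, and let $\thetatree'\rightarrow\thetatree$ be an edge-contraction morphism along a sequence $\underline{e} = (e_1,\ldots,e_m)$. Since $\mathring{E}(\thetatree') = \mathring{E}(\stree')\sqcup\mathring{E}(\ttree')$ for the unique factorization $\thetatree' = \stree'\circ_{i_p}\ttree'$, the sequence splits as $\underline{e} = \underline{e}'\sqcup\underline{e}''$. Applying Figure~\ref{homotopy-dg-cooperad:Segal-coproduct-compatibility} with $\utree = \gammatree$ and $(\sigmatree_{u_1},\sigmatree_{u_2}) = (\stree,\ttree)$ yields
\[
\rho_{\sigma(\thetatree',\underline{e})}\circ i_{\stree\circ_{i_p}\ttree}
= (i\otimes\mu)\circ\mathrm{sh}\circ(\rho_{\sigma'}\otimes\rho_{\sigma''}),
\]
where $\sigma'$ is the restriction of $\sigma(\thetatree',\underline{e})$ to $\stree$-preimages (a chain of length $m$ with a degeneracy at each position $i$ such that $e_i\in\mathring{E}(\ttree')$) and $\sigma''$ is the symmetric restriction to $\ttree$-preimages. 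Projecting to the top component $\underline{01}^{\sharp}{}^{\otimes(m-1)}$ extracts $\partial_{(\thetatree',\underline{e})}\circ\circ_{i_p}^{\stree,\ttree}$.

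The key computational input is a cohomological degree bound: each cubical operator $s^j\colon I^{k-1}\rightarrow I^k$ preserves cohomological degree, so the image of an iterated degeneracy $s^{j_1}\cdots s^{j_r}\colon I^{k-r}\rightarrow I^k$ has degree at most $k-r$ and does not reach the top element $\underline{01}^{\sharp}{}^{\otimes k}$ unless $r=0$. Moreover, since $\nabla^*$ and $s_0$ are algebra maps, iterated degeneracies applied to $1\in I^0$ produce the algebra unit $\mathbf{1}_{I^{m-1}} = (\underline{0}^{\sharp}+\underline{1}^{\sharp})^{\otimes(m-1)}$. Using these two facts, we distinguish two cases. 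In the pure case ($\underline{e}$ entirely in $\stree$, so $\underline{e}''=\emptyset$), the chain $\sigma''$ is fully degenerate and $\rho_{\sigma''}(y) = y\otimes\mathbf{1}_{I^{m-1}}$; multiplication by this unit in $\mu$ is the identity, so the top component reduces to $\circ_{i_p}^{\stree',\ttree}(\partial_{(\stree',\underline{e}')}(x)\otimes y)$ (with the symmetric formula when $\underline{e}$ lies entirely in $\ttree$). In the mixed case ($|\underline{e}'|,|\underline{e}''|\geq 1$), the image of $\rho_{\sigma'}$ has cohomological degree at most $|\underline{e}'|-1$ and that of $\rho_{\sigma''}$ has degree at most $|\underline{e}''|-1$; since $\mu$ preserves total cohomological degree, the product lies in degree at most $m-2<m-1$, and its top component vanishes.

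Summing over all $\thetatree'$ and $\underline{e}$, the pure-$\stree$ contributions assemble into $\circ_{i_p}(\partial(\alpha)\otimes\beta)$, the pure-$\ttree$ contributions into $\pm\circ_{i_p}(\alpha\otimes\partial(\beta))$, and the mixed contributions vanish, which yields the graded Leibniz rule for $\partial$. The main obstacle will be the sign bookkeeping, arising from the Koszul sign when interchanging the degree $-1$ derivation $\partial$ with $\alpha$, the suspension factors $\underline{01}^{\sharp}_v$, and the blow-up operations $\underline{01}_{u\equiv v}^{\sharp}\mapsto\underline{01}^{\sharp}_u\otimes\underline{01}^{\sharp}_v$ from Construction~\ref{construction:bar-complex-homotopy}; we expect an analysis parallel to, though more intricate than, that of Lemma~\ref{lemma:bar-construction-product-operad} to produce exactly the required signs.
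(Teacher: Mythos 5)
Your proposal is correct and follows essentially the same route as the paper's proof: the same commutative diagram obtained from Figure~\ref{homotopy-dg-cooperad:Segal-coproduct-compatibility} combined with the degeneracy relations of Figure~\ref{homotopy-E-infinity-cooperad:degeneracies}, the same case split into pure and mixed edge collections (the paper's cases $r=0$, $s=0$ and $r,s\geq 1$), and the same degree count showing that in the mixed case the degeneracies carry $I^{r-1}\otimes I^{s-1}$ into dimensions $<m-1$ so that the product $\mu$ cannot reach the top cell $\underline{01}^{\sharp}{}^{\otimes m-1}$. Only a cosmetic slip: $\mathring{E}(\thetatree')$ also contains the grafting edge, so it is not literally $\mathring{E}(\stree')\sqcup\mathring{E}(\ttree')$; but since the grafting edge is never contracted, the splitting $\underline{e}=\underline{e}'\sqcup\underline{e}''$ that you actually use is correct.
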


\begin{proof}
We generalize the arguments used in Lemma \ref{lemma:bar-construction-product-operad}.
We again prove that $\circ_{i_p}$ commutes with the differential on each summand $\DGSigma^{-\sharp V(\stree)}\AOp(\stree)\otimes\DGSigma^{-\sharp V(\ttree)}\AOp(\ttree)$
of the tensor product $\DGB^c(\AOp)(\{i_1<\dots<i_k\})\otimes\DGB^c(\AOp)(\{j_1<\dots<j_l\})$,
where $\stree\in\Tree(\{i_1<\dots<i_k\})$, $\ttree\in\Tree(\{j_1<\dots<j_l\})$.
We can still use that the Segal maps, which induce our composition product componentwise, are morphisms of dg modules to conclude that $\circ_{i_p}$
preserves the internal differentials on our objects.
We therefore focus on the verification that $\circ_{i_p}$ preserves the twisting differential $\partial$.

We set $\thetatree = \stree\circ_{i_p}\ttree$ and we consider a tree $\thetatree'$ such that $\thetatree = \thetatree'/\{e_1,\dots,e_m\}$
for an sequence of edges $e_1,\dots,e_m\in\mathring{E}(\thetatree')$.
We then have $\thetatree' = \stree'\circ_{i_p}\ttree'$, where $\stree'$ and $\ttree'$ represent the pre-image of the subtrees $\stree\subset\thetatree$ and $\ttree\subset\thetatree$
under the map $\thetatree'\rightarrow\thetatree'/\{e_1,\dots,e_m\}$, and $\stree = \stree'/\{e_{\alpha_1},\dots,e_{\alpha_r}\}$, $\ttree = \ttree'/\{e_{\beta_1},\dots,e_{\beta_s}\}$,
for the partition $\{e_{\alpha_1},\dots,e_{\alpha_r}\}\amalg\{e_{\beta_1},\dots,e_{\beta_s}\} = \{e_1,\dots,e_m\}$
such that $e_{\alpha_1},\dots,e_{\alpha_r}\in\mathring{E}(\stree')$ and $e_{\beta_1},\dots,e_{\beta_s}\in\mathring{E}(\ttree')$.
We may note that one the component of this partition can be empty (we take by convention $r = 0$ or $s = 0$ in this case).

The compatibility between the Segal maps and the coproduct operators in Figure~\ref{homotopy-dg-cooperad:Segal-coproduct-compatibility}
together with the degeneracy relations of Figure~\ref{homotopy-E-infinity-cooperad:degeneracies}
imply the commutativity of the following diagram:
\begin{equation*}
\xymatrixcolsep{5pc}\xymatrix{ \AOp(\stree)\otimes\AOp(\ttree)
\ar[r]^-{\rho_{\sigma(\stree',\underline{e}|_{\stree'})}\otimes\rho_{\sigma(\ttree',\underline{e}|_{\ttree'})}}
\ar[ddd]|{i_{\stree\circ_{i_p}\ttree}} &
\AOp(\stree')\otimes I^{r-1}\otimes\AOp(\ttree')\otimes I^{s-1}
\ar[d]|{(\id\otimes s^{m-\beta_*})\otimes(\id\otimes s^{m-\alpha_*})} \\
& \AOp(\stree')\otimes I^{m-1}\otimes\AOp(\ttree)\otimes I^{m-1}\ar[d]^{\simeq} \\
& \AOp(\stree')\otimes\AOp(\ttree')\otimes I^{m-1}\otimes I^{m-1}\ar[d]^{i_{\stree',\ttree',\thetatree}\otimes\mu} \\
\AOp(\thetatree)\ar[r]_{\rho_{\sigma(\thetatree',\underline{e})}} &
\AOp(\thetatree')\otimes I^{m-1} },
\end{equation*}
where we set $\underline{e}|_{\stree'} = (e_{\alpha_1},\dots,e_{\alpha_r})$ and $\underline{e}|_{\ttree'} = (e_{\beta_1},\dots,e_{\beta_s})$ for short,
and $s^{m-\alpha_*} = s^{m-\alpha_1} s^{m-\alpha_2}\cdots s^{m-\alpha_r}$, $s^{m-\beta_*} = s^{m-\beta_1} s^{m-\beta_2}\cdots s^{m-\beta_s}$.
(These composites correspond to the positions of the degeneracies
when we take the pre-image of the subtrees $\stree,\ttree\subset\thetatree$
under the sequence of tree morphisms $\thetatree'\rightarrow\thetatree'/\{e_1\}\rightarrow\dots\rightarrow\thetatree'/\{e_1,\dots,e_m\} = \thetatree$.)

Note that we may have $r = 0$ or $s = 0$ and our diagram is still valid in these cases. (We then take $I^{-1} = \kk$ by convention and $s^0: I^{-1}\rightarrow I^0$
denotes the identity map.)
We actually have three possible cases:
\begin{itemize}
\item $r = 0$:
In this case all the edges of our collection $\underline{e}$ belong to $\ttree'$, we have $\stree = \stree'$,
and the commutativity of the diagram implies that $\partial_{\theta',\underline{e}}\circ i_{\stree\circ_{i_p}\ttree}
= i_{\stree\circ_{i_p}\ttree'}\circ(\id\otimes\partial_{\ttree',\underline{e}})$.
\item $s = 0$:
In this mirror case, all the edges of our collection $\underline{e}$ belong to $\stree'$, we have $\ttree = \ttree'$,
and we get $\partial_{\theta',\underline{e}}\circ i_{\stree\circ_{i_p}\ttree}
= i_{\stree'\circ_{i_p}\ttree}\circ(\partial_{\stree',\underline{e}}\otimes\id)$.
\item $r,s\geq 1$: in this case, the composite of the vertical morphisms on the right hand side of the diagram
does not meet $\AOp(\stree'\circ_{i_p}\ttree')\otimes\underline{01}^{\sharp}{}^{\otimes m-1}$,
because the product of degeneracies carries $I^{r-1}\otimes I^{s-1}$ to a submodule of $I^{m-1}\otimes I^{m-1}$ concentrated in dimension $<m-1$,
whose image under the product can not meet $\underline{01}^{\sharp}{}^{\otimes m-1}$,
so that we have $\partial_{\theta',\underline{e}}\circ i_{\stree\circ_{i_p}\ttree} = 0$.
\end{itemize}
From these identities, we obtain the derivation relation $\partial_m\circ\circ_{i_p} = \circ_{i_p}\circ(\partial_m\otimes\id + \id\otimes\partial_m)$,
valid for each $m\geq 1$. The conclusion follows.
\end{proof}

We still immediately deduce from the associativity of the Segal maps that the composition products of Construction~\ref{construction:bar-complex-homotopy}
satisfy the associativity relations of the composition products of an operad.
We therefore get the following concluding statement:

\begin{thm-defn}
The collection $\DGB^c(\AOp) = \{\DGB^c(\AOp)(r),r>0\}$ equipped with the differential and structure operations defined in Construction~\ref{construction:bar-complex-homotopy}
forms a shuffle operad in dg modules.
This operad $\DGB^c(\AOp)$ is the cobar construction of the connected homotopy Segal shuffle dg (pre-)cooperad $\AOp$.\qed
\end{thm-defn}

\subsection{The definition of homotopy morphisms}\label{subsection:homotopy-morphisms}

We devote this section to the study of homotopy morphisms of Segal cooperads.
We always assume that our target object is equipped with a strict Segal cooperad structure for technical reasons,
but our source object can be equipped with a general homotopy Segal cooperad structure.
We explain the definition of these homotopy morphisms in the context of $E_\infty$-Hopf cooperads first.
We examine the forgetting of $E_\infty$-structures afterwards and then we study the application
of homotopy morphisms to the cobar construction.

\begin{defn}\label{definition:homotopy-morphisms}
We assume that $\BOp$ is a strict Segal $E_\infty$-Hopf shuffle cooperad while $\AOp$ can be any homotopy Segal $E_\infty$-Hopf shuffle cooperad.
We then define a homotopy morphism of homotopy Segal $E_\infty$-Hopf shuffle cooperads $\phi: \AOp\rightarrow\BOp$
as a collection of $\EOp$-algebra morphisms
\begin{equation*}
\phi_{\ttree}: \AOp(\ttree)\rightarrow\BOp(\ttree),\quad\text{$\ttree\in\Tree(r)$, $r>0$},
\end{equation*}
referred to as the underlying maps of our homotopy morphism, together with a collection of higher morphism operators
\begin{equation*}
\phi_{\ttree\rightarrow\ttree_k\rightarrow\dots\rightarrow\ttree_1\rightarrow\stree}: \AOp(\stree)\rightarrow\BOp(\ttree)\otimes I^{k+1},
\end{equation*}
defined in the category of $\EOp$-algebras as well and associated to sequences of composable tree morphisms $\ttree\rightarrow\ttree_k\rightarrow\dots\rightarrow\ttree_1\rightarrow\stree$,
so that the compatibility relations with the face and degeneracy operators expressed by the diagrams of Figure~\ref{homotopy-morphisms:0-faces}-\ref{homotopy-morphisms:degeneracies}
hold, as well the compatibility relations with the facet operators
expressed by the diagrams of Figure~\ref{homotopy-morphisms:facets}-\ref{homotopy-morphisms:homotopy-facets}.
(For the underlying maps of our homotopy morphism, we just retrieve the relation of Definition~\ref{definition:E-infinity-cooperad-morphism}.)
When $\AOp$ and $\BOp$ are symmetric cooperads, we say that $\phi: \AOp\rightarrow\BOp$
defines a homotopy morphism of homotopy Segal $E_\infty$-Hopf symmetric cooperads
if we have also the compatibility relations with the action of permutations
expressed by the diagrams of Figure~\ref{homotopy-morphisms:0-faces}-\ref{homotopy-morphisms:degeneracies}.
(For the underlying maps of our homotopy morphism, we just retrieve the relation of Definition~\ref{definition:E-infinity-cooperad-morphism}.)
\end{defn}

We also have a version of this definition for homotopy Segal shuffle dg cooperads without $E_\infty$-structure.

\begin{defn}\label{definition:homotopy-morphisms-forgetful}
We assume that $\BOp$ be a strict Segal shuffle dg cooperad while $\AOp$ can be any homotopy Segal shuffle dg cooperad.
We then define a homotopy morphism of homotopy Segal shuffle dg cooperads $\phi: \AOp\rightarrow\BOp$
as a collection of morphisms dg modules
\begin{equation*}
\phi_{\ttree}: \AOp(\ttree)\rightarrow\BOp(\ttree),\quad\text{$\ttree\in\Tree(r)$, $r>0$},
\end{equation*}
to which we again refer as the underlying maps of our homotopy morphism, together with a collection of higher morphism operators
\begin{equation*}
\phi_{\ttree\rightarrow\ttree_k\rightarrow\dots\rightarrow\ttree_1\rightarrow\stree}: \AOp(\stree)\rightarrow\BOp(\ttree)\otimes I^{k+1},
\end{equation*}
defined in the category of dg modules as well and associated to sequences of composable tree morphisms $\ttree\rightarrow\ttree_k\rightarrow\dots\rightarrow\ttree_1\rightarrow\stree$,
so that we have the compatibility relations with respect to the face and degeneracy operators expressed by the diagrams
of Figure~\ref{homotopy-morphisms:0-faces}-\ref{homotopy-morphisms:degeneracies} (as in the case of homotopy morphisms $E_\infty$-Hopf cooperads),
together with the compatibility relations with respect to the Segal maps
expressed by the diagrams of Figure~\ref{homotopy-dg-morphisms:Segal-maps}-\ref{homotopy-dg-morphisms:homotopy-Segal-maps}.
\end{defn}

\begin{figure}[p]
\ffigbox
{\caption{The compatibility of homotopy morphisms with $0$-faces.
The diagrams commute for all sequences of composable tree morphisms $\ttree\rightarrow\ttree_k\rightarrow\dots\rightarrow\ttree_1\rightarrow\stree$
and for all $1\leq i\leq k$, where $\widehat{\ttree_i}$ means that we delete the node $\ttree_i$.}\label{homotopy-morphisms:0-faces}}
{\centerline{\xymatrixcolsep{10pc}\xymatrix{ \AOp(\stree)
\ar[r]^{\phi_{\ttree\rightarrow\ttree_k\rightarrow\dots\rightarrow\ttree_1\rightarrow\stree}}
\ar[d]^{\phi_{\ttree_k\rightarrow\dots\rightarrow\ttree_1\rightarrow\stree}} &
\BOp(\ttree)\otimes I^{k+1}\ar[d]^{\id\otimes d^{k+1}_0} \\
\BOp(\ttree_k)\otimes I^k
\ar[r]_{\rho^{\BOp}_{\ttree\rightarrow\ttree_k}\otimes\id} &
\BOp(\ttree)\otimes I^k \\
\AOp(\stree)
\ar[r]^{\phi_{\ttree\rightarrow\ttree_k\rightarrow\dots\rightarrow\ttree_1\rightarrow\stree}}
\ar[dr]_{\phi_{\ttree\rightarrow\cdots\widehat{\ttree_i}\cdots\rightarrow\stree}} &
\BOp(\ttree)\otimes I^{k+1} \ar[d]^{\id\otimes d_0^i} \\
& \BOp(\ttree)\otimes I^k }
}}
\ffigbox
{\caption{The compatibility of homotopy morphisms with $1$-faces.
The diagrams commute for all sequences of composable tree morphisms $\ttree\rightarrow\ttree_k\rightarrow\dots\rightarrow\ttree_1\rightarrow\stree$
and for all $1\leq i\leq k$.}
\label{homotopy-morphisms:1-faces}}
{\centerline{\xymatrixcolsep{4pc}\xymatrix{ \AOp(\stree)
\ar[rr]^{\phi_{\ttree\rightarrow\ttree_k\rightarrow\dots\rightarrow\ttree_1\rightarrow\stree}}
\ar[d]^{\rho^{\AOp}_{\ttree\rightarrow\ttree_k\rightarrow\dots\rightarrow\stree}} &&
\BOp(\ttree)\otimes I^{k+1}
\ar[d]^{\id\otimes d_1^{k+1}} \\
\AOp(\ttree)\otimes I^k
\ar[rr]_{\phi_{\ttree}\otimes\id} &&
\BOp(\ttree)\otimes I^k \\
\AOp(\stree)
\ar[rr]^{\phi_{\ttree\rightarrow\ttree_k\rightarrow\dots\rightarrow\ttree_1\rightarrow\stree}}
\ar[d]^{\rho^{\AOp}_{\ttree_i\rightarrow\dots\rightarrow\ttree_1\rightarrow\stree}} &&
\BOp(\ttree)\otimes I^{k+1}
\ar[d]^{\id\otimes d_1^i} \\
\AOp(\ttree_i)\otimes I^{i-1}
\ar[r]_-{\phi_{\ttree\rightarrow\ttree_k\rightarrow\dots\rightarrow\ttree_i}\otimes\id} &
\BOp(\ttree)\otimes I^{k-i+1}\otimes I^{i-1}\ar[r]_-{\simeq} &
\BOp(\ttree)\otimes I^k }
}}
\ffigbox
{\caption{The compatibility of homotopy morphisms with degeneracies.
The diagrams commute for all sequences of composable tree morphisms $\ttree\rightarrow\ttree_k\rightarrow\dots\rightarrow\ttree_1\rightarrow\stree$
and for all $0\leq j\leq k+1$.}\label{homotopy-morphisms:degeneracies}}
{\centerline{\xymatrixcolsep{10pc}\xymatrix{ \AOp(\stree)
\ar[r]^{\phi_{\stree}}
\ar[dr]_{\phi_{\stree = \stree}} &
\BOp(\stree)
\ar[d]^{\id\otimes s^0} \\
& \BOp(\stree)\otimes I^1 \\
\AOp(\stree)
\ar[r]^{\phi_{\ttree\rightarrow\ttree_k\rightarrow\dots\rightarrow\ttree_1\rightarrow\stree}}
\ar[rd]_{\phi_{\ttree\rightarrow\ttree_k\rightarrow\cdots\ttree_j = \ttree_j\cdots\rightarrow\ttree_1\rightarrow\stree}} &
\BOp(\ttree)\otimes I^{k+1}
\ar[d]^{\id\otimes s^j} \\
& \BOp(\ttree)\otimes I^{k+2} }
}}
\end{figure}

\begin{figure}[p]
\ffigbox
{\caption{The preservation of facet operators by the underlying maps of homotopy morphisms.
The diagram commutes for all subtrees $\sigmatree\subset\stree$.}
\label{homotopy-morphisms:facets}}
{\centerline{\xymatrixcolsep{5pc}\xymatrix{ \AOp(\stree)\ar[r]^{\phi_{\stree}} & \BOp(\stree) \\
\AOp(\sigmatree)\ar[r]^{\phi_{\sigmatree}}\ar[u]^{i_{\sigmatree,\stree}} &
\BOp(\sigmatree)\ar[u]_{i_{\sigmatree,\stree}} }
}}
{\caption{The compatibility of homotopy morphisms with facet operators.
The diagram commutes for all subtrees $\sigmatree\subset\stree$
and for all sequences of composable tree morphisms $\ttree\xrightarrow{f_k}\ttree_k\xrightarrow{f_{k-1}}\dots\xrightarrow{f_1}\ttree_1\xrightarrow{f_0}\stree$.}
\label{homotopy-morphisms:homotopy-facets}}
{\centerline{\xymatrixcolsep{10pc}\xymatrix{ \AOp(\stree)\ar[r]^-{\phi_{\ttree\rightarrow\ttree_k\rightarrow\dots\rightarrow\ttree_1\rightarrow\stree}} &
\BOp(\ttree)\otimes I^{k+1} \\
\AOp(\sigmatree)
\ar@{.>}[r]_-{\phi_{(f_k\dots f_0)^{-1}(\sigmatree)\rightarrow\dots\rightarrow f_0^{-1}(\sigmatree)\rightarrow\sigmatree}}
\ar[u]^{i^{\AOp}_{\sigmatree,\stree}} &
\BOp((f_k\dots f_0)^{-1}(\sigmatree))\otimes I^{k+1}
\ar@{.>}[u]^{i^{\BOp}_{(f_k\dots f_0)^{-1}(\sigmatree),\ttree}\otimes\id} }
}}
\ffigbox
{\caption{The preservation of the action of permutations by the underlying maps of homotopy morphisms.
The diagram commutes for all $s\in\Sigma_r$ and $\ttree\in\Tree(r)$.}
\label{homotopy-morphisms:permutations}}
{\centerline{\xymatrixcolsep{5pc}\xymatrix{ \AOp(s\ttree)\ar[r]^{\phi_{s\ttree}}\ar[d]_{s^*} & \BOp(s\ttree)\ar[d]^{s^*} \\
\AOp(\ttree)\ar[r]^{\phi_{\ttree}} & \BOp(\ttree) }
}}
\ffigbox
{\caption{The compatibility of homotopy morphisms with facet operators.
The diagram commutes for all subtrees $\sigmatree\subset\stree$
and for all sequences of composable tree morphisms $\ttree\xrightarrow{f_k}\ttree_k\xrightarrow{f_{k-1}}\dots\xrightarrow{f_1}\ttree_1\xrightarrow{f_0}\stree$.}
\label{homotopy-morphisms:homotopy-permutations}}
{\centerline{\xymatrixcolsep{10pc}\xymatrix{ \AOp(\stree)\ar[r]^-{\phi_{\ttree\rightarrow\ttree_k\rightarrow\dots\rightarrow\ttree_1\rightarrow\stree}} &
\BOp(\ttree)\otimes I^{k+1} \\
\AOp(\sigmatree)
\ar@{.>}[r]_-{\phi_{(f_k\dots f_0)^{-1}(\sigmatree)\rightarrow\dots\rightarrow f_0^{-1}(\sigmatree)\rightarrow\sigmatree}}
\ar[u]^{i^{\AOp}_{\sigmatree,\stree}} &
\BOp((f_k\dots f_0)^{-1}(\sigmatree))\otimes I^{k+1}
\ar@{.>}[u]^{i^{\BOp}_{(f_k\dots f_0)^{-1}(\sigmatree),\ttree}\otimes\id} }
}}
\end{figure}

%\afterpage{\clearpage}

\begin{figure}[t]
\ffigbox
{\caption{The preservation of Segal maps by the underlying map of homotopy morphisms of homotopy Segal dg cooperads.
The diagram commutes for all tree decompositions $\stree = \lambda_{\utree}(\sigmatree_v,v\in V(\utree))$.}
\label{homotopy-dg-morphisms:Segal-maps}}
{\centerline{\xymatrixcolsep{5pc}\xymatrix{ \AOp(\stree)\ar[r]^{\phi_{\stree}} & \BOp(\stree) \\
\bigotimes_{v\in V(\utree)}\AOp(\sigmatree_v)
\ar[r]^{\bigotimes_{v\in V(\utree)}\phi_{\sigmatree_v}}
\ar[u]^{i^{\AOp}_{\sigmatree_*,\stree}} &
\bigotimes_{v\in V(\utree)}\BOp(\sigmatree_v)
\ar[u]_{i^{\BOp}_{\sigmatree_*,\stree}} }
}}
\ffigbox
{\caption{The compatibility of homotopy morphisms of homotopy Segal dg cooperads with the Segal maps.
The diagram commutes for all tree decompositions $\stree = \lambda_{\utree}(\sigmatree_v,v\in V(\utree))$
and for all sequences of composable tree morphisms $\ttree\xrightarrow{f_k}\ttree_k\xrightarrow{f_{k-1}}\dots\xrightarrow{f_1}\ttree_1\xrightarrow{f_0}\stree$.}
\label{homotopy-dg-morphisms:homotopy-Segal-maps}}
{\centerline{\xymatrixcolsep{10pc}\xymatrix{ \AOp(\stree)
\ar[r]^-{\phi_{\ttree\rightarrow\ttree_k\rightarrow\dots\rightarrow\ttree_1\rightarrow\stree}} &
\BOp(\ttree)\otimes I^{k+1} \\
& \bigotimes_{v\in V(\utree)}\BOp((f_k\dots f_0)^{-1}(\sigmatree_v))\otimes\left(\bigotimes_{v\in V(\utree)}I^{k+1}\right)
\ar@{.>}[u]_{i^{\BOp}_{(f_k\dots f_0)^{-1}(\sigmatree_*),\ttree}\otimes\mu} \\
\bigotimes_{v\in V(\utree)}\AOp(\sigmatree_v)
\ar@{.>}[r]^-{\bigotimes_v \phi_{(f_k\dots f_0)^{-1}(\sigmatree_v)\rightarrow\dots\rightarrow f_0^{-1} (\sigmatree_v)\rightarrow\sigmatree_v}}
\ar[uu]^{i^{\AOp}_{\sigmatree_*,\stree}} &
\bigotimes_{v \in V(\utree)}\left(\BOp((f_k\dots f_0)^{-1}(\sigmatree_v))\otimes I^{k+1}\right)
\ar@{.>}[u]_{\simeq} }
}}
\end{figure}

\afterpage{\clearpage}

We have the following statement, which is the homotopy version of the result of Lemma~\ref{lemma:homotopy-cooperad-top-component},
and which can be proved by the same arguments.
We still write $\alpha^{\square}: X\rightarrow Y$ for the homomorphism of graded modules of degree $k$
associated to a morphism of dg modules $\alpha: X\rightarrow Y \otimes I^k$
such that $\alpha(x) = (-1)^{k\deg(x)}\alpha^{\square}(x)\otimes\underline{01}^{\sharp}{}^{\otimes k} + \text{tensors with a factor of dimension $<k$ in $I^k$}$.

\begin{lemm}\label{lemma:homotopy-morphism-top-component}
%\hspace*{2mm}
\begin{enumerate}
\item
Let $\phi: \AOp\rightarrow\BOp$ be a homotopy morphism of homotopy Segal shuffle dg cooperads,
where we still assume that $\BOp$ is a strict Segal shuffle dg cooperad
as in Definition~\ref{definition:homotopy-morphisms-forgetful}.
The graded homomorphism of degree $k+1$
\begin{equation*}
\phi^{\square}_{\ttree\rightarrow\ttree_k\rightarrow\dots\rightarrow\ttree_1\rightarrow\stree}: \AOp(\stree)\rightarrow\AOp(\ttree)
\end{equation*}
that we associate to the dg module morphism $\phi_{\ttree\rightarrow\ttree_k\rightarrow\dots\rightarrow\ttree_1\rightarrow\stree}: \AOp(\stree)\rightarrow\AOp(\ttree)\otimes I^{k+1}$,
for any sequence of composable tree morphisms $\ttree\rightarrow\ttree_k\rightarrow\dots\rightarrow\ttree_1\rightarrow\stree$,
satisfies the relation
\begin{equation}\tag{*}\label{equation:homotopy-morphism-top-component}
\delta(\phi^{\square}_{\ttree\rightarrow\ttree_k\rightarrow\dots\rightarrow\ttree_1\rightarrow\stree})
\begin{aligned}[t]
& = (-1)^{k+1}\rho^{\BOp}_{\ttree_k\rightarrow\stree}\phi^{\square}_{\ttree_k\rightarrow\dots\rightarrow\ttree_1\rightarrow\stree} \\
& + \sum_{i=1}^k(-1)^i\phi^{\square}_{\ttree\rightarrow\dots\rightarrow\hat{\ttree}_i\rightarrow\dots\rightarrow\stree} \\
& + (-1)^k\phi_{\ttree}\circ\rho^{\AOp\square}_{\ttree\rightarrow\ttree_k\rightarrow\dots\rightarrow\ttree_1\rightarrow\stree} \\
& - \sum_{i=1}^k(-1)^i\phi^{\square}_{\ttree\rightarrow\ttree_k\rightarrow\dots\rightarrow\ttree_i}\circ\rho^{\AOp\square}_{\ttree_i\rightarrow\dots\rightarrow\ttree_1\rightarrow\stree}.
\end{aligned}
\end{equation}
%\texttt{TODO: I am note sure that the signs are correct here.}
Moreover, if we have a degeneracy $\ttree_j = \ttree_{j+1}$ in our sequence of tree morphisms, for some $0\leq j\leq k$ (with the convention that $\ttree_{k+1} = \ttree$ and $\ttree_0 = \stree$),
then we have the relation
\begin{equation}\tag{**}\label{equation:homotopy-morphism-degeneracy-top-component}
\phi^{\square}_{\ttree\rightarrow\ttree_{k}\rightarrow\dots\ttree_{j+1}=\ttree_j\cdots\rightarrow\ttree_1\rightarrow\stree} = 0.
\end{equation}
\item
In the converse direction, if we have a collection of dg module morphisms $\phi_{\ttree}: \AOp(\ttree)\rightarrow\BOp(\ttree)$, $\ttree\in\Tree(r)$, $r>0$,
together with a collection of dg graded homomorphisms $\phi^{\square}_{\ttree\rightarrow\ttree_k\rightarrow\dots\rightarrow\ttree_1\rightarrow\stree}: \AOp(\stree)\rightarrow\AOp(\ttree)$,
of degree $k+1$,
which satisfy the relations (\ref{equation:homotopy-morphism-top-component})-(\ref{equation:homotopy-morphism-degeneracy-top-component})
of the previous statement,
then there is a unique collection of morphisms of dg modules $\phi_{\ttree\rightarrow\ttree_k\rightarrow\dots\rightarrow\ttree_1\rightarrow\stree}: \AOp(\stree)\rightarrow\AOp(\ttree)\otimes I^k$,
which extend these maps on the summands $\AOp(\ttree)\otimes\underline{01}^{\sharp}{}^{\otimes k+1}$
and satisfy the face and degeneracy relations of homotopy morphism operators
of Figure~\ref{homotopy-morphisms:0-faces}-\ref{homotopy-morphisms:degeneracies}.\qed
\end{enumerate}
\end{lemm}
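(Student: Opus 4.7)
The plan is to adapt, in an essentially mechanical way, the argument of Lemma \ref{lemma:homotopy-cooperad-top-component} to the setting of homotopy morphisms. For assertion (1), I would first record the decomposition of $I^{k+1}$ in the relevant two lowest degrees,
\begin{gather*}
(I^{k+1})_{-k-1} = \kk\,\underline{01}^{\sharp}{}^{\otimes k+1},\\
(I^{k+1})_{-k} = \bigoplus_{i=1}^{k+1}\bigoplus_{\epsilon\in\{0,1\}}\kk\,\underline{01}^{\sharp}{}^{\otimes k+1-i}\otimes\underline{\epsilon}^{\sharp}\otimes\underline{01}^{\sharp}{}^{\otimes i-1},
\end{gather*}
so that every dg map $\alpha: X\rightarrow Y\otimes I^{k+1}$ admits, modulo tensors of dimension $<k$ in $I^{k+1}$, the same kind of expansion as in the proof of Lemma \ref{lemma:homotopy-cooperad-top-component}, with the coefficients of the dimension $-k$ terms given by the top components $((\id\otimes d_{\epsilon+1}^i)\alpha)^{\square}$. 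Projecting the equation $\delta\alpha(x) = \alpha(\delta x)$ onto $Y\otimes\underline{01}^{\sharp}{}^{\otimes k+1}$ and using $\delta(\underline{01}^{\sharp}{}^{\otimes k+1-i}\otimes\underline{\epsilon}^{\sharp}\otimes\underline{01}^{\sharp}{}^{\otimes i-1}) = (-1)^{k+1-i+\epsilon+1}\underline{01}^{\sharp}{}^{\otimes k+1}$ yields a relation of the form $\delta(\alpha^{\square}) + \sum_{i,\epsilon}(-1)^{i+\epsilon}((\id\otimes d^i_{\epsilon+1})\alpha)^{\square} = (-1)^{k+1}\alpha^{\square}\delta$.

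I would then apply this identity to $\alpha = \phi_{\ttree\rightarrow\ttree_k\rightarrow\dots\rightarrow\ttree_1\rightarrow\stree}$ and substitute each face term via the compatibility diagrams of Figure \ref{homotopy-morphisms:0-faces} and Figure \ref{homotopy-morphisms:1-faces}: the face $d^i_0$ for $1\leq i\leq k$ produces $\phi^{\square}_{\ttree\rightarrow\dots\widehat{\ttree_i}\dots\rightarrow\stree}$, the face $d^{k+1}_0$ produces $\rho^{\BOp}_{\ttree\rightarrow\ttree_k}\circ\phi^{\square}_{\ttree_k\rightarrow\dots\rightarrow\stree}$ (which, after the facet-preservation relation, identifies with $\rho^{\BOp}_{\ttree_k\rightarrow\stree}\circ\phi^{\square}_{\ttree_k\rightarrow\dots\rightarrow\stree}$ up to the conventions on indexing), the face $d^i_1$ for $1\leq i\leq k$ produces $\phi^{\square}_{\ttree\rightarrow\dots\rightarrow\ttree_i}\circ\rho^{\AOp\square}_{\ttree_i\rightarrow\dots\rightarrow\stree}$, and the face $d^{k+1}_1$ produces $\phi_{\ttree}\circ\rho^{\AOp\square}_{\ttree\rightarrow\ttree_k\rightarrow\dots\rightarrow\stree}$. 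Regrouping with the signs $(-1)^{i+\epsilon}$ then gives formula (\ref{equation:homotopy-morphism-top-component}). The vanishing relation (\ref{equation:homotopy-morphism-degeneracy-top-component}) is immediate: by Figure \ref{homotopy-morphisms:degeneracies}, a degenerate sequence forces $\phi_{\ldots\ttree_{j+1}=\ttree_j\ldots}$ to factor through $\id\otimes s^j: \BOp(\ttree)\otimes I^k\rightarrow\BOp(\ttree)\otimes I^{k+1}$, whose image lies in tensor degree $>-k-1$, so the top component must vanish.

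For assertion (2), I would follow the exact same template as in Lemma \ref{lemma:homotopy-cooperad-top-component}: pass to the adjoint morphism $\phi'_{\ttree\rightarrow\ttree_k\rightarrow\dots\rightarrow\stree}: \AOp(\stree)\otimes\DGN_*(\Delta^1)^{\otimes k+1}\rightarrow\BOp(\ttree)$ and define it by induction on $k$. The value on $x\otimes\underline{01}^{\otimes k+1}$ is set to be $\phi^{\square}_{\ttree\rightarrow\ttree_k\rightarrow\dots\rightarrow\stree}(x)$, and the values on $x\otimes d_i^0(\underline{\sigma})$ and $x\otimes d_i^1(\underline{\sigma})$ are forced by Figure \ref{homotopy-morphisms:0-faces}--\ref{homotopy-morphisms:1-faces} to be respectively $\phi'_{\ttree\rightarrow\dots\widehat{\ttree_i}\dots\rightarrow\stree}(x\otimes\underline{\sigma})$ (or the $\rho^{\BOp}$-variant when $i = k+1$) and the composite involving $\phi_{\ttree\rightarrow\dots\rightarrow\ttree_i}$ and $\rho^{\AOp}_{\ttree_i\rightarrow\dots\rightarrow\stree}$ (or $\phi_{\ttree}$ when $i = k+1$). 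This simultaneously proves uniqueness and gives existence; the coherence of the definition across overlapping coface-image subspaces, as well as the validity of the compatibility relations on all of $\AOp(\stree)\otimes I^{k+1}$, is verified by the same identities on the cubical face and degeneracy operators invoked in Remark \ref{remark:compatibility-relations}. That $\phi'$ commutes with the differential is then proved by induction on $k$: the inductive hypothesis handles the values on coface images, and the base case on $x\otimes\underline{01}^{\otimes k+1}$ reduces exactly to relation (\ref{equation:homotopy-morphism-top-component}).

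The main obstacle, as in the cooperad case, is the sign bookkeeping. In particular one has to track carefully the asymmetry between the $(-1)^{k+1}$ coefficient of the $\rho^{\BOp}\circ\phi^{\square}$ term and the $(-1)^k$ coefficient of the $\phi_{\ttree}\circ\rho^{\AOp\square}$ term, both of which arise from faces at index $i = k+1$ but with opposite values of $\epsilon$ and with an additional Koszul sign $(-1)^{|x|}$ produced by pulling the degree-$(k+1)$ symbol $\underline{01}^{\sharp}{}^{\otimes k+1}$ past $x$; and similarly one has to check that the two inner families of faces ($d^i_0$ versus $d^i_1$, for $i = 1,\dots, k$) contribute with the opposite relative sign claimed in (\ref{equation:homotopy-morphism-top-component}). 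Once this verification is done carefully in the universal case of an arbitrary map $\alpha: X\to Y\otimes I^{k+1}$, the rest is a direct translation via the diagrams of Figure \ref{homotopy-morphisms:0-faces}--\ref{homotopy-morphisms:degeneracies}.
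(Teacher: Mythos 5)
Your proposal is correct and follows essentially the same route as the paper, which itself only states that this lemma ``can be proved by the same arguments'' as Lemma~\ref{lemma:homotopy-cooperad-top-component}: you carry out exactly that adaptation, with the right identification of the four families of face terms via Figures~\ref{homotopy-morphisms:0-faces}--\ref{homotopy-morphisms:1-faces} and the correct sign pattern coming from the projection of $\delta\alpha = \alpha\delta$ onto the top cubical component. The only slight imprecision is your parenthetical invoking a ``facet-preservation relation'' to reconcile $\rho^{\BOp}_{\ttree\rightarrow\ttree_k}$ with the notation $\rho^{\BOp}_{\ttree_k\rightarrow\stree}$ appearing in~(\ref{equation:homotopy-morphism-top-component}); this is purely a labelling convention for the coproduct operator $\BOp(\ttree_k)\rightarrow\BOp(\ttree)$ and needs no structural relation.
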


We now prove that any homotopy morphism of homotopy Segal shuffle dg cooperads $\phi: \AOp\rightarrow\BOp$, as in Definition~\ref{definition:homotopy-morphisms-forgetful},
gives rise to an induced morphism on the cobar construction $\phi_*: \DGB^c(\AOp)\rightarrow\DGB^c(\BOp)$.
We address the definition of this morphism in the next paragraph.
We assume all along this study that a homotopy morphism $\phi: \AOp\rightarrow\BOp$ is fixed, with $\AOp$ a homotopy Segal shuffle dg cooperad
and $\BOp$ a strict Segal shuffle dg cooperad.
We need to assume that the object $\AOp$ is connected in order to give a sense to the cobar construction $\DGB^c(\AOp)$ (see~\S\ref{subsection:homotopy-Segal-cooperad-cobar}).
We also need to assume that $\BOp$ is connected in the construction of our morphisms.
We therefore assume that these connectedness conditions hold in the rest of this subsection.
%\texttt{TODO: connectedness condition.}

\begin{constr}\label{construction:bar-complex-homotopy-morphism}
The underlying maps of our homotopy morphism $\phi: \AOp\rightarrow\BOp$ induce morphisms of graded modules between the components of the cobar construction:
\begin{equation*}
\phi_{\ttree}: \DGSigma^{-\sharp V(\ttree)}\AOp(\ttree)\rightarrow\DGSigma^{-\sharp V(\ttree)}\AOp(\ttree).
\end{equation*}
In addition to these maps, we consider morphisms
\begin{equation*}
\phi_{(\ttree,\underline{e})}: \DGSigma^{-\sharp V(\ttree)+m}\AOp(\ttree/\{e_1,\dots,e_m\})\rightarrow\DGSigma^{-\sharp V(\ttree)}\BOp(\ttree).
\end{equation*}
associated to the pairs $(\ttree,\underline{e})$, where $\ttree$ is a tree and $\underline{e} = (e_1,\dots,e_m)$
is an ordered collection of pairwise distinct edges $e_i\in\mathring{E}(\ttree)$,
as in Construction~\ref{construction:bar-complex-homotopy}.
To define the latter maps, we again consider the sequence of composable tree morphisms
\begin{gather*}
\sigma(\ttree,\underline{e}) = \{\ttree\rightarrow\ttree/{e_1}\rightarrow\ttree/{\{e_1,e_2\}}\rightarrow\dots\rightarrow\ttree/\{e_1,\dots,e_m\}\},
\intertext{which we associate to any such pair $(\ttree,\underline{e})$ in Construction~\ref{construction:bar-complex-homotopy},
and we set}
\phi_{(\ttree,\underline{e})} = \phi^{\square}_{\sigma(\ttree,\underline{e})},
\end{gather*}
where we take the top component of the morphism $\phi_{\sigma(\ttree,\underline{e})}$ (such as defined in Lemma~\ref{lemma:homotopy-morphism-top-component}).
In this construction, we also use the same blow-up process as in Construction~\ref{construction:bar-complex-homotopy}
to pass from the tensor product $\DGSigma^{-\sharp V(\ttree)+m}\AOp(\ttree/\{e_1,\dots,e_m\})
= \bigl(\bigotimes_{x\in V(\ttree/\{e_1,\dots,e_m\})}\underline{01}^{\sharp}_x\bigr)\otimes\AOp(\ttree/\{e_1,\dots,e_m\})$
to $\DGSigma^{-\sharp V(\ttree)}\BOp(\ttree) = \bigl(\bigotimes_{x\in V(\ttree)}\underline{01}^{\sharp}_x\bigr)\otimes\AOp(\ttree)$
and to determine a possible sign, which we associate to our map $\phi_{(\ttree,\underline{e})}$.
In what follows, we identify the morphisms $\phi_{\ttree}$, induced by the underlying maps of our homotopy morphism $\phi: \AOp\rightarrow\BOp$,
with the case $m=0$ of these homomorphisms $\phi_{(\ttree,\underline{e})}$.

Finally, we take:
\begin{gather*}
\phi_m = \sum_{(\ttree,(e_1,\dots,e_m)}\phi_{(\ttree,(e_1,\dots,e_m))},\quad\text{for $m\geq 0$},
\quad\text{and}\quad\phi = \sum_{m\geq 0}\phi_m
\intertext{to get a map}
\phi_*: \DGB^c(\AOp)(r)\rightarrow\DGB^c(\BOp)(r),\quad\text{for each arity $r>0$}.
\end{gather*}
(Note that we use the connectedness condition on $\BOp$ to ensure that the above sum reduces to a finite number of terms on each summand $\DGSigma^{-\sharp V(\ttree)}\AOp(\ttree)$.)
% TODO
We aim to prove that this map is compatible with the structure operations of the cobar construction.
\end{constr}

We check the preservation of differentials first. This claim follows from the following more precise observation.

\begin{lemm}\label{lemma:differential-bar-morphisms}
We have the relation
\begin{equation*}
\delta^{\BOp}\phi_m = \phi_m \delta^{\AOp} + \partial^{\BOp}\phi_{m-1} - \sum_{i=0}^{m-1}\phi_i\partial_{m-i}^{\AOp},
\end{equation*}
for all $m\geq 0$,
where $\delta = \delta^{\AOp},\delta^{\BOp}$ denotes the term of the differential of the cobar construction induced by the internal differential of the objects $\COp = \AOp,\BOp$,
we denote by $\partial^{\AOp} = \sum_{m = 1}^{\infty}\partial_m^{\AOp}$ the twisting map of the cobar construction
of the homotopy Segal shuffle dg cooperad $\AOp$,
while $\partial^{\BOp}$ denotes the twisting differential of the cobar construction
of the strict Segal shuffle dg cooperad $\AOp$.
\end{lemm}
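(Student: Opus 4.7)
The plan is to reduce the identity summand by summand to the differential relation~(\ref{equation:homotopy-morphism-top-component}) on the top-component homomorphisms $\phi^{\square}_{\sigma(\ttree,\underline{e})}$ obtained in Lemma~\ref{lemma:homotopy-morphism-top-component}. Fixing a tree $\ttree$, I focus on the summand $\DGSigma^{-\sharp V(\ttree)}\BOp(\ttree)\subset\DGB^c(\BOp)(r)$, into which only the operators $\phi_{(\ttree,\underline{e})}=\phi^{\square}_{\sigma(\ttree,\underline{e})}$ indexed by sequences $\underline{e}=(e_1,\dots,e_m)$ of pairwise distinct inner edges of $\ttree$ can map. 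For each such pair, the sequence $\sigma(\ttree,\underline{e})=\{\ttree\rightarrow\ttree/e_1\rightarrow\dots\rightarrow\ttree/\{e_1,\dots,e_m\}\}$ comprises $m+1$ trees, so I apply~(\ref{equation:homotopy-morphism-top-component}) with the parameter $k=m-1$ and then sum the resulting relations over all such pairs $(\ttree,\underline{e})$.

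The four families of terms on the right-hand side of~(\ref{equation:homotopy-morphism-top-component}) are then matched to the four types of contributions of the lemma as follows. First, the terms $(-1)^m\rho^{\BOp}_{\ttree\rightarrow\ttree/e_1}\circ\phi^{\square}_{\sigma(\ttree/e_1,(e_2,\dots,e_m))}$ reassemble, upon summation over pairs, into $\pm\partial^{\BOp}\phi_{m-1}$, since the twisting differential of the cobar complex of the strict Segal cooperad $\BOp$ is precisely the sum $\sum_{(\ttree,e)}\rho^{\BOp}_{\ttree\rightarrow\ttree/e}$ of single-edge contraction operators. Second, the terms $(-1)^{m-1}\phi_{\ttree}\circ\rho^{\AOp\square}_{\sigma(\ttree,\underline{e})}$ assemble into $\pm\phi_0\partial^{\AOp}_m$, and, for $1\le i\le m-1$, the composite terms $-(-1)^i\phi^{\square}_{\sigma(\ttree,(e_1,\dots,e_i))}\circ\rho^{\AOp\square}_{\sigma(\ttree/\{e_1,\dots,e_i\},(e_{i+1},\dots,e_m))}$ assemble into $\pm\phi_i\partial^{\AOp}_{m-i}$; together, these two families account for the whole sum $\pm\sum_{i=0}^{m-1}\phi_i\partial^{\AOp}_{m-i}$ appearing on the right-hand side of the lemma. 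Third, the omitted-node contributions $\sum_{i=1}^{m-1}(-1)^i\phi^{\square}_{\sigma(\ttree,\underline{e})\setminus\text{node at position }i}$ cancel in pairs, exactly as in the proof of Lemma~\ref{lemma:differential-bar-construction-homotopy}: the contribution coming from the removal of the node $\ttree/\{e_1,\dots,e_i\}$ in the pair $(\ttree,(e_1,\dots,e_m))$ coincides, with opposite overall sign, with the contribution coming from the removal of the node $\ttree/\{e_1,\dots,e_{i-1},e_{i+1}\}$ in the pair $(\ttree,(e_1,\dots,e_{i-1},e_{i+1},e_i,\dots,e_m))$ obtained by swapping $e_i$ and $e_{i+1}$, the sign flip being produced by performing the two sequences of blow-ups $\underline{01}^{\sharp}_{u\equiv v}\mapsto\underline{01}^{\sharp}_u\otimes\underline{01}^{\sharp}_v$ in reversed order.

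Finally, on the left-hand side of~(\ref{equation:homotopy-morphism-top-component}), the differential $\delta\phi^{\square}_{\sigma(\ttree,\underline{e})}=\delta^{\BOp}\phi^{\square}_{\sigma(\ttree,\underline{e})}-(-1)^m\phi^{\square}_{\sigma(\ttree,\underline{e})}\delta^{\AOp}$ of the degree-$m$ top-component homomorphism, once summed over pairs and transported through the suspension shifts into genuine maps between summands of the cobar complexes, yields the combination $\delta^{\BOp}\phi_m-\phi_m\delta^{\AOp}$. Rearranging the resulting equality then produces the identity claimed by the lemma. The main technical difficulty is the careful bookkeeping of signs, which arise from three coordinated sources: the signs prescribed by~(\ref{equation:homotopy-morphism-top-component}) itself, the Koszul sign appearing in the definition of the differential of the degree-$m$ graded homomorphism $\phi^{\square}_{\sigma(\ttree,\underline{e})}$, and the signs produced by the blow-up operations $\underline{01}^{\sharp}_{u\equiv v}\mapsto\underline{01}^{\sharp}_u\otimes\underline{01}^{\sharp}_v$ on the suspension factors attached to the vertices of the trees; these three contributions must combine coherently to recover the precise sign pattern of the stated relation. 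By contrast, the pairwise cancellation of the omitted-node contributions is a direct transcription of the argument given in the proof of Lemma~\ref{lemma:differential-bar-construction-homotopy}.
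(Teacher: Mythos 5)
Your proposal is correct and follows essentially the same route as the paper's own proof: apply relation~(\ref{equation:homotopy-morphism-top-component}) to each $\phi^{\square}_{\sigma(\ttree,\underline{e})}$ (with $k=m-1$), sum over the pairs $(\ttree,\underline{e})$, identify the $\rho^{\BOp}$-term with $\partial^{\BOp}\phi_{m-1}$ and the composite terms with $\sum_i\phi_i\partial^{\AOp}_{m-i}$, and cancel the omitted-node contributions pairwise by the edge-transposition argument of Lemma~\ref{lemma:differential-bar-construction-homotopy}. Your attribution of the sign flip in that cancellation to the reversed order of the blow-up operations on the suspension factors is consistent with what the paper leaves as "we can still check that these signs are opposite."
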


\begin{proof}
We argue as in the proof of Lemma \ref{lemma:differential-bar-construction-homotopy}.
We use the relation of Equation~(\ref{equation:homotopy-morphism-top-component}) of Lemma~\ref{lemma:homotopy-morphism-top-component}
to write
\begin{align*}
\delta^{\BOp}\phi_{(\ttree,\underline{e})} - \phi_{(\ttree,\underline{e})}\delta^{\AOp} &
= \sum_{i=1}^{m-1}\pm\phi^{\square}_{\ttree\rightarrow\dots\rightarrow\widehat{\ttree/\{e_1,\dots,e_i\}}\rightarrow\dots\rightarrow\ttree/\{e_1,\dots,e_m\}} \\
& - \rho^{\BOp}_{\ttree\rightarrow\ttree/e_1}\phi^{\square}_{\ttree/e_1\rightarrow\dots\rightarrow\ttree/\{e_1,\dots,e_m\}}\\
& + \sum_{i=0}^{m-1}\pm\phi^{\square}_{\ttree\rightarrow\dots\rightarrow\ttree/\{e_1,\dots,e_i\}}\rho^{\AOp\square}_{\ttree/\{e_1,\dots,e_i\}\rightarrow\dots\rightarrow\ttree/\{e_1,\dots,e_m\}}.
\end{align*}
Then, by taking the sum of these expressions over the set of pairs $(\ttree,\underline{e})$, we obtain the formula:
\begin{align*}
\delta^{\BOp}\phi_m - \phi_m\delta^{\AOp} &
= \sum_{\substack{(\ttree,\underline{e})\\i=1,\dots,m}}\pm\phi^{\square}_{\ttree\rightarrow\dots\rightarrow\widehat{\ttree/\{e_1,\dots,e_i\}}\rightarrow\dots\rightarrow\ttree/\{e_1,\dots,e_m\}} \\
& + \underbrace{\sum_{(\ttree,\underline{e})}\pm\rho^{\BOp}_{\ttree\rightarrow\ttree/e_1}\phi^{\square}_{\ttree/e_1\rightarrow\dots\rightarrow\ttree/\{e_1,\dots,e_m\}}}_{=\partial^{\BOp}\phi_{m-1}} \\
& + \sum_{i=0}^{m-1}\bigl(\underbrace{\sum_{(\ttree,\underline{e})}\pm\phi^{\square}_{\ttree\rightarrow\dots\rightarrow\ttree/\{e_1,\dots,e_i\}}
\circ\rho^{\AOp\square}_{\ttree/\{e_1,\dots,e_i\}\rightarrow\dots\rightarrow\ttree/\{e_1,\dots,e_m\}}}_{= \phi_i\partial_{m-i}}\bigr).
\end{align*}
In the first sum of this formula, the term that corresponds to the removal of the node $\ttree/\{e_1,\dots,e_{i-1},e_i\}$
and the term that corresponds to the removal of the node $\ttree/\{e_1,\dots,e_{i-1},e_{i+1}\}$
for the pair $(\ttree,(e_1,\dots,e_{i-1},e_{i+1},e_i,e_{i+2},\dots,e_m))$
with $e_i$ and $e_{i+1}$ switched
are again equal up to a sign. We can still check that these signs are opposite, so that these terms cancel out in our sum. The conclusion of the lemma follows.
\end{proof}

%\begin{cor}\label{corollary:morphism-cobar-complex-homotopy}
%For every homotopy morphisms of homotopy Segal shuffle dg cooperad $\phi: \AOp\rightarrow\BOp$, $\phi_*: \DGB^c(\AOp)\rightarrow\DGB^c(\BOp)$ is a chain complex.
%\end{cor}

We now check that our morphisms preserves the composition products. This claim follows from the following more precise observation.

\begin{lemm}\label{lemma:morphism-cobar-operad}
We have the relation $\phi_m\circ\circ_{i_p} = \sum_{r+s=m}\circ_{i_p}\circ(\phi_r\otimes\phi_s)$, for all $m\geq 0$.
\end{lemm}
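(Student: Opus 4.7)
The plan is to adapt the argument of Lemma~\ref{lemma:bar-construction-product-operad-homotopy} to the homotopy morphism setting; the essential novelty is that mixed terms no longer vanish by a dimension count but instead reassemble into the pieces $\phi_r\otimes\phi_s$ of the right-hand side. First, it suffices to verify the identity termwise, on each summand $\AOp(\stree)\otimes\AOp(\ttree)$ of the tensor product $\DGB^c(\AOp)(\{i_1<\dots<i_k\})\otimes\DGB^c(\AOp)(\{j_1<\dots<j_l\})$. Since $\circ_{i_p}^{\stree,\ttree}$ is induced by the Segal map $i^{\AOp}_{\stree\circ_{i_p}\ttree}$ and $\phi_m=\sum_{(\thetatree',\underline{e})}\phi^{\square}_{\sigma(\thetatree',\underline{e})}$, the left-hand side expands as a sum over pairs $(\thetatree',\underline{e})$ with $\thetatree'/\underline{e}=\stree\circ_{i_p}\ttree$ and $|\underline{e}|=m$. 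The key reindexing is that each such $\thetatree'$ admits a canonical decomposition $\thetatree'=\stree'\circ_{i_p}\ttree'$ into the preimages of $\stree$ and $\ttree$, and the ordered sequence $\underline{e}$ splits accordingly into subsequences $\underline{e}|_{\stree'}$ and $\underline{e}|_{\ttree'}$ of respective lengths $r$ and $s$ with $r+s=m$; the datum $(\thetatree',\underline{e})$ is thus equivalent to the triple $\bigl((\stree',\underline{e}|_{\stree'}),(\ttree',\underline{e}|_{\ttree'}),A\bigr)$, where $A\subset\{1,\dots,m\}$ is the interleaving pattern of size $r$ recording the positions of the $\stree'$-edges within $\underline{e}$.

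The second step invokes the compatibility between homotopy morphism operators and Segal maps (Figure~\ref{homotopy-dg-morphisms:homotopy-Segal-maps}), applied to the two-vertex decomposition $\stree\circ_{i_p}\ttree=\lambda_{\gammatree}(\stree,\ttree)$. This factors each composite $\phi_{\sigma(\thetatree',\underline{e})}\circ i^{\AOp}_{\gammatree}$ as $(i^{\BOp}_{\gammatree}\otimes\mu)\circ(\text{tensor swap})\circ(\phi_{\tau_{\stree'}}\otimes\phi_{\tau_{\ttree'}})$, where $\tau_{\stree'}$ and $\tau_{\ttree'}$ are the preimage sequences of $\stree$ and $\ttree$ under the successive edge contractions $\thetatree'\to\thetatree'/e_1\to\dots\to\thetatree'/\underline{e}$. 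Only the edges of $\underline{e}|_{\stree'}$ (respectively $\underline{e}|_{\ttree'}$) contract nontrivially on the $\stree'$ side (respectively $\ttree'$ side), so these preimage sequences contain repeated nodes at the positions dictated by the complement of $A$ (respectively by $A$ itself). Iterating the degeneracy relation of Figure~\ref{homotopy-morphisms:degeneracies} then yields factorizations $\phi_{\tau_{\stree'}}=(\id\otimes S_{A^c})\circ\phi_{\sigma(\stree',\underline{e}|_{\stree'})}$ and $\phi_{\tau_{\ttree'}}=(\id\otimes S_{A})\circ\phi_{\sigma(\ttree',\underline{e}|_{\ttree'})}$, where $S_{A^c}\colon I^{r}\to I^{m}$ and $S_{A}\colon I^{s}\to I^{m}$ are composites of cubical degeneracies $s^j$ whose indices are determined by $A$.

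Extracting top-degree components according to Lemma~\ref{lemma:homotopy-morphism-top-component} and summing over the interleavings $A$ for fixed $(\stree',\underline{e}|_{\stree'})$ and $(\ttree',\underline{e}|_{\ttree'})$, the proof reduces to the combinatorial identity
\[
\sum_{A\subset\{1,\dots,m\},\,|A|=r}\pm\,\mu\bigl(S_{A^c}(\underline{01}^{\sharp}{}^{\otimes r})\otimes S_{A}(\underline{01}^{\sharp}{}^{\otimes s})\bigr)=\underline{01}^{\sharp}{}^{\otimes m}
\]
in the cubical cochain algebra $I^{m}=\DGN^*(\Delta^1)^{\otimes m}$, with signs controlled by the blow-up conventions of Construction~\ref{construction:bar-complex-homotopy} on the suspension factors $\underline{01}^{\sharp}_v$ and by the Koszul signs produced by the tensor swap. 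Verifying this shuffle-type identity is the main technical obstacle of the proof: I expect to handle it by a direct computation on the normalized cochain basis using the explicit formulas for the connection $\nabla^*$ of Construction~\ref{constr:cubical-cochain-connection}, the underlying combinatorial mechanism being that summing over complementary degeneracy patterns produces exactly the tensors whose cubical product covers the top cell $\underline{01}^{\sharp}{}^{\otimes m}$ of the $m$-cube.

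Once the identity is in hand, the left-hand side collapses, for each fixed $r+s=m$, to the sum $\sum i^{\BOp}_{\stree'\circ_{i_p}\ttree'}\bigl(\phi^{\square}_{\sigma(\stree',\underline{e}')}(\alpha)\otimes\phi^{\square}_{\sigma(\ttree',\underline{e}'')}(\beta)\bigr)$ indexed by pairs $(\stree',\underline{e}')$ with $\stree'/\underline{e}'=\stree$, $|\underline{e}'|=r$ and $(\ttree',\underline{e}'')$ with $\ttree'/\underline{e}''=\ttree$, $|\underline{e}''|=s$. Totalling over $r+s=m$ gives precisely the termwise expression of $\sum_{r+s=m}\circ_{i_p}\circ(\phi_r\otimes\phi_s)$ evaluated on $\alpha\otimes\beta$, completing the proof.
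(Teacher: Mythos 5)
Your proposal follows the paper's own route almost step for step: the termwise reduction to a summand $\AOp(\stree)\otimes\AOp(\ttree)$, the decomposition $\thetatree'=\stree'\circ_{i_p}\ttree'$ of the trees $\thetatree'$ lying over $\thetatree=\stree\circ_{i_p}\ttree$ together with the splitting of the ordered edge list into $\underline{e}|_{\stree'}$ and $\underline{e}|_{\ttree'}$, the commutative diagram obtained from the Segal-map compatibility of Figure~\ref{homotopy-dg-morphisms:homotopy-Segal-maps} combined with the degeneracy relations of Figure~\ref{homotopy-morphisms:degeneracies}, and the extraction of top components via Lemma~\ref{lemma:homotopy-morphism-top-component}. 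The one place where you diverge is precisely the step you flag as the ``main technical obstacle'', and there your announced mechanism is wrong. You expect the interleaved terms to survive and ``reassemble'' via a genuine shuffle-type cancellation into the top cell $\underline{01}^{\sharp}{}^{\otimes m}$. What actually happens is the same vanishing phenomenon as in Lemma~\ref{lemma:bar-construction-product-operad-homotopy}: for every properly mixed interleaving $A$, the composite $\mu\circ(S_{A^c}\otimes S_A)$ misses the top cell altogether. The reason is visible already for $m=2$, $r=s=1$: the connection satisfies $\nabla^*(\underline{01}^{\sharp})=\underline{01}^{\sharp}\otimes\underline{1}^{\sharp}+\underline{1}^{\sharp}\otimes\underline{01}^{\sharp}$, i.e.\ it only ever places $\underline{1}^{\sharp}$ alongside $\underline{01}^{\sharp}$, while the cup product on $\DGN^*(\Delta^1)$ kills $\underline{1}^{\sharp}\cup\underline{01}^{\sharp}$ and $\underline{01}^{\sharp}\cup\underline{0}^{\sharp}$; so the wrong ordering produces only products of the form $\underline{01}^{\sharp}\cup\underline{01}^{\sharp}$ or $\underline{1}^{\sharp}\cup\underline{01}^{\sharp}$, all zero. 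Only the single ordering with all $\stree'$-edges preceding all $\ttree'$-edges contributes, via $s^{m-\beta_*}(\underline{01}^{\sharp}{}^{\otimes r})=\underline{01}^{\sharp}{}^{\otimes r}\otimes 1^{\otimes s}$ and the term $\underline{1}^{\sharp}{}^{\otimes r}\otimes\underline{01}^{\sharp}{}^{\otimes s}$ of $s^{m-\alpha_*}(\underline{01}^{\sharp}{}^{\otimes s})$, whose product is exactly $\underline{01}^{\sharp}{}^{\otimes m}$.

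So the displayed identity you reduce to is in fact true, but degenerately: its left-hand side has exactly one nonzero summand, namely $A=\{1,\dots,r\}$, and there is nothing to ``sum up''. This matters beyond aesthetics, because the correct statement is what makes the signs work (with several nonzero summands you would have to control relative signs of distinct shuffles, which is not needed here), and because it shows the mixed case is handled by the same dimension/cup-product count as in the cobar-differential lemma, contrary to your opening sentence. The direct computation on the cochain basis that you propose would reveal this, so the gap is one of execution rather than of strategy; once the vanishing statement is established, your concluding reindexing over pairs $(\stree',\underline{e}')$, $(\ttree',\underline{e}'')$ gives the lemma as claimed.
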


\begin{proof}
The proof is similar to that of Lemma~\ref{lemma:bar-construction-product-operad-homotopy}.

For $m = 0$, the relation follows the commutativity of the diagram of Figure~\ref{homotopy-dg-morphisms:Segal-maps} (since $\circ_{i_p}$ is defined as a sum of Segal maps).
We therefore focus on the case $m\geq 1$.
We consider again a summand $\Sigma^{-\sharp V(\stree)}\AOp(\stree)\otimes\Sigma^{-\sharp V(\ttree)}\AOp(\ttree)$
of the tensor product $\DGB^c(\AOp)(\{i_1<\dots<i_k\})\otimes\DGB^c(\AOp)(\{j_1<\dots<j_l\})$,
where $\stree\in\Tree(\{i_1<\dots<i_k\})$, $\ttree\in\Tree(\{j_1<\dots<j_l\})$.
The composition product maps $\circ_{i_p}$ carry this summand into $\Sigma^{-\sharp V(\thetatree)}\AOp(\thetatree)$,
with $\thetatree = \stree\circ_{i_p}\ttree$.
The components of the map $\phi_m$ carry this summand into terms of the form $\Sigma^{-\sharp V(\thetatree')}\AOp(\thetatree')$,
for trees $\thetatree'$ equipped with a set of internal edges $(e_1,\dots,e_m)$
such that $\thetatree'/\{e_1,\dots,e_m\} = \thetatree$.
We still have $\thetatree' = \stree'\circ_{i_p}\ttree'$ and $\stree = \stree'/\{e_{\alpha_1},\dots,e_{\alpha_r}\}$, $\ttree = \ttree'/\{e_{\beta_1},\dots,e_{\beta_s}\}$,
for a partition $\{e_{\alpha_1},\dots,e_{\alpha_r}\}\amalg\{e_{\beta_1},\dots,e_{\beta_s}\} = \{e_1,\dots,e_m\}$
such that $e_{\alpha_1},\dots,e_{\alpha_r}\in\mathring{E}(\stree')$ and $e_{\beta_1},\dots,e_{\beta_s}\in\mathring{E}(\ttree')$.
We then have the following commutative diagram:
\begin{equation*}
\xymatrixcolsep{8pc}\xymatrix{
\AOp(\stree)\otimes\AOp(\ttree)
\ar[r]^-{\phi_{\sigma(\stree',\underline{e}|_{\stree'})}\otimes\phi_{\sigma(\ttree',\underline{e}|_{\ttree'})}}
\ar[ddd]|{i_{\stree\circ_{i_p}\ttree}} &
(\BOp(\stree')\otimes I^r)\otimes(\BOp(\ttree')\otimes I^s)
\ar[d]|{\id\otimes s^{m-\beta_*}\otimes\id\otimes s^{m-\alpha_*}} \\
& (\BOp(\stree')\otimes I^m)\otimes(\BOp(\ttree')\otimes I^m)
\ar[d]^{\simeq} \\
& \BOp(\stree')\otimes\BOp(\ttree')\otimes I^m\otimes I^m
\ar[d]^{i_{\stree'\circ_{i_p}\ttree'}\otimes\mu} \\
\AOp(\thetatree)\ar[r]_{\phi_{\sigma(\thetatree',\underline{e})}} &
\BOp(\thetatree')\otimes I^m }
\end{equation*}
(by the relations of Figure~\ref{homotopy-dg-morphisms:homotopy-Segal-maps} and Figure~\ref{homotopy-morphisms:degeneracies}).
We use the same notation as in the proof of Lemma~\ref{lemma:bar-construction-product-operad-homotopy}
in this diagram
and we still consider the morphisms $\phi_{\sigma(\stree',\underline{e}|_{\stree'})}: \AOp(\stree'/\{e_{\alpha_1},\dots,e_{\alpha_r}\})\rightarrow\AOp(\stree')\otimes I^r$
and $\phi_{\sigma(\ttree',\underline{e}|_{\ttree'})}: \AOp(\ttree'/\{e_{\beta_1},\dots,e_{\beta_s}\})\rightarrow\AOp(\ttree')\otimes I^s$
associated to the sequences of tree morphisms
such that $\sigma(\stree',\underline{e}|_{\stree'}) = \{\stree'\rightarrow\stree'/e_{\alpha_1}\rightarrow\dots\rightarrow\stree'/\{e_{\alpha_1},\dots,e_{\alpha_r}\}\}$
and $\sigma(\ttree',\underline{e}|_{\ttree'}) = \{\ttree'\rightarrow\ttree'/e_{\beta_1}\rightarrow\dots\rightarrow\ttree'/\{e_{\beta_1},\dots,e_{\beta_s}\}\}$.
We also use the notation $s^{m-\alpha_*} = s^{m-\alpha_1} s^{m-\alpha_2}\cdots s^{m-\alpha_r}$, $s^{m-\beta_*} = s^{m-\beta_1} s^{m-\beta_2}\cdots s^{m-\beta_s}$,
and $\mu: I^m\otimes I^m\rightarrow I^m$ denotes the product of the dg algebra $I^m$
as usual.

We see, by elaborating on the arguments of the proof of Lemma~\ref{lemma:bar-construction-product-operad-homotopy},
that the composite of the right-hand side vertical morphisms of this diagram does not meet $\BOp(\thetatree')\otimes\underline{01}^{\sharp}{}^{\otimes m}$
unless we have $\{e_{\alpha_1},\dots,e_{\alpha_r}\} = \{e_1,\dots,e_r\}$ and $\{e_{\beta_1},\dots,e_{\beta_s}) = (e_{r+1},\dots,e_m\}$. (We have in this case
$s^{m-\beta_*}(\underline{01}^{\sharp}{}^{\otimes r}) = \underline{01}^{\sharp}{}^{\otimes r}\otimes 1^{\otimes s}$,
$s^{m-\alpha_*}(\underline{01}^{\sharp}{}^{\otimes s}) = \underline{1}^{\sharp}{}^{\otimes r}\otimes\underline{01}^{\sharp}{}^{\otimes s} + \text{other terms}$,
and $\mu(s^{m-\beta_*}(\underline{01}^{\sharp}{}^{\otimes r})\otimes s^{m-\alpha_*}(\underline{01}^{\sharp}{}^{\otimes s})) = \underline{01}^{\sharp}{}^{\otimes m}$.)
We conclude from this analysis that the composite $\phi_{(\thetatree',\underline{e})}\circ\circ_{i_p}$ vanishes unless the edge collection $\underline{e} = (e_1,\dots,e_m)$
is equipped with an order such that $\{e_{\alpha_1},\dots,e_{\alpha_r}\} = \{e_1,\dots,e_r\}$
and $\{e_{\beta_1},\dots,e_{\beta_s}\} = \{e_{r+1},\dots,e_m\}$.
We get in this case $\phi_{(\thetatree',\underline{e})}\circ\circ_{i_p} = \circ_{i_p}\circ\phi_{(\stree',\underline{e}|_{\stree'})}\otimes\phi_{(\ttree',\underline{e}|_{\ttree'})}$
and summing over the pairs $(\thetatree',(e_1,\dots,e_m))$ with $\thetatree' = \stree'\circ_{i_p}\ttree'$, $e_1,\dots,e_r\in\mathring{E}(\stree')$, $e_{r+1},\dots,e_m\in\mathring{E}(\stree')$,
amounts to summing over the pairs $(\stree',(e_1,\dots,e_r))$ and $(\ttree',(e_{r+1},\dots,e_m))$
such that $\stree'/\{e_1,\dots,e_r\} = \stree$ and $\ttree'/\{e_1,\dots,e_r\} = \ttree$.
We therefore obtain the relation of the lemma $\phi_m\circ\circ_{i_p} = \sum_{r+s=m}\circ_{i_p}\circ(\phi_r\otimes\phi_s)$
when we perform this sum.
\end{proof}

We get the following concluding statement:

\begin{thm-defn}
The collection of morphisms $\phi_*: \DGB^c(\AOp)(r)\rightarrow\DGB^c(\BOp)(r)$, $r>0$, defined in Construction~\ref{construction:bar-complex-homotopy-morphism},
defines a morphism of shuffle dg operads $\phi_*: \DGB^c(\AOp)\rightarrow\DGB^c(\BOp)$,
the morphism induced by the homotopy morphism of connected homotopy Segal shuffle cooperads $\phi: \AOp\rightarrow\BOp$
on the cobar construction.\qed
\end{thm-defn}

%\begin{cor}
%Let $\AOp$ be a homotopy Segal shuffle dg cooperad, and let $\BOp$ be a Segal shuffle dg cooperad. If $\phi: \AOp\rightarrow\BOp$ is a homotopy morphism, then $\phi_*: %\DGB^c(\AOp)\rightarrow\DGB^c(\BOp)$ is a morphism of dg operads.
%\end{cor}

% \subsection{The equivalence with strict $E_\infty$-cooperads}

%{\tt [TODO: prove that every homotopy Segal $E_\infty$-Hopf cooperad is weakly equivalent to a strict Segal $E_\infty$-Hopf cooperad]}

%{\tt [TODO: prove that every homotopy morphism of Segal $E_\infty$-Hopf cooperads is equivalent to a zigzag of strict morphisms]}

%{\tt [TODO: prove that every homotopy Segal dg cooperad is weakly equivalent to a strict Segal dg cooperad]}

% \include{Equivalence-with-strict-cooperads}

\subsection{The equivalence with strict $ E_\infty $-cooperads}\label{subsection:strict-equivalence}

We devote this final subsection to proving the following result.

\begin{thm}\label{theorem:strictification}
Every connected homotopy Segal $E_\infty$-Hopf cooperad (either symmetric or shuffle)
is weakly-equivalent to a connected strict Segal $E_\infty$-Hopf cooperad.
\end{thm}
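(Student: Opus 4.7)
The plan is to construct a functor $\DGW^c_{\infty}$ from connected homotopy Segal $E_\infty$-Hopf shuffle cooperads to connected strict Segal $E_\infty$-Hopf shuffle cooperads, together with a natural termwise weak-equivalence $\AOp \xrightarrow{\sim} \DGW^c_{\infty}(\AOp)$. The idea is to absorb the higher coproducts $\rho_{\ttree\rightarrow\ttree_k\rightarrow\dots\rightarrow\ttree_1\rightarrow\stree}$ into a strict functorial structure by using a version of the Boardman--Vogt $W$-construction of Subsection~\ref{subsection:forgetful-strict}, modified by a twisting differential that records the homotopical information of $\AOp$. The symmetric case will then follow by equipping the construction with the evident action of the symmetric groups on trees.

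For $\stree \in \Tree(r)$, I would take the additive end
\begin{equation*}
\DGW^c_{\infty}(\AOp)(\stree) = \int'_{\ttree\rightarrow\stree}\square^*(\ttree/\stree)\otimes\AOp(\ttree)
\end{equation*}
of Construction~\ref{construction:W}, but with a twisted differential $\delta + \partial^{\infty}$ whose twisting part $\partial^{\infty}$ is the sum over pairs $(\ttree, \underline{e})$ --- with $\underline{e} = (e_1, \dots, e_m)$ an ordered collection of pairwise distinct inner edges of $\ttree$ --- of the operators built from the top component $\rho^{\square}_{\ttree \to \ttree/e_1 \to \dots \to \ttree/\{e_1,\dots,e_m\}}$ of the homotopy coproducts (see Lemma~\ref{lemma:homotopy-cooperad-top-component}), tensored with an explicit cube cochain map that transforms the $\underline{0}^\sharp$ factors associated to the edges $e_i$ into $\underline{01}^\sharp$ ones. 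This is directly parallel to the twisting differential of the cobar complex in Construction~\ref{construction:bar-complex-homotopy}. The $E_\infty$-algebra structure on each summand is obtained from the componentwise action of $\EOp$ via its diagonal, as in Construction~\ref{constr:cubical-cochain-algebras}; the facet operators descend from the subtree inclusions as in Construction~\ref{construction:W}; and the strict coproduct operators $\rho^W_{\ttree \to \stree}$ come from the covariant functoriality of the cubical factors $\square^*(-/\stree)$ (contracting $\underline{1}^\sharp$ factors).

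First I would verify that $(\delta + \partial^{\infty})^2 = 0$ by adapting the cancellation argument of Lemma~\ref{lemma:differential-bar-construction-homotopy}, making repeated use of the face/degeneracy relations of Figures~\ref{homotopy-E-infinity-cooperad:0-faces}--\ref{homotopy-E-infinity-cooperad:degeneracies} and the coherences recorded in Remark~\ref{remark:compatibility-relations}. Next I would check that $\DGW^c_{\infty}(\AOp)$ carries a bona fide strict Segal $E_\infty$-Hopf cooperad structure: strict associativity of the induced coproducts follows from the covariant functoriality of $\square^*(-/\stree)$, while the compatibility with the facet operators is inherited from the facet compatibility of $\AOp$ of Figure~\ref{homotopy-E-infinity-cooperad:facet-compatibility}. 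The Segal condition for $\DGW^c_{\infty}(\AOp)$ will follow by combining the multiplicative splitting of cube factors across a tree decomposition (as in the proof of Theorem-Definition~\ref{lemma:W-construction-cooperad}) with the comparison $\otimes \xrightarrow{\EM} \vee$ of Proposition~\ref{claim:Barratt-Eccles-algebra-coproducts}, using that the Segal maps of $\AOp$ are weak-equivalences.

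Finally, to produce the weak-equivalence $\AOp \xrightarrow{\sim} \DGW^c_{\infty}(\AOp)$, I would send $\AOp(\stree)$ into the summand indexed by the identity $\stree \xrightarrow{=} \stree$ via the unit maps $\kk \to I$ of the cube factors, and analyze the resulting map through the spectral sequence associated to the filtration by the number of vertices of the trees $\ttree$, exactly as in the proof of Theorem~\ref{proposition:quasi-isomorphism-bar-strict-cooperads}: on the first page only the identity term survives by acyclicity of the reduced cube complex $\mathring{I}_e$, and the connectedness hypothesis guarantees the convergence of the spectral sequence. The main obstacle I expect is the combinatorial bookkeeping in the verification of $(\delta + \partial^{\infty})^2 = 0$ and of the compatibility of $\partial^{\infty}$ with the strict coproducts and facet operators: reconciling the face and degeneracy relations of the homotopy coproducts with the sign conventions coming from the suspension factors of the $W$-construction will require a careful organization in the same spirit as the one carried out for the cobar complex in Subsection~\ref{subsection:homotopy-Segal-cooperad-cobar}.
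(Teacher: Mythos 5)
Your construction cannot produce a \emph{strict Segal $E_\infty$-Hopf} cooperad, which is what the theorem requires, and this is the essential difficulty that the paper's entirely different proof is designed to circumvent. The obstruction is that the top components $\rho^{\square}_{\ttree\rightarrow\ttree_k\rightarrow\dots\rightarrow\stree}$ extracted in Lemma~\ref{lemma:homotopy-cooperad-top-component} are only graded homomorphisms of dg modules: the full homotopy coproducts $\rho_{\ttree\rightarrow\dots\rightarrow\stree}\colon\AOp(\stree)\rightarrow\AOp(\ttree)\otimes I^k$ are $\EOp$-algebra morphisms, but their components landing in $\AOp(\ttree)\otimes\underline{01}^{\sharp}{}^{\otimes k}$ are not, and in particular they are not derivations for the $\EOp$-action. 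Consequently the perturbed differential $\delta+\partial^{\infty}$ on $\int'_{\ttree\rightarrow\stree}\square^*(\ttree/\stree)\otimes\AOp(\ttree)$ fails to be a derivation with respect to the diagonal $\EOp$-algebra structure you propose to put on the summands, so $\DGW^c_{\infty}(\AOp)(\stree)$ is not an object of $\EAlg$ at all (there is also the secondary issue that a direct sum of $\EOp$-algebras carries no natural $\EOp$-algebra structure, so the additive end would have to be traded for a product computed in $\EAlg$). This is precisely why, in the paper, the twisting-differential device is confined to the cobar construction of Subsection~\ref{subsection:homotopy-Segal-cooperad-cobar}, which is only ever applied to homotopy Segal shuffle \emph{dg} cooperads after the $E_\infty$-structure has been forgotten; your plan would prove a dg-level strictification (essentially the content of the remark identifying $\DGW^c_{dec}(\AOp)$ with $\DGB\DGB^c(\AOp)$), not the $E_\infty$-Hopf statement.

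The paper's actual route keeps everything inside $\EAlg$ by never perturbing a differential directly: it packages all the homotopy coproducts as an action of an enriched category $\Tree^{\square}$ whose hom-objects are $\EOp$-coalgebras built from the cubical chain complexes $\DGN_*(\Delta^1)^{\otimes k}$ (Construction~\ref{construction:W-homotopy}, Proposition~\ref{proposition:E-algebra-tree-category}), dualizes to get genuine $\EOp$-algebra morphisms $\rho^*\colon\AOp(\stree)\rightarrow\prod_{\ttree\rightarrow\stree}\AOp(\ttree)\otimes\Tree^{\square}(\ttree,\stree)^{\sharp}$, forms a cosimplicial two-sided cobar construction $\DGK^{\bullet}(\AOp,\Tree^{\square},\FOp_{\stree})$ whose cofaces and codegeneracies are $\EOp$-algebra maps, and then totalizes via the end $\DGN^*(R^{\bullet})=\int_n R^n\otimes\DGN^*(\Delta^n)$, which stays in $\EAlg$ because limits are created by the forgetful functor. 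The "twisted differential" you have in mind only appears after conormalization, already packaged as a limit of $\EOp$-algebras. Note also that the outcome is a zigzag $\AOp\xrightarrow{\sim}\DGN^*(\DGK^{\bullet}(\AOp,\Tree^{\square},\Tree^{\square}(-,\stree)^{\sharp}))\xleftarrow{\sim}\DGN^*(\DGK^{\bullet}(\AOp,\Tree^{\square},\Tree(-,\stree)^{\sharp}))$, with only the right-hand term strict, rather than the single direct weak-equivalence into a strict object that you aim for; even in the strict dg setting of Theorem~\ref{proposition:quasi-isomorphism-bar-strict-cooperads} a zigzag was needed.
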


We prove Theorem~\ref{theorem:strictification} by constructing a functor $\AOp\mapsto\DGK^c(\AOp)$, from the category of connected homotopy Segal $E_\infty$-Hopf cooperads (symmetric or shuffle)
to the category of strict Segal $E_\infty$-Hopf cooperads, and a zigzag of weak-equivalences between $\AOp$ and $\DGK^c(\AOp)$.

For this purpose, we consider a category $\Tree^{\square}$, enriched in dg modules, which encodes the homotopy coproduct operators of homotopy Segal cooperads.
We explain the definition of this category $\Tree^{\square}$ in the next paragraph.
We have a morphism of enriched categories $\Tree^{\square}\rightarrow\Tree$, where, by an abuse of notation, we denote by $\Tree$ the enriched category in $\kk$-modules
whose hom-objects are the $\kk$-modules spanned by the set-theoretic tree morphisms.
We define our functor $\DGK^c(-)$ as a homotopy Kan extension, by dualizing a two-sided bar complex over the enriched category $\Tree^{\square}$.

Note that, in our statement, we again assume that our Segal cooperad $\AOp$ is connected in the sense of~\S\ref{subsection:conilpotence}.
This assumption enables us to simplify our constructions and to avoid technical difficulties
in the verification of our result. We assume that our cooperads satisfy this connectedness condition all along this section.

Recall that a (homotopy or strict) Segal cooperad $\AOp$ is connected if we have $\AOp(\ttree) = 0$
when the tree $\ttree$ is not reduced (has at least one vertex with a single ingoing edge)
and that this condition implies that we can restrict ourselves to the subcategories of reduced trees, denoted by $\widetilde{\Tree}(r)\subset\Tree(r)$, $r>0$,
in the definition of the structure operations that we associate to our objects.
For simplicity, all along this subsection, we keep the notation $\Tree$ for our constructions on tree categories (for instance, we use the notation $\Tree^{\square}$
for our enriched category of trees).
Nevertheless, we restrict ourselves to reduced trees, as permitted by our connectedness assumption on cooperads,
and for this reason, we only define the enriched hom-objects $\Tree^{\square}(\ttree,\stree)$ associated to the full subcategories of reduced trees $\widetilde{\Tree}^{\square}(r)$, $r>0$.

Recall that for reduced trees $\stree,\ttree\in\widetilde{\Tree}(r)$, the set of tree morphisms $\Tree(\ttree,\stree)$
is either empty or reduced to a point (see \cite[Theorem B.0.6]{FresseBook}).
For the enriched version of this category, we therefore have:
\begin{equation*}
\Tree(\ttree,\stree) = \begin{cases} \kk, & \text{if we have a morphism $\ttree\rightarrow\stree$}, \\
0, & \text{otherwise}, \end{cases}
\end{equation*}
for any pair of reduced trees $\stree,\ttree\in\widetilde{\Tree}(r)$, $r>0$.
Note that we may still write $\Tree(\ttree,\stree) = *$ in this setting, because we identify the object $\Tree(\ttree,\stree) = \kk$
with the terminal object of the category of cocommutative coalgebras, and we can actually regard $\Tree$
as a category enriched in cocommutative coalgebras.
This observation, to which we go back later on, motivates our abuse of notation.

%The first step towards the proof of this Theorem is the construction of the functor $ \DGW^c $.

\begin{constr}\label{construction:W-homotopy}
We define the enriched category $\Tree^{\square}$ in this paragraph. We take the same set of objects as the category of reduced trees $\widetilde{\Tree}$.

In the definition of the hom-objects, we consider the cubical chain complexes $\DGN_*(\Delta^1)^{\otimes k}$, dual to the cubical cochain algebras $I^k = \DGN^*(\Delta^1)^{\otimes k}$
of the definition of homotopy Segal cooperads.
We use the coface operators $d_i^{\epsilon}: \DGN_*(\Delta^1)^{\otimes k-1}\rightarrow\DGN_*(\Delta^1)^{\otimes k}$
and the codegeneracy operators $s_j: \DGN_*(\Delta^1)^{\otimes k}\rightarrow\DGN_*(\Delta^1)^{\otimes k-1}$
dual to the operators $d^i_{\epsilon}: I^k\rightarrow I^{k-1}$, $i = 1,\dots,k$, $\epsilon = 0,1$,
and $s^j: I^{k-1}\rightarrow I^k$, $j = 0,\dots,k$,
considered in Construction~\ref{constr:cubical-cochain-algebras}.
We then have $d_i^{\epsilon} = \id^{\otimes k-i}\otimes d^{\epsilon}\otimes\id^{\otimes i-1}$, $s_0 = \id^{\otimes k-1}\otimes s^0$,
$s_j = \id^{\otimes k-j-1}\otimes\nabla_*\otimes\id^{\otimes j-1}$, for $j = 1,\dots,k-1$,
and $s_k = s^k\otimes\id^{\otimes k-1}$,
where $d^{\epsilon}: \kk = \DGN_*(\Delta^0)\rightarrow\DGN_*(\Delta^1)$, $\epsilon = 0,1$, and $s^0: \DGN_*(\Delta^1)\rightarrow\DGN_*(\Delta^0) = \kk$
are the cofaces and the codegeneracy of the normalized chain complex of the one-simplex,
while $\nabla_*: \DGN_*(\Delta^1)\otimes\DGN_*(\Delta^1)\rightarrow\DGN_*(\Delta^1)$
denotes the connection of Construction~\ref{constr:cubical-cochain-connection}.

We precisely define the dg module $\Tree^{\square}(\ttree,\stree)$, which represents the dg hom-object associated to a pair of reduced trees $\stree,\ttree\in\widetilde{\Tree}$
such that $\ttree\not=\stree$ in our enriched category, by the following quotient
\begin{equation*}
\Tree^{\square}(\ttree,\stree)
= \bigoplus_{k\geq 0}\left(\bigoplus_{\ttree\rightarrow\ttree_k\rightarrow\dots\rightarrow\ttree_1\rightarrow\stree}
\DGN_*(\Delta^1)^{\otimes k}_{\ttree\rightarrow\ttree_k\rightarrow\dots\rightarrow\ttree_1\rightarrow\stree}
\right)/\equiv,
\end{equation*}
where a copy of the cubical chain complex $\DGN_*(\Delta^1)^{\otimes k}$ is assigned to every sequence
of composable tree morphisms $\ttree\rightarrow\ttree_k\rightarrow\dots\rightarrow\ttree_1\rightarrow\stree$,
and we mod out by the relations
\begin{align*}
d^0_i(\underline{\sigma})_{\ttree\rightarrow\cdots\rightarrow\stree}
& \equiv\underline{\sigma}_{\ttree\rightarrow\cdots\widehat{\ttree_i}\cdots\rightarrow\stree}, \\
s_j(\underline{\sigma})_{\ttree\rightarrow\cdots\rightarrow\stree}
& \equiv\underline{\sigma}_{\ttree\rightarrow\cdots\ttree_j = \ttree_j\cdots\rightarrow\stree},
\end{align*}
where $\underline{\sigma}$ denotes an element of the cubical chain complex (of appropriate dimension).
The composition operations of this enriched category
\begin{equation*}
\circ: \Tree^{\square}(\utree,\stree)\otimes\Tree^{\square}(\ttree,\utree)\rightarrow\Tree^{\square}(\ttree,\stree)
\end{equation*}
are given by
\begin{multline}\tag{$*$}\label{cubical-enriched-category:composition}
\underline{\sigma}_{\utree\rightarrow\utree_k\rightarrow\dots\rightarrow\utree_1\rightarrow\stree}
\circ\underline{\tau}_{\ttree\rightarrow\ttree_l\rightarrow\dots\rightarrow\ttree_1\rightarrow\utree}\\
= (\underline{\tau}\otimes\underline{0}\otimes\underline{\sigma})_{\ttree\rightarrow\ttree_l\rightarrow\dots\rightarrow\ttree_1\rightarrow\utree
\rightarrow\utree_k\rightarrow\dots\rightarrow\utree_1\rightarrow\stree}
\end{multline}
as long as $\utree\not=\stree$ and $\ttree\not=\utree$. We just take in addition $\Tree^{\square}(\stree,\stree) = \kk$ to provide our category with identity homomorphisms.
We easily check that the above formula preserves the relations of our hom-objects
and hence gives a well-defined morphism
of dg modules. We immediately see that these composition operations
are associative too.

We can also define the objects $\Tree^{\square}(\ttree,\stree)$ in terms of a coend.
We then consider an indexing category $\CubeCat$ generated by the $0$-cofaces $d^0_i$ and the codegeneracies $s^j$
attached to our cubical chain complexes
and which reflect the face and degeneracy operations that we apply to the sequences of composable tree morphisms.
The objects of this category are the ordinals $\underline{k+2} = \{k+1>k>\dots>1>0\}$, with $k\geq 0$.
The morphisms are the non decreasing maps $u: \underline{k+2}\rightarrow\underline{l+2}$
such that $u(k+1) = l+1$ and $u(0) = 0$.
The coface $d^0_i$ corresponds to the map $d^0_i: \underline{k+1}\rightarrow\underline{k+2}$ that jumps over $i+1$ in $\underline{k+2}$,
while the codegeneracy $s_j$ corresponds to the map $s_j: \underline{k+2}\rightarrow\underline{k+1}$
such that $s_j(x) = x$ for $x = 0,\dots,j$ and $s_j(x) = x-1$ for $x = j+1,\dots,k+2$.
We easily check that the collection of cubical chain complexes $\DGN_*(\Delta^1)^{\otimes k}$,
equipped with the previously defined operators $d^0_i: \DGN_*(\Delta^1)^{\otimes k-1}\rightarrow\DGN_*(\Delta^1)^{\otimes k}$
and $s_j: \DGN_*(\Delta^1)^{\otimes k}\rightarrow\DGN_*(\Delta^1)^{\otimes k-1}$,
defines a functor $\underline{k+2}\mapsto\DGN_*(\Delta^1)^{\otimes k}$ on this category $\CubeCat$.

In general, we denote by $X_k$ the image of an object $\underline{k+2}\in\CubeCat$
under a (contravariant or covariant) functor $X$
over the category $\CubeCat$.

For a pair of reduced trees $\ttree,\stree\in\widetilde{\Tree}(r)$ with $\ttree\not=\stree$,
we also consider the functor $\NCat(\ttree,\stree): \CubeCat^{op}\rightarrow\Set$
such that
\begin{equation*}
\NCat(\ttree,\stree)_k = \bigl\{\ttree\rightarrow\ttree_k\rightarrow\dots\rightarrow\ttree_1\rightarrow\stree|\ttree_i\in\widetilde{\Tree}(r)\,(\forall i)\bigr\},
\end{equation*}
where we consider the set of all sequences of composable tree morphisms of length $k+1$ with $\ttree_{k+1} = \ttree$, $\ttree_0 = \stree$,
and we equip this set with the obvious action of the category $\CubeCat$ (we adapt the usual definition
of the simplicial nerve of a category).
We then have:
\begin{equation*}
\Tree^{\square}(\ttree,\stree) = \int^{\underline{k+2}\in\CubeCat}\kk[\NCat(\ttree,\stree)_k]\otimes\DGN_*(\Delta^1)^{\otimes k},
\end{equation*}
where $\kk[\NCat(\ttree,\stree)_k]$ is the $\kk$-module generated by the set $\NCat(\ttree,\stree)_k$.
We can also express the composition operation of the category $\Tree^{\square}$ in terms of a termwise composition operation on this coend
which we define by the above formula~(\ref{cubical-enriched-category:composition}).

We have well-defined (contravariant) facet operators
\begin{equation*}
i_{\sigmatree,\stree}: \Tree^{\square}(\ttree,\stree)\rightarrow\Tree^{\square}(\thetatree,\sigmatree),
\end{equation*}
which we associate to all subtree inclusions $\sigmatree\subset\stree$, where $\thetatree = f^{-1}(\sigmatree)$ is the pre-image of the subtree $\sigmatree\subset\stree$
under the (at most unique) morphism $f: \ttree\rightarrow\stree$.
We define these facet operators on our coend termwise,
by the map
\begin{equation*}
i_{\sigmatree,\stree}: \kk[\NCat(\ttree,\stree)_k]\otimes\DGN_*(\Delta^1)^{\otimes k}\rightarrow\kk[\NCat(\thetatree,\sigmatree)_k]\otimes\DGN_*(\Delta^1)^{\otimes k}
\end{equation*}
induced by the set-theoretic facet operator $\NCat(\ttree,\stree)_k\rightarrow\NCat(\thetatree,\sigmatree)_k$
which carries any sequence of composable tree morphisms $\ttree\xrightarrow{f_k}\ttree_k\xrightarrow{f_{k-1}}\cdots\xrightarrow{f_1}\ttree_1\xrightarrow{f_0}\stree$
to the sequence of tree morphisms such that $(f_0\cdots f_k)^{-1}(\sigmatree)\rightarrow(f_0\cdots f_{k-1})^{-1}(\sigmatree)\rightarrow\cdots\rightarrow f_0^{-1}(\sigmatree)\rightarrow\sigmatree$,
where we use $\thetatree = (f_0\cdots f_k)^{-1}(\sigmatree)$.
We can also associate a Segal map
\begin{equation*}
i_{\lambda_{\utree}(\sigmatree_*)}: \Tree^{\square}(\ttree,\stree)\rightarrow\bigotimes_{u\in V(\utree)}\Tree^{\square}(\thetatree_u,\sigmatree_u),
\end{equation*}
to every tree decomposition $\stree = \lambda_{\utree}(\sigmatree_*)$, where we set $\thetatree_u = f^{-1}(\sigmatree_u)$, for all factors $\sigmatree_u\subset\stree$.
We then take the tensor product of the product of the above set-theoretic assignments $\NCat(\ttree,\stree)_k\rightarrow\prod_{u\in V(\utree)}\NCat(\thetatree_u,\sigmatree_u)_k$
with the map $\mu^*: \DGN_*(\Delta^1)^{\otimes k}\rightarrow\bigotimes_{u\in V(\utree)}\DGN_*(\Delta^1)^{\otimes k}$
induced by the coassociative coproduct of the cubical chain complex $\DGN_*(\Delta^1)^{\otimes k}$.
We just need to fix an ordering on the set of vertices of our trees since this coproduct is not associative.
We easily check that the facet operators and the Segal maps satisfy natural associativity relations and are compatible, in some natural sense,
with the composition operation of our enriched category structure.

In the symmetric context, we can also observe that the hom-objects $\Tree^{\square}(\ttree,\stree)$
inherit an action of the symmetric group such that
\begin{equation*}
s_*: \Tree^{\square}(\ttree,\stree)\rightarrow\Tree^{\square}(s\ttree,s\stree),
\end{equation*}
for every pair of reduced trees $\stree,\ttree\in\widetilde{\Tree}(r)$, $r>0$,
and for every permutation $s\in\Sigma_r$,
which are induced by the mappings $\ttree_i\mapsto s\ttree_i$ at the level of the sets $\NCat(\ttree,\stree)_k$.
These operators are compatible with the enriched category structure (so that the mapping $s_*: \ttree\mapsto s\ttree$
defines a functor on the enriched category $\Tree^{\square}$)
and with the facet operators. (But the action of permutation is not compatible with the Segal maps with values in the tensor product, since we need to order the vertices
of our trees when we form these maps.)
\end{constr}

We use that this enriched category in dg modules $\Tree^{\square}$ can be upgraded to a category enriched over the category of $\EOp$-coalgebras.

\begin{prop}\label{proposition:E-algebra-tree-category}
Each object $\Tree^{\square}(\ttree,\stree)$ inherits an $\EOp$-coalgebra structure from the cubical chain complexes $\DGN_*(\Delta^1)^{\otimes k}$.
The composition products define morphisms of $\EOp$-coalgebras $\circ: \Tree^{\square}(\ttree,\utree)\otimes\Tree^{\square}(\utree,\stree)\rightarrow\Tree^{\square}(\ttree,\stree)$
(we switch the conventional order of the factors of the composition to make the diagonal action of the Barratt--Eccles operad compatible with these operations).
The facet operators $i_{\sigmatree,\stree}: \Tree^{\square}(\ttree,\stree)\rightarrow\Tree^{\square}(\thetatree,\sigmatree)$
also define morphisms of $\EOp$-coalgebras,
as well as the operators that give the action of permutations $s_*: \Tree^{\square}(\ttree,\stree)\rightarrow\Tree^{\square}(s\ttree,s\stree)$
in the symmetric setting.

The constructions of the previous paragraph accordingly give a category $\Tree^{\square}$ enriched in $\EOp$-coalgebras
and equipped with facet operators (together with an action of permutations), which are defined within the category of $\EOp$-coalgebras
and are compatible with the composition structure of our objects.
% TODO NEW
\end{prop}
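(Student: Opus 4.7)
The plan is to first equip each $k$-fold cubical chain complex $\DGN_*(\Delta^1)^{\otimes k}$ with the $\EOp$-coalgebra structure dual to the $\EOp$-algebra structure on $I^k = \DGN^*(\Delta^1)^{\otimes k}$ from Construction~\ref{constr:cubical-cochain-algebras}, where the action on tensor powers is obtained through the iterated diagonal $\EOp\rightarrow\EOp^{\otimes k}$ and the $\EOp$-coaction on $\DGN_*(\Delta^1)$ (dual to the Berger--Fresse action). Once this base structure is fixed, the goal is to transport it through the coend presentation
\begin{equation*}
\Tree^{\square}(\ttree,\stree) = \int^{\underline{k+2}\in\CubeCat}\kk[\NCat(\ttree,\stree)_k]\otimes\DGN_*(\Delta^1)^{\otimes k}.
\end{equation*}
Since $\kk[\NCat(\ttree,\stree)_k]$ is a free $\kk$-module (or a set-coalgebra with the trivial cocommutative comultiplication), each summand inherits an $\EOp$-coalgebra structure by tensoring with the $\EOp$-coaction on the cubical chain factor.

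The key step is then to verify that the generating operators of $\CubeCat$ — the $0$-cofaces $d^0_i$ and the codegeneracies $s_j$ — act as $\EOp$-coalgebra morphisms, so that the $\EOp$-coaction descends to the coend. The cofaces $d^0_i = \id^{\otimes k-i}\otimes d^0\otimes\id^{\otimes i-1}$ and the boundary codegeneracies $s_0, s_k$ come from the unit/counit maps $\kk\leftrightarrow\DGN_*(\Delta^1)$ associated to the simplicial face/degeneracy of $\Delta^1$, and are automatically $\EOp$-coalgebra morphisms by functoriality of $\DGN_*(-)$ under simplicial maps. The substantive case is that of the internal codegeneracies $s_j = \id^{\otimes k-j-1}\otimes\nabla_*\otimes\id^{\otimes j-1}$, for $1\leq j\leq k-1$, which involve the connection $\nabla_*: \DGN_*(\Delta^1)\otimes\DGN_*(\Delta^1)\rightarrow\DGN_*(\Delta^1)$; their compatibility with the $\EOp$-coaction is exactly the statement proved in the appendix (Construction~\ref{constr:cubical-cochain-connection} and the subsequent lemmas on the interaction between the Barratt--Eccles operad and connections). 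This is the technical heart of the proposition and the main obstacle, since the connection is not induced by a simplicial map.

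With the $\EOp$-coalgebra structure on each hom-object established, it remains to check compatibility with the three families of operators. For the composition product, the formula
\begin{equation*}
\underline{\sigma}\circ\underline{\tau} = \underline{\tau}\otimes\underline{0}\otimes\underline{\sigma}
\end{equation*}
is obtained from the tensor product pairing $\DGN_*(\Delta^1)^{\otimes l}\otimes\DGN_*(\Delta^1)^{\otimes k}\rightarrow\DGN_*(\Delta^1)^{\otimes k+l+1}$ given by inserting the element $\underline{0}\in\DGN_*(\Delta^1)$ in the middle factor. This map is the tensor product of the identity on the outer factors with the $\EOp$-coalgebra morphism $\underline{0}: \kk\rightarrow\DGN_*(\Delta^1)$ arising from the simplicial vertex, and so it is an $\EOp$-coalgebra morphism for the diagonal action on tensor products — note that the reversal of tensor order in the composition formula is precisely what makes this action diagonal-compatible, as indicated in the statement. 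The facet operators $i_{\sigmatree,\stree}$ and the permutation operators $s_*$ are induced termwise by set-theoretic maps on the indexing sets $\NCat(\ttree,\stree)_k$ tensored with the identity on the cubical chain factor $\DGN_*(\Delta^1)^{\otimes k}$, and are therefore $\EOp$-coalgebra morphisms by inspection. Gluing all these verifications together yields the claimed enrichment of $\Tree^{\square}$ over $\EOp$-coalgebras together with the compatible facet and permutation structure.
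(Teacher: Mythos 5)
Your proposal is correct and follows essentially the same route as the paper's proof: equip $\kk[\NCat(\ttree,\stree)_k]\otimes\DGN_*(\Delta^1)^{\otimes k}$ with the tensor product of the set-diagonal coalgebra structure and the dual $\EOp$-coalgebra structure on cubical chains, check compatibility with the $\CubeCat$-action so the structure passes to the coend (created by the forgetful functor), and verify composition, facets and permutations termwise. You also correctly isolate the only non-formal input, namely that the connection $\nabla_*$ is a morphism of $\EOp$-coalgebras, which is exactly the content of Proposition~\ref{prop:cubical-cochain-connection} in the appendix.
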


\begin{proof}
For each $k\geq 2$, we use that $\kk[\NCat(\ttree,\stree)_k]$ inherits a cocommutative coalgebra structure (given by the diagonal of the set $\NCat(\ttree,\stree)_k$)
in order to extend the $\EOp$-coalgebra structure of the cubical chain complex $\DGN_*(\Delta^1)^{\otimes k}$
to the tensor product $\kk[\NCat(\ttree,\stree)_k]\otimes\DGN_*(\Delta^1)^{\otimes k}$.
We readily check that this $\EOp$-coalgebra structure is compatible with the action of the category $\CubeCat$
and therefore passes to our coend. (Recall simply that the forgetful functor from a category of coalgebras to a base category creates coends.)

We easily check that the composition operations of the category $\Tree^{\square}$ are also compatible with the $\EOp$-coalgebra structure termwise,
as well the facet operators. Then we just pass to the coend
to get the conclusions of the proposition.
\end{proof}

We now consider the enriched category in cocommutative coalgebras such that
\begin{equation*}
\Tree(\ttree,\stree) = \begin{cases} \kk, & \text{if we have a morphism $\ttree\rightarrow\stree$}, \\
0, & \text{otherwise}, \end{cases}
\end{equation*}
for any pair of reduced trees $\stree,\ttree\in\widetilde{\Tree}(r)$, $r>0$ (with the same abuse of notation as in the introduction of this subsection).
We immediately see that this enriched category inherits the same structures (facet operators, action of permutations) within the category of cocommutative coalgebras
as the enriched category in $\EOp$-coalgebras $\Tree^{\square}$.
We also have the following observation:

\begin{prop}\label{lemma:tree-square-contractible}
%\hspace*{2mm}
\begin{enumerate}
\item
The hom-objects of the enriched category $\Tree^{\square}$ are endowed with weak-equivalences
\begin{equation*}
\epsilon: \Tree^{\square}(\ttree,\stree)\xrightarrow{\sim}\kk,
\end{equation*}
which are yielded by the augmentation maps such that $\epsilon(1_{\ttree\rightarrow\stree}) = 1$
and $\epsilon(\underline{\sigma}_{\ttree\rightarrow\ttree_k\rightarrow\dots\rightarrow\ttree_0\rightarrow\stree}) = 0$
in cubical dimension $k>0$ when $\ttree\rightarrow\ttree_k\rightarrow\dots\rightarrow\ttree_0\rightarrow\stree$
is non degenerate.
\item
These weak-equivalences also define a morphism of enriched categories
\begin{equation*}
\epsilon: \Tree^{\square}\xrightarrow{\sim}\Tree,
\end{equation*}
which is compatible with the facet operators and with the action of permutations whenever we consider this extra structure.
(We then regard the enriched category in cocommutative coalgebras $\Tree$ as a category enriched in $\EOp$-coalgebras
by restriction of structure through the augmentation map of the Barratt--Eccles operad.)
\end{enumerate}
\end{prop}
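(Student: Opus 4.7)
For Part (1), the plan is to construct an explicit contracting homotopy $h$ of degree $+1$ satisfying $\delta h + h\delta = \id - \eta\epsilon$, where $\eta: \kk \to \Tree^{\square}(\ttree,\stree)$ is the canonical section sending $1$ to the class of $1_{\ttree\to\stree}$. We define $h$ on a generator by prepending the starting tree to the sequence, weighted by the one-cell of the interval:
\[
h(\underline{\sigma}_{\ttree\to\ttree_k\to\cdots\to\ttree_1\to\stree})
= \pm\,(\underline{01}\otimes\underline{\sigma})_{\ttree=\ttree\to\ttree_k\to\cdots\to\ttree_1\to\stree}.
\]
Using $\delta(\underline{01}) = \underline{1}-\underline{0}$, we compute $\delta h + h\delta$: the $\underline{0}$-contribution, via the face relation $d^0_i(\underline{\sigma})_{\text{long}}\equiv\underline{\sigma}_{\text{shorter}}$, reproduces the original generator (yielding the identity), while the $\underline{1}$-contribution on the prepended (degenerate) sequence vanishes by the $s_0$ degeneracy relation, except for a residual term concentrated on the trivial sequence $\ttree\to\stree$ that matches $\eta\epsilon$. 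Well-definedness of $h$ on the coend quotient follows from the naturality of the prepending operation with respect to both faces and degeneracies in $\CubeCat$.

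For Part (2), the plan is to verify the compatibility of $\epsilon$ with composition, facet operators, and the action of permutations by direct termwise inspection. The nontrivial point is composition: by formula (\ref{cubical-enriched-category:composition}), the composite $\underline{\sigma}\circ\underline{\tau}$ contains a middle $\underline{0}$-factor, which via the face relation $d^0_i$ collapses the common tree $\utree$ in the concatenated sequence and reduces to a chain of the form $(\underline{\tau}\otimes\underline{\sigma})_{\ttree\to\cdots\to\widehat{\utree}\to\cdots\to\stree}$. A direct case analysis then shows that $\epsilon(\underline{\sigma}\circ\underline{\tau})$ is nonzero precisely when both $\epsilon(\underline{\sigma})$ and $\epsilon(\underline{\tau})$ are nonzero, yielding the multiplicative rule $\epsilon(\underline{\sigma}\circ\underline{\tau})=\epsilon(\underline{\sigma})\,\epsilon(\underline{\tau})$. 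Compatibility with facet operators and with permutations is automatic since these operations act only on the tree factor of the coend while acting as the identity on the cubical chain factor.

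The main technical obstacle will be the sign and indexing bookkeeping required to verify that $h$ descends to the coend quotient and satisfies the homotopy identity on the nose. An alternative and likely cleaner strategy is to avoid the contracting homotopy altogether and run a spectral sequence argument in the style of the proof of Theorem~\ref{proposition:quasi-isomorphism-bar-strict-cooperads}: filter $\Tree^{\square}(\ttree,\stree)$ by the number of intermediate trees in the sequences, so that the $E^1$-page identifies with the normalized chain complex of the nerve of the poset of factorizations of $f: \ttree\to\stree$ in $\widetilde{\Tree}$; since this poset admits $\ttree$ itself (the trivial factorization) as an initial object, its nerve is contractible and the spectral sequence collapses onto a single copy of $\kk$ concentrated at the trivial sequence, giving the result.
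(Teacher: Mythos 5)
Your proposed contracting homotopy for Part (1) does not work. In the coend defining $\Tree^{\square}(\ttree,\stree)$, the face relation identifies $d^0_i(\underline{\sigma})$ --- insertion of the vertex $\underline{1}$, not $\underline{0}$ --- with $\underline{\sigma}$ on the shortened sequence, while the degeneracy relation identifies any chain carried by a degenerate sequence with its image under the corresponding codegeneracy. Now take $h(\underline{\sigma}) = \pm(\underline{01}\otimes\underline{\sigma})$ on the prepended sequence $\ttree = \ttree\rightarrow\ttree_k\rightarrow\dots\rightarrow\stree$ and compute the boundary of the new interval factor, $\delta(\underline{01}) = \underline{1}-\underline{0}$. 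The term $\underline{1}\otimes\underline{\sigma}$ lies in the image of $d^0_{k+1}$ and is identified with $\underline{\sigma}$ on the original sequence, as you say. But the term $\underline{0}\otimes\underline{\sigma}$ does \emph{not} vanish: the prepended sequence is degenerate, so its class equals that of $(s^0\otimes\id^{\otimes k})(\underline{0}\otimes\underline{\sigma}) = \underline{\sigma}$ on the original sequence, because $s^0(\underline{0}) = 1$ (only $\underline{01}$ is killed by $s^0$). The two contributions cancel and you get $\delta h + h\delta = 0$ rather than $\id - \eta\epsilon$; the same happens if you append at the target end. The failure is structural: this cubical coend has no simplicial ``extra degeneracy''. (Note also that you have the roles of $\underline{0}$ and $\underline{1}$ interchanged relative to the paper's conventions throughout, including in your Part (2) discussion, where the middle $\underline{0}$-factor in the composition formula does not collapse $\utree$ --- multiplicativity of $\epsilon$ holds simply because composition adds cubical degrees and $\epsilon$ is the augmentation by degree.)

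Your fallback strategy, by contrast, is viable and genuinely different from the paper's proof. Filtering by the length of the sequences, the associated graded splits (once one knows that every class has a unique reduced representative carried by a non-degenerate sequence of non-identity morphisms with cubical factors in $\kk\underline{0}\oplus\kk\underline{01}$ --- the analogue of Lemma~\ref{lemma:W-construction-splitting}, which the paper records as Lemma~\ref{lemma:finite-generated-tree-square}) into a finite direct sum of complexes $(\kk\underline{0}\oplus\kk\underline{01})^{\otimes k}$ with $\delta(\underline{01}) = -\underline{0}$, acyclic for $k\geq 1$; hence $E^1 = \kk$, concentrated on the trivial sequence, exactly as in the proof of Theorem~\ref{proposition:quasi-isomorphism-bar-strict-cooperads}. (Your identification of the $E^1$-page with the nerve of the poset of factorizations is not the right statement, but the conclusion is.) The paper takes a different route: it builds an explicit contraction $h(-\otimes\underline{t})$ by applying the Alexander--Whitney diagonal and the max-connection $\nabla_*^{\max}$ of Construction~\ref{constr:cubical-cochain-connection} factorwise to each cube, contracting it onto the vertex $\underline{1}^{\otimes k}$, which the face relations identify with $1_{\ttree\rightarrow\stree}$; the entire technical content there is the verification, via the min--max distributivity relation, that this operation descends to the coend. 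The spectral-sequence route avoids that verification, at the cost of requiring the splitting lemma first and of producing no explicit homotopy.
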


\begin{proof}
The morphism $\epsilon: \Tree^{\square}(\ttree,\stree)\xrightarrow{\sim}\kk$ has an obvious section $\eta: \kk\rightarrow\Tree^{\square}(\ttree,\stree)$
given by $\eta(1) = 1_{\ttree\rightarrow\stree}$.
We construct a chain homotopy $h: \Tree^{\square}(\ttree,\stree)\otimes\DGN_*(\Delta^1)\rightarrow\Tree^{\square}(\ttree,\stree)$
between $\eta\circ\epsilon = h(-\otimes\underline{1})$ and $\id = h(-\otimes\underline{0})$
to prove that $\epsilon$ is a weak-equivalence.

We proceed as follows. For $k\geq 0$, we consider the map
\begin{equation*}
h_k: \DGN_*(\Delta^1)^{\otimes k}\otimes\DGN_*(\Delta^1)\rightarrow\DGN_*(\Delta^1)^{\otimes k}
\end{equation*}
defined by the composite
\begin{multline*}
\DGN_*(\Delta^1)^{\otimes k}\otimes\DGN_*(\Delta^1)\xrightarrow{\id\otimes\mu^*}\DGN_*(\Delta^1)^{\otimes k}\otimes\DGN_*(\Delta^1)^{\otimes k} \\
\xrightarrow{\simeq}(\DGN_*(\Delta^1)\otimes\DGN_*(\Delta^1))^{\otimes k}\xrightarrow{(\nabla_*^{\max})^{\otimes k}}\DGN_*(\Delta^1)^{\otimes k},
\end{multline*}
where $\mu^*$ is the ($k$-fold) Alexander--Whitney diagonal and $\nabla_*^{\max} = \DGN_*(\max)\circ\EM$ is the composite of the Eilenberg--MacLane map
with the morphism induced by the map of simplicial sets $\max: \Delta^1\times\Delta^1\rightarrow\Delta^1$
such that $\max: (s,t)\mapsto\max(s,t)$. (Thus, we consider a mirror of the connection $\nabla_* = \nabla_*^{\min}$,
which we use in the definition of the codegeneracies of our cubical complex $\DGN_*(\Delta^1)^{\otimes k}$.)

We claim that these maps preserve the action of the cofaces $d^0_i$ and of the codegeneracies $s_j$ on the cubical complex $\DGN_*(\Delta^1)^{\otimes k}$.
The preservation of the cofaces $d^0_i$ follows from the formulas $\nabla_*^{\max}(\underline{1},\underline{01}) = 0$
and $\nabla_*^{\max}(\underline{1},\underline{\tau}) = \underline{1}$ for $\deg(\underline{\tau}) = 0$.
The preservation of the codegeneracies $s_j$ is immediate in the cases $j = 0$ and $j = k$.
We use the properties of the Alexander--Whitney diagonal and of the Eilenberg--MacLane map to reduce the verification of the preservation of the codegeneracies $s_j$
such that $j = 1,\dots,k-1$ to the case $k=2$.
We then deduce our claim from the distribution relation
\begin{equation*}
\nabla_*^{\max}(\nabla_*^{\min}(\underline{\sigma}_2\otimes\underline{\sigma}_1)\otimes\underline{\tau})
= \sum_{(\underline{\tau})}\nabla_*^{\min}(\nabla_*^{\max}(\underline{\sigma}_2\otimes\underline{\tau}')\otimes\nabla_*^{\max}(\underline{\sigma}_1\otimes\underline{\tau}'')),
\end{equation*}
valid for $\underline{\sigma}_2,\underline{\sigma}_1,\underline{\tau}\in\DGN_*(\Delta^1)$,
and where we write $\mu^*(\underline{\tau}) = \sum_{(\underline{\tau})}\underline{\tau}'\otimes\underline{\tau}''$
for the Alexander--Whitney diagonal of the element $\underline{\tau}\in\DGN_*(\Delta^1)$.
(This relation, which reflects the classical min-max distribution relation, can easily be checked by hand.)
We deduce from these verifications that these maps $h_k$, tensored with the identity of the factor $\kk[\NCat(\ttree,\stree)_k]$,
induce a well-defined map on our coend $h: \Tree^{\square}(\ttree,\stree)\otimes\DGN_*(\Delta^1)\rightarrow\Tree^{\square}(\ttree,\stree)$.

We also have $h_k(\underline{\sigma}\otimes\underline{1}) = \underline{1}^{\otimes k}$ and $h_k(\underline{\sigma}\otimes\underline{0}) = \underline{\sigma}$, for each $k\geq 0$,
and these identities give the relations $\eta\circ\epsilon = h(-\otimes\underline{1})$ and $\id = h(-\otimes\underline{0})$
at the coend level.
This verification completes the proof of the first assertion of the proposition, while an immediate inspection gives the verification of the second assertion.
\end{proof}

This proposition has the following corollary, which we use in our verification that the homotopy Kan construction returns Segal $E_\infty$-Hopf pre-cooperads
that satisfy the Segal condition.

\begin{cor}\label{cor:Segal-condition-tree-square}
The Segal map of Construction~\ref{construction:W-homotopy} defines weak-equivalence of dg modules
\begin{equation*}
i_{\lambda_{\utree}(\sigmatree_*)}: \Tree^{\square}(\ttree,\stree)\rightarrow\bigotimes_{u\in V(\utree)}\Tree^{\square}(\thetatree_u,\sigmatree_u),
\end{equation*}
for every tree decomposition $\stree = \lambda_{\utree}(\sigmatree_*)$, where we again set $\thetatree_u = f^{-1}(\sigmatree_u)$, for all factors $\sigmatree_u\subset\stree$.\qed
\end{cor}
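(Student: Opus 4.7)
The plan is a straightforward 2-out-of-3 argument based on the contractions of Proposition~\ref{lemma:tree-square-contractible}. First I observe that the weak-equivalence $\epsilon:\Tree^{\square}(\ttree,\stree)\xrightarrow{\sim}\kk$ is in fact a chain homotopy equivalence, since its proof supplies an explicit section $\eta$ with $\eta(1) = 1_{\ttree\rightarrow\stree}$ (satisfying $\epsilon\eta = \id_{\kk}$ strictly) together with a contracting chain homotopy $h$ witnessing $\eta\epsilon\sim\id$. Because chain homotopy equivalences are preserved under tensor products without any flatness hypothesis on the ground ring, the tensor product
\begin{equation*}
\epsilon_\otimes := \bigotimes_{u\in V(\utree)}\epsilon:\bigotimes_{u\in V(\utree)}\Tree^{\square}(\thetatree_u,\sigmatree_u)\xrightarrow{\sim}\bigotimes_{u\in V(\utree)}\kk = \kk
\end{equation*}
is again a chain homotopy equivalence, in particular a weak-equivalence of dg modules.

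Next I compute the effect of the Segal map $i_{\lambda_{\utree}(\sigmatree_*)}$ on the canonical generator $1_{\ttree\rightarrow\stree}$ of the cubical-dimension-zero summand of $\Tree^{\square}(\ttree,\stree)$. By direct inspection of Construction~\ref{construction:W-homotopy}, the set-theoretic restriction $\NCat(\ttree,\stree)_0\rightarrow\prod_{u\in V(\utree)}\NCat(\thetatree_u,\sigmatree_u)_0$ sends the one-step sequence $\ttree\rightarrow\stree$ to the tuple of restricted one-step sequences $(\thetatree_u\rightarrow\sigmatree_u)_{u\in V(\utree)}$, while the diagonal $\mu^*$ on $\DGN_*(\Delta^1)^{\otimes 0} = \kk$ reduces to the canonical iso $1\mapsto 1^{\otimes|V(\utree)|}$. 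Therefore,
\begin{equation*}
i_{\lambda_{\utree}(\sigmatree_*)}(1_{\ttree\rightarrow\stree}) = \bigotimes_{u\in V(\utree)}1_{\thetatree_u\rightarrow\sigmatree_u},
\end{equation*}
and applying $\epsilon_\otimes$ to this element returns $1\in\kk$.

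The composite $\epsilon_\otimes\circ i_{\lambda_{\utree}(\sigmatree_*)}$ therefore sends the class $[1_{\ttree\rightarrow\stree}]$ generating $H_0(\Tree^{\square}(\ttree,\stree)) = \kk$ to $1\in H_0(\kk)$, so it induces an isomorphism in degree zero. Since both source and target have vanishing homology in positive degrees (the source by Proposition~\ref{lemma:tree-square-contractible}, the target because $\epsilon_\otimes$ is a weak-equivalence to $\kk$), the composite $\epsilon_\otimes\circ i_{\lambda_{\utree}(\sigmatree_*)}$ is itself a weak-equivalence. Combined with the fact that $\epsilon_\otimes$ is a weak-equivalence, the 2-out-of-3 property of quasi-isomorphisms yields the desired conclusion that $i_{\lambda_{\utree}(\sigmatree_*)}$ is a weak-equivalence.

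The only point requiring any care is ensuring that we tensor chain homotopy equivalences, rather than arbitrary quasi-isomorphisms, when passing to the target; this avoids a Künneth-style argument over the general ground ring $\kk$, and it is granted for free by the explicit contracting homotopy constructed in the proof of Proposition~\ref{lemma:tree-square-contractible}.
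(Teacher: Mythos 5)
Your argument is correct and is exactly the deduction the paper leaves implicit behind the \qed{}: the corollary is meant to follow from Proposition~\ref{lemma:tree-square-contractible} by observing that both sides retract onto $\kk$ via the augmentations, that the Segal map is compatible with these augmentations on the degree-zero generators, and then invoking two-out-of-three. Your added care in tensoring the explicit \emph{chain homotopy} equivalences (rather than bare quasi-isomorphisms) is precisely what makes the target computation valid over an arbitrary ground ring, and matches the role the explicit contracting homotopy plays in the proposition's proof.
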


We can reformulate the definition of the structure operators of homotopy Segal $E_\infty$-Hopf shuffle pre-cooperads in terms of the category $\Tree^{\square}$.
We get the following result, which follows from formal verifications.

\begin{prop}\label{prop:dg-category-trees}
Let $\AOp$ be a connected homotopy Segal $E_\infty$-Hopf shuffle pre-cooperad (either symmetric or shuffle).
The objects $\AOp(\ttree)$, $\ttree\in\Tree(r)$, $r>0$, inherit an action of the enriched category $\Tree^{\square}$,
given by operators
\begin{equation*}
\rho: \AOp(\stree)\otimes\Tree^{\square}(\ttree,\stree)\rightarrow\AOp(\ttree),
\end{equation*}
defined in the category of dg modules, and which preserve the action of the facet operators in some natural sense (as well as the action of permutations in the symmetric setting).
The homotopy coproduct operators $\rho_{\ttree\rightarrow\ttree_k\rightarrow\dots\rightarrow\ttree_1\rightarrow\stree}$
are identified with the adjoint morphisms
of the maps
\begin{multline*}
\AOp(\stree)\otimes[\ttree\rightarrow\ttree_k\rightarrow\dots\rightarrow\ttree_1\rightarrow\stree]\otimes\DGN_*(\Delta^1)^{\otimes k}\\
\rightarrow\AOp(\stree)\otimes\underbrace{\int^{\underline{k+2}\in\CubeCat}\kk[\NCat(\ttree,\stree)_k]\otimes\DGN_*(\Delta^1)^{\otimes k}}_{\Tree^{\square}(\ttree,\stree)}
\rightarrow\AOp(\ttree),
\end{multline*}
which we determine from this action.\qed
\end{prop}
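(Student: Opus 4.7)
The plan is to construct the action morphisms $\rho: \AOp(\stree)\otimes\Tree^{\square}(\ttree,\stree)\rightarrow\AOp(\ttree)$ directly from the homotopy coproduct operators of $\AOp$, and then to verify that all the defining relations of the category $\Tree^{\square}$ (the coend identifications together with the composition formula~(\ref{cubical-enriched-category:composition})) are matched, term by term, with the face and degeneracy relations of Figure~\ref{homotopy-E-infinity-cooperad:0-faces}-\ref{homotopy-E-infinity-cooperad:degeneracies}. In practice, I will first take the dg adjoint of each morphism $\rho_{\ttree\rightarrow\ttree_k\rightarrow\dots\rightarrow\ttree_1\rightarrow\stree}: \AOp(\stree)\rightarrow\AOp(\ttree)\otimes I^k$ to obtain a morphism $\widetilde{\rho}_{\ttree\rightarrow\dots\rightarrow\stree}: \AOp(\stree)\otimes\DGN_*(\Delta^1)^{\otimes k}\rightarrow\AOp(\ttree)$, and then assemble the sum of these maps over all composable sequences of tree morphisms of length $k+1$ into a single morphism with source $\AOp(\stree)\otimes\kk[\NCat(\ttree,\stree)_k]\otimes\DGN_*(\Delta^1)^{\otimes k}$ and target $\AOp(\ttree)$.

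Next I would check that this family of maps descends to the coend that defines $\Tree^{\square}(\ttree,\stree)$. Concretely, this amounts to showing that the maps $\widetilde{\rho}$ intertwine the action of the coface operators $d^0_i$ and of the codegeneracies $s_j$ on the cubical chain complex $\DGN_*(\Delta^1)^{\otimes k}$ with the corresponding operations on the nerve sets $\NCat(\ttree,\stree)_\bullet$. This is exactly a dualization of the compatibility diagrams of Figure~\ref{homotopy-E-infinity-cooperad:0-faces} (for $d^i_0$) and Figure~\ref{homotopy-E-infinity-cooperad:degeneracies} (for $s^j$), which assert that the homotopy coproducts collapse in the expected way when a node is forgotten or doubled.

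The heart of the verification lies in establishing associativity of the action with respect to the composition product~(\ref{cubical-enriched-category:composition}) of the enriched category. Unpacking the formula, the composition of $\underline{\sigma}_{\utree\to\dots\to\stree}$ and $\underline{\tau}_{\ttree\to\dots\to\utree}$ is, up to a shuffle of factors, $(\underline{\tau}\otimes\underline{0}\otimes\underline{\sigma})$ at the concatenated sequence. Dually, the element $\underline{0}\in\DGN_*(\Delta^1)$ is the image of $1$ under the coface $d_1^1$, so testing the action on this composite amounts, on the cochain side, to applying a $d^i_1$ face at the node $\utree$ in the middle of the long sequence. The $d^1$-face compatibility of Figure~\ref{homotopy-E-infinity-cooperad:1-faces} then identifies this with the composite of $\rho_{\ttree\to\dots\to\utree}$ followed by $\rho_{\utree\to\dots\to\stree}$, which is exactly the associativity relation $\rho\circ(\rho\otimes\id) = \rho\circ(\id\otimes\circ)$ required to make $\AOp$ an enriched functor on $\Tree^{\square}$. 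Finally, compatibility with the facet operators (respectively with the action of permutations in the symmetric case) is a direct termwise translation of the diagram of Figure~\ref{homotopy-E-infinity-cooperad:facet-compatibility} (respectively of the analogous permutation compatibility built into Definition~\ref{definition:homotopy-E-infinity-cooperad}), and the identification of the coproduct operators with the adjoints claimed in the statement is tautological from our construction.

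The main obstacle, and the only point where care is needed, is the bookkeeping at the composition step: one must check that the assembled action is independent of the coend presentation of a given element, i.e. that applying the adjoint of an iterated face relation and applying the adjoint of the composition formula produce the same morphism of dg modules. Equivalently, one must confirm that no further relation beyond the $d^0_i$ and $s_j$ identifications is imposed by the composition formula. This is not difficult in itself, but requires writing out the coend presentation, the action, and Figure~\ref{homotopy-E-infinity-cooperad:0-faces}-\ref{homotopy-E-infinity-cooperad:1-faces} together, and verifying the coherence diagrams of Remark~\ref{remark:compatibility-relations} (especially the $s^j d^i_1$ identities) so that no ambiguity arises when a node of the sequence coincides with the composition node $\utree$.
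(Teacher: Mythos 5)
Your proposal is correct and matches the paper's (omitted) argument: the paper dismisses this statement as following from "formal verifications", and the verifications you spell out — descent to the coend via the $d^i_0$-face and degeneracy relations of Figures~\ref{homotopy-E-infinity-cooperad:0-faces} and~\ref{homotopy-E-infinity-cooperad:degeneracies}, associativity of the action via the identification of the inserted $\underline{0}$ with the coface dual to $d^i_1$ and the relation of Figure~\ref{homotopy-E-infinity-cooperad:1-faces}, and facet/permutation compatibility via Figure~\ref{homotopy-E-infinity-cooperad:facet-compatibility} — are exactly the right ones. The coherence point you raise at the end is already handled by Remark~\ref{remark:compatibility-relations} and by the well-definedness of the composition on the coend established in Construction~\ref{construction:W-homotopy}, so no further argument is needed.
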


In the case where $\AOp$ is a strict Segal $E_\infty$-Hopf shuffle pre-cooperad, we may see that the action of the enriched category $\Tree^{\square}$
defined in this proposition factors through an action of the category enriched in cocommutative coalgebras $\Tree$ (which satisfies the same properties).
Thus we just retrieve the obvious functor structure of the object $\AOp$
in this case.

\medskip
We now tackle the definition of our homotopy Kan construction. We have to dualize the structure operations attached to the category $\Tree^{\square}$.
For this purpose, we use the following observation.

\begin{lemm}\label{lemma:finite-generated-tree-square}
The dg hom-object $\Tree^{\square}(\ttree,\stree)$ forms a free module of finite rank over the ground ring $\kk$, for all trees $\ttree,\stree\in\widetilde{\Tree}(r)$, $r>0$.
\end{lemm}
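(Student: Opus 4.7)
The plan is to establish an explicit free basis of $\Tree^{\square}(\ttree,\stree)$ by adapting the reduced expression of Lemma~\ref{lemma:W-construction-splitting} to our coend formulation. First, I would observe that the index set involved is finite: since $\widetilde{\Tree}(r)$ has at most one morphism between any two reduced trees, any intermediate tree in a chain $\ttree\rightarrow\ttree_k\rightarrow\cdots\rightarrow\ttree_1\rightarrow\stree$ corresponds bijectively to a subset of the edges of $\ttree$ contracted by the unique morphism $f:\ttree\to\stree$. Hence there are at most $2^n$ such intermediate trees (for $n = \sharp V(\ttree) - \sharp V(\stree)$), and in particular the set of non-degenerate composable sequences (those satisfying $\ttree_{i+1}\neq\ttree_i$ for all $i$) is finite, with length bounded by $n$.

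Next, I would establish the direct sum decomposition
\begin{equation*}
\Tree^{\square}(\ttree,\stree) \simeq \bigoplus_{\nu}(\kk\underline{0}\oplus\kk\underline{01})^{\otimes k_\nu},
\end{equation*}
where the sum is taken over non-degenerate sequences $\nu = (\ttree\to\ttree_{k_\nu}\to\cdots\to\ttree_1\to\stree)$. This parallels Lemma~\ref{lemma:W-construction-splitting}: since the $d^0_i$ coface on $\DGN_*(\Delta^1)^{\otimes k}$ inserts the element $\underline{1}$ at position $i$, the coend relation $d^0_i(\underline{x})_{\ttree\to\cdots\to\stree}\equiv\underline{x}_{\ttree\to\cdots\widehat{\ttree_i}\cdots\to\stree}$ allows one to rewrite any tensor whose cubical part contains a $\underline{1}$ factor as a tensor on a shorter sequence. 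Iterating, every class in the coend admits a representative whose cubical part involves only $\underline{0}$ and $\underline{01}$. The codegeneracy relations $s_j$ similarly allow one to reduce tensors supported on degenerate sequences to tensors supported on shorter non-degenerate ones.

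The main obstacle will be to verify that the proposed decomposition is an isomorphism rather than merely a surjection, that is, that no two distinct reduced elements become identified by the generating coend relations. This requires constructing an explicit retraction of the canonical projection $\bigoplus_k\kk[\NCat(\ttree,\stree)_k]\otimes\DGN_*(\Delta^1)^{\otimes k}\rightarrow\Tree^{\square}(\ttree,\stree)$ with image in the reduced summand, and checking that this retraction vanishes on the generating coend relations. The coherence needed to verify the well-definedness of the retraction follows from the cubical identities satisfied by the generators $d^0_i$ and $s_j$ in $\CubeCat$, which bound the order ambiguities in the iterated reduction process (analogously to the coherence relations collected in Remark~\ref{remark:compatibility-relations}). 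Once the direct sum decomposition is thus established, each summand $(\kk\underline{0}\oplus\kk\underline{01})^{\otimes k}$ is manifestly a free $\kk$-module of rank $2^k$, and the finiteness of the index set from the first paragraph implies that $\Tree^{\square}(\ttree,\stree)$ is free of finite rank over $\kk$, as claimed.
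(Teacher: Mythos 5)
Your proposal follows essentially the same route as the paper: the paper's proof likewise asserts that every class in the coend has a unique reduced representative supported on a non-degenerate sequence $\ttree\xrightarrow{\not=}\ttree_k\xrightarrow{\not=}\dots\xrightarrow{\not=}\stree$ with cubical part in $(\kk\underline{0}\oplus\kk\underline{01})^{\otimes k}$, and then concludes by the finiteness of the set of such sequences, each intermediate tree being a contraction of a subset of the finitely many inner edges of $\ttree$. Your additional discussion of the retraction needed to justify injectivity of the reduced decomposition only makes explicit a verification that the paper leaves implicit, so the argument is correct and matches the paper's.
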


\begin{proof}
Each element of our coend has a unique representative as a (linear combination) of tensors
of the form
\begin{equation*}
[\ttree\xrightarrow{\not=}\ttree_k\xrightarrow{\not=}\dots\xrightarrow{\not=}\ttree_1\xrightarrow{\not=}\stree]\otimes\underline{\sigma}
\in\kk[\NCat(\ttree,\stree)_k]\otimes\mathring{\DGN}_*(\Delta^1)^{\otimes k},
\end{equation*}
where $\mathring{\DGN}_*(\Delta^1) = \kk\underline{0}\oplus\kk\underline{01}$. Then we just use that the set of sequences of composable morphisms
of the form $\ttree\xrightarrow{\not=}\ttree_k\xrightarrow{\not=}\dots\xrightarrow{\not=}\ttree_1\xrightarrow{\not=}\stree$, $k\geq 0$,
is finite,
because each tree $\ttree_i$ is given by the contraction of a subset of inner edges $e\in\mathring{E}(\ttree)$,
which are in finite number.
\end{proof}

We can now define the two-sided cobar complex which underlies our homotopy Kan construction $\DGK^c(\AOp)$.
We address this construction in the next paragraph.

\begin{constr}\label{constr:two-sided-cobar}
We use the statement of the previous lemma to dualize the structure operations associated to the enriched category $\Tree^{\square}$.

For a pair of trees $\stree,\ttree\in\Tree(r)$, $r>0$, we let $\Tree^{\square}(\ttree,\stree)^{\sharp}$
denote the dual $\EOp$-algebra
of the $\EOp$-coalgebra $\Tree^{\square}(\ttree,\stree)$.
The composition operations of the enriched category $\Tree^{\square}$ induce a coproduct map
\begin{align*}
\gamma^*: \Tree^{\square}(\ttree,\stree)^{\sharp}
& \rightarrow\prod_{\ttree\rightarrow\utree\rightarrow\stree}\Tree^{\square}(\ttree,\utree)^{\sharp}\otimes\Tree^{\square}(\utree,\stree)^{\sharp},
\intertext{which also forms a morphism of $\EOp$-algebras. We also have an augmentation map}
\eta^*: \Tree^{\square}(\ttree,\stree)^{\sharp} & \rightarrow\kk,
\end{align*}
which we take as the identity in the case $\ttree = \stree$, as the zero map in the case $\ttree\not=\stree$.
The facet operators induce $\EOp$-algebra morphisms
\begin{equation*}
i_{\sigmatree,\stree}:  \Tree^{\square}(\thetatree,\sigmatree)^{\sharp}\rightarrow\Tree^{\square}(\ttree,\stree)^{\sharp},
\end{equation*}
which preserve the above coproduct and counit operations. Note that the cartesian product in the definition of the operation $\gamma^*$ reduces to a direct sum,
since every morphism $\ttree\rightarrow\stree$ admits finitely many factorizations $\ttree\rightarrow\utree\rightarrow\stree$.
In the symmetric setting, we also consider $\EOp$-algebra morphisms
\begin{equation*}
s^*:  \Tree^{\square}(s\ttree,s\stree)^{\sharp}\rightarrow\Tree^{\square}(\ttree,\stree)^{\sharp}
\end{equation*}
given by the action of permutations $s\in\Sigma_r$.

For a homotopy Segal $E_\infty$-Hopf cooperad, the operations of Proposition~\ref{prop:dg-category-trees}
dualize to $\EOp$-algebra morphisms
\begin{equation*}
\rho^*: \AOp(\stree)\rightarrow\prod_{\ttree\rightarrow\stree}\AOp(\ttree)\otimes\Tree^{\square}(\ttree,\stree)^{\sharp},
\end{equation*}
which we can also identify with an end of the homotopy coproducts attached to our object by the adjoint definition
of these operations in our Proposition~\ref{prop:dg-category-trees} (we again use a variant of the observations
of the previous lemma to obtain that the tensor product with $\AOp(\ttree)$
distributes over this end).
These morphisms are coassociative and counital with respect to the coproduct and counit operations of the objects $\Tree^{\square}(\ttree,\stree)^{\sharp}$,
commute with the action of the facet operators on our homotopy Segal $E_\infty$-Hopf cooperad
and on the objects $\Tree^{\square}(\ttree,\stree)^{\sharp}$ (and commute with the action of permutations in the symmetric setting).

We then let $\FOp_{\stree}(\ttree)\in\EAlg$ be a collection of $\EOp$-algebras, defined for any fixed tree $\stree\in\widetilde{\Tree}(r)$, for all $\ttree\in\widetilde{\Tree}(r)/\stree$,
and equipped with coproduct operations
\begin{equation*}
\gamma^*: \FOp_{\stree}(\ttree)\rightarrow\prod_{\ttree\rightarrow\utree\rightarrow\stree}\Tree^{\square}(\ttree,\utree)^{\sharp}\otimes\FOp_{\stree}(\utree),
\end{equation*}
defined in the category of $\EOp$-algebras, and which are again coassociative and counital with respect to the coproduct and counit operations
of the objects $\Tree^{\square}(\ttree,\stree)^{\sharp}$.
We assume that this collection defines a functor in the tree $\stree$ when $\stree$ varies
and that we have facet operators
\begin{equation*}
i_{\sigmatree,\stree}:  \FOp_{\sigmatree}(\thetatree)\rightarrow\FOp_{\stree}(\ttree),
\end{equation*}
associated to all subtrees $\sigmatree\subset\stree$, with $\thetatree = f^{-1}(\sigmatree)$, the pre-image of $\sigmatree$ under the morphism $f: \ttree\rightarrow\stree$,
which again satisfy natural functoriality relations and are compatible with the coproduct operations.
In the symmetric setting, we also assume that we have an action of the permutations $s^*: \FOp_{s\stree}(s\ttree)\rightarrow\FOp_{\stree}(\ttree)$,
which is compatible with the structure operations
attached to our collection.
In what follows, we consider the cases $\FOp_{\stree}(\ttree) = \Tree^{\square}(\ttree,\stree)^{\sharp}$ and $\FOp_{\stree}(\ttree) = \Tree(\ttree,\stree)^{\sharp}$,
where in the latter case $\Tree(\ttree,\stree)^{\sharp}$ denotes the commutative algebra of functions $u: \Tree(\ttree,\stree)\rightarrow\kk$
on the morphism sets of the tree category $\Tree(\ttree,\stree)$.

For each $n\in\NN$, we set
\begin{multline*}
\DGK^n(\AOp,\Tree^{\square},\FOp_{\stree})\\
= \prod_{\ttree_n\rightarrow\cdots\rightarrow\ttree_0\rightarrow\stree}
\AOp(\ttree_n)\otimes\Tree^{\square}(\ttree_n,\ttree_{n-1})^{\sharp}\otimes\dots\otimes\Tree^{\square}(\ttree_1,\ttree_0)^{\sharp}\otimes\FOp_{\stree}(\ttree_0),
\end{multline*}
and we equip this object with the coface operators $d^i: \DGK^{n-1}(\AOp,\Tree^{\square},\FOp_{\stree})\rightarrow\DGK^n(\AOp,\Tree^{\square},\FOp_{\stree})$
defined termwise by the maps such that
\begin{align*}
d^i & = \begin{cases}
\id\otimes\id^{\otimes n-1}\otimes\gamma^*, & \text{for $i = 0$}, \\
\id\otimes\id^{\otimes n-i-1}\otimes\gamma^*\otimes\id^{\otimes i-1}\otimes\id, & \text{for $i = 1,\dots,n-1$}, \\
\rho^*\otimes\id^{\otimes n-1}\otimes\id, & \text{for $i = n$},
\end{cases}
\intertext{and with the codegeneracies $s^j: \DGK^{n+1}(\AOp,\Tree^{\square},\FOp_{\stree})\rightarrow\DGK^n(\AOp,\Tree^{\square},\FOp_{\stree})$
defined termwise by the maps}
s^j & = \id\otimes\id^{\otimes n-j}\otimes\eta^*\otimes\id^{\otimes j}\otimes\id\quad\text{for $j = 0,\dots,n$}.
\end{align*}
We easily check that this definition returns a cosimplicial object in the category of $\EOp$-algebras.
We also have facet operators
\begin{equation*}
i_{\sigmatree,\stree}: \DGK^{\bullet}(\AOp,\Tree^{\square},\FOp_{\sigmatree})\rightarrow\DGK^{\bullet}(\AOp,\Tree^{\square},\FOp_{\stree}),
\end{equation*}
compatible with the cosimplicial structure, and defined by the termwise tensor products of facet operators
\begin{multline*}
i_{\thetatree_n,\ttree_n}\otimes i_{\thetatree_{n-1},\ttree_{n-1}}\otimes\dots\otimes i_{\thetatree_0,\ttree_0}\otimes i_{\sigmatree,\stree}:\\
\AOp(\thetatree_n)\otimes\Tree^{\square}(\thetatree_n,\thetatree_{n-1})^{\sharp}\otimes\dots\otimes\Tree^{\square}(\thetatree_1,\thetatree_0)^{\sharp}\otimes\FOp_{\stree}(\thetatree_0)\\
\rightarrow\AOp(\ttree_n)\otimes\Tree^{\square}(\ttree_n,\ttree_{n-1})^{\sharp}\otimes\dots\otimes\Tree^{\square}(\ttree_1,\ttree_0)^{\sharp}\otimes\FOp_{\stree}(\ttree_0),
\end{multline*}
where $\thetatree_i = (f_0\cdots f_i)^{-1}(\sigmatree)$, $i = 0,\dots,n$, denotes the pre-image of the subtree $\sigmatree\subset\stree$
under the composite of the tree morphisms $\ttree_i\xrightarrow{f_i}\cdots\xrightarrow{f_1}\ttree_0\xrightarrow{f_0}\stree$.

In the symmetric setting, we still consider an action of permutations
\begin{equation*}
s^*: \DGK^{\bullet}(\AOp,\Tree^{\square},\FOp_{s\stree})\rightarrow\DGK^{\bullet}(\AOp,\Tree^{\square},\FOp_{\stree}),
\end{equation*}
defined again by an obvious termwise construction, and compatible with the facet operators.
\end{constr}

We record the outcome of the previous construction in the next proposition.

\begin{prop}\label{proposition:cosimplicial-B}
The construction of the previous paragraph returns a collection of cosimplicial $\EOp$-algebras
\begin{equation*}
\DGK^{\bullet}(\AOp,\Tree^{\square},\FOp_{\stree})\in\EAlg,\quad\stree\in\widetilde{\Tree}(r),\quad r>0,
\end{equation*}
equipped with compatible facet operators $i_{\sigmatree,\stree}: \DGK^{\bullet}(\AOp,\Tree^{\square},\FOp_{\stree})\rightarrow\DGK^{\bullet}(\AOp,\Tree^{\square},\FOp_{\stree})$,
which satisfy the usual functoriality relations.
If the homotopy Segal $E_\infty$-Hopf cooperad $\AOp$ is symmetric and $\FOp_{\stree}(-)$ is endowed with a symmetric structure as well,
then we also have an action of permutations on our objects $s^*: \DGK^{\bullet}(\AOp,\Tree^{\square},\FOp_{s\stree})\rightarrow\DGK^{\bullet}(\AOp,\Tree^{\square},\FOp_{\stree})$
compatible with the cosimplicial structure and with the facet operators.
\qed
\end{prop}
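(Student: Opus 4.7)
The plan is to view $\DGK^\bullet(\AOp,\Tree^\square,\FOp_\stree)$ as a standard two-sided cobar-style cosimplicial object, obtained by dualizing the fact that $\AOp$ is a right ``comodule'' over the enriched category $\Tree^\square$ in the sense of Proposition~\ref{prop:dg-category-trees}, and $\FOp_\stree$ a left comodule by assumption. First, I would extract the algebraic content of Construction~\ref{constr:two-sided-cobar}. Because each hom-object $\Tree^\square(\ttree,\stree)$ is a finitely generated free $\kk$-module by Lemma~\ref{lemma:finite-generated-tree-square}, the composition of $\Tree^\square$ dualizes to a coassociative counital coproduct $\gamma^\ast$ with counit $\eta^\ast$ on the $\EOp$-algebras $\Tree^\square(\ttree,\stree)^\sharp$, landing in a sum rather than a product since a given tree morphism has only finitely many factorizations through reduced trees. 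Similarly, the action $\rho$ of Proposition~\ref{prop:dg-category-trees} being associative and unital dualizes to a coaction $\rho^\ast$ satisfying $(\rho^\ast\otimes\id)\circ\rho^\ast = (\id\otimes\gamma^\ast)\circ\rho^\ast$ together with $(\id\otimes\eta^\ast)\circ\rho^\ast = \id$, and the same for $\FOp_\stree$ by assumption.

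With these coassociativity and counit relations in hand, the verification of the cosimplicial identities $d^j d^i = d^i d^{j-1}$ for $i<j$, $s^j s^i = s^i s^{j+1}$ for $i\le j$, and the mixed $s^j d^i$ relations proceeds by a termwise computation that is standard for two-sided cobar constructions. Each identity reduces, after matching indices, to applying one of the structure maps $\gamma^\ast$, $\eta^\ast$, $\rho^\ast$ at two distinct positions of the tensor product indexing $\DGK^n$: middle cases $1\le i\le n-1$ use only the coassociativity/counitality of $\gamma^\ast$, while boundary cases $i=0$ and $i=n$ invoke the coaction relations for $\FOp_\stree$ and $\AOp$ respectively. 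The boundary cases $i=n-1,n$ for the coassociativity $d^{n}d^{n-1}=d^{n-1}d^{n-1}$ couple $\rho^\ast$ with $\gamma^\ast$ and is exactly the coassociativity of the coaction.

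That $d^i$ and $s^j$ are morphisms of $\EOp$-algebras is immediate, since the constituent maps $\gamma^\ast$, $\eta^\ast$, $\rho^\ast$ are all $\EOp$-algebra morphisms by Construction~\ref{constr:two-sided-cobar} (together with Proposition~\ref{proposition:E-algebra-tree-category}), and the tensor product in $\EAlg$, equipped with the diagonal $\EOp$-action from Construction~\ref{constr:Barratt-Eccles-diagonal}, preserves $\EOp$-algebra morphisms factorwise. Compatibility of the facet operators $i_{\sigmatree,\stree}$ with the cosimplicial structure reduces termwise to the commutation of facet operators with $\gamma^\ast$, $\eta^\ast$ on $\Tree^\square(\ttree,\stree)^\sharp$ (dualizing the compatibility established in Construction~\ref{construction:W-homotopy}), with $\rho^\ast$ on $\AOp$ (dualizing the facet compatibility of homotopy coproducts in Definition~\ref{definition:homotopy-E-infinity-cooperad}), and with the assumed facet structure on $\FOp_\stree$. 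The action of permutations is handled by an identical argument in the symmetric setting.

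The only genuinely technical point, rather than an obstacle, is keeping track of the indexing of the product defining $\DGK^n$ under the insertions performed by $d^0$ (which splits $\FOp_\stree(\ttree_0)$ via $\gamma^\ast$ into $\Tree^\square(\ttree_0,\utree)^\sharp\otimes\FOp_\stree(\utree)$, thereby inserting a new node between $\ttree_0$ and $\stree$) and $d^n$ (which splits $\AOp(\ttree_{n-1})$ via $\rho^\ast$, inserting a new node at the left). Matching the length-$(n+1)$ source sequence to the length-$(n+2)$ target sequence involves a shift of indices, but this is purely combinatorial, and no convergence issue arises because each sum is finite by connectedness.
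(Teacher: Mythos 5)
Your proposal is correct and matches what the paper does: the proposition is recorded with no separate argument because the verification is exactly the routine two-sided cobar check you spell out (coassociativity/counitality of $\gamma^*$, $\eta^*$, $\rho^*$ for the cosimplicial identities, the fact that all constituent maps are $\EOp$-algebra morphisms, and termwise compatibility of facets and permutations), which Construction~\ref{constr:two-sided-cobar} declares to be an easy check. Your attention to the finiteness ensuring $\gamma^*$ lands in a direct sum is the same point the paper makes in that construction.
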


In the case of a decomposition $\stree = \lambda_{\utree}(\sigmatree_u,u\in V(\utree))$, we can assemble the facet operators
$i_{\sigma_u,\stree}: \FOp_{\sigmatree_u}(\thetatree_u)\rightarrow\FOp_{\stree}(\ttree)$
associated to a bicollection of $\EOp$-algebras $\FOp_{\stree}(\ttree)\in\EAlg$
as in Construction~\ref{constr:two-sided-cobar}
into a Segal map
\begin{equation*}
i_{\lambda_{\utree}(\sigmatree_*)}: \bigvee_{u\in V(\utree)}\FOp_{\sigmatree_u}(\thetatree_u)\rightarrow\FOp_{\stree}(\ttree),
\end{equation*}
and we can define a Segal map similarly on our cosimplicial object $\DGK^{\bullet}(\AOp,\Tree^{\square},\FOp_{\stree})$.
We say that our bicollection $\FOp_{\stree}(\ttree)\in\EAlg$ satisfies the Segal condition when the above Segal map is a weak-equivalence.
We have the following statement.

\begin{prop}\label{prop:cosimplicial-Segal}
If $\AOp$ is a homotopy Segal $E_\infty$-Hopf cooperad and hence satisfies the Segal condition, and if the bicollection $\FOp_{\stree}(\ttree)\in\EAlg$ satisfies the Segal condition as well,
then the Segal maps that we associate to our cosimplicial object $\DGK^{\bullet}(\AOp,\Tree^{\square},\FOp_{\stree})$
define levelwise weak-equivalences of cosimplicial $\EOp$-algebras
\begin{equation*}
i_{\lambda_{\utree}(\sigmatree_*)}: \bigvee_{u\in V(\utree)}\DGK^{\bullet}(\AOp,\Tree^{\square},\FOp_{\sigmatree_u})\xrightarrow{\sim}\DGK^{\bullet}(\AOp,\Tree^{\square},\FOp_{\stree}),
\end{equation*}
for all tree decomposition $\stree = \lambda_{\utree}(\sigmatree_u,u\in V(\utree))$,
so that $\DGK^{\bullet}(\AOp,\Tree^{\square},\FOp_{\stree})$ also satisfies a form of our Segal condition levelwise.
\end{prop}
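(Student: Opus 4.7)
The plan is to analyze the Segal map on $\DGK^{\bullet}(\AOp,\Tree^\square,\FOp_{\stree})$ one cosimplicial level at a time, and to reduce the weak-equivalence claim, level by level, to a combination of the Segal conditions satisfied by $\AOp$, by the enriched category $\Tree^\square$ via Corollary~\ref{cor:Segal-condition-tree-square}, and by the bicollection $\FOp$. I would fix $n\geq 0$ and a tree decomposition $\stree = \lambda_{\utree}(\sigmatree_u, u\in V(\utree))$, and prove that the Segal map is a weak-equivalence of $\EOp$-algebras at level $n$.

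The first reduction is to use the connectedness restriction that is implicit throughout this subsection, where only reduced trees are considered, see Construction~\ref{construction:W-homotopy}: for a fixed reduced tree $\stree$, the set of chains $\ttree_n\to\cdots\to\ttree_0\to\stree$ with all $\ttree_i$ reduced is finite, so the products in the definition of $\DGK^n$ reduce to finite ones and may freely be interchanged with tensor products. Next, the iterated form of Lemma~\ref{lemma:tree-decomposition-chain} identifies any such chain over $\stree$ with a collection of chains $(\thetatree_n^u\to\cdots\to\thetatree_0^u\to\sigmatree_u)_{u\in V(\utree)}$, where $\ttree_i = \lambda_{\utree}(\thetatree_i^u)$ and $\thetatree_i^u = (f_0\cdots f_i)^{-1}(\sigmatree_u)$ for each $i$. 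I would use this bijection to re-index, and I would use the weak-equivalence $\EM: A\otimes B\xrightarrow{\sim} A\vee B$ of Construction~\ref{constr:Barratt-Eccles-diagonal} to transport the outer coproduct $\bigvee_{u\in V(\utree)}$ on the source of the Segal map to a tensor product $\bigotimes_{u\in V(\utree)}$ up to weak-equivalence, and then distribute this tensor product over the (now finite) product.

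Under these reductions, the Segal map on $\DGK^n$ is identified, up to $\EM$, with a product indexed by chains over $\stree$ of termwise maps, each of which is a tensor product of three weak-equivalences. The three factors are: the Segal map of $\AOp$, namely $\bigvee_u \AOp(\thetatree_n^u)\xrightarrow{\sim}\AOp(\ttree_n)$, composed with $\EM$; the dual of the Segal weak-equivalence of Corollary~\ref{cor:Segal-condition-tree-square} on $\Tree^\square$, which remains a weak-equivalence after dualization because both sides are free $\kk$-modules of finite rank by Lemma~\ref{lemma:finite-generated-tree-square}; and the Segal map of $\FOp$, namely $\bigvee_u \FOp_{\sigmatree_u}(\thetatree_0^u)\xrightarrow{\sim}\FOp_{\stree}(\ttree_0)$, again composed with $\EM$. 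The hard part will be to verify the compatibility of $\EM$ with the facet operators and with the Segal decomposition, so that the factorization through termwise Segal maps is really well-defined at the level of $\EOp$-algebras: this amounts to the diagram-chasing arguments already used in Propositions~\ref{proposition:forgetful-strict} and~\ref{proposition:forgetful-homotopy}, exploiting the associativity of $\EM$ and the built-in compatibility of the facet operators with coproducts and with the Segal decomposition. Once this compatibility is in place, a tensor product of weak-equivalences between free modules of finite rank is still a weak-equivalence, so the termwise weak-equivalences assemble into a weak-equivalence on the finite product, which yields the conclusion.
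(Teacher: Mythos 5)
Your proposal is correct and follows essentially the same route as the paper's proof: analyze the Segal map termwise on each level of the cosimplicial object, use the Eilenberg--MacLane map to convert the coproduct into a tensor product, and identify the result with a factorwise tensor product of the Segal weak-equivalences for $\AOp(-)$, for $\Tree^{\square}(-,-)^{\sharp}$ (via Corollary~\ref{cor:Segal-condition-tree-square}), and for $\FOp_{\stree}(-)$. The extra details you supply (finiteness of the set of chains from connectedness, the re-indexing via Lemma~\ref{lemma:tree-decomposition-chain}, and the finite-rank freeness from Lemma~\ref{lemma:finite-generated-tree-square} to justify dualization) are all points the paper leaves implicit, and they are correct.
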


\begin{proof}
The Segal maps of the proposition are given, on each term of the cosimplicial object $\DGK^{\bullet}(\AOp,\Tree^{\square},\FOp_{\stree})$, by expressions of the form
\begin{multline*}
\bigvee_{u\in V(\utree)}
\biggl(\AOp(\thetatree_n^u)\otimes\cdots\otimes\Tree^{\square}(\thetatree_i^u,\thetatree_{i-1}^u)^{\sharp}\otimes\cdots\otimes\FOp_{\stree}(\thetatree_0^u)\biggr)\\
\rightarrow\AOp(\ttree_n)\otimes\cdots\otimes\Tree^{\square}(\ttree_i,\ttree_{i-1})^{\sharp}\otimes\cdots\otimes\FOp_{\stree}(\ttree_0),
\end{multline*}
where $\thetatree_i^u$ denote the pre-images of the subtree $\sigmatree\subset\stree$ under the tree morphisms $\ttree_i\rightarrow\dots\rightarrow\ttree_0\rightarrow\stree$
and we take a tensor product of facet operators
on each factor. We compose this map with the Eilenberg--MacLane map to pass from the coproduct $\bigvee_{u\in V(\utree)}$
to a tensor product $\bigotimes_{u\in V(\utree)}$ (as in Proposition~\ref{proposition:forgetful-strict}).
We have an obvious commutative diagram which enables us to identify the obtained Segal map
with a tensor product of the form:
\begin{multline*}
\biggl(\bigotimes_{u\in V(\utree)}\AOp(\thetatree_n^u)\biggr)
\otimes\cdots\otimes\biggl(\bigotimes_{u\in V(\utree)}\Tree^{\square}(\thetatree_i^u,\thetatree_{i-1}^u)^{\sharp}\biggr)\otimes\cdots
\otimes\biggl(\bigotimes_{u\in V(\utree)}\FOp_{\stree}(\thetatree_0^u)\biggr)\\
\rightarrow\AOp(\ttree_n)\otimes\cdots\otimes\Tree^{\square}(\ttree_i,\ttree_{i-1})^{\sharp}\otimes\cdots\otimes\FOp_{\stree}(\ttree_0),
\end{multline*}
where we take a factorwise tensor product of Segal maps associated to the objects $\AOp(-)$, $\Tree^{\square}(-,-)^{\sharp}$ and $\FOp_{\stree}(-)$.
In the case of the objects $\Tree^{\square}(-,-)^{\sharp}$, we retrieve the dual of the Segal maps
considered in Corollary~\ref{cor:Segal-condition-tree-square}.
These Segal maps are weak-equivalences therefore, like the Segal maps associated to the objects $\AOp(-)$ and $\FOp_{\stree}(-)$ by assumption.
The conclusion follows.
\end{proof}

We now focus on the cases $\FOp_{\stree}(-) = \Tree^{\square}(-,\stree)^{\sharp},\Tree(-,\stree)^{\sharp}$. The coproduct operation on $\Tree^{\square}(-,-){\sharp}$,
such as defined in Construction~\ref{constr:two-sided-cobar},
gives a natural transformation $\Tree^{\square}(-,\stree)^{\sharp}\rightarrow\Tree^{\square}(-,\ttree)^{\sharp}\otimes\Tree^{\square}(\ttree,\stree)^{\sharp}$,
which passes to our cosimplicial object, by functoriality of our construction, and yields a morphism of cosimplicial $\EOp$-algebras
\begin{equation*}
\rho^*: \DGK^{\bullet}(\AOp,\Tree^{\square},\Tree^{\square}(-,\stree)^{\sharp})
\rightarrow\DGK^{\bullet}(\AOp,\Tree^{\square},\Tree^{\square}(-,\ttree)^{\sharp})\otimes\Tree^{\square}(\ttree,\stree)^{\sharp},
\end{equation*}
for every pair of objects $\stree,\ttree\in\widetilde{\Tree}(r)$, $r>0$.
In the case $\FOp_{\stree}(-) = \Tree(-,\stree)^{\sharp}$, we similarly get morphisms of cosimplicial $\EOp$-algebras
of the form
\begin{equation*}
\rho^*: \DGK^{\bullet}(\AOp,\Tree^{\square},\Tree(-,\stree)^{\sharp})\rightarrow\DGK^{\bullet}(\AOp,\Tree^{\square},\Tree(-,\ttree)^{\sharp})\otimes\Tree(\ttree,\stree)^{\sharp}.
\end{equation*}
Note that the map $\Tree^{\square}(-,-)\rightarrow\Tree(-,-)$ of Proposition~\ref{lemma:tree-square-contractible}
induces a natural transformation in the converse direction between these cosimplicial objects $\DGK^{\bullet}(\AOp,\Tree^{\square},\FOp_{\stree})$
that we associate to $\FOp_{\stree}(-) = \Tree^{\square}(-,\stree)^{\sharp}$ and to $\FOp_{\stree}(-) = \Tree(-,\stree)^{\sharp}$.
We then have the following result.

\begin{prop}\label{prop:cosimplicial-cooperad-B}
%\hspace*{2mm}
\begin{enumerate}
\item
The above coproduct operations provide the collection of cosimplicial $\EOp$-algebras
\begin{equation*}
\DGK^{\bullet}(\AOp,\Tree^{\square},\Tree^{\square}(-,\stree)^{\sharp})\in\cosimp\EAlg,\quad\stree\in\widetilde{\Tree}(r),\quad r>0,
\end{equation*}
with the coproduct operators of a homotopy Segal $E_\infty$-Hopf pre-cooperad structure.
These coproduct operators are compatible with the facet operators and hence $\DGK^{\bullet}(\AOp,\Tree^{\square},\Tree^{\square}(-,\stree)^{\sharp})$
forms a cosimplicial object in the category of homotopy Segal $E_\infty$-Hopf cooperads (shuffle or symmetric when $\AOp$ is so).
\item
In the case of the cosimplicial object $\DGK^{\bullet}(\AOp,\Tree^{\square},\Tree(-,\stree)^{\sharp})$,
we similarly obtain a strict Segal $E_\infty$-Hopf pre-cooperad structure
compatible with the cosimplicial structure
on $\DGK^{\bullet}(\AOp,\Tree^{\square},\Tree(-,\stree)^{\sharp})$.
Furthermore, the natural transformations $\Tree(-,\stree)^{\sharp}\rightarrow\Tree^{\square}(-,\stree)^{\sharp}$
induce levelwise weak-equivalences of cosimplicial $\EOp$-algebras
\begin{equation*}
\DGK^{\bullet}(\AOp,\Tree^{\square},\Tree(-,\stree)^{\sharp})\xrightarrow{\sim}\DGK^{\bullet}(\AOp,\Tree^{\square},\Tree^{\square}(-,\stree)^{\sharp}),
\end{equation*}
which preserve the homotopy Segal $E_\infty$-Hopf pre-cooperad structures, and hence, define a levelwise weak-equivalence
of cosimplicial objects in the category of homotopy Segal $E_\infty$-Hopf pre-cooperads.
\item
Both objects $\DGK^{\bullet}(\AOp,\Tree^{\square},\Tree^{\square}(-,\stree)^{\sharp})$ and $\DGK^{\bullet}(\AOp,\Tree^{\square},\Tree(-,\stree)^{\sharp})$
also satisfy the Segal condition levelwise, and hence define (homotopy) Segal $E_\infty$-Hopf cooperads
when $\AOp$ does so.
\end{enumerate}
\end{prop}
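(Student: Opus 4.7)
The plan is to exploit the enriched category structure on $\Tree^{\square}$ together with the duality between $\Tree^{\square}(\ttree,\stree)$ and $\Tree^{\square}(\ttree,\stree)^{\sharp}$, which is well-behaved since the hom-objects are finitely generated free dg modules by Lemma~\ref{lemma:finite-generated-tree-square}. For assertion (1), I would obtain the coproduct operators of the homotopy Segal $E_\infty$-Hopf pre-cooperad structure by combining the morphism $\rho^*$ displayed just before the statement with, for each sequence of composable tree morphisms $\ttree\to\ttree_k\to\cdots\to\ttree_1\to\stree$, the projection $\Tree^{\square}(\ttree,\stree)^{\sharp}\to I^k$ dual to the canonical map $\DGN_*(\Delta^1)^{\otimes k}\to\Tree^{\square}(\ttree,\stree)$ issued from the coend presentation. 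The face relations of Figures~\ref{homotopy-E-infinity-cooperad:0-faces}--\ref{homotopy-E-infinity-cooperad:1-faces} are then dual statements: the cubical faces $d^0_i$ on $\Tree^{\square}(\ttree,\stree)$ encode the relations $d^0_i(\underline{\sigma})_{\ttree\to\cdots\to\stree}\equiv\underline{\sigma}_{\ttree\to\cdots\widehat{\ttree_i}\cdots\to\stree}$ which delete a node, while the faces $d^1_i$ dualize the composition~(\ref{cubical-enriched-category:composition}) splitting a sequence at position $i$. Similarly, the degeneracy relations of Figure~\ref{homotopy-E-infinity-cooperad:degeneracies} follow from the relations $s_j(\underline{\sigma})_{\ttree\to\cdots\to\stree}\equiv\underline{\sigma}_{\ttree\to\cdots\ttree_j=\ttree_j\cdots\to\stree}$ in the coend presentation. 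The compatibility with facet operators is inherited from the corresponding compatibility for $i_{\sigmatree,\stree}:\Tree^{\square}(\thetatree,\sigmatree)^{\sharp}\to\Tree^{\square}(\ttree,\stree)^{\sharp}$ set up in Construction~\ref{constr:two-sided-cobar}, and in the symmetric case the action of permutations transfers in the same way.

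For assertion (2), when we take $\FOp_{\stree}(-)=\Tree(-,\stree)^{\sharp}$, each hom-object of $\Tree$ is concentrated in cubical degree zero, so the induced coproduct operators are all concentrated in cubical degree zero as well and reduce to a strictly coassociative and functorial assignment of morphisms to tree maps; this yields a strict Segal $E_\infty$-Hopf pre-cooperad structure on the cosimplicial object $\DGK^{\bullet}(\AOp,\Tree^{\square},\Tree(-,\stree)^{\sharp})$. The natural transformation $\Tree(-,\stree)^{\sharp}\to\Tree^{\square}(-,\stree)^{\sharp}$ dualizes the augmentation $\epsilon$ of Proposition~\ref{lemma:tree-square-contractible}. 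Because we are dealing with finitely generated free dg modules (Lemma~\ref{lemma:finite-generated-tree-square}), dualization turns the termwise weak-equivalence of $\epsilon$ into a termwise weak-equivalence in the opposite direction, and the induced comparison map is therefore a levelwise weak-equivalence since each $\DGK^n$ is a finite product of tensor products involving these hom-duals. The preservation of the pre-cooperad structure by this transformation follows from the compatibility of $\epsilon$ with the enriched composition asserted in Proposition~\ref{lemma:tree-square-contractible}(2).

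For assertion (3), the Segal condition for both cosimplicial objects reduces, via Proposition~\ref{prop:cosimplicial-Segal}, to checking that the bicollections $\Tree^{\square}(-,\stree)^{\sharp}$ and $\Tree(-,\stree)^{\sharp}$ themselves satisfy the $\EOp$-algebra Segal condition. For $\Tree^{\square}$ this is the $\EOp$-algebra dual of Corollary~\ref{cor:Segal-condition-tree-square}: dualizing the weak-equivalence $\Tree^{\square}(\ttree,\stree)\xrightarrow{\sim}\bigotimes_u\Tree^{\square}(\thetatree_u,\sigmatree_u)$ between finitely generated free dg modules provides a weak-equivalence $\bigotimes_u\Tree^{\square}(\thetatree_u,\sigmatree_u)^{\sharp}\xrightarrow{\sim}\Tree^{\square}(\ttree,\stree)^{\sharp}$, which we precompose with the Eilenberg--MacLane weak-equivalence $\bigvee_u\Tree^{\square}(\thetatree_u,\sigmatree_u)^{\sharp}\xrightarrow{\sim}\bigotimes_u\Tree^{\square}(\thetatree_u,\sigmatree_u)^{\sharp}$ of Proposition~\ref{claim:Barratt-Eccles-algebra-coproducts} to recover the Segal map on the $\EOp$-algebra side. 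For $\Tree(-,-)^{\sharp}$, the underlying tree-level Segal map is an isomorphism by the bijective correspondence of Lemma~\ref{lemma:tree-decomposition-chain}, so only the Eilenberg--MacLane equivalence intervenes.

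The main obstacle is the combinatorial bookkeeping of assertion (1), namely verifying that the dualization of the coend relations and of the composition formula~(\ref{cubical-enriched-category:composition}) on $\Tree^{\square}$ precisely translates into the commutative diagrams of Figures~\ref{homotopy-E-infinity-cooperad:0-faces}--\ref{homotopy-E-infinity-cooperad:degeneracies}, with particular care for the face $d^1_i$ encoding the enriched category composition. A secondary but delicate point is that the coproduct operators, defined at the level of the whole cosimplicial object, must intertwine with the cosimplicial cofaces $d^i$ and codegeneracies $s^j$ of Construction~\ref{constr:two-sided-cobar}; this amounts to the coassociativity and counitality of the coproduct on $\Tree^{\square}(-,-)^{\sharp}$ together with the naturality of $\gamma^*$, $\eta^*$, and of $\rho^*$ in the source tree argument, which can be reduced to direct verifications on the termwise tensor products defining the end in Construction~\ref{constr:two-sided-cobar}.
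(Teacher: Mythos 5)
Your proposal is correct and follows essentially the same route as the paper's (much terser) proof: assertion (1) by functoriality of the two-sided construction, assertion (2) by dualizing the augmentation of Proposition~\ref{lemma:tree-square-contractible} (your direction $\Tree(-,\stree)^{\sharp}\rightarrow\Tree^{\square}(-,\stree)^{\sharp}$ is the right one), and assertion (3) by feeding Corollary~\ref{cor:Segal-condition-tree-square} into Proposition~\ref{prop:cosimplicial-Segal}. You simply supply more of the bookkeeping (the coend dualization behind the face and degeneracy relations, the role of Lemma~\ref{lemma:finite-generated-tree-square} in making dualization exact) that the paper leaves implicit.
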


\begin{proof}
The first assertion is an immediate consequence of the functoriality properties of our construction.
In the second assertion, we use that the natural transformation $\Tree^{\square}(-,\stree)^{\sharp}\rightarrow\Tree(-,\stree)^{\sharp}$
is dual to the augmentation map of Proposition~\ref{lemma:tree-square-contractible},
which is a weak-equivalence by the result of this proposition.
The third assertion follows from the result of Proposition~\ref{prop:cosimplicial-Segal}
since Corollary~\ref{cor:Segal-condition-tree-square} implies that the bicollection $\FOp_{\stree}(\ttree) = \Tree^{\square}(\ttree,\stree)^{\sharp}$
satisfies the Segal condition and this is also obviously the case of the bicollection $\FOp_{\stree}(\ttree) = \Tree(\ttree,\stree)^{\sharp}$.
\end{proof}

We use a totalization functor to transform the cosimplicial (homotopy) Segal $E_\infty$-Hopf cooperads
of the previous proposition into ordinary (homotopy) Segal $E_\infty$-Hopf cooperads
in dg modules.

\begin{constr}\label{definition:corealization}
Let $R^{\bullet}$ be a cosimplicial object of the category of $\EOp$-algebras. We set
\begin{equation*}
\DGN^*(R^{\bullet}) = \int_n R^n\otimes\DGN^*(\Delta^n),
\end{equation*}
and we equip this object with the $\EOp$-algebra structure induced by the diagonal $\EOp$-algebra structure on $R^n\otimes\DGN^*(\Delta^n)$ termwise.
If we forget about $\EOp$-algebra structures, then we can identify this object with the conormalized complex of cosimplicial dg modules
(see for instance~\cite[\S II.5.0.12 and \S II.9.4.6]{FresseBook}),
and as such, this functor carries the levelwise weak-equivalences of cosimplicial objects
to weak-equivalences in the category of dg modules.

For a cosimplicial connected (homotopy) $E_\infty$-Hopf pre-cooperad $\KOp^{\bullet}$, the collection $\DGN^*(\KOp^{\bullet}(\stree))$,
which we obtain by applying this conormalized complex functor termwise,
also inherits the structure of a (homotopy) $E_\infty$-Hopf pre-cooperad (shuffle or symmetric when $\KOp^{\bullet}$ is so)
by functoriality of our conormalized complex construction.
\end{constr}

We use the following lemma.

\begin{lemm} \label{lemma:cosimplicial-quasi-iso}
Let $R^{\bullet}$ and $S^{\bullet}$ be cosimplicial $\EOp$-algebras. The $\EOp$-algebra morphism $\DGN^*(R^{\bullet})\vee\DGN^*(S^{\bullet})\rightarrow\DGN^*(R^{\bullet}\vee S^{\bullet})$
induced by the canonical morphisms $R^{\bullet}\rightarrow R^{\bullet}\vee S^{\bullet}$ and $S^{\bullet}\rightarrow R^{\bullet}\vee S^{\bullet}$
is a weak-equivalence.
\end{lemm}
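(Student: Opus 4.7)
The plan is to reduce the statement to the parallel assertion for tensor products (rather than coproducts) by applying the Eilenberg--MacLane weak-equivalence $\EM: A\otimes B\xrightarrow{\sim}A\vee B$ of the appendix in both source and target, and to use a standard Eilenberg--Zilber-type result
to control the totalization of a tensor of cosimplicial dg modules.

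To be more precise, I consider the commutative diagram of dg modules
\begin{equation*}
\xymatrix{
\DGN^*(R^{\bullet})\otimes\DGN^*(S^{\bullet})\ar[r]^-{\EM}\ar[d]_{\varphi} &
\DGN^*(R^{\bullet})\vee\DGN^*(S^{\bullet})\ar[d] \\
\DGN^*(R^{\bullet}\otimes S^{\bullet})\ar[r]^-{\DGN^*(\EM)} &
\DGN^*(R^{\bullet}\vee S^{\bullet})
}
\end{equation*}
where the right vertical arrow is the map of our statement, the map $\varphi$ is the canonical comparison morphism that one associates to the totalization of a tensor product of cosimplicial dg modules (induced by the Alexander--Whitney diagonal of the normalized cochain complex on the factors $\DGN^*(\Delta^n)$ in the end formula for $\DGN^*(-)$), and the bottom map is the termwise application of $\EM$ in each cosimplicial degree followed by the functoriality of $\DGN^*$. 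The commutativity follows from the naturality of $\EM$ with respect to morphisms of cosimplicial dg modules, together with the compatibility between the lax monoidal structure of $\DGN^*$ on cosimplicial dg modules and the diagonal $\EOp$-algebra structure of tensor products.

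I then show that three of the four arrows are weak-equivalences, which gives the conclusion by the two-out-of-three property. For the top horizontal map, this is the result of Proposition~\ref{claim:Barratt-Eccles-algebra-coproducts} applied to the dg modules $\DGN^*(R^{\bullet})$ and $\DGN^*(S^{\bullet})$. For the bottom horizontal map, I use that $\EM: R^n\otimes S^n\xrightarrow{\sim} R^n\vee S^n$ is a levelwise weak-equivalence of cosimplicial dg modules (again by Proposition~\ref{claim:Barratt-Eccles-algebra-coproducts}), and that the conormalized complex functor $\DGN^*$ preserves levelwise weak-equivalences of cosimplicial dg modules (this is a classical property of the $\Tot$ construction for conormalized complexes). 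For the left vertical map $\varphi$, I use the classical Eilenberg--Zilber-type weak-equivalence for the conormalized complex of a tensor product of cosimplicial dg modules, which can be established by a spectral sequence argument using the Künneth formula page by page, or equivalently via the shuffle map in the cosimplicial setting.

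The main delicate point is the verification of the commutativity of the diagram. Both composites can be described as follows: starting with an element of $\DGN^*(R^{\bullet})\otimes\DGN^*(S^{\bullet})$, one has to apply Alexander--Whitney diagonals on the simplicial factors $\DGN^*(\Delta^n)$ to bring both tensor factors over a common cosimplicial coordinate, and then apply the $\EM$ map pairing $R^n$ with $S^n$; the difference between the two paths is only in the order of these operations, and the naturality of $\EM$ (which commutes, in particular, with tensoring by the cochain complexes of the simplices, that carry the trivial $\EOp$-coalgebra structure induced by restriction through the augmentation) ensures that they give the same result.
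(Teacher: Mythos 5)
Your proposal is correct and is essentially the paper's own proof: the square you draw is the same square the paper uses (your $\varphi$ is the paper's cosimplicial Alexander--Whitney map $\AW$, and your top and bottom arrows are the paper's two vertical $\EM$ maps), and the conclusion is obtained in both cases by showing the other three arrows are weak-equivalences via Proposition~\ref{claim:Barratt-Eccles-algebra-coproducts}, the preservation of levelwise weak-equivalences by the conormalized complex, and the Eilenberg--Zilber equivalence.
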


\begin{proof}
We have a commutative diagram
\begin{equation*}
\xymatrixcolsep{5pc}\xymatrix{ \DGN^*(R^{\bullet})\vee\DGN^*(S^{\bullet})\ar[r] &
\DGN^*(R^{\bullet}\vee S^{\bullet}) \\
\DGN^*(R^{\bullet})\otimes\DGN^*(S^{\bullet})\ar[r]^{\AW}\ar[u]^{\EM} &
\DGN^*(R^{\bullet}\otimes S^{\bullet})\ar[u]_{\DGN^*(\EM)}
},
\end{equation*}
where the vertical maps are given by the natural transformations between the tensor product and the coproduct in the category of $\EOp$-algebras,
such as defined in Construction~\ref{constr:Barratt-Eccles-diagonal},
and the bottom horizontal map $\AW$ is the generalization of the Alexander--Whitney diagonal
for the conormalized cochain complex of cosimplicial dg modules.
In the case of cosimplicial dg algebras, this map $\AW$ is used to represent a product operation.
The vertical maps $\EM$ also identifies tensor products with associative products in the coproduct of $\EOp$-algebras
by the definition of Construction~\ref{constr:Barratt-Eccles-diagonal}. The commutativity of the diagram
readily follows from this interpretation of our maps.

The vertical maps are weak-equivalences by Proposition~\ref{claim:Barratt-Eccles-algebra-coproducts}.
The bottom horizontal map is also a weak-equivalence (by the general theory of the Eilenberg--Zilber equivalence).
Therefore the map of the proposition, which represents the upper horizontal map
of our diagram, is also a weak-equivalence.
\end{proof}

This lemma has the following immediate consequence.

\begin{prop}\label{prop:totalization-Segal-cooperads}
If $\ROp^{\bullet}$ is a cosimplicial (homotopy) Segal $E_\infty$-Hopf pre-cooperad that satisfies the Segal condition levelwise,
then $\DGN^*(\ROp^{\bullet})$ satisfies the Segal condition as well, and hence forms a (homotopy) Segal $E_\infty$-Hopf cooperad
in the category of dg modules.\qed
\end{prop}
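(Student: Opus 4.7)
The plan is to factor the Segal map of $\DGN^*(\ROp^{\bullet})$ through the result of applying $\DGN^*$ to the Segal map of $\ROp^{\bullet}$, and to prove that both factors are weak-equivalences. For every tree decomposition $\ttree = \lambda_{\stree}(\sigmatree_v, v\in V(\stree))$, consider the commutative diagram
\begin{equation*}
\xymatrix{
\bigvee_{v\in V(\stree)}\DGN^*(\ROp^{\bullet}(\sigmatree_v)) \ar[r]^-{\alpha}\ar[dr] &
\DGN^*\bigl(\bigvee_{v\in V(\stree)}\ROp^{\bullet}(\sigmatree_v)\bigr) \ar[d]^-{\DGN^*(i_{\lambda_{\stree}(\sigmatree_*)})} \\
& \DGN^*(\ROp^{\bullet}(\ttree)),
}
\end{equation*}
where the slanted arrow is the Segal map of $\DGN^*(\ROp^{\bullet})$ and $\alpha$ is the canonical $\EOp$-algebra morphism induced by the cosimplicial inclusions $\ROp^{\bullet}(\sigmatree_v)\hookrightarrow\bigvee_v\ROp^{\bullet}(\sigmatree_v)$. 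The commutativity of this triangle is a direct consequence of the naturality of $\alpha$ and of the definition of the Segal maps in the conormalized complex.

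First I would argue that $\DGN^*(i_{\lambda_{\stree}(\sigmatree_*)})$ is a weak-equivalence. By hypothesis, $\ROp^{\bullet}$ satisfies the Segal condition levelwise, so the cosimplicial morphism $\bigvee_v\ROp^{\bullet}(\sigmatree_v)\to\ROp^{\bullet}(\ttree)$ is a levelwise weak-equivalence of $\EOp$-algebras. As observed in Construction~\ref{definition:corealization}, the functor $\DGN^*$ is, after forgetting $\EOp$-algebra structures, the conormalized cochain complex of a cosimplicial dg module, and as such preserves levelwise weak-equivalences.

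Next I would prove that $\alpha$ is a weak-equivalence by induction on the cardinality of $V(\stree)$. The base case $|V(\stree)|=1$ is trivial. For $|V(\stree)|=2$, we get exactly the statement of Lemma~\ref{lemma:cosimplicial-quasi-iso} applied to the pair of cosimplicial $\EOp$-algebras $R^{\bullet} = \ROp^{\bullet}(\sigmatree_{v_1})$ and $S^{\bullet} = \ROp^{\bullet}(\sigmatree_{v_2})$. For the inductive step, I would split $V(\stree) = V'\amalg\{v_0\}$, use the associativity of the coproduct to write
\begin{equation*}
\bigvee_{v\in V(\stree)}\DGN^*(\ROp^{\bullet}(\sigmatree_v))
= \Bigl(\bigvee_{v\in V'}\DGN^*(\ROp^{\bullet}(\sigmatree_v))\Bigr)\vee\DGN^*(\ROp^{\bullet}(\sigmatree_{v_0})),
\end{equation*}
and combine the induction hypothesis (which gives a weak-equivalence on the first factor) with Lemma~\ref{lemma:cosimplicial-quasi-iso} applied to the pair $\bigvee_{v\in V'}\ROp^{\bullet}(\sigmatree_v)$ and $\ROp^{\bullet}(\sigmatree_{v_0})$. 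The point here is that the coproduct of weak-equivalences between cosimplicial $\EOp$-algebras is a weak-equivalence after conormalization, which again follows from Lemma~\ref{lemma:cosimplicial-quasi-iso} and the analogous property of the Alexander--Whitney map on conormalized complexes.

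Composing these two weak-equivalences gives that the Segal map of $\DGN^*(\ROp^{\bullet})$ is a weak-equivalence, which completes the verification of the Segal condition. The main obstacle is the iteration step for $\alpha$: one must be careful that Lemma~\ref{lemma:cosimplicial-quasi-iso}, stated only for a pair of cosimplicial $\EOp$-algebras, can be bootstrapped to arbitrary finite coproducts. This requires checking that the intermediate object $\bigvee_{v\in V'}\ROp^{\bullet}(\sigmatree_v)$ is again a cosimplicial $\EOp$-algebra in a way compatible with the natural transformation $\alpha$, which amounts to verifying the expected associativity and naturality of the Eilenberg--MacLane map $\EM$ entering in Lemma~\ref{lemma:cosimplicial-quasi-iso}.
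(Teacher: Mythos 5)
Your argument is correct and is exactly the elaboration the paper intends: the statement is left as an ``immediate consequence'' of Lemma~\ref{lemma:cosimplicial-quasi-iso}, and your factorization of the Segal map through $\alpha$, combined with the preservation of levelwise weak-equivalences by the conormalized complex, is the expected proof. The iteration issue you flag at the end is genuine but harmless: Proposition~\ref{claim:Barratt-Eccles-algebra-coproducts} produces natural chain homotopy equivalences (deformation retracts) between $\vee$ and $\otimes$, so the two-factor case of Lemma~\ref{lemma:cosimplicial-quasi-iso} bootstraps to finite coproducts without any flatness hypothesis on the ground ring.
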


Then we have the following statement.

\begin{prop}
Let $\AOp$ be connected homotopy Segal $E_\infty$-Hopf cooperad (shuffle or symmetric).
\begin{enumerate}
\item
The objects $\DGN^*(\DGK^{\bullet}(\AOp,\Tree^{\square},\Tree^{\square}))$ and $\DGN^*(\DGK^{\bullet}(\AOp,\Tree^{\square},\Tree))$,
defined by taking the totalization of the cosimplicial (homotopy) Segal $E_\infty$-Hopf cooperads
of Proposition~\ref{prop:cosimplicial-cooperad-B},
respectively form a homotopy Segal $E_\infty$-Hopf cooperad and a strict homotopy Segal $E_\infty$-Hopf cooperad (which are shuffle or symmetric when $\AOp$ is so).
The levelwise weak-equivalence of Proposition~\ref{prop:cosimplicial-cooperad-B}
induces a weak-equivalence of homotopy Segal $E_\infty$-Hopf cooperads
\begin{equation*}
\DGN^*(\DGK^{\bullet}(\AOp,\Tree^{\square},\Tree(-,\stree)^{\sharp}))\xrightarrow{\sim}\DGN^*(\DGK^{\bullet}(\AOp,\Tree^{\square},\Tree^{\square}(-,\stree)^{\sharp})),
\end{equation*}
when we pass to this totalization.
\item
Furthermore, we have a weak-equivalence of homotopy Segal $E_\infty$-Hopf cooperads (shuffle or symmetric)
\begin{equation*}
\AOp(\stree)\xrightarrow{\sim}\DGN^*(\DGK^{\bullet}(\AOp,\Tree^{\square},\Tree^{\square}(-,\stree)^{\sharp})).
\end{equation*}
This weak-equivalence is natural in $\AOp$.
\end{enumerate}
\end{prop}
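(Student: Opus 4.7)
The plan is to combine the totalization functor with the cosimplicial constructions already established, and then use an acyclicity argument (an extra codegeneracy) to prove the weak equivalence with $\AOp$.

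For the first assertion, I would apply the totalization functor $\DGN^*$ of Construction~\ref{definition:corealization} termwise to the cosimplicial objects of Proposition~\ref{prop:cosimplicial-cooperad-B}. By functoriality of $\DGN^*$, the pre-cooperad structure (homotopy coproduct operators, facet operators, permutation action) transfers levelwise to the totalization, while Proposition~\ref{prop:totalization-Segal-cooperads} ensures preservation of the Segal condition. The weak equivalence between the two totalizations is obtained by applying $\DGN^*$ to the levelwise weak equivalence furnished by Proposition~\ref{prop:cosimplicial-cooperad-B}, invoking the classical fact (recalled in Construction~\ref{definition:corealization}) that on underlying dg modules $\DGN^*$ agrees with the conormalized complex, which carries levelwise weak equivalences to weak equivalences.

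For the second assertion, I would first construct the comparison map via the coaction $\rho^*\colon\AOp(\stree)\rightarrow\prod_{\ttree\rightarrow\stree}\AOp(\ttree)\otimes\Tree^{\square}(\ttree,\stree)^{\sharp} = \DGK^0(\AOp,\Tree^{\square},\Tree^{\square}(-,\stree)^{\sharp})$ from Construction~\ref{constr:two-sided-cobar}. Coassociativity and counitality of $\rho^*$ with respect to $\gamma^*$ and $\eta^*$ ensure that this map commutes with the cosimplicial coface and codegeneracy operators, and hence lifts to a morphism $\AOp(\stree)\rightarrow\DGN^*(\DGK^{\bullet}(\AOp,\Tree^{\square},\Tree^{\square}(-,\stree)^{\sharp}))$. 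Compatibility with the coproduct operators, facet operators, and (in the symmetric setting) permutation action follows directly from the analogous compatibilities of $\rho^*$ and the facet/permutation actions on $\Tree^{\square}(-,-)^{\sharp}$ stated in Construction~\ref{constr:two-sided-cobar}, so this map is indeed a morphism of homotopy Segal $E_\infty$-Hopf pre-cooperads. Naturality in $\AOp$ is built into the construction.

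To prove this map is a weak equivalence, I would use the equivalence of the first assertion to reduce to showing that the corresponding map $\AOp(\stree)\rightarrow\DGN^*(\DGK^{\bullet}(\AOp,\Tree^{\square},\Tree(-,\stree)^{\sharp}))$ is a weak equivalence. The crucial observation is that $\Tree(\ttree_0,\stree)^{\sharp}=\kk$ whenever a morphism $\ttree_0\rightarrow\stree$ exists (and vanishes otherwise), so this cosimplicial $\EOp$-algebra admits an extra codegeneracy $\tilde{s}\colon\DGK^n\rightarrow\DGK^{n-1}$ obtained by applying the counit $\eta^*\colon\Tree^{\square}(\ttree_n,\ttree_{n-1})^{\sharp}\rightarrow\kk$ to the factor adjacent to $\AOp(\ttree_n)$ (which forces $\ttree_n = \ttree_{n-1}$). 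The counit axiom $(\eta^*\otimes\id)\circ\gamma^* = \id$ and $(\eta^*\otimes\id)\circ\rho^* = \id$ then translate into cosimplicial identities that make $\tilde{s}$ into a cosimplicial contracting homotopy for the augmented cosimplicial $\EOp$-algebra $\AOp(\stree)\rightarrow\DGK^{\bullet}(\AOp,\Tree^{\square},\Tree(-,\stree)^{\sharp})$; hence the augmentation induces a weak equivalence after totalization.

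The main obstacle will be verifying the extra-codegeneracy argument rigorously, in particular checking that the splitting induced by $\eta^*$ at the $\ttree_n=\ttree_{n-1}$ summands is compatible with the entire cosimplicial structure (cofaces $d^0,\dots,d^{n-1}$ coming from $\gamma^*$, and $d^n$ coming from $\rho^*$) and with the facet operators and Segal maps. If the direct extra-codegeneracy argument proves cumbersome in the products, I would instead deduce the acyclicity via a spectral sequence for the filtration by cosimplicial degree on the conormalized complex, whose $E_1$-page collapses to $\AOp(\stree)$ concentrated in degree zero thanks to the same counit/coassociativity identities applied termwise.
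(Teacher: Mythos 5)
Your treatment of assertion (1) and your construction of the coaugmentation for assertion (2) coincide with the paper's proof: the coaugmentation is the coaction $\rho^*$, viewed as a map from the constant cosimplicial object, and its compatibility with coproducts, facets and permutations is checked termwise. The acyclicity argument, however, has two genuine gaps.

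First, the proposed reduction to the case $\FOp_{\stree}(-) = \Tree(-,\stree)^{\sharp}$ is not available: there is no map $\AOp(\stree)\rightarrow\DGK^{\bullet}(\AOp,\Tree^{\square},\Tree(-,\stree)^{\sharp})$ compatible with the cosimplicial structure. A coaugmentation into that object would require the coaugmentation relation $d^0\eta = d^1\eta$, which amounts to a strict coaction of the category $\Tree$ on $\AOp$, i.e.\ to the strict functoriality $\rho_{\ttree\rightarrow\utree}\circ\rho_{\utree\rightarrow\stree} = \rho_{\ttree\rightarrow\stree}$ of the degree-zero coproducts. For a genuine homotopy Segal cooperad this relation only holds up to the homotopy $\rho_{\ttree\rightarrow\utree\rightarrow\stree}$; this is precisely why the comparison between $\AOp$ and the strict object $\DGN^*(\DGK^{\bullet}(\AOp,\Tree^{\square},\Tree(-,\stree)^{\sharp}))$ has to be a zigzag through the $\Tree^{\square}(-,\stree)^{\sharp}$ version rather than a direct map. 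The contraction must therefore be carried out on $\DGK^{\bullet}(\AOp,\Tree^{\square},\Tree^{\square}(-,\stree)^{\sharp})$ itself.

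Second, the operator you propose is not an extra codegeneracy. Applying the counit $\eta^*$ to the factor $\Tree^{\square}(\ttree_n,\ttree_{n-1})^{\sharp}$ adjacent to $\AOp(\ttree_n)$ is exactly the existing top codegeneracy of Construction~\ref{constr:two-sided-cobar} (the case of maximal $j$ in $s^j = \id\otimes\id^{\otimes n-j}\otimes\eta^*\otimes\id^{\otimes j}\otimes\id$); one checks that it satisfies $\tilde{s}d^{n-1} = \tilde{s}d^n = \id$ and commutes with the lower cofaces, which are the identities of an ordinary codegeneracy, and such an operator contributes nothing on the conormalized complex. The genuinely extra codegeneracy is $s^{-1} = \id\otimes\id^{\otimes n+1}\otimes\eta^*$, obtained by collapsing the coefficient factor $\FOp_{\stree}(\ttree_0)$ via its counit; this only produces an element of $\DGK^{n-1}$ because, for $\FOp_{\stree}(-) = \Tree^{\square}(-,\stree)^{\sharp}$, setting $\ttree_0 = \stree$ turns the adjacent factor $\Tree^{\square}(\ttree_1,\ttree_0)^{\sharp}$ into the new coefficient factor $\Tree^{\square}(\ttree_1,\stree)^{\sharp}$. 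This is the one-sided bar-resolution contraction (the analogue of $B(M,A,A)\simeq M$, not of $B(A,A,N)\simeq N$), it satisfies $s^{-1}d^0 = \id$ together with $s^{-1}\circ\rho^* = \id$ at level $0$ by counitality of $\gamma^*$ and of the coaction, and it is what forces the conormalized complex to contract onto $\AOp(\stree)$.
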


\begin{proof}
The first assertion of the proposition immediately follows from the statements of Proposition~\ref{prop:cosimplicial-cooperad-B}
and from the result of Proposition~\ref{prop:totalization-Segal-cooperads}.

Thus, we focus on the proof of the second assertion.
We define our natural transformation first.
We use that the coproduct map $\rho^*: \AOp(\stree)\rightarrow\prod_{\ttree\rightarrow\stree}\AOp(\ttree)\otimes\Tree^{\square}(\ttree,\stree)^{\sharp}$
which we associate to our object in Construction~\ref{constr:two-sided-cobar}
defines a coaugmentation over the cosimplicial object $\DGK^{\bullet}(\AOp,\Tree^{\square},\Tree^{\square}(-,\stree)^{\sharp})$,
or equivalently, a morphism
\begin{equation*}
\eta: \AOp\rightarrow\DGK^{\bullet}(\AOp,\Tree^{\square},\Tree^{\square}(-,\stree)^{\sharp}),
\end{equation*}
where we regard $\AOp$ as a constant cosimplicial object (see~\cite[\S II.5.4]{FresseBook} for an account of these concepts).
We immediately see that this morphism is compatible with the coproduct operators, with the facets (and with the action of permutations whenever defined),
and hence, defines a morphism of cosimplicial homotopy Segal $E_\infty$-Hopf cooperads (shuffle or symmetric)
which yields a morphism of homotopy Segal $E_\infty$-Hopf cooperads in dg modules of the form of the proposition
when we pass to conormalized cochain complexes (we just use that we have $\DGN^*(\AOp) = \AOp$
in the case of the constant cosimplicial object $\AOp$).

The weak-equivalence claim follows from the observation that, in the case $F_{\stree}(-) = \Tree^{\square}(-,\stree)^{\sharp}$,
the cosimplicial object $\DGK^{\bullet}(\AOp,\Tree^{\square},\Tree^{\square}(-,\stree)^{\sharp})$
is endowed with an extra codegeneracy $s^{-1}$,
which is defined by extending the definition of Construction~\ref{constr:two-sided-cobar}
to the case $j=-1$:
\begin{equation*}
s^{-1} = \id\otimes\id^{\otimes n+1}\otimes\eta^*.
\end{equation*}
(We again refer to~\cite[\S II.5.4]{FresseBook} for a proof that the existence of this extra codegeneracy forces the contractibility of the conormalized cochain complex
in the cosimplicial direction, and hence, forces the acyclicity of our map.)
This observation finishes the proof of the proposition.
\end{proof}

\begin{proof}[Proof of Theorem~\ref{theorem:strictification}]
We can now conclude the proof of Theorem~\ref{theorem:strictification}. The results of the previous proposition
returns a zigzag of weak-equivalences of homotopy Segal $E_\infty$-Hopf cooperads (shuffle or symmetric)
\begin{equation*}
\AOp(\stree)\xrightarrow{\sim}\DGN^*(\DGK^{\bullet}(\AOp,\Tree^{\square},\Tree^{\square}(-,\stree)))
\xleftarrow{\sim}\DGN^*(\DGK^{\bullet}(\AOp,\Tree^{\square},\Tree(-,\stree)^{\sharp})),
\end{equation*}
from which the result follows since the Segal $E_\infty$-Hopf cooperad
\begin{equation*}
\DGK^c(\AOp)(\stree) = \DGN^*(\DGK^{\bullet}(\AOp,\Tree^{\square},\Tree(-,\stree)^{\sharp}))
\end{equation*}
is strict by construction.
\end{proof}

%As a corollary of the proof of Theorem \ref{theorem:strictification} we obtain this.
%\begin{cor}\label{corollary:equivalence of strict and homotopy cooperads}
%The homotopy category of homotopy Segal $ E_\infty $-Hopf cooperads and the homotopy category of strict Segal $ E_\infty $-Hopf cooperads are equivalent.
%\end{cor}
%\begin{proof}
%Let $ \mathcal{C} $ be the homotopy category of homotopy Segal $ E_\infty $-Hopf cooperads and $ \mathcal{D} $ be the homotopy category of strict Segal $ E_\infty $-Hopf cooperads. Since $ B^c $ preserves weak equivalences, it induces a functor $ B^c \colon \mathcal{C} \rightarrow \mathcal{D} $, denoted with the same symbol with a tiny abuse of notation. Let $ \iota \colon \mathcal{D} \rightarrow \mathcal{C} $ be the functor induced by the inclusion of the category of strict Segal $ E_\infty $-Hopf cooperads into the category of their homotopy counterpart.
%By the proof of Theorem \ref{theorem:strictification}, $ \iota B^c \simeq \id $. To show that $ B^c \iota \simeq \id $, we can observe that , if $ \AOp $ is strict, $ B^c(\AOp) $ factors through $ N^*(B^*(\Tree,\Tree,\AOp) $, defined by replacing $ \Tree^\square $ with the ordinary category of trees $ \Tree $ in the definition of $ B^*(\Tree, \Tree^\square,\AOp) $. Similarly to the proof above, the existence of an extra degeneracy yields a weak equivalence $ \AOp \rightarrow N^*(B^*(\Tree,\Tree,\AOp)) $. Thus, in this case, $ \AOp $ is weakly equivalent to $ B^c(\AOp) $ via morphisms of strict Segal $ E_\infty $-Hopf cooperads.
%\end{proof}

\begin{appendix}

\renewcommand{\thesubsubsection}{\thesection.\arabic{subsubsection}}

\section{The Barratt--Eccles operad and $E_\infty$-algebras}\label{sec:Barratt-Eccles-operad}

The purpose of this appendix is to prove the results on the category of algebras over the Barratt--Eccles operad that we use throughout this article.
In preliminary paragraphs, we briefly review the definition of the chain Barratt--Eccles operad
and the definition of the associated category of $E_\infty$-algebras. We mostly follow the conventions of~\cite{BergerFresse} for the definition of this operad
and we refer to this article for more detailed explanations
on this subject. We also briefly explain our conventions and basic definitions on permutations.
We devote the next paragraph to this subject.

In~\S\ref{section:background}, we recall the definition of a cooperad without counit and we also forget about counits in the definition
of the notions of Segal cooperad that we consider all along this paper. But the Barratt--Eccles operad is more naturally defined as a unital operad.
Therefore we go back to the usual definition of an operad with unit in this appendix. Similarly, as the Barratt--Eccles operad naturally forms a symmetric operad,
we consider composition products in the standard form $\circ_i: \EOp(k)\otimes\EOp(l)\rightarrow\EOp(k+l-1)$
in this appendix, and not the general operations $\circ_{i_p}: \EOp(\{i_1<\dots<i_k\})\otimes\EOp(\{j_1<\dots<j_l\})\rightarrow\EOp(\{1<\dots<r\})$,
since we can deduce the latter from the former by the action of a shuffle permutation
on the Barratt--Eccles operad (see~\S\ref{subsection:shuffle-cooperads}).

%\subsubsection{Conventions on permutations and the associative operad}
\begin{recoll}[Conventions on permutations and the associative operad]\label{recoll:permutations}
We denote the symmetric group on $r$ letters by $\Sigma_r$.
We represent a permutation $s\in\Sigma_r$ by giving the sequence of its values
\begin{equation*}
s = (s(1),\dots,s(r)).
\end{equation*}

We use that the collection of the symmetric groups $\Sigma_r$, $r\in\NN$, form an operad in sets.
The symmetric structure of this operad is given by the left translation action.
The operadic composition $u\circ_i v\in\Sigma_{k+l-1}$ of permutations $u\in\Sigma_k$ and $v\in\Sigma_l$
is obtained by inserting the sequence of values of the permutation $v = (v(1),\dots,v(l))$
at the position of the value $i\in\{1,\dots,k\}$
in the permutation $u = (u(1),\dots,u(k))$,
by performing the value shift $v(y)\mapsto v(y) + i-1$
on the terms of the permutation $v$
and the shift $u(x)\mapsto u(x) + l-1$ on the terms of the permutation $u$ such that $u(x)>i$.
Thus, we have
\begin{equation*}
(u(1),\dots,u(k))\circ_i(v(1),\dots,v(l)) = (u(1)',\dots,\underbrace{v(1)',\dots,v(l)'}_{u(t)},\dots,u(k)'),
\end{equation*}
where $t$ is the position of the value $i$ in the sequence $(u(1),\dots,u(k))$, while $v(y)'$ and $u(x)'$ denote the result of our shift operations
so that we have $v(y)' = v(y) + i-1$, for all terms $v(y)$, we have $u(x)' = u(x)$ when $u(x)<i$
and $u(x)' = u(x) + l-1$ when $u(x)>i$.

This operad in sets governs the category of associative monoids. In our constructions, we also use a counterpart of this operad in our base category of modules.
This associative operad $\AsOp$ is defined by taking the modules spanned by the sets of permutations $\AsOp(r) = \kk[\Sigma_r]$, for $r\in\NN$,
with the induced structure operations.
In what follows, we generally identify a permutation $s\in\Sigma_r$ with the associated basis element in $\AsOp(r)$.
We also use the notation $\mu\in\AsOp(2)$ for the element of the associative operad given by the identity permutation on $2$ letters $\mu = \id_2\in\Sigma_2$,
which governs the product operation when we pass to associative algebras.
We trivially have $\AsOp(0) = \kk$ and we can identify the element given by the trivial permutation $* = \id_0\in\Sigma_0$ with an arity zero operation
which represents a unit for this product structure.
\end{recoll}

%\subsubsection{Recollections on the definition of the Barratt--Eccles operad and of $E_\infty$-algebras}\label{subsection:Barratt-Eccles-operad}
\begin{recoll}[The Barratt--Eccles operad and $E_\infty$-algebra structures]\label{recoll:Barratt-Eccles-operad}
The chain Barratt--Eccles operad $\EOp$ is defined by the normalized chain complexes of the homogeneous bar construction
of the symmetric groups.
Thus, we have:
\begin{equation*}
\EOp(r) = \DGN_*(W(\Sigma_r)),
\end{equation*}
for each arity $r\in\NN$, where $W(\Sigma_r)$ denotes the simplicial such that
\begin{equation*}
W(\Sigma_r)_n = \underbrace{\Sigma_r\times\dots\times\Sigma_r}_{n+1},
\end{equation*}
for each dimension $n$, together with the face and degeneracy operators such that
\begin{align*}
d_i(w_0,\dots,w_n) & = (w_0,\dots,\widehat{w_i},\dots,w_n), \\
s_j(w_0,\dots,w_n) & = (w_0,\dots,w_j,w_j,\dots,w_n),
\end{align*}
for any $(w_0,\dots,w_n)\in W(\Sigma_r)_n$.

For simplicity, we do not make any distinction between a simplex $(w_0,\dots,w_n)\in W(\Sigma_r)$
and the class of this simplex in the normalized chain complex $\EOp(r) = \DGN_*(W(\Sigma_r))$
in our notation. We just get $(w_0,\dots,w_j,w_j,\dots,w_n)\equiv 0$ for the degenerate simplices when we pass to the normalized chain complex.
The differential of simplices in $\EOp(r)$
is given by the usual formula:
\begin{equation*}
\delta(w_0,\dots,w_n) = \sum_{i=0}^n (-1)^i(w_0,\dots,\widehat{w_i},\dots,w_n).
\end{equation*}

The action of the symmetric group $\Sigma_r$ on $\EOp(r)$ is induced by the left translation action of permutations
on these simplices. We explicitly have:
\begin{equation*}
s\cdot(w_0,\dots,w_n) = (s w_0,\dots,s w_n),
\end{equation*}
for each permutation $s\in\Sigma_r$.
The operadic composition operations $\circ_i: \EOp(k)\otimes\EOp(l)\rightarrow\EOp(k+l-1)$ are given by the composite of a termwise application
of operadic composition operations on permutations with the Eilenberg--MacLane map
when we pass to normalized chains. For $(u_0,\dots,u_m)\in\EOp(k)$ and $(v_0,\dots,v_n)\in\EOp(l)$,
we explicitly have:
\begin{equation*}
(u_0,\dots,u_m)\circ_i(v_0,\dots,v_n) = \sum_{(i_*,j_*)}\pm(u_{i_0}\circ_i v_{j_0},\dots,u_{i_{m+n}}\circ_i v_{j_{m+n}}),
\end{equation*}
where the sum runs over the set of paths $\{(i_t,j_t),t=0,\dots,m+n\}$ which start at $(i_0,j_0) = (0,0)$ and end at $(i_{m+n},j_{m+n}) = (m,n)$
in an $m\times n$ cartesian diagram, the expression $\pm$ denotes a sign which we associate to any such path,
and along our paths, we take the operadic composites $u_{i_t}\circ_i v_{j_t}\in\Sigma_{k+l-1}$
of the permutations $u_{i_t}\in\Sigma_k$ and $v_{j_t}\in\Sigma_l$.
(The sign $\pm$ is determined by the shuffle of horizontal and vertical moves which we use when we form our path.)
For convenience, we may also represent such a composite by a picture
of the following form:
\begin{equation*}
(u_0,\dots,u_m)\circ_i(v_0,\dots,v_n) = \sum\pm\left(\vcenter{\xymatrix@R=2mm@C=3mm{ u_0\circ_i v_0\ar@{-}[d]\ar@{-}[r] &
u_1\circ_i v_0\ar@{-}[d]\ar@{-}[r] &
*{\cdots}\ar@{-}[r] &
u_m\circ_i v_0\ar@{-}[d] \\
u_0\circ_i v_1\ar@{-}[d]\ar@{-}[r] &
u_1\circ_i v_1\ar@{-}[d]\ar@{-}[r] &
*{\cdots}\ar@{-}[r] &
u_m\circ_i v_1\ar@{-}[d] \\
*{\vdots}\ar@{-}[d] & *{\vdots}\ar@{-}[d] & & *{\vdots}\ar@{-}[d] \\
u_0\circ_i v_n\ar@{-}[r] &
u_1\circ_i v_n\ar@{-}[r] &
*{\cdots}\ar@{-}[r] &
u_m\circ_i v_n }}\right),
\end{equation*}
where we take the sum of the simplices that we may form by running over all paths contained in the diagram materialized in our figure. (To be fully explicit, we take the paths
which go from the upper-left corner to the lower-right corner of the diagram
by a shuffle of horizontal moves $\xymatrix@R=2mm@C=3mm{u_x\circ_i v_y\ar@{-}[r] & u_{x+1}\circ_i v_y }$
and of vertical moves $\xymatrix@R=2mm@C=3mm{u_x\circ_i v_y\ar@{-}[r] & u_x\circ_i v_{y+1}}$.)
%We refer to~\cite{BergerFresse} (for instance) for the explicit definition of this operation on permutations.

Recall that the operad of permutations in sets is identified with the set-theoretic associative operad (the operad which governs the category of associative monoids).
From the relation $W(\Sigma_r)_0 = \Sigma_r$ for any $r\in\NN$, we get an operad embedding $\AsOp\subset\EOp$
which identifies the module-theoretic version of the associative operad $\AsOp$
with the degree zero component of the Barratt-Eccles operad $\EOp$.
In what follows, we still use the notation $\mu\in\EOp(2)$ for the degree $0$ operation, represented by the identity permutation $\mu = \id_2\in\Sigma_2$,
which governs the product operation of associative algebra structures
in the Barratt--Eccles operad.
Note that we still have $\EOp(0) = \AsOp(0) = \kk$ (we take the convention to consider operads with a term in arity zero throughout this paragraph)
and the generating element of this arity zero term $*\in\EOp(0)$ also represents a unit operation
when we pass to the category of algebras over the Barratt--Eccles operad.

The Barratt--Eccles operad $\EOp$ is weakly-equivalent to the operad of commutative algebras $\ComOp$, and forms, as such, an instance of an $E_\infty$-operad.
Recall that we have $\ComOp(r) = \kk$, for any $r\in\NN$. The weak-equivalence $\EOp\xrightarrow{\sim}\ComOp$
is given by the standard augmentation $\DGN_*(W(\Sigma_r))\rightarrow\DGN_*(\pt) = \kk$
on the normalized chain complexes $\EOp(r) = \DGN_*(W(\Sigma_r))$, $r\in\NN$,
and sits in a factorization $\AsOp\hookrightarrow\EOp\xrightarrow{\sim}\ComOp$ of the usual morphism $\AsOp\rightarrow\ComOp$
between the associative operad $\AsOp$ and the commutative operad $\ComOp$.

We take the category of algebras over the Barratt--Eccles operad to get our model of the category of $E_\infty$-algebras.
Recall that we have $\EOp(0) = \kk$ so that our $\EOp$-algebras are equipped with a unit, which is represented by the generating element of this arity zero term of our operad $*\in\EOp(0)$.

By the main result of the article~\cite{BergerFresse}, the normalized cochain complex of a simplicial set $\DGN^*(X)$
is endowed with the structure of an algebra over the Barratt--Eccles operad.
This $\EOp$-algebra structure is functorial in $X\in\simp\Set$,
and extends the classical associative algebra structure of normalized cochains.
\end{recoll}

%\subsubsection{The diagonal and the action of the Barratt--Eccles operad on cubical cochains and tensor products}
%\begin{constr}[The diagonal and the action of the Barratt--Eccles operad on cubical cochains and tensor products]\label{constr:Barratt-Eccles-diagonal}
\begin{constr}[The diagonal and the action of the Barratt--Eccles operad on tensor products]\label{constr:Barratt-Eccles-diagonal}
In our constructions, we use that the Alexander--Whitney diagonal on the normalized chain complexes $\EOp(r) = \DGN_*(W(\Sigma_r))$, $r\in\NN$,
induces a morphism of dg operads $\Delta: \EOp\rightarrow\EOp\otimes\EOp$, where $\EOp\otimes\EOp$
is given by the arity-wise tensor product $(\EOp\otimes\EOp)(r) = \EOp(r)\otimes\EOp(r)$,
for any $r\in\NN$.
This map is given by the usual formula:
\begin{equation*}
\Delta(w_0,\dots,w_n) = \sum_{k=0}^n(w_0,\dots,w_k)\otimes(w_k,\dots,w_n),
\end{equation*}
for any $(w_0,\dots,w_n)\in\EOp(r)$.

The existence of this diagonal implies that a tensor product of $\EOp$-algebras $A\otimes B$
inherits an $\EOp$-algebra structure, since we can make an operation $\pi\in\EOp(r)$
act on $A\otimes B$
through its diagonal $\Delta(\pi)\in\EOp(r)\otimes\EOp(r)$. We explicitly take:
\begin{equation*}
\pi(a_1\otimes b_1,\dots,a_r\otimes b_r) = \sum_{(\pi)}\pi'(a_1,\dots,a_r)\otimes\pi''(b_1,\dots,b_r),
\end{equation*}
for all $a_1\otimes b_1,\dots,a_r\otimes b_r\in A\otimes B$, where we use the expression $\Delta(\pi) = \sum_{(\pi)}\pi'\otimes\pi''$
for the expansion of the coproduct of the operation $\pi\in\EOp(r)$
in the Barratt--Eccles operad.

%In our constructions, we notably use this observation to provide the dg modules
%of cubical cochains
%\begin{equation*}
%I^m = \underbrace{\DGN^*(\Delta^1)\otimes\dots\otimes\DGN^*(\Delta^1)}_m
%\end{equation*}
%with an $\EOp$-algebra structure, using the action of the Barratt--Eccles operad on each cochain complex factor $\DGN^*(\Delta^1)$.

In the paper, we also use that we have a morphism of $\EOp$-algebras
%In the paper, we also use that we have a morphism of $\EOp$-algebras
\begin{equation*}
\AW: A\vee B\rightarrow A\otimes B,
\end{equation*}
for any pair of $\EOp$-algebras $A$ and $B$, where we adopt the notation $\vee$ for the coproduct in the category of $\EOp$-algebras.
This morphism is induced by the inclusions $A\otimes *\rightarrow A\otimes B\leftarrow *\otimes B$
on each factor of the coproduct $A\vee B$, where we still use the notation $*$
for the unit of the $\EOp$-algebras $A$ and $B$.
(We will see in the proof of the next proposition that we can identify this map with an instance of an Alexander--Whitney diagonal. We therefore adopt the notation $\AW$ for this morphism.)
We have a morphism of dg modules which goes in the converse direction
\begin{equation*}
\EM: A\otimes B\rightarrow A\vee B,
\end{equation*}
and which is given by the formula $\EM(a\otimes b) = \mu(a,b)$, for each tensor $a\otimes b\in A\otimes B$, where $\mu(a,b)$ denotes the product of the elements $a\in A$ and $b\in B$
in the $\EOp$-algebra $A\vee B$. (We are going to see that we can identify this map with an instance of an Eilenberg--MacLane map.)
Note that neither $\AW$ nor $\EM$ are symmetric, and actually, the tensor product $A\otimes B$
does not define a symmetric bifunctor on the category of $\EOp$-algebras since the Alexander--Whitney diagonal $\Delta: \EOp(r)\rightarrow\EOp(r)\otimes\EOp(r)$
fails to be cocommutative.
\end{constr}

We have the following useful property.

\begin{prop}\label{claim:Barratt-Eccles-algebra-coproducts}
The above morphisms $\AW: A\vee B\rightarrow A\otimes B$ and $\EM: A\otimes B\rightarrow A\vee B$ satisfy $\AW\EM = \id$
and we have a natural chain homotopy $H: A\vee B\rightarrow A\vee B$
such that $\delta H + H\delta = \EM\AW - \id$.
Hence, our morphisms induce homotopy inverse weak-equivalences of dg modules
\begin{equation*}
\AW: A\vee B\xrightarrow{\sim} A\otimes B\quad\text{and}\quad\EM: A\otimes B\xrightarrow{\sim} A\vee B,
\end{equation*}
for all $\EOp$-algebras $A$ and $B$.
\end{prop}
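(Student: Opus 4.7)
The plan is to identify $A\vee B$ with a bar-style realization built from the simplicial structure of the Barratt--Eccles operad, so that $\AW$ and $\EM$ become instances of the classical Alexander--Whitney diagonal and Eilenberg--MacLane shuffle map; the required natural chain homotopy $H$ is then the standard Eilenberg--Zilber homotopy. Since $\AsOp\hookrightarrow\EOp$ is the degree-zero inclusion, every $\EOp$-algebra carries a strictly associative unital product, and the coproduct $A\vee B$ decomposes into ``words'' built from elements of $A$ and $B$, where the higher (simplicial) cells of $\EOp(r) = \DGN_*(W(\Sigma_r))$ provide coherent homotopies between reorderings of letters. In particular, the transposition $\tau\in\Sigma_2$ gives rise to a $1$-simplex $(\mu,\tau)\in\EOp(2)_1$ whose boundary exhibits $\tau\cdot(a,b) - \mu(a,b)$, which witnesses that the associative product in $A\vee B$ is commutative up to homotopy.

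First I would verify the identity $\AW\circ\EM = \id$ by a direct computation. For $a\otimes b\in A\otimes B$ we have $\EM(a\otimes b) = \mu(a,b)$, where $\mu\in\EOp(2)$ is the operation given by the identity permutation $\id_2\in\Sigma_2$ and thus lies in simplicial degree zero. Its Alexander--Whitney diagonal satisfies $\Delta(\mu) = \mu\otimes\mu$, so the induced product on $A\otimes B$ satisfies $\mu_{A\otimes B}(a\otimes 1, 1\otimes b) = \mu(a,1)\otimes\mu(1,b) = a\otimes b$. Since $\AW$ is a morphism of $\EOp$-algebras sending the canonical copies of $A$ and $B$ inside $A\vee B$ to $A\otimes\ast$ and $\ast\otimes B$ respectively, applying $\AW$ to $\mu(a,b)\in A\vee B$ yields $\mu_{A\otimes B}(\AW(a),\AW(b)) = \mu_{A\otimes B}(a\otimes 1,1\otimes b) = a\otimes b$, as required.

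The construction of $H$ proceeds by realizing $A\vee B$ as the totalization of a bisimplicial dg module whose $(p,q)$-cells are built from $A^{\otimes(p+1)}\otimes B^{\otimes(q+1)}$ with the appropriate $\EOp$-action coming from the simplices of $W(\Sigma_{p+q+2})$, so that $A\otimes B$ arises as the diagonal of the corresponding Segal-style product. Under this identification, $\AW$ acquires its meaning as the Alexander--Whitney diagonal and $\EM$ as the Eilenberg--MacLane shuffle map for the associated (bi)simplicial objects. The natural Eilenberg--Zilber homotopy, functorial in the underlying simplicial data, then provides a chain homotopy $H$ satisfying $\delta H + H\delta = \EM\circ\AW - \id$, and naturality in $A$ and $B$ is immediate from the construction.

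The main obstacle is setting up this bar-style description of $A\vee B$ in a way that is manifestly compatible with the explicit formulas for the diagonal of $\EOp$ from Construction~\ref{constr:Barratt-Eccles-diagonal}. The delicate point is that the weak-equivalence is claimed for \emph{arbitrary} $\EOp$-algebras and not merely cofibrant ones, which forces us to exploit the particular feature of the Barratt--Eccles operad that the higher cells of $W(\Sigma_r)$ already encode all the coherent shufflings needed to present $A\vee B$ in a form that makes the Eilenberg--Zilber equivalence apply termwise, without any cofibrancy hypothesis. Once this presentation is in place, both the definition of $H$ and the verification of the homotopy identity $\delta H + H\delta = \EM\AW - \id$ reduce to the standard sign combinatorics of the classical Eilenberg--Zilber equivalence.
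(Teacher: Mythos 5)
Your verification of $\AW\circ\EM = \id$ is correct and matches the paper's remark (the identity $a\otimes b = \mu(a\otimes *,*\otimes b)$ in $A\otimes B$, using $\Delta(\mu)=\mu\otimes\mu$). The construction of the homotopy $H$, however, has a genuine gap, and it sits exactly where you locate "the main obstacle" without resolving it. The proposed presentation of $A\vee B$ as the totalization of a bisimplicial dg module with $(p,q)$-cells $A^{\otimes(p+1)}\otimes B^{\otimes(q+1)}$ is not established and does not compute the coproduct of $\EOp$-algebras: even in the free case one has $\EOp(X)\vee\EOp(Y)=\EOp(X\oplus Y)=\bigoplus_{p,q}\EOp(p+q)\otimes_{\Sigma_p\times\Sigma_q}X^{\otimes p}\otimes Y^{\otimes q}$, which is graded by arities of \emph{generators} with coinvariants, not by tensor powers of the algebras themselves, and for non-free $A$, $B$ there is no such strict bar-type decomposition at all. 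So the object on which you want to run the classical Eilenberg--Zilber machinery is not available in the form you describe.

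The second, more serious, issue is the claim that "naturality in $A$ and $B$ is immediate from the construction." The paper's strategy is: (i) write explicit Eilenberg--MacLane/Alexander--Whitney formulas and an explicit join-plus-paths homotopy $H$ on free algebras $\EOp(X)\vee\EOp(Y)=\EOp(X\oplus Y)$; (ii) prove that $H$ commutes with $\phi\vee\psi$ for \emph{arbitrary} morphisms of free $\EOp$-algebras $\phi\colon\EOp(X)\to\EOp(X)$, $\psi\colon\EOp(Y)\to\EOp(Y)$ — these are determined by maps $X\to\EOp(X)$ and act by operadic composition, so this is not the simplicial naturality of the classical Eilenberg--Zilber homotopy but a separate, lengthy combinatorial verification about joins and shuffles of composite simplices; (iii) descend $H$ along the canonical reflexive-coequalizer presentation $\EOp(X_1)\rightrightarrows\EOp(X_0)\to A$ to obtain the homotopy on arbitrary $A\vee B$ without any cofibrancy hypothesis. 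Step (ii) is the heart of the proof and is precisely what replaces the missing "bar-style description"; your proposal omits it entirely. Without it you only get the statement for free algebras, and the passage to general $A$ and $B$ — which you correctly identify as the delicate point — remains unproved.
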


\begin{proof}
We consider the case of free $\EOp$-algebras first $A = \EOp(X)$ and $B = \EOp(Y)$.
We represent the elements of a free algebra such as $A = \EOp(X)$
by formal expressions of the form $a = u(x_1,\dots,x_r)$,
where $u\in\EOp(r)$ and $x_1,\dots,x_r\in X$.
We then have the expressions:
\begin{align*}
A\otimes B = \EOp(X)\otimes\EOp(Y) & = \bigoplus_{p,q}\EOp(p)\otimes_{\Sigma_p} X^{\otimes p}\otimes\EOp(p)\otimes_{\Sigma_q} Y^{\otimes q}, \\
A\vee B = \EOp(X\oplus Y) & = \bigoplus_{p,q}\EOp(p+q)\otimes_{\Sigma_p\times\Sigma_q} X^{\otimes p}\otimes Y^{\otimes q}.
\end{align*}
We use that the operadic composite $\id_2(u,v)$ of permutations $u\in\Sigma_p$ and $v\in\Sigma_q$
is identified with the result of a direct sum operation
such that $u\oplus v = (u(1),\dots,u(p),p+v(1),\dots,p+v(q))$.
Recall that $\mu = \id_2$ represents the associative product when we pass to the Barratt--Eccles operad $\EOp$.
For a tensor $a\otimes b = u(x_1,\dots,x_p)\otimes v(y_1,\dots,y_q)$
such that $u = (u_0,\dots,u_m)\in\EOp(p)$ and $v = (v_0,\dots,v_n)\in\EOp(q)$,
we have $\mu(a,b) = \mu(u,v)(x_1,\dots,x_p,y_1,\dots,y_q)$,
where we take the composite $\mu(u,v)$
in the Barratt--Eccles operad.
%Hence, by definition of this composite, we obtain the expression:
%\begin{equation*}
%\EM(u(x_1,\dots,x_p)\otimes v(y_1,\dots,y_q)) = \sum_{(i_*,j_*)}\pm^{\sh(i_*,j_*)} w_{(i_*,j_*)}(x_1,\dots,x_p,y_1,\dots,y_q)
%\end{equation*}
%where the sum runs over the set of paths $\{(i_t,j_t),t=0,\dots,m+n\}$ which start at $(i_0,j_0) = (0,0)$ and end at $(i_{m+n},j_{m+n}) = (m,n)$
%in an $m\times n$ diagram,
%and we consider the simplex of permutations such that $w_{(i_*,j_*)} = (u_{i_0}\oplus v_{j_0},\dots,u_{i_{m+n}}\oplus v_{j_{m+n}})$.
%(Thus, we can identify our morphism $\EM: A\otimes B\rightarrow A\vee B$ with an instance of an Eilenberg--MacLane, as explained in the previous paragraph.)
%For convenience, we can also represent this expression
%by a picture of the following form:
By definition of this composite in terms of shuffles of termwise composites $\mu(u_i,v_j) = u_i\oplus v_j$ (we apply the Eilenberg--MacLane map),
we obtain an expression of the following form:
\begin{multline*}
\EM(u(x_1,\dots,x_p)\otimes v(y_1,\dots,y_q))\\
= \sum\pm\underbrace{\left(\vcenter{\xymatrix@R=2mm@C=3mm{ u_0\oplus v_0\ar@{-}[d]\ar@{-}[r] &
u_1\oplus v_0\ar@{-}[d]\ar@{-}[r] &
*{\cdots}\ar@{-}[r] &
u_m\oplus v_0\ar@{-}[d] \\
u_0\oplus v_1\ar@{-}[d]\ar@{-}[r] &
u_1\oplus v_1\ar@{-}[d]\ar@{-}[r] &
*{\cdots}\ar@{-}[r] &
u_m\oplus v_1\ar@{-}[d] \\
*{\vdots}\ar@{-}[d] & *{\vdots}\ar@{-}[d] & & *{\vdots}\ar@{-}[d] \\
u_0\oplus v_n\ar@{-}[r] &
u_1\oplus v_n\ar@{-}[r] &
*{\cdots}\ar@{-}[r] &
u_m\oplus v_n }}\right)}_{\in\EOp(p+q)}
(x_1,\dots,x_p,y_1,\dots,y_q),
\end{multline*}
where the sum runs over all paths that we may form in the diagram of our figure (as in our representation of the composition of the Barratt--Eccles operad in~\S\ref{recoll:Barratt-Eccles-operad}).
We use this representation to identify our morphism $\EM: A\otimes B\rightarrow A\vee B$ with an instance of an Eilenberg--MacLane map.

The morphism $\AW: A\vee B\rightarrow A\otimes B$, on the other hand, carries any free algebra element $c = w(x_1,\dots,x_p,y_1,\dots,y_q)$
such that $w = (w_0,\dots,w_n)\in\EOp(p+q)$
to the image of the elements $x_1\otimes *,\dots,x_p\otimes *,*\otimes y_1,\dots,*\otimes y_q\in\EOp(X)\otimes\EOp(Y)$
under the action of the operation $w$ on $\EOp(X)\otimes\EOp(Y)$,
and hence to the tensor such that $\AW(w(x_1,\dots,x_p,y_1,\dots,y_q)) = \sum_{(w)} w'(x_1,\dots,x_p,*,\dots,*)\otimes w''(*,\dots,*,y_1,\dots,y_q)$,
where we use the notation $\Delta(w) = \sum_{(w)}w'\otimes w''$
for the expansion of the coproduct of the simplex $w$
in the Barratt--Eccles operad. Thus, if we assume $w = (w_0,\dots,w_n)$, then we have $\sum_{(w)}w'\otimes w'' = \sum_{k=0}^n (w_0,\dots,w_k)\otimes(w_k,\dots,w_n)$.
From this analysis, we deduce that our morphism $\AW$ is given by the following Alexander--Whitney type formula:
\begin{multline*}
\AW(w(x_1,\dots,x_p,y_1,\dots,y_q))\\
= \sum_{k=0}^n (w_0|_I,\dots,w_k|_I)(x_1,\dots,x_p)\otimes(w_k|_J,\dots,w_n|_J)(y_1,\dots,y_q),
\end{multline*}
where we set $I = \{1,\dots,p\}$ and $J = \{1,\dots,q\}$ for short and $|_{I}$, $|_{J}$
denote the obvious restriction operations on permutations
which we apply to our simplices termwise. (In this construction, we also use the canonical bijection $\{p+1,\dots,p+q\}\simeq\{1,\dots,q\}$
to identify the permutations of the set $\{p+1,\dots,p+q\}$ with permutations of the set $\{1,\dots,q\}$.)

For short, we may adopt the notation $x_*$ and $y_*$ for the words of variables $x_* = x_1,\dots,x_p$ and $y_* = y_1,\dots,y_q$
that occur in our expression of free algebra elements.
The chain homotopy $H$ can be given by a formula of the following form:
\begin{multline*}
H(w(x_*,y_*)) = \sum\pm(w_0,\dots,w_k)\\
\star\left(\vcenter{\xymatrix@R=2mm@C=3mm{ \scriptstyle{w_k|_I\oplus w_l|_J}\ar@{-}[d]\ar@{-}[r] &
\scriptstyle{w_{k+1}|_I\oplus w_l|_J}\ar@{-}[d]\ar@{-}[r] &
*{\cdots}\ar@{-}[r] &
\scriptstyle{w_l|_I\oplus w_l|_J}\ar@{-}[d] \\
\scriptstyle{w_k|_I\oplus w_{l+1}|_J}\ar@{-}[d]\ar@{-}[r] &
\scriptstyle{w_{k+1}|_I\oplus w_{l+1}|_J}\ar@{-}[d]\ar@{-}[r] &
*{\cdots}\ar@{-}[r] &
\scriptstyle{w_l|_I\oplus w_{l+1}|_J}\ar@{-}[d] \\
*{\vdots}\ar@{-}[d] & *{\vdots}\ar@{-}[d] & & *{\vdots}\ar@{-}[d] \\
\scriptstyle{w_k|_I\oplus w_n|_J}\ar@{-}[r] &
\scriptstyle{w_{k+1}|_I\oplus w_n|_J}\ar@{-}[r] &
*{\cdots}\ar@{-}[r] &
\scriptstyle{w_l|_I\oplus w_n|_J} }}\right)(x_*,y_*),
\end{multline*}
where $\star$ denotes a ``join'' operation (given by the obvious concatenation operation in $W(\Sigma_*)$), and the sum runs over the indices $0\leq k\leq l\leq n$,
together with the set of paths $\{(i_t,j_t),t=k,\dots,n\}$ which start at $(i_k,j_k) = (k,l)$ and end at $(i_n,j_n) = (l,n)$
in the $l-k\times n-l$-diagram represented in our figure.

The relation $\AW\EM = \id$ is an instance of the general inversion relation between the Alexander--Whitney diagonal and the Eilenberg--MacLane map.
In our context, we can also deduce this relation from the observation that the element $a\otimes b\in A\otimes B$
in a tensor product of $\EOp$-algebras $A$ and $B$
represents the product of the tensors $a\otimes *,*\otimes b\in A\otimes B$,
so that we have the identity $a\otimes b = \mu(a\otimes *,*\otimes b)$
in $A\otimes B$.

The proof of the chain homotopy relation $\delta H + H\delta = \AW\EM - \id$
is straightforward: the composite $\AW\EM$ corresponds to the $0$-face of the terms with $k=0$ in the expression of $H$
while the identity map corresponds to the $n$-face of the terms with $k=l=n$, and the other faces cancel out
when we form the anti-commutator $\delta(H) = \delta H + H\delta$.

The morphisms $\AW$ and $\EM$, which are defined for all $\EOp$-algebras $A$ and $B$,
are obviously functorial. We check that our chain homotopy $H$ is also functorial
with respect to the action of the morphisms
of free $\EOp$-algebras $\phi: \EOp(X)\rightarrow\EOp(X)$ and $\psi: \EOp(Y)\rightarrow\EOp(Y)$
on the coproduct $\EOp(X)\vee\EOp(Y) = \EOp(X\oplus Y)$.
Thus, we establish that we have the following relation:
\begin{equation*}
H((\phi\vee\psi)(w(x_*,y_*))) = (\phi\vee\psi)(H(w(x_*,y_*))),
\end{equation*}
for all $c = w(x_*,y_*)\in\EOp(X\oplus Y)$.

We use that $\phi = \phi_f: \EOp(X)\rightarrow\EOp(X)$ and $\psi = \psi_g: \EOp(Y)\rightarrow\EOp(Y)$
are induced by morphisms of dg modules $f: X\rightarrow\EOp(X)$
and $g: Y\rightarrow\EOp(Y)$.
We use the short notation $f(x_i) = \sum s^i(\underline{x}_i')$ and $g(y_j) = \sum t^j(\underline{y}_j')$
for the expansion of these free algebra elements $f(x_i)\in\EOp(X)$ and $g(y_j)\in\EOp(Y)$,
which we associate to the factors of a tensor $c = w(x_1,\dots,x_p,y_1,\dots,y_q)$.
We also write $s^i = (s^i_0,\dots,s^i_{d_i})$ and $t^j = (t^j_0,\dots,t^j_{e_j})$, where $d_i$ and $e_j$ are dimension variables.
We have the formula:
\begin{equation}\tag{$*$}\label{eqn:coproduct_functoriality}
(\phi\vee\psi)(w(x_*,y_*)) = w(f(x_*),g(y_*)) = \sum w(s^*,t^*)(\underline{x}_*',\underline{y}_*'),
\end{equation}
where we form the composite $w(s^*,t^*) = w(s^1,\dots,s^p,t^1,\dots,t^q)$ inside the Barratt--Eccles operad.
(In this formula, we also use the notation $s^*$ and $t^*$ for the words of simplices $s^* = s^1,\dots,s^p$ and $t^* = t^1,\dots,t^q$,
as well as the notation $\underline{x}_*'$ and $\underline{y}_*'$
for the composite words $\underline{x}_*' = \underline{x}_1',\dots,\underline{x}_p'$
and $\underline{y}_*' = \underline{y}_1',\dots,\underline{y}_q'$.)
We then use a multidimensional generalization of the picture of~\S\ref{recoll:Barratt-Eccles-operad}
for the definition of the operadic composition
in the Barratt--Eccles operad. We are going to use that the shuffles of horizontal and vertical moves, which we carry out in this definition of operadic composites,
satisfy natural associativity and commutativity relations when we perform a multidimensional
application of this operation.

We analyze the expression of the composite $(\phi\vee\psi)(H(w(x_*,y_*)))$ first.
We then apply the composition operation $w'\mapsto w'(s^*,t^*)$
to the simplices that occur in the expression of the chain homotopy $H(w(x_*,y_*))$.
We decompose the result of this operation as a join of simplices, using the join decomposition of the simplices that occur in the definition of our chain homotopy.
We identify the first factor of our join with a chain of composite permutations $w_x(s^*_{\alpha_*},t^*_{\beta_*})$,
where $w_x$ runs over the vertices of the simplex $(w_0,\dots,w_k)$,
which represents the first factor of our join in the expression of $H(w(x_*,y_*))$.
We take all shuffles of $w_x$-directional moves in the chain $(w_0,\dots,w_k)$
with $s^i_{\alpha_i}$-directional and $t^j_{\beta_j}$-directional moves
in subchains of the simplices $s^i$ and $t^j$
of the form $s^i{}' = (s^i_0,\dots,s^i_{d_i'})$ and $t^j{}' = (t^j_0,\dots,t^j_{e_j'})$,
where $d_i'\leq d_i$ and $e_j'\leq e_j$.
We identify the second factor of our join with a chain of composite permutations
of the form $w_x|_I(s^*_{\alpha_*})\oplus w_y|_J(t^*_{\beta_*})$,
where $w_x|_I\oplus w_y|_J$
runs over the vertices of the second join factor in the expression of $H(w(x_*,y_*))$, starting at $w_k|_I\oplus w_l|_J$
and ending at $w_l|_I\oplus w_n|_J$.
When we pass from the previous join factor to this second join factor in our computation of $(\phi\vee\psi)(H(w(x_*,y_*)))$,
we carry out a move of the form $\xymatrix@R=2mm@C=3mm{w_k\ar@{-}[r] & w_k|_I\oplus w_l|_J }$,
and hence, our move has to be constant in the $s^i_{\alpha_i}$ directions
and in the $t^j_{\beta_j}$ directions.
This observation implies that we start this simplex at the end point of the chains $s^i{}' = (s^i_0,\dots,s^i_{d_i'})$ in the $s^i_{\alpha_i}$ directions
and at the end point of the chains $t^j{}' = (t^j_0,\dots,t^j_{e_j'})$
in the $t^j_{\beta_j}$ directions.
Thus, to form the chains of composite permutations of our second join factor, we shuffle $w_x|_I$-directional and $w_y|_J$-directional moves
in the $2$-dimensional diagram $(w_k|_I,\dots,w_l|_I)\times(w_l|_J,\dots,w_n|_J)$
with $s^i_{\alpha_i}$-directional moves in the chains $s^i{}'' = (s^i_{d_i'},\dots,s^i_{d_i})$
and $t^j_{\beta_j}$-directional moves in the chains $t^j{}'' = (t^j_{e_j'},\dots,t^j_{e_j})$.

We now analyze the expression of the composite $H((\phi\vee\psi)(w(x_*,y_*)))$,
which we obtain by applying our chain homotopy $H$
to the element $(\phi\vee\psi)(w(x_*,y_*))
= \sum w(s^*,t^*)(\underline{x}_*',\underline{y}_*')$.
The simplices of the composite $w(s^*,t^*)$
consist of chains of composite permutations $w_x(s^*_{\alpha_*},t^*_{\beta_*})$,
which we obtain after shuffling $w_x$-directional moves in the chain $w = (w_0,\dots,w_n)$
with $s^i_{\alpha_i}$-directional moves in the chains $s^i{}'' = (s^i_0,\dots,s^i_{d_i})$
and $t^j_{\beta_j}$-directional moves in the chains $t^j{}'' = (t^j_0,\dots,t^j_{e_j})$.

To form our chain homotopy, we cut this chain at two positions $w_k(s^*_{d_*'},t^*_{e_*'}) = w_k(s^1_{d_1'},\dots,s^p_{d_p'},t^1_{e_1'},\dots,t^q_{e_q'})$
and $w_l(s^*_{d_*''},t^*_{e_*''}) = w_l(s^1_{d_1''},\dots,s^p_{d_p''},t^1_{e_1''},\dots,t^q_{e_q''})$ with $0\leq k\leq l\leq n$
and $0\leq d_i'\leq d_i''\leq d_i$, $0\leq e_j'\leq e_j''\leq e_j$.
We take the subchain of permutations running from $w_0(s^*_0,t^*_0)$
up to $w_k(s^*_{d_*'},t^*_{e_*'})$
to get the first join factor of our chain homotopy. We exactly retrieve the same chains as in our decomposition of~$(\phi\vee\psi)(H(w(\underline{x},\underline{y})))$.

We then form the restrictions $w_x(s^*_{\alpha_*},t^*_{\beta_*})|_{I'} = w_x|_I(s^*_{\alpha_*})$
and $w_y(s^*_{\alpha_*},t^*_{\beta_*})|_{J'} = w_y|_J(t^*_{\beta_*})$
where $I'$ denotes the terms of the composite permutations $w_x(s^*_{\alpha_*},t^*_{\beta_*})$
that correspond to the positions of the variables $\underline{x}_*'$,
whereas $J'$ denotes the terms of the composite permutations $w_y(s^*_{\alpha_*},t^*_{\beta_*})$
that correspond to the positions of the variables $\underline{y}_*'$.
We take a chain of direct sums $w_x(s^*_{\alpha_*},t^*_{\beta_*})|_{I'}\oplus w_y(s^*_{\alpha_*},t^*_{\beta_*})|_{J'}$
such that $w_x(s^*_{\alpha_*},t^*_{\beta_*})$ runs from $w_k(s^*_{d_*'},t^*_{e_*'})$ up to $w_l(s^*_{d_*''},t^*_{e_*''})$,
while $w_y(s^*_{\alpha_*},t^*_{\beta_*})$ runs from $w_l(s^*_{d_*''},t^*_{e_*''})$ up to $w_l(s^*_{d_*},t^*_{e_*})$.
(We then take a shuffle of $w_x(s^*_{\alpha_*},t^*_{\beta_*})$-directional moves and of $w_y(s^*_{\alpha_*},t^*_{\beta_*})$-direction moves.)
We easily see that this operation produces a degeneracy in the case where we have $e_j'<e_j''$ for some $j$,
because in this case the subchain of permutations $w_x(s^*_{\alpha_*},t^*_{\beta_*})$
contains a $t^j_{\beta_j}$-directional move, which produces a degeneracy when we pass
to the restriction $w_x(s^*_{\alpha_*},t^*_{\beta_*})|_{I'} = w_x|_I(s^*_{\alpha_*})$.
We similarly see that our operation produces a degeneracy in the case where we have $d_i''<d_i$ for some $i$.
We therefore have to assume $d_i'' = d_i$ for all $i$ and $e_j' = e_j$ for all $j$ in order to avoid possible degeneracies,
and in these cases, we exactly retrieve the same chains as in our expression of the second join factor in our decomposition of~$(\phi\vee\psi)(H(w(x_*,y_*)))$.

We conclude from this analysis that the expansions of $H((\phi\vee\psi)(w(x_*,y_*)))$ and $(\phi\vee\psi)(H(w(x_*,y_*)))$ consist of the same joins of simplices,
and therefore these composites agree, as expected.

To finish the proof of our proposition, we use that every object of the category of $\EOp$-algebras has a presentation in terms of a natural reflexive coequalizer of free $\EOp$-algebras.
If we have a pair of objects $A$ and $B$, then we can form a commutative diagram:
\begin{equation*}
\xymatrix{ \EOp(X_1)\otimes\EOp(Y_1)\ar@<-2pt>[r]_{\EM}\ar@<+2pt>[d]\ar@<-2pt>[d] &
\EOp(X_1)\vee\EOp(Y_1)\ar@<-2pt>[l]_{\AW}\ar@<+2pt>[d]\ar@<-2pt>[d]\ar@(ur,dr)[]!UR;[]!DR^{H} \\
\EOp(X_0)\otimes\EOp(Y_0)\ar@<-2pt>[r]_{\EM}\ar@/_1em/[u]\ar@{.>}[d] &
\EOp(X_0)\vee\EOp(Y_0)\ar@<-2pt>[l]_{\AW}\ar@/_1em/[u]\ar@{.>}[d]\ar@(ur,dr)[]!UR;[]!DR^{H} \\
A\otimes B\ar@<-2pt>[r]_{\EM} & A\vee B\ar@<-2pt>[l]_{\AW}\ar@{.>}@(ur,dr)[]!UR;[]!DR^{H} }
\end{equation*}
where we take the presentations of $A$ and $B$ by reflexive coequalizers in the vertical direction and our deformation retract diagram
of Alexander--Whitney and Eilenberg--MacLane maps
in the horizontal direction. We use this diagram to check that our chain homotopy $H$ passes to the quotient
and induces a chain homotopy such that $\delta H + H\delta = \EM\AW - \id$
on $A\vee B$,
as indicated in our figure.
\end{proof}

%\begin{constr}[The connection on the cubical cochains of the $2$-cube]\label{constr:cubical-cochain-connection}
\begin{constr}[The action of the Barratt--Eccles operad on the interval, on cubical cochains and the definition of connections]\label{constr:cubical-cochain-connection}
We already recalled that the normalized cochain complex of a simplicial set $\DGN^*(X)$ inherits the structure of an algebra over the Barratt--Eccles operad.
We refer to~\cite{BergerFresse} for the precise definition.
We consider the cochain algebra of the $1$-simplex $X = \Delta^1$ in our definition of homotopy Segal $E_\infty$-cooperads
together with the cubical cochain algebras
\begin{equation*}
I^m = \underbrace{\DGN^*(\Delta^1)\otimes\dots\otimes\DGN^*(\Delta^1)}_m
\end{equation*}
which we provide with an $\EOp$-algebra structure, using the action of the Barratt--Eccles operad on each cochain complex factor $\DGN^*(\Delta^1)$,
and the diagonal operation of Construction~\ref{constr:Barratt-Eccles-diagonal}.

We study structures attached to the cochain algebra $I = \DGN^*(\Delta^1)$ in this paragraph.
We also consider the normalized chain complex $\DGN_*(\Delta^1)$, dual to $\DGN^*(\Delta^1)$.
We have
\begin{equation*}
\DGN_*(\Delta^1) = \kk\underline{0}\oplus\kk\underline{1}\oplus\kk\underline{01},
\end{equation*}
where $\underline{01}$ denotes the class of the fundamental simplex of $\Delta^1$ in the normalized chain complex,
whereas $\underline{0}$ and $\underline{1}$ denote the class of the vertices of $\underline{01}$.
We can identify $\kk\underline{0}\subset\DGN_*(\Delta^1)$ with the image of the map $\DGN_*(d^1): \DGN_*(\Delta^0)\rightarrow\DGN_*(\Delta^1)$
induced by the $1$-coface $d^1: \Delta^0\rightarrow\Delta^1$,
while $\kk\underline{1}\subset\DGN_*(\Delta^1)$ is identified with the image of the map $d^0: \DGN_*(\Delta^0)\rightarrow\DGN_*(\Delta^1)$
induced by the $0$-coface $d^0: \Delta^0\rightarrow\Delta^1$. We have $\delta(\underline{01}) = \underline{1} - \underline{0}$.
For the cochain algebra, we have
\begin{equation*}
\DGN^*(\Delta^1) = \kk\underline{0}^{\sharp}\oplus\kk\underline{1}^{\sharp}\oplus\kk\underline{01}^{\sharp},
\end{equation*}
where we take the basis $(\underline{0}^{\sharp},\underline{1}^{\sharp},\underline{01}^{\sharp})$
dual to $(\underline{0},\underline{1},\underline{01})$.

We now explain the definition of a connection $\nabla^*: \DGN^*(\Delta^1)\otimes\DGN^*(\Delta^1)\rightarrow\DGN^*(\Delta^1)$,
which we use in the construction of degeneracy operators in our definition of homotopy Segal $E_\infty$-cooperads.
We consider the simplicial map $\min: \Delta^1\times\Delta^1\rightarrow\Delta^1$ defined by the mapping $(s,t)\mapsto\min(s,t)$
on topological realizations, or equivalently, by the following representation:
\begin{equation*}
\vcenter{\hbox{\includegraphics{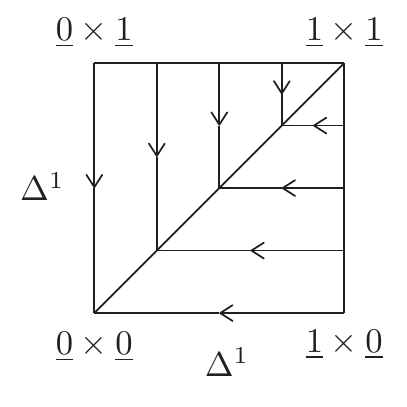}}},
\end{equation*}
where we take the projection onto the diagonal simplex along the lines depicted in the figure.
%We accordingly have the correspondence $\mu(\underline{0},\underline{0}) = \mu(\underline{0},\underline{1}) = \mu(\underline{1},\underline{0}) = \underline{0}$
%and $\mu(\underline{1},\underline{1}) = \underline{1}$
%for the vertices of our simplicial sets.
We take the composite $\nabla_* = \DGN_*(\min)\circ\EM$ of the induced map on normalized chain complexes $\DGN_*(\min): \DGN_*(\Delta^1\times\Delta^1)\rightarrow\DGN_*(\Delta^1)$
with the Eilenberg--MacLane map $\EM: \DGN_*(\Delta^1)\otimes\DGN_*(\Delta^1)\rightarrow\DGN_*(\Delta^1\times\Delta^1)$.
We get the following formulas:
\begin{align*}
& \nabla_*(\underline{0}\otimes\underline{0}) = \nabla_*(\underline{1}\otimes\underline{0}) = \nabla_*(\underline{0}\otimes\underline{1}) = \underline{0}, \\
& \nabla_*(\underline{1}\otimes\underline{1}) = \underline{1}, \\
& \nabla_*(\underline{1}\otimes\underline{01}) = \nabla_*(\underline{01}\otimes\underline{1}) = \underline{01}, \\
& \nabla_*(\underline{0}\otimes\underline{01}) = \nabla_*(\underline{01}\otimes\underline{0}) = 0, \\
& \nabla_*(\underline{01}\otimes\underline{01}) = 0.
\end{align*}
We define our connection on normalized cochains $\nabla^*$ as the dual map of this morphism $\nabla_*$.
We accordingly take:
\begin{equation*}
\nabla^* = \EM\circ\DGN^*(\min): \DGN^*(\Delta^1)\rightarrow\DGN^*(\Delta^1)\otimes\DGN^*(\Delta^1),
\end{equation*}
and we can determine this morphism by the following formulas on our basis elements:
\begin{align*}
& \nabla^*(\underline{0}^{\sharp}) = \underline{0}^{\sharp}\otimes\underline{0}^{\sharp} + \underline{1}^{\sharp}\otimes\underline{0}^{\sharp}
+ \underline{0}^{\sharp}\otimes\underline{1}^{\sharp}, \\
& \nabla^*(\underline{1}^{\sharp}) = \underline{1}^{\sharp}\otimes\underline{1}^{\sharp}, \\
& \nabla^*(\underline{01}^{\sharp}) = \underline{01}^{\sharp}\otimes\underline{1}^{\sharp} + \underline{1}^{\sharp}\otimes\underline{01}^{\sharp}.
\end{align*}
We crucially need the observation of the next proposition in our constructions.
\end{constr}

\begin{prop}\label{prop:cubical-cochain-connection}
The map $\nabla^* = \EM\circ\DGN^*(\min): \DGN^*(\Delta^1)\rightarrow\DGN^*(\Delta^1)\otimes\DGN^*(\Delta^1)$ defined in the above paragraph is a morphism of $\EOp$-algebras,
where we use that the Barratt-Eccles operad $\EOp$ acts on the tensor product $\DGN^*(\Delta^1)\otimes\DGN^*(\Delta^1)$
through the operadic diagonal $\Delta: \EOp\rightarrow\EOp\otimes\EOp$
and its action on each factor $\DGN^*(\Delta^1)$.
\end{prop}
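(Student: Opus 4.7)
The plan is to present the connection $\nabla^*$ as a composite of two morphisms of $\EOp$-algebras. By the very definition given in Construction~\ref{constr:cubical-cochain-connection}, we have the factorization $\nabla^* = \AW \circ \DGN^*(\min)$, where $\DGN^*(\min)\colon \DGN^*(\Delta^1) \to \DGN^*(\Delta^1 \times \Delta^1)$ is the contravariant cochain map induced by the simplicial $\min$ map, and $\AW\colon \DGN^*(\Delta^1 \times \Delta^1) \to \DGN^*(\Delta^1) \otimes \DGN^*(\Delta^1)$ is the cochain-level Alexander--Whitney diagonal, dual to the Eilenberg--MacLane shuffle map $\EM$ that appears in the construction of the chain-level $\nabla_*$.

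For the first factor, the map $\DGN^*(\min)$ is a morphism of $\EOp$-algebras for free, by the naturality of the Berger--Fresse $\EOp$-algebra structure on the normalized cochain complex $\DGN^*(-)$ with respect to arbitrary simplicial maps (recalled in~\S\ref{recoll:Barratt-Eccles-operad}).

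The substantive step, and the main obstacle, is to prove that the Alexander--Whitney diagonal $\AW\colon \DGN^*(X \times Y) \to \DGN^*(X) \otimes \DGN^*(Y)$ is itself a morphism of $\EOp$-algebras for arbitrary simplicial sets $X$ and $Y$, where the target carries the diagonal $\EOp$-action of Construction~\ref{constr:Barratt-Eccles-diagonal}. The required identity is that for every simplex $\pi = (w_0, \dots, w_n) \in \EOp(r)$, the action of $\pi$ on $\DGN^*(X \times Y)$ (defined via the Berger--Fresse table reduction on cochains of a product) is intertwined by $\AW$ with the action of the operadic coproduct $\Delta(\pi) = \sum_{k=0}^n (w_0, \dots, w_k) \otimes (w_k, \dots, w_n)$ on the tensor product. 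The combinatorial coincidence between the Alexander--Whitney decomposition of cochains of a product and the Alexander--Whitney coproduct on the nerve $W(\Sigma_r)$ that defines $\Delta$ is precisely what makes this compatibility hold, and I would either invoke the corresponding explicit verification from~\cite{BergerFresse} or derive it directly from the formulas that express the Barratt--Eccles action on $\DGN^*(X \times Y)$ in terms of the actions on the factors.

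Composing these two $\EOp$-algebra morphisms then yields the claim. As a fallback, one could bypass the general AW statement and verify the $\EOp$-compatibility of $\nabla^*$ by a direct computation on the basis $\underline{0}^{\sharp}, \underline{1}^{\sharp}, \underline{01}^{\sharp}$ of $\DGN^*(\Delta^1)$ and on generators $(w_0, \dots, w_n) \in \EOp(r)$ of the Barratt--Eccles operad---feasible because of the smallness of the source, but markedly less conceptual than the naturality argument sketched above.
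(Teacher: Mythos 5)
Your argument hinges on the claim that the K\"unneth-type map $\DGN^*(X\times Y)\rightarrow\DGN^*(X)\otimes\DGN^*(Y)$ is a strict morphism of $\EOp$-algebras for arbitrary simplicial sets $X$ and $Y$. This is false, and the paper says so explicitly in its proof: after dualizing, the statement amounts to the Eilenberg--MacLane map $\EM: \DGN_*(X)\otimes\DGN_*(Y)\rightarrow\DGN_*(X\times Y)$ being a morphism of $\EOp$-coalgebras, and the paper notes that this ``does not preserve $\EOp$-coalgebra structures in general'' --- it only does so when one factor is a point. (There is also a terminological slip: the map occurring in $\nabla^* = \EM\circ\DGN^*(\min)$ is the dual of the chain-level Eilenberg--MacLane shuffle map, not the Alexander--Whitney diagonal; in the paper's conventions the cochain-level $\AW$ goes in the opposite direction, $\DGN^*(X)\otimes\DGN^*(Y)\rightarrow\DGN^*(X\times Y)$. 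Neither map is a strict $\EOp$-(co)algebra morphism in general, so the confusion does not rescue the argument.) The whole content of the proposition is that the \emph{composite} with $\DGN^*(\min)$ is nevertheless an algebra map even though the second factor alone is not; your factorization therefore cannot establish it.

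The paper's actual proof dualizes to the chain level and exploits the specific form of $\min$. On tensors $\underline{\sigma}\otimes\underline{\tau}$ with at least one factor a vertex, the one-point-factor case of the Eilenberg--MacLane compatibility suffices. The only remaining case is $\underline{01}\otimes\underline{01}$, where $\nabla_*(\underline{01}\otimes\underline{01}) = 0$, so that the required identity reduces to the vanishing relation $\sum_{(\pi)}\nabla_*^{\otimes r}\sh(\pi'_*(\underline{01})\otimes\pi''_*(\underline{01})) = 0$ for every $\pi\in\EOp(r)$. Verifying this occupies most of the proof and requires a careful analysis of the interval-cut/table-reduction formulas for the Barratt--Eccles action on $\DGN_*(\Delta^1)$ (tracking which factors receive $\underline{0}$, $\underline{1}$ or $\underline{01}$, and locating the caesuras), combined with the vanishing of $\nabla_*$ on $\underline{0}\otimes\underline{01}$, $\underline{01}\otimes\underline{0}$ and $\underline{01}\otimes\underline{01}$. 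Your proposed ``fallback'' of a direct check is the right idea in spirit, but it is not carried out, and it is substantially harder than you suggest: the elements $(w_0,\dots,w_n)\in\EOp(r)$ range over infinitely many simplices in every arity, so the verification is a genuine combinatorial argument about arbitrary table reductions, not a finite case check on a small source.
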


\begin{proof}
We go back to the definition of the $\EOp$-algebra structure of normalized cochain complexes of simplicial sets
in terms of a dual $\EOp$-coalgebra structure on normalized chain complexes.
We prove that the morphism $\nabla_*: \DGN_*(\Delta^1)\otimes\DGN_*(\Delta^1)\rightarrow\DGN_*(\Delta^1)$,
dual to the morphism of our claim, is a morphism of $\EOp$-coalgebras.
We may note that the Eilenberg--MacLane map $\EM: \DGN_*(X)\otimes\DGN_*(Y)\rightarrow\DGN_*(X\times Y)$
does not preserve $\EOp$-coalgebra structures
in general. Nevertheless, such a statement holds when one factor is a one-point set, $X = *$ or $Y = *$,
because in this case, we have $\DGN_*(X)\simeq\DGN_*(X)\otimes\kk\simeq\DGN_*(X\times *)$
or $\DGN_*(Y)\simeq \kk\otimes\DGN_*(Y)\simeq\DGN_*(*\times Y)$, and the Eilenberg--MacLane map
reduces to the identity morphism on the functor of normalized chains.
We readily deduce from this observation that our morphism $\nabla_*: \DGN_*(\Delta^1)\otimes\DGN_*(\Delta^1)\rightarrow\DGN_*(\Delta^1)$
preserves $\EOp$-coalgebra structure on the subcomplex generated by the tensors $\underline{\sigma}\otimes\underline{\tau}\in\DGN_*(\Delta^1)\otimes\DGN_*(\Delta^1)$
such that $\underline{\sigma}\in\{\underline{0},\underline{1}\}$
or $\underline{\tau}\in\{\underline{0},\underline{1}\}$
since such tensors lie in the image of the coface maps $d^i\otimes\id: \DGN^*(\Delta^0)\otimes\DGN_*(\Delta^1)\rightarrow\DGN_*(\Delta^1)\otimes\DGN_*(\Delta^1)$
and $\id\otimes d^i: \DGN^*(\Delta^1)\otimes\DGN_*(\Delta^1)\rightarrow\DGN_*(\Delta^0)\otimes\DGN_*(\Delta^1)$,
with $i = 0,1$.
%We are therefore left to verifying that our morphism $\nabla_*$ preserves the action of our operad $\EOp$
%on the element $\underline{\sigma}\otimes\underline{\tau} = \underline{01}\otimes\underline{01}$
%in $ \DGN^*(\Delta^1)\otimes\DGN_*(\Delta^1)$.

We use the notation $\pi_*: \DGN_*(\Delta^1)\rightarrow\DGN_*(\Delta^1)^{\otimes r}$ for the operation
that we associate to an element of the Barratt--Eccles operad $\pi\in\EOp(r)$
in the definition of the $\EOp$-coalgebra
structure on $\DGN_*(\Delta^1)$.
In general, for a tensor $\underline{\sigma}\otimes\underline{\tau}\in\DGN_*(\Delta^1)\otimes\DGN_*(\Delta^1)$, we have the formula:
\begin{equation*}
\pi_*(\underline{\sigma}\otimes\underline{\tau}) = \sum_{(\pi)}\sh(\pi'_*(\underline{\sigma})\otimes\pi''_*(\underline{\tau})),
\end{equation*}
where $\Delta(\pi) = \sum_{(\pi)}\pi'\otimes\pi''$ denotes the coproduct of the operation $\pi$,
while $\sh: \DGN_*(\Delta^1)^{\otimes r}\otimes\DGN_*(\Delta^1)^{\otimes r}\rightarrow(\DGN^*(\Delta^1)\otimes\DGN^*(\Delta^1))^{\otimes r}$
is the tensor permutation such that $\sh(a_1\otimes\dots\otimes a_r\otimes b_1\otimes\dots\otimes b_r) = (a_1\otimes b_1)\otimes\dots\otimes(a_r\otimes b_r)$.
Thus the statement of our claim is equivalent to the following relation:
\begin{equation}\tag{$*$}\label{eqn:nabla_product}
\pi_*(\nabla_*(\underline{\sigma}\otimes\underline{\tau})) = \sum_{(\pi)}\nabla_*^{\otimes r}\sh(\pi'_*(\underline{\sigma})\otimes\pi''_*(\underline{\tau})),
\end{equation}
for $\pi\in\EOp(r)$, and for any $\underline{\sigma}\otimes\underline{\tau}\in\DGN_*(\Delta^1)\otimes\DGN_*(\Delta^1)$.

We can use the argument of the previous paragraph to establish the validity of this relation in the case where $\underline{\sigma}$ or $\underline{\tau}$
is a vertex $\underline{0},\underline{1}\in\DGN_*(\Delta^1)$.
We therefore focus on the case $\underline{\sigma}\otimes\underline{\tau} = \underline{01}\otimes\underline{01}$.
We then have $\nabla_*(\underline{01}\otimes\underline{01}) = 0$,
so that the above equation (\ref{eqn:nabla_product})
reduces to the following vanishing relation:
\begin{equation}\tag{$*'$}\label{eqn:vanishing_nabla_product}
\sum_{(\pi)}\nabla_*^{\otimes r}\sh(\pi'_*(\underline{01})\otimes\pi''_*(\underline{01})) = 0.
\end{equation}
We devote the rest of this proof to the verification of this relation.

\textit{The definition of the action of the Barratt--Eccles operad on chains}.
To carry out our verification, we have to go back to the explicit expression of the operation $\varpi_*: \DGN_*(\Delta^1)\rightarrow\DGN_*(\Delta^1)^{\otimes r}$
associated to an element of the Barratt--Eccles operad $\varpi\in\EOp(r)$
in terms of interval cuts associated to a table reduction
of the simplices of the permutations $(s_0,\dots,s_l)$
that occur in the expansion of $\varpi$.
We briefly recall this construction in the general case of a $q$-dimensional simplex $\Delta^q$.
We refer to~\cite{BergerFresse} for details.

The table reduction is a sum of surjective maps $s: \{1,\dots,r+l\}\rightarrow\{1,\dots,r\}$,
which we determine by sequences of values $s = (s(1),\dots,s(r+l))$,
which we arrange on a table, as in the following picture:
\begin{equation*}
s = \left|\begin{array}{l} s(1),\dots,s(e_0-1),s(e_0) \\
s(e_0+1),\ldots,s(e_1-1),s(e_1) \\
\vdots\\
s(e_{l-2}+1),\dots,s(e_{l-1}-1),s(e_{l-1}) \\
s(e_{l-1}+1),\dots,s(r+l)
\end{array}\right..
\end{equation*}
The caesuras $s(e_0),\dots,s(e_{l-1})$, which terminate the rows of the table, are the terms $y = s(x)$ of the sequence $s = (s(1),\dots,s(r+l))$
that do not form the last occurrence of a value $y\in\{1,\dots,r\}$.
Thus, the complement of the caesuras, which consists of the inner terms of the rows $s(e_{i-1}+1),\dots,s(e_i-1)$, $i = 0,\dots,l-1$,
and of the terms of the last rows $s(e_{l-1}+1),\dots,s(r+l)$, consists of the terms of the sequence $s = (s(1),\dots,s(r+l))$
which are not repeated after their position.

The table reduction of a simplex of permutations $\varpi = (s_0,\dots,s_l)$
is a sum of table arrangements of surjections
of the following form:
\begin{equation*}
\TR(s_0,\dots,s_l) = \sum\left|\begin{array}{l} s_0(1),\dots,s_0(r_0-1),s_0(r_0) \\
s_1(1)',\dots,s_1(r_1-1)',s_1(r_1)' \\
\vdots \\
s_{l-1}(1)',\dots,s_{l-1}(r_{l-1}-1)',s_{l-1}(r_{l-1})' \\
s_l(1)',\dots,s_l(r_l)' \end{array}\right.,
\end{equation*}
and which we obtain by browsing the terms of our permutations $s_i$, $i = 0,\dots,l$.
For $i = 0$, we retain all terms of our permutation $s_0(1),\dots,s_0(x),\dots$ up to the choice of a caesura $s_0(r_0)$, where we decide to stop this enumeration.
For $i > 0$, in the enumeration of the terms of the permutation $s_i$ we only retain the values $s_i(1)',\dots,s_i(x)',\dots$
that do not occur before the caesura on the previous rows of our table.
For $i < l$, we again stop this enumeration at the choice of a caesura $s_i(r_i)$.
For $i = l$, we run this process up to the last term of the permutation $s_l$. We sum over all possible choices of caesuras.

For $0\leq\upsilon_0\leq\dots\leq\upsilon_p\leq q$, we generally denote by $\underline{\upsilon_0\dots\upsilon_p}\in\Delta^q$
the $p$-simplex defined by taking the image of the fundamental simplex of the $q$-simplex $\Delta^q$
under the simplicial operator $u^*: \Delta^q_q\rightarrow\Delta^q_p$
associated to the map $u: \{0<\dots<p\}\rightarrow\{0<\dots<q\}$
such that $u(x) = \upsilon_x$, $x = 0,\dots,p$.
The notation $\underline{0\cdots q}$, for instance, represents the fundamental simplex of $\Delta^q$.

Each surjection $s = (s(1),\dots,s(l+r))$
in the table reduction
of an element of the Barratt--Eccles operad $\varpi = (s_0,\dots,s_l)$
is used to assign a sum of tensors
\begin{equation*}
s_*(\underline{0\cdots q}) = \sum_{\alpha}\pm\underline{\sigma}_{(1)}^{\alpha}\otimes\dots\otimes\underline{\sigma}_{(r)}^{\alpha}\in\DGN_*(\Delta^q)^{\otimes r}
\end{equation*}
to the fundamental simplex $\underline{0\cdots q}\in\Delta^q_q$.
We proceed as follows. We fix a sequence of indices $0 = \rho_0\leq\dots\leq\rho_x\leq\dots\leq\rho_{r+l} = q$,
which we associate to an interval decomposition of the indexing sequence
of the fundamental simplex:
\begin{equation*}
\underline{0\cdots q} = \underline{\rho_0\cdots\rho_1}|\underline{\rho_1\cdots\rho_2}|\cdots\,\cdots|\underline{\rho_{r+l-1}\cdots\rho_{r+l}}.
\end{equation*}
For $x = 1,\dots,r+l$, we label the interval $\underline{\rho_{x-1}\cdots\rho_x}$
with the value of the term $s(x)$ of our surjection $s = (s(1),\dots,s(r+l))$
in $\{1,\dots,r\}$,
as in the following picture:
\begin{equation*}
\underline{\rho_0\overset{s(1)}{\cdots}\rho_1}|\underline{\rho_1\overset{s(2)}{\cdots}\rho_2}|\cdots
\,\cdots|\underline{\rho_{x-1}\overset{s(x)}{\cdots}\rho_x}|\cdots
\,\cdots|\underline{\rho_{r+l-1}\overset{s(r+l)}{\cdots}\rho_{r+l}}.
\end{equation*}
Then, for $i\in\{1,\dots,r\}$, we concatenate the intervals $\underline{\rho_{x-1}\cdots\rho_x}$ labelled by the value $s(x) = i$
in order to form the factor $\sigma_{(i)}^{\alpha}$ of our tensor $s_*(\underline{0\cdots q})\in\DGN_*(\Delta^q)^{\otimes r}$.
We sum over all possible choices of indices $0 = \rho_0\leq\dots\leq\rho_x\leq\dots\leq\rho_{r+l} = q$.

The image of the simplex $\underline{0\cdots q}\in\DGN_*(\Delta^q)$ under the operation $\varpi: \DGN_*(\Delta^q)\rightarrow\DGN_*(\Delta^q)^{\otimes r}$
associated to an element of the Barratt-Eccles operad $\varpi\in\EOp(r)$
is given by the sum of the tensors $s_*(\underline{0\cdots q})\in\DGN_*(\Delta^q)^{\otimes r}$,
where $s$ runs over surjections that occur in the table reduction of $\varpi$.

In general, we can determine the action of the operation $\varpi_*: \DGN_*(X)\rightarrow\DGN_*(X)^{\otimes r}$
on an element $\sigma\in\DGN_*(X)$ in the normalized chains of a simplicial set
by using that $\sigma$ is represented by a simplex $\sigma\in X_q$ such that $\sigma = \hat{\sigma}(\underline{0\cdots q})$
for some simplicial map $\hat{\sigma}: \Delta^q\rightarrow X$.
Indeed, by functoriality of the operation $\varpi_*: \DGN_*(X)\rightarrow\DGN_*(X)^{\otimes r}$,
we have the identity $\varpi_*(\sigma) = \DGN_*(\hat{\sigma})^{\otimes r}(\varpi_*(\underline{0\cdots q}))$
in $\DGN_*(X)^{\otimes r}$.
But we do not use really this correspondence in the follow-up, because we are going to focus on the computation of the tensors $\varpi_*(\sigma)\in\DGN_*(X)^{\otimes r}$
for the fundamental simplex of the $1$-simplex $\underline{01}\in\Delta^1$.
We study the terms that may remain in the expansion of $\varpi_*(\underline{01})\in\DGN_*(\Delta^1)^{\otimes r}$ after the reduction of the factors $\sigma_{(i)}^{\alpha}\in\DGN_*(\Delta^1)$
associated to degenerate simplices in $\Delta^1$.

\textit{The reduction of degenerate simplices for the action of the Barratt--Eccles operad on the $1$-simplex $\underline{01}\in\DGN_*(\Delta^1)$}.
In general, in order to get non-degenerate simplices $\sigma_{(i)}^{\alpha}$ in the above definition of the tensors $s_*(\underline{0\cdots q})\in\DGN_*(\Delta^q)^{\otimes r}$,
we have to assume that we have a strict inequality $\rho_x<\rho_{y-1}+1$ for each pair $x<y$
such that $s(x)$ and $s(y)$ represent consecutive occurrences of a value $s(x) = s(y) = i$
in our surjection $s(1),\dots,s(r+l)$.

In the case $q = 1$, we must have $0 = \rho_0 = \dots = \rho_{t-1}<\rho_{t} = \dots = \rho_p = 1$, for some index choice $t$.
Then we associate an interval $\underline{01}$ to the term $s(t)$ of our surjection $s$,
an interval of length one $\underline{0}$ to the terms $s(x)$ such that $x<t$,
and an interval of length one $\underline{1}$ to the terms $s(x)$ such that $x>t$.
In this context, the only possibility to get a tensor product of non degenerate simplices is to assume that $s(t)$
lies the last row of our table.
Indeed, we can observe that no caesura $s(e_i)$ should be associated to an interval $\underline{1}$ or $\underline{01}$,
because in the case where such an interval $\underline{1}$ or $\underline{01}$
is labelled by the value of a caesura $s(e_i)$,
the next occurrence of this value should be associated to the interval $\underline{1}$,
producing a degeneracy $\underline{\cdots 11\cdots}$
when we perform the concatenation operation.
Now, if we assume that all caesuras $s(e_i)$ are associated to the interval $\underline{0}$,
then the term $s(t)$ associated to the interval $\underline{01}$
necessarily occurs after the last caesura,
and hence, necessarily lies on the last row of our table.

In the case of the table reduction of a simplex of permutations $\varpi = (s_0,\dots,s_l)$,
we get that the interval $\underline{01}$ is associated to a term $s_l(t') = s_l(t)'$
which we retain in the sequence of values of the permutation $s_l$
on the last row of our table reduction.
The interval of length one $\underline{1}$ can only be labelled by the value of terms $s_l(x)'$ that occur after $s_l(t)'$
on the last row of our table.
The values of the terms $s_l(x)$ such that $x<t'$ in the permutation $s_l(x)$
can not occur at such positions.
Therefore, the occurrences of these values in our table reduction can only be decorated by intervals of length one $\underline{0}$,
and produce either a vertex $\sigma_{(i)}^{\alpha} = \underline{0}$ or a degenerate element
when we perform our concatenation operation.

This analysis implies that the tensors $\sigma_{(1)}^{\alpha}\otimes\cdots\otimes\sigma_{(r)}^{\alpha}$,
which occur in the expansion of a coproduct $\varpi_*(\underline{01})\in\DGN^*(\Delta^1)^{\otimes r}$, $\varpi = (s_0,\dots,s_l)$,
satisfy the following repartition constraint (when no degeneracy occurs):
\begin{equation*}
\sigma_{(i)}^{\alpha} = \begin{cases} \text{$\underline{0}$}, & \text{for $i = s_l(1),\dots,s_l(t'-1)$}, \\
\text{$\underline{01}$}, & \text{for $i = s_l(t')$},
\end{cases}
\end{equation*}
where $s_l(t') = s_l(t)'$ is the term of the permutation $s_l(1),\dots,s_l(r)$ that we associate to the interval $\underline{01}$ in our interval decomposition process.
Note also that (in non degeneracy cases) we necessarily have
\begin{equation*}
\sigma_{(i)}^{\alpha} = \underline{01},\quad\begin{aligned}[t] & \text{for the values of the caesuras $i = s_*(e_*)'$}\\
& \text{in our table reduction of the simplex $\varpi = (s_0,\dots,s_l)$},
\end{aligned}
\end{equation*}
since the values of the caesuras are repeated in our table (and hence lead to simplices of positive dimension when we perform our interval concatenation).

\textit{The verification of the vanishing relation}.
We go back to the operations $\pi_*': \DGN_*(\Delta^1)\rightarrow\DGN_*(\Delta^1)^{\otimes r}$ and $\pi_*'': \DGN_*(\Delta^1)\rightarrow\DGN_*(\Delta^1)^{\otimes r}$
associated to the factors of a coproduct $\Delta(\pi) = \sum_{(\pi)}\pi'\otimes\pi''$
in the expression of our equation~(\ref{eqn:vanishing_nabla_product}).

Recall that, for an element $\pi = (w_0,\dots,w_n)\in\EOp(r)$, we have by definition $\Delta(\pi) = \sum_k(w_0,\dots,w_k)\otimes(w_k,\dots,w_n)$.
We fix a term of this coproduct $\pi'\otimes\pi'' = (w_0,\dots,w_k)\otimes(w_k,\dots,w_n)$
and interval decompositions that fulfill the conditions of the previous paragraph
for some table reductions of the simplices $\pi' = (w_0,\dots,w_k)$
and $\pi'' = (w_k,\dots,w_n)$.
We consider the associated tensors $\sigma_{(1)}'\otimes\dots\otimes\sigma_{(r)}'$ and $\sigma_{(1)}''\otimes\dots\otimes\sigma_{(r)}''$
in the expansion of $\pi'_*(\underline{01})$
and $\pi''_*(\underline{01})$. We consider the case $k<n$ first.

Let $i = w_k(t')$ be the term of the permutation $w_k$ to which we associate the interval $\underline{01}$ in the table reduction of $\pi' = (w_0,\dots,w_k)$,
so that we have $\sigma_{(i)}' = \underline{01}$ (in a non degeneracy case).
If this term $i = w_k(t')$ occurs on the first row of our table reduction of the simplex $\pi'' = (w_k,\dots,w_n)$,
then we have either $\sigma_{(i)}'' = \underline{0}$ (when $w_k(t')$ occurs before the caesura)
or $\sigma_{(i)}'' = \underline{01}$ (when the caesura is at $w_k(t')$).
In both cases, we have $\nabla_*(\sigma_{(i)}'\otimes\sigma_{(i)}'') = 0$ since $\nabla_*(\underline{01}\otimes\underline{01}) = \nabla_*(\underline{0}\otimes\underline{01}) = 0$
by definition of our map $\nabla_*$,
and therefore such a choice results in a zero term
in the expression $\nabla^*\sh(\pi'_*(\underline{01})\otimes\pi''_*(\underline{01}))$.
If, on the contrary, we take the caesura $j = w_k(r_k'')$ before the value $i = w_k(t')$
occurs in our table reduction of the simplex $\pi'' = (w_k,\dots,w_n)$,
then this means that the value $j = w_k(r_k'')$ occurs before $i = w_k(t')$
in the permutation $w_k$,
and in this case, we have by our previous analysis $\sigma_{(j)}' = \underline{0}$
when we form the coproduct $\pi'_*(\underline{01})$.
We then have $\nabla_*(\sigma_{(j)}'\otimes\sigma_{(j)}'') = 0$ (since $\nabla_*(\underline{0}\otimes\underline{01}) = 0$),
We still conclude that our choice results in a zero term in the expression $\nabla^*\sh(\pi'_*(\underline{01})\otimes\pi''_*(\underline{01}))$.

In the case $k = n\Rightarrow\deg(\pi'') = 0$, we just consider the value $j = w_n(t'')$ to which we assign an interval $\underline{01}$ in our construction
of the tensor $\pi''_*(\underline{01})$, and we argue similarly: if $t'<t''$, then we have $\sigma_{(i)}'' = \underline{0}$,
so that $\nabla_*(\sigma_{(i)}'\otimes\sigma_{(i)}'') = \nabla_*(\underline{01}\otimes\underline{0}) = 0$;
if $t'=t''$, then we have $\sigma_{(i)}' = \sigma_{(i)}'' = \underline{01}$
and $\nabla_*(\sigma_{(i)}'\otimes\sigma_{(i)}'') = \nabla_*(\underline{01}\otimes\underline{01}) = 0$;
if $t'>t''$, then we have $\sigma_{(j)}' = \underline{0}$
and $\nabla_*(\sigma_{(j)}'\otimes\sigma_{(j)}'') = \nabla_*(\underline{0}\otimes\underline{01}) = 0$.

In all cases, we conclude that our choices result in a zero term in $\nabla_*^{\otimes r}\sh(\pi'_*(\underline{01})\otimes\pi''_*(\underline{01}))$.
Hence, we do obtain the vanishing relation $\sum_{(\pi)}\nabla_*^{\otimes r}\sh(\pi'_*(\underline{01})\otimes\pi''_*(\underline{01})) = 0$,
and this result finishes the proof of our proposition.
\end{proof}

\end{appendix}

\bibliographystyle{plain}
\bibliography{E-infinity-cooperads}

\end{document}